\documentclass{article}

\usepackage[english]{babel}
\usepackage[utf8]{inputenc}

\usepackage[letterpaper,top=2cm,bottom=2cm,left=3cm,right=3cm,marginparwidth=1.75cm]{geometry}

\usepackage{mathtools}
\usepackage{chngcntr}
\usepackage{indentfirst}
\usepackage{amsmath}
\usepackage{graphicx}
\usepackage{amssymb}
\usepackage{amsthm}
\usepackage[colorlinks=true, allcolors=blue]{hyperref}
\usepackage{mathrsfs}
\usepackage{tikz-cd}
\usepackage{cleveref}

\usepackage[doi = false, isbn = false, url = false, style = alphabetic, maxnames = 100]{biblatex}
    \DeclareFieldFormat{postnote}{#1}
    \AtBeginDocument{%
       \def\MR#1{}
    }

\newtheorem{thm}{Theorem}[subsection]
\newtheorem{lemma}[thm]{Lemma}
\newtheorem{cor}[thm]{Corollary}
\newtheorem{rmk}[thm]{Remark}
\newtheorem{prop}[thm]{Proposition}
\newtheorem{defn}[thm]{Definition}
\newtheorem{conj}[thm]{Conjecture}

\title{Base change fundamental lemma for Bernstein centers of principal series blocks}
\author{Shenghao Li}
\numberwithin{equation}{subsubsection}
\renewcommand{\theequation}{%
  \ifnum\value{subsection}>0 %
    \ifnum\value{subsubsection}>0 %
      \thesubsubsection.\arabic{equation}
    \else
      \thesubsection.\arabic{equation}
    \fi
  \else
    \thesection.\arabic{equation}
  \fi
}
\numberwithin{thm}{subsubsection}
\renewcommand{\thethm}{%
  \ifnum\value{subsection}>0 %
    \ifnum\value{subsubsection}>0 %
      \thesubsubsection.\arabic{thm}
    \else
      \thesubsection.\arabic{thm}
    \fi
  \else
    \thesection.\arabic{thm}
  \fi
  }

\newtheorem{claim}{Claim}

\newtheoremstyle{normalexample}
  {3pt} 
  {3pt} 
  {\normalfont} 
  {} 
  {\bfseries} 
  {.} 
  {.5em} 
  {} 

\theoremstyle{normalexample}
\newtheorem{example}{Case}

\begin{document}
\maketitle

\begin{abstract}
    Let $G$ be an unramified group over a $p$-adic field $F$. This article introduces a base change homomorphism for the Bernstein center of a principal series block, and proves that two functions related by this base change homomorphism are associated. This result provides new evidence for the conjecture on twisted endoscopic transfer of the stable Bernstein center proposed by T. Haines, which will be applied to a general conjecture on test functions for Shimura varieties due to R. Kottwitz and T. Haines.
\end{abstract}

\tableofcontents

\section{Introduction}

    Let $F$ be a $p$-adic field, and $F_r/F$ be the unique degree $r$ unramified extension of $F$ contained in some algebraic closure $\bar{F}$ of $F$. Let $\theta$ denote a generator of the Galois group $\textnormal{Gal}(F_r/F)$. Let $G$ be an unramified connected reductive group over $F$. Use the same symbol $\theta$ to denote the automorphism on $G(F_r)$ induced by $\theta$. For any algebraic group $H$ over $F$, we use $H$ to denote $H(F)$ and $H_r$ to denote $H(F_r)$ when there is no ambiguity.

    Consider the concrete norm map $N_r:G_r\rightarrow G_r$ given by $N_r\delta=\delta\theta(\delta)\cdots \theta^{r-1}(\delta)$. We can therefore define a norm map $\mathscr{N}$, from stable $\theta$-conjugacy classes in $G_r$ to stable conjugacy classes in $G$ (see \cite{kottwitz1982rational} for details). Let $\mathcal{H}(G_r)$ denote the convolution algebra of locally constant and compactly supported $\mathbb{C}$-valued functions on $G_r$. For any function $\phi\in \mathcal{H}(G_r)$, we can define the stable twisted orbital integral $\textnormal{SO}_{\delta\theta}^{G_r}(\phi)$ of $\phi$ for any element $\delta\in G_r$ with semisimple norm. For a more precise definition of stable twisted orbital integrals $\textnormal{SO}_{\delta\theta}^{G_r}$, see \cite{kottwitz1986base}.

    In this article, we mainly consider the matching of certain functions in $\mathcal{H}(G_r)$ and $\mathcal{H}(G)$, respectively. We make the following definition:
    \begin{defn}
        Two functions $\phi\in\mathcal{H}(G_r)$ and $f\in\mathcal{H}(G)$ are associated (or have matching stable orbital integrals) if the following equation holds: for every semisimple element $\gamma\in G$,
        $$\textnormal{SO}_{\gamma}^{G}(f)=\sum_{\delta}\Delta(\gamma,\delta)\textnormal{SO}_{\delta\theta}^{G_r}(\phi),$$
        where the sum is over stable $\theta$-conjugacy classes $\delta\in G_r$ with semisimple norm, and where $\Delta(\gamma,\delta)=1$ if $\mathscr{N}\delta=\gamma$ and $\Delta(\gamma,\delta)=0$ otherwise.
    \end{defn}

    The case of spherical Hecke algebras is studied first. Let $K_r\subset G_r$ and $K\subset G$ denote hyperspecial maximal compact subgroups associated to a hyperspeical vertex in the Bruhat-Tits building $\mathscr{B}(G)$ for $G(F)$. Denote the Hecke algebra of all $K_r$ (resp. $K$) bi-invariant functions in $\mathcal{H}(G_r)$ (resp. $\mathcal{H}(G)$) by $\mathcal{H}(G_r,K_r)$ (resp. $\mathcal{H}(G,K)$). The Satake isomorphism induces a base change homomorphism
    $$b:\mathcal{H}(G_r,K_r)\rightarrow \mathcal{H}(G,K).$$
    We have the following base change fundamental lemma for spherical Hecke algebras, proved in \cite{clozel1990fundamental} and \cite{labesse1990fonctions}:
    \begin{thm}
        If $\phi\in \mathcal{H}(G_r,K_r)$, then $\phi$ and $b\phi$ are associated.
    \end{thm}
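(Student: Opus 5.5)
We follow the strategy of Clozel and Labesse: first reduce to a statement about the unit element via the Kottwitz descent, then prove the statement globally using the trace formula. The first step is to reduce the full matching statement to the special case $\phi=1_{K_r}$, $b\phi=1_K$. Fix a semisimple $\gamma\in G$ and a $\delta\in G_r$ with $\mathscr{N}\delta=\gamma$. By Kottwitz's descent (\cite{kottwitz1986base}), the twisted orbital integral $\textnormal{SO}_{\delta\theta}^{G_r}(\phi)$ can be written near $\gamma$ as a stable orbital integral on the twisted centralizer $G_{\delta\theta}$. This twisted centralizer is an inner form of the centralizer $G_\gamma$. For elements whose norm is not conjugate into $K$, both sides vanish by a support argument for spherical functions; this uses the compatibility of the Satake transform with the norm on the split torus.

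Next we use the Satake isomorphism and the fact that $b$ is dual to the diagonal embedding ${}^LG\to {}^L(\textnormal{Res}_{F_r/F}G)$. For "regular enough" $\gamma$, the identity $\textnormal{SO}_\gamma(b\phi)=\textnormal{SO}_{\delta\theta}(\phi)$ follows from constant-term descent to a Levi subgroup. Concretely, both sides are computed by the twisted and untwisted Satake transforms, and these are compatible with parabolic induction. This reduces the general case to elliptic $\gamma$ and $\delta$, or to the torus case, where the statement is an explicit computation with $T_r\to T$, $t\mapsto N_r t$ on $X_*(T)$.

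The main obstacle is the elliptic, non-regular case, together with proving the matching for the unit $1_{K_r}\leftrightarrow 1_K$. A purely local approach does not seem feasible. Instead we globalize: choose a number field $E$ with a place $v$ where $E_v=F$, and a cyclic extension of degree $r$ inert at $v$. We then compare the stable twisted trace formula for $\textnormal{Res}\,G$ with the stable trace formula for $G$, using test functions that are spherical at $v$. Away from $v$, the test functions are chosen as pseudo-coefficients and "very cuspidal" functions, so that only elliptic regular semisimple terms survive. Density of Hecke eigenvalue characters (Satake parameters) and linear independence of characters then let us separate the contribution at $v$.

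The geometric sides are handled with the Kottwitz norm theory. Spectrally, unramified base change for $GL$-type character identities gives equality of traces $\textnormal{tr}(\pi_r(\phi)\circ I_\theta)=\textnormal{tr}\,\pi(b\phi)$ for unramified $\pi$, which is where $b$ enters. Comparing the two sides yields the identity for regular semisimple $\gamma$. The identity for arbitrary semisimple $\gamma$ then follows from germ expansions (Shalika germs and their twisted analogues) and the density of regular semisimple orbital integrals. The most delicate point is controlling the global error terms, namely the non-elliptic and stabilization terms; this is where we rely on the simple trace formula through the choice of cuspidal test functions.
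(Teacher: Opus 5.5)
This statement is not proved in the paper. It is quoted from \cite{clozel1990fundamental} and \cite{labesse1990fonctions}, and Sections~\ref{section 8}--\ref{section 9} rerun Labesse's argument for principal series blocks. Measured against that argument, your global step has a genuine gap: nothing in your comparison is known in advance.

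A trace formula comparison gives information at $v$ only if you already know that some family of test functions at $v$ is associated. For $(\phi_v,b\phi_v)$ the geometric identity is exactly what you want to prove, and the spectral sides cannot be matched term by term either. The local identity $\textnormal{tr}(\Pi(\phi)I_\theta)=\textnormal{tr}\,\pi(b\phi)$, for $\Pi$ the unramified base change of $\pi$, is indeed formal from the Satake description of $b$. But applying it to the global spectral sides presupposes a global base change correspondence with matching multiplicities. That is not available for a general $G$; it is an output of such comparisons, not an input. So ``comparing the two sides'' yields nothing as written, and the linear independence you invoke has nothing to act on.

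Labesse's remedy, which the paper transplants, works in four steps:
\begin{enumerate}
\item Put at $v$ the elementary functions $\phi^{u}$, $f^{t}$ with $t=N(u)$, whose association is a direct local computation (cf.\ \Cref{associated}).
\item The comparison then yields the local-data character identities for these pairs, for every regular dominant $\nu$.
\item The Jacquet-module computation writes both sides as finite combinations of characters $\nu\mapsto\eta(\nu(\varpi))$. Linear independence separates the individual parameters and lets you specialize to $u=1$.
\item Finally, the scalars by which $\phi$ and $b\phi$ act agree by the very definition of $b$.
\end{enumerate}

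Your treatment of the unit element is also circular. The global test functions must be unit elements at almost all places, including infinitely many places inert in the cyclic extension. So the comparison cannot even be set up without the matching $1_{K_{r,w}}\leftrightarrow 1_{K_w}$ at those places. That is Kottwitz's theorem \cite{kottwitz1986base}, which is proved by a local argument and is used as an input by both Clozel and Labesse. Your remark that a local approach is not feasible is therefore mistaken, and deriving the unit-element case from the same comparison is circular.

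Your first step is wrong as stated. For a general spherical $\phi$, twisted orbital integrals do not vanish just because the norm is not conjugate into $K$. For example, take $\phi=1_{K_r\mu(\varpi)K_r}$ at $\delta=\mu(\varpi)$ with $\mu\in X_*(A)$ regular. The integrand is $\geq 0$ and equals $1$ near $g=1$, while $\mathscr{N}\delta=\mu(\varpi)^r$ lies in no compact subgroup. There is also no local reduction of an arbitrary spherical function to the unit element; that reduction is precisely what the global argument achieves.

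The remaining pieces are sound and match the reduction steps of \Cref{descent formula section} and \Cref{reduction steps}. These are the parabolic descent to elliptic $\gamma$, via the compatibility of $b$ with constant terms, and the passage from regular to arbitrary semisimple $\gamma$ through germ expansions.
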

    This theorem of Clozel and Labesse contributes a lot in Kottwitz's work on Shimura varieties with good reduction at $p$.

    Decades later, T. Haines proved analogous base change fundamental lemmas for centers of parahoric Hecke algebras (\cite{haines2009base}) and the Bernstein center of depth-zero principal series block (\cite{haines2012base}). The first result plays an important role in the study of Shimura varieties with parahoric level structure at $p$, and the second result is crucial to study Shimura varieties with $\Gamma_1(p)$-level structure at $p$. 

    After that, T. Haines proposed a conjecture on twisted endoscopic transfer of the stable Bernstein center (\cite[Conjecture 6.2.3]{haines2014stable}), which is a generalization of the above three results. Let $\mathfrak{Z}^{\textnormal{st}}(G)$ denote the stable Bernstein center of $G$, and $b_r':\mathfrak{Z}^{\textnormal{st}}(G_r)\rightarrow \mathfrak{Z}^{\textnormal{st}}(G)$ denote the base change homomorphism (see \Cref{stable bernstein center} for more details). We assume that $G$ satisfies LLC+ (see \cite[\S5.2]{haines2014stable} for details). Since $G$ is a quasi-split group, we can naturally embed $\mathfrak{Z}^{\textnormal{st}}(G)$ into the Bernstein center $\mathfrak{Z}(G)$ of $G$ (\cite[Corollary 5.5.2]{haines2014stable}). By viewing $\mathfrak{Z}^{\textnormal{st}}(G_r)$ as a subring of $\mathfrak{Z}(G)$, we have the following conjecture due to T. Haines:
    \begin{conj}[Haines]\label{conjecture}
        If $\phi\in\mathcal{H}(G_r)$ and $f\in\mathcal{H}(G)$ are associated, then $Z_r*\phi$ and $b_r'(Z_r)*\phi$ are associated for any $Z_r\in \mathfrak{Z}^{\textnormal{st}}(G_r)$.
    \end{conj}

    In addition to the three results above, P. Scholze also proved the above conjecture when $G=\textnormal{GL}_n$ (see \cite[Theorem C]{scholze2013local}). The goal of this paper is to generalize Haines's result (\cite{haines2012base}) to general principal series blocks, removing the restriction on depths. For the remainder of this paper, we will refer to a principal series block simply as a principal block. 

    To state the main theorem, we need more notations and definitions. Let $A$ denote a maximal $F$-split torus, and let $T=C_G(A)$ denote the centralizer of $A$ in $G$. Therefore, $T$ is a maximal torus of $G$. Choose a Borel subgroup $B=TU$ of $G$, and an Iwahori subgroup corresponding to $B$. 

    Since we assume $G$ is unramified, this means that $T$ is also unramified. We denote the unique maximal compact subgroup of $T$ (resp. $T_r$) by $^0T$ (resp. $^0T_r$). One can see that $^0T=I\cap T$. Let $^0\chi$ be a character from $^0T$ to $\mathbb{C}^{\times}$. Define $^0\chi_r={^0\chi}\circ N_r$ as a character in $^0T_r$. Let $
    \chi$ denote an arbitrary extension of $^0\chi$ from $^0T$ to $T$. Similarly, define $\chi_r=\chi\circ N_r$ as an extension of $^0\chi_r$ from $^0T_r$ to $T_r$. Let $\mathfrak{s}=[T,\chi]_G$ (resp. $\mathfrak{s}_r=[{T_r},{\chi_r}]_{G_r}$) be an inertial equivalence class and $\mathfrak{R}_{\mathfrak{s}}(G)$ (resp. $\mathfrak{R}_{\mathfrak{s}_r}(G_r)$) denote the corresponding Bernstein block (for more details on Bernstein blocks, see \Cref{type definition} and \Cref{types}). 
    
    Next, we need to construct types for $\mathfrak{s}$ and $\mathfrak{s}_r$. However, Roche's construction (see \cite{roche1998types}) only works for split groups. In \Cref{types}, we generalize this construction to unramified groups:
    \begin{thm}\label{type construction}
        Fix an inertial equivalence class $\mathfrak{s}=[{T},{\chi}]_G$. We can then construct a concrete type $(J_{\mathfrak{s}},\rho_{\mathfrak{s}})$ of   
        $\mathfrak{R}_{\mathfrak{s}}(G)$, with the property that $J_{\mathfrak{s}}$ is an open compact subgroup of $I$ and $\rho$ is a character on $J_{\mathfrak{s}}$. In particular, this construction gives us the same construction as in \cite{roche1998types} if $G$ is split, and the same construction as in \cite[\S3]{haines2012base} if the depth of $\chi$ is zero.
    \end{thm}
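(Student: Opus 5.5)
We follow Roche's method \cite{roche1998types}, adapting each step from split to unramified $G$. Fix the hyperspecial vertex and Iwahori $I$ in the apartment of $A$. Let $\Phi$ be the relative root system of $(G,A)$, and for $a\in\Phi$ let $U_a$ be the root subgroup with its Moy--Prasad filtration $U_{a,t}$ ($t\in\mathbb{R}$) attached to the chosen vertex; for non-reduced $a$, $U_{2a}\subset U_a$. Write $^0T_t$ for the filtration of $^0T$. Since $G$ is unramified, $T$ is an induced torus and $^0T$ is the maximal bounded subgroup. This is where the filtrations replace the split-case coordinates $1+\mathfrak{p}^n$.

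\textbf{Step 1: define $J_{\mathfrak{s}}$.} For each $a\in\Phi$ we assign a conductor-type integer $c_a$ measuring the depth of $^0\chi$ restricted along the coroot direction. Concretely, let $c_a$ be the smallest integer $c\ge 1$ such that $^0\chi\circ a^\vee$ is trivial on the level-$c$ subgroup of the norm-one-type subgroup $(a^\vee)(\mathcal{O}_{F_a}^\times)$. Here $F_a$ is the field of definition of the root, which is unramified over $F$. We then set $f_a=\lfloor c_a/2\rfloor$ for $a>0$ and $f_a=\lfloor (c_a+1)/2\rfloor$ for $a<0$, with a small modification for the non-reduced case. Define
$$J_{\mathfrak{s}}=\langle\, {}^0T,\ U_{a,f_a}\ (a\in\Phi)\,\rangle .$$
Since $f_a\ge 0$ for $a>0$ and $f_a\ge 1$ for $a<0$, we get $J_{\mathfrak{s}}\subset I$. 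We then prove the Iwahori factorization $J_{\mathfrak{s}}=\prod_{a<0}U_{a,f_a}\cdot{}^0T\cdot\prod_{a>0}U_{a,f_a}$. This reduces to checking that $a\mapsto f_a$ is a concave function in the Bruhat--Tits sense, i.e. $f_{a+b}\le f_a+f_b$, and $f_{2a}\le 2f_a$ in the non-reduced case. Consequently $J_{\mathfrak{s}}$ is a group by the Bruhat--Tits theory of schematic closures.

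\textbf{Step 2: the character $\rho_{\mathfrak{s}}$.} Set $\rho_{\mathfrak{s}}(u^-tu^+)={}^0\chi(t)$. For $\rho_{\mathfrak{s}}$ to be a character, it must be trivial on the commutators $[U_{a,f_a},U_{-a,f_{-a}}]$. These commutators land in $a^\vee(1+\mathfrak{p}_{F_a}^{f_a+f_{-a}})$ times unipotent parts, and $f_a+f_{-a}=c_a$, so the definition of $c_a$ gives triviality. Commutators of non-opposite roots land in higher unipotent groups and are harmless. The real work here is a rank-one computation in $SL_2$, $SU_3$ and the restriction of scalars of these, i.e. the rank-one unramified quasi-split groups. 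It can be done with explicit Chevalley-type coordinates, following the case analysis of Bruhat--Tits for $SU_3$.

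\textbf{Step 3: the type property.} We follow Roche and show that $(J_{\mathfrak{s}},\rho_{\mathfrak{s}})$ is a cover of the $T$-type $({}^0T,{}^0\chi)$ in the sense of Bushnell--Kutzko. This requires three facts: the Iwahori decomposition, $\rho_{\mathfrak{s}}$ trivial on the unipotent parts, and the existence of an invertible element of $\mathcal{H}(G,\rho_{\mathfrak{s}})$ supported on $J_{\mathfrak{s}}\lambda J_{\mathfrak{s}}$ for a strongly positive $\lambda\in T/{}^0T$. The last is automatic since $\lambda$ contracts the positive unipotent part. Bushnell--Kutzko then yields that $(J_{\mathfrak{s}},\rho_{\mathfrak{s}})$ is an $\mathfrak{s}$-type.

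Two points remain to check. First, the main obstacle, I expect, is the intertwining and support computation in Step 3 as Roche needs it for the Hecke algebra description. There one has to use the group $W_\chi$ and handle the possibly disconnected stabilizer; for unramified non-split groups this needs the rank-one analysis again. I would first try to reduce everything to Levi subgroups of semisimple rank one via the Iwahori factorization. Second, the compatibility statements: when $G$ is split, the $c_a$ are exactly Roche's conductors, so the two constructions agree. When $\chi$ has depth zero, all $c_a\le 1$, so $f_a=0$ for $a>0$ and $f_a=1$ for $a<0$. Then $J_{\mathfrak{s}}=I$ with $\rho_{\mathfrak{s}}$ inflated from ${}^0\chi$ on $I/I^+$, which is Haines' construction.
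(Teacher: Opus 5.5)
\textbf{Gap in Step 3.} The existence of an \emph{invertible} element of $\mathcal{H}(G,\rho_{\mathfrak{s}})$ supported on $J_{\mathfrak{s}}\lambda J_{\mathfrak{s}}$ is not automatic. It is the real content of the cover condition, and contraction does not give it. That $\lambda$ contracts $J_{\mathfrak{s}}\cap U$ and expands $J_{\mathfrak{s}}\cap\bar U$ only makes $\lambda$ strongly positive. This guarantees that a nonzero $\phi_\lambda$ supported on $J_{\mathfrak{s}}\lambda J_{\mathfrak{s}}$ exists and that the Bushnell--Kutzko map is multiplicative on positive elements. It says nothing about $\phi_\lambda$ being a unit.

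Your reasoning applies verbatim to $G=\mathrm{GL}_2$, $K=\Gamma_0(\mathfrak{p}^n)$ with $n\geq 2$, and the trivial character:
\begin{itemize}
\item $K$ has Iwahori factorizations with respect to both Borel subgroups containing the diagonal torus;
\item the character is trivial on both unipotent parts and restricts to the type $({}^0T,1)$ of $T$;
\item $\mathrm{diag}(\varpi,1)$ is strongly positive.
\end{itemize}
Yet $(K,1)$ is not a type. By newform theory, a depth-zero supercuspidal with trivial central character (conductor $2$) has nonzero $\Gamma_0(\mathfrak{p}^n)$-fixed vectors. So condition (iii) fails there.

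What distinguishes Roche's group is the exact balance $f(\alpha)+f(-\alpha)=\textnormal{cond}(\alpha)$. Turning this into invertibility goes through the intertwining computation (support $J\widetilde{W}_{{}^0\chi}J$) and the analysis of the Hecke algebra. You defer exactly this material to a ``Hecke algebra description'', but it is needed for the type property itself. It is also precisely the part the paper says does not carry over beyond split groups.

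\textbf{How the paper avoids this.} It does not follow Roche. It builds a Kim--Yu $G$-datum from $\chi$:
\begin{itemize}
\item twisted Levi subgroups $G^i\supset T$ are cut out by good elements, which exist by Fintzen since $p\nmid|W_E|$;
\item the characters $\phi_i$ are extended from $T$ using $G(F)=G'(F)T(F)$;
\item genericity follows from $B_i(a_i,\mathrm{d}\alpha^\vee(1))=m_\alpha\,\mathrm{d}\alpha(a_i)$;
\item the point $y$ is chosen with $0<\langle\alpha,y\rangle<1/2$.
\end{itemize}
The cover property, invertibility included, is then inherited from Kim--Yu.

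The paper's own work is to identify $G^d_{y,0}G^{d-1}_{y,r_d/2}\cdots G^1_{y,r_2/2}$ with $J_{\mathfrak{s}}$. It does this by showing $\textnormal{cond}(\alpha)=r_i+1$ on $\Phi_E(G^{i-1},T)\setminus\Phi_E(G^i,T)$ and $\textnormal{cond}(\alpha)=1$ on $\Phi_E(G^d,T)$. These conductors are computed for \emph{absolute} roots with ${}^0\chi_E={}^0\chi\circ N_E$, followed by Galois descent from $E$. This also avoids your unspecified modification for non-reduced relative roots.

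To complete your route instead, you would have to prove invertibility of $\phi_\lambda$ for unramified $G$ directly. That means redoing Roche's intertwining and Hecke-relation computations, including unequal parameters and the $\mathrm{SU}_3$-type rank-one pieces.

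Minor point: $T$ need not be induced; for $\mathrm{U}_3$ it contains a norm-one torus. You only use that ${}^0T$ is the maximal bounded subgroup, which holds regardless. Your split and depth-zero compatibility checks are fine.
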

    Note that by \cite{fintzen2022tame}, the construction of types works if $p$ is not divided by the cardinality of the absolute Weyl group. For the remainder of this article, we may and we do make this assumption. 

    Suppose that we construct types $(J,\rho)$ and $(J_r,\rho_r)$ for $\mathfrak{s}$ and $\mathfrak{s}_r$ respectively (note that $\rho$ and $\rho_r$ are characters). Define the Hecke algebra $\mathcal{H}(G,\rho)$ as follows:
    $$\mathcal{H}(G,\rho)\coloneqq \{f\in \mathcal{H}(G)|f(j_igj_2)=\rho(j_1)^{-1}f(g)\rho(j_2)^{-1},\forall j_1,j_2\in J,\forall g\in G\}.$$
    Convolution is defined using the Haar measure which gives $J$ volume 1.
    
    Denote the unit element in $\mathcal{H}(G,\rho)$ (resp. $\mathcal{H}(G_r,\rho_r)$) by $e_{\rho}$ (resp. $e_{\rho_r}$) and the center of the corresponding Hecke algebra by $\mathcal{Z}(G,\rho)$ (resp. $\mathcal{Z}(G_r,\rho_r)$). Note that the above concrete construction of types gives us a concrete realization of the Bernstein center of the corresponding Bernstein block. In other words, the Bernstein center of the corresponding Bernstein block is isomorphic to $\mathcal{Z}(G,\rho)$. We have the following corollary:
    \begin{cor}
        There is a base change homomorphism $b_r$ from $\mathcal{Z}(G_r,\rho_r)$ to $\mathcal{Z}(G,\rho)$:
        \begin{equation}
        b_r:
        \mathcal{Z}(G_r,\rho_r) \rightarrow \mathcal{Z}(G,\rho).
    \end{equation}
    \end{cor}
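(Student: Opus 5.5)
The base change homomorphism $b_r$ will be built by passing through the Bernstein center of the torus and the Weyl group invariants, as in \cite{haines2012base}; its existence is essentially formal once those identifications are in place. The plan is to identify both $\mathcal{Z}(G,\rho)$ and $\mathcal{Z}(G_r,\rho_r)$ with rings of regular functions on the Bernstein varieties of $\mathfrak{s}$ and $\mathfrak{s}_r$. These varieties are quotients of the unramified-twist tori by the relevant stabilizers in the Weyl group. Pulling back along the dual norm map between these varieties then gives $b_r$.

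\textbf{Step 1.} By \Cref{type construction} and the theory of types, $(J,\rho)$ is an $\mathfrak{s}$-type, so the category $\mathfrak{R}_{\mathfrak{s}}(G)$ is equivalent to the category of $\mathcal{H}(G,\rho)$-modules. The center of the block is therefore $\mathcal{Z}(G,\rho)$. Bernstein's description gives
$$\mathcal{Z}(G,\rho)\cong \mathbb{C}[X(T)/{}^0T\cdot\text{twists}]^{W_\chi},$$
where $W_\chi$ is the stabilizer of ${}^0\chi$ in $W=N_G(T)/T$. Equivalently this is $\mathbb{C}[T/{}^0T]^{W_\chi}$ twisted by $\chi$. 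The variety is $\{\chi\eta:\eta\in X_{\mathrm{nr}}(T)\}/W_\chi$. The same holds over $F_r$, with $\chi_r$, $T_r$, and $W_{\chi_r}$.

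\textbf{Step 2.} The norm gives a map on characters, $\chi\eta\mapsto(\chi\eta)\circ N_r=\chi_r\cdot(\eta\circ N_r)$. This sends the unramified twists of $\chi$ to the unramified twists of $\chi_r$, since $\eta\circ N_r$ is unramified. Dually, it corresponds to the map $T_r/{}^0T_r\to T/{}^0T$ induced by $N_r$; because $T$ is unramified, this map is identified with $X_*(A)$-valued data scaled by $r$.

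\textbf{Step 3.} This map must be equivariant for the Weyl group actions. Since $G$ is unramified, $W$ is $\theta$-stable and the relative Weyl group over $F_r$ contains $W$. We need $W_\chi\subseteq W_{\chi_r}$ to act compatibly: for $w\in W_\chi$, we have $w({}^0\chi\circ N_r)=({}^w{}^0\chi)\circ N_r={}^0\chi_r$, because $N_r$ commutes with $W$ (take representatives in $G(F)$). Hence restriction of a $W_{\chi_r}$-invariant function, precomposed with the norm map on twists, yields a $W_\chi$-invariant function. Composing with the isomorphisms of Step 1 defines $b_r$ as a ring homomorphism.

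\textbf{Main obstacle.} The difficulty is Step 3 when the Weyl groups differ. $W_{\chi_r}$ may be strictly larger than $W_\chi$, and over $F_r$ the relevant Weyl group is that of $G_{F_r}$ relative to its maximal split torus, which could be larger than $W$ since $G$ is only quasi-split unramified. I would handle this as follows. Use that $T_r$ is the centralizer of the maximal $F_r$-split torus in $G_{F_r}$. Note that the twist variety for $G$ is, via the norm, the $\theta$-fixed part of the twist variety for $G_r$. Then any $W_{\chi_r}$-invariant function restricts to a $W_\chi$-invariant function. Here one only needs that $W_\chi$ lies in the image of the $\theta$-fixed points of $W_{\chi_r}$, which follows from the commutation of $N_r$ with $W$.

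We also need the identification in Step 1 to be compatible with the explicit type of \Cref{type construction}. I would check this by using the Bernstein presentation of $\mathcal{H}(G,\rho)$ available from Roche-type constructions: there, the center consists of $W_\chi$-invariant sums of the translation elements $\theta_\lambda$.

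With this, $b_r$ can be written concretely. On a basis of symmetrized translation elements $z_\mu$, with $\mu\in (T_r/{}^0T_r)$ taken modulo $W_{\chi_r}$, it is given by sums over the fibres of the norm.
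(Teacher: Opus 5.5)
Your proposal is correct and is essentially the paper's argument. The paper also uses the type to get the isomorphism $\beta:\mathbb{C}[\mathfrak{X}_{\mathfrak{s}}]\xrightarrow{\sim}\mathcal{Z}(G,\rho)$ (and likewise over $F_r$), and defines $b_r$ as pullback along $(T,\xi)_G\mapsto(T_r,\xi\circ N_r)_{G_r}$, which is well defined because $N_r$ commutes with Weyl representatives taken in $G(F)$. Your extra remarks are not needed and are partly inaccurate: the image of the norm is not the whole $\theta$-fixed part of the $F_r$-twist variety (Section 9 of the paper must separate norm from non-norm $\theta$-fixed characters), and Roche's Bernstein presentation is unavailable for non-split $G$, which is why one relies on the canonical $\beta$ defined by scalars on $i_B^G(\xi)^{\rho}$ rather than on a compatibility check.
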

    See \Cref{stable bernstein center} for more details. This base change homomorphism is analogous to the base change homomorphism defined in \cite{haines2012base}. We can now state our main theorem:
    \begin{thm}\label{main theorem}
        For any $\phi\in \mathcal{Z}(G_r,\rho_r)$, the functions $\phi$ and $b_r\phi$ are associated.
    \end{thm}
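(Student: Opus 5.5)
We will follow the strategy of Haines's proof in the depth-zero case \cite{haines2012base}, which goes through a global trace-formula comparison rather than a direct computation of orbital integrals. The basic idea is to reduce the matching of stable (twisted) orbital integrals to an identity of (twisted) traces on irreducible tempered representations. By the local results of Labesse/Clozel and Kottwitz on base change of characters (the twisted trace Paley–Wiener theorem combined with density of regular semisimple characters), two functions $\phi\in\mathcal{H}(G_r)$ and $f\in\mathcal{H}(G)$ are associated as soon as
\[
\operatorname{tr}\pi(f)=\operatorname{tr}\bigl(\Pi(\phi)\circ I_\theta\bigr)
\]
for every tempered irreducible $\pi$ of $G$ with base change lift $\Pi$ to $G_r$, and $\operatorname{tr}(\Pi(\phi)I_\theta)=0$ when $\Pi$ is $\theta$-stable but not a base change. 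At least, this holds up to stabilisation issues, which we handle by working with the stable versions and using that $G$ is quasi-split.

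So the plan is to compute both sides spectrally. Since $\phi\in\mathcal{Z}(G_r,\rho_r)$ and $b_r\phi\in\mathcal{Z}(G,\rho)$, both traces vanish unless the representation lies in the blocks $\mathfrak{R}_{\mathfrak{s}_r}(G_r)$, respectively $\mathfrak{R}_{\mathfrak{s}}(G)$, by \Cref{type construction}. Every irreducible representation in these blocks is a subquotient of an unramified twist of $i_B^G\chi$ (respectively $i_{B_r}^{G_r}\chi_r$). On such a representation, the central element $b_r\phi$ acts by the scalar given by its value under the Bernstein isomorphism $\mathcal{Z}(G,\rho)\cong\mathbb{C}[X^*(\hat{T})]^{W_\chi}$-type description, at the cuspidal support $\chi\xi$. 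The element $\phi$ acts at $\chi_r\xi_r$, and by construction of $b_r$ (see \Cref{stable bernstein center}), these scalars agree when $\xi_r=\xi\circ N_r$.

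Since central elements act by scalars, we reduce to two statements:
\[
\operatorname{tr}\pi(b_r\phi)=\phi^\vee(\chi_r\xi_r)\,\operatorname{tr}\pi(e_\rho\text{-part}),
\qquad
\operatorname{tr}(\Pi(\phi)I_\theta)=\phi^\vee(\chi_r\xi_r)\,\operatorname{tr}(\Pi(e_{\rho_r})I_\theta).
\]
It then suffices to prove the theorem for the unit elements, namely that $e_{\rho_r}$ and $e_\rho$ are associated. Equivalently, we need the base change identity of traces for the types: $\operatorname{tr}(\Pi(e_{\rho_r})I_\theta)=\operatorname{tr}\pi(e_\rho)$. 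This is where the explicit construction of $(J,\rho)$ and $(J_r,\rho_r)$ in \Cref{types} enters. We intend to use that $J$ has an Iwahori decomposition $J=(J\cap U^-)\,{}^0T\,(J\cap U)$ with $\rho$ trivial on the unipotent parts and equal to ${}^0\chi$ on ${}^0T$, and similarly over $F_r$ with $J_r\cap G=J$ and $\theta$-stable $J_r$. Via Jacquet-module arguments (Casselman/Bushnell–Kutzko), both traces are computed on the $\rho$-isotypic part of the Jacquet module, i.e. on the $T$-side. There the identity reduces to the torus base change $\chi_r=\chi\circ N_r$, together with a count of Weyl group elements: $W_\chi$ versus the $\theta$-fixed part of $W_{\chi_r}$, which coincide because $\chi_r$ factors through the norm.

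The main obstacle we expect is the unit-element matching $e_{\rho_r}\leftrightarrow e_\rho$. In Haines's depth-zero argument this used the finite-field Deligne–Lusztig/Kottwitz twisted fixed-point computation for parahorics, and in positive depth $J$ is no longer a parahoric. Our first attempt will be a direct geometric comparison of orbital integrals for $e_\rho$, in the spirit of Kottwitz's proof that the unit of the spherical Hecke algebra base-changes correctly. Concretely, we will use a $\theta$-twisted version of the descent that reduces the twisted orbital integral of $\operatorname{vol}(J_r)^{-1}\rho_r^{-1}\mathbf{1}_{J_r}$ to the torus, plus the vanishing of both sides off elements whose norms are conjugate into $J$. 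If that fails, we will fall back on the trace identity above, applying a twisted Paley–Wiener plus van Dijk-type character formula for induced representations to $i_{B_r}^{G_r}\chi_r\xi_r$.
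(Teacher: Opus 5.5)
There is a genuine gap, and it begins before the step you flag.

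\textbf{The spectral criterion is not available.} Your opening criterion says $\phi$ and $f$ are associated as soon as $\mathrm{tr}\,\pi(f)=\mathrm{tr}(\Pi(\phi)I_\theta)$ for every tempered $\pi$ with base change lift $\Pi$. This presupposes a local base change correspondence $\pi\mapsto\Pi$ with character identities for an arbitrary unramified $G$. Such a correspondence is not available independently of results like the one you are proving. Twisted Kazhdan density only compares functions on the same group, and ``up to stabilisation issues'' hides the whole difficulty.

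The paper never uses a correspondence $\pi\mapsto\Pi$. It first reduces, via $z$-extensions, an embedding $G\hookrightarrow G'$ with induced center compatible with $b_r$, and the central-character averages $\phi_{\lambda N}$, $f_\lambda$, to the case $G_{\mathrm{der}}=G_{\mathrm{sc}}$, $Z(G)$ induced, and $\gamma$ a strongly regular elliptic norm. It then imports \emph{local data} from a simple global trace formula. These are coefficients $a_i(\pi)$, $b_i(\Pi)$, about which nothing explicit is known, such that the trace identities (A) for all $i$ are equivalent to matching (B) at strongly regular elliptic norms. Non-elliptic $\gamma$ are handled by a separate descent formula. Its key input is that among the double cosets $P_r\backslash G_r/J_r$ only the Weyl-group representatives contribute to the constant term. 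Non-norms are handled by the vanishing argument with characters of $\mathbf{H}^0_{\mathrm{ab}}(F,G)$.

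\textbf{Neither route to the units reaches elliptic elements.} Your two routes to $e_{\rho_r}\leftrightarrow e_\rho$ both fail exactly where all the content lies.
\begin{itemize}
\item The twisted descent formula needs $G^\circ_{\delta\theta}\subset M$ for a proper Levi $M$, so it is vacuous when $\mathscr{N}\delta$ is elliptic.
\item The van Dijk formula for $i_{B_r}^{G_r}(\chi_r\xi_r)$ only sees the constant term along $B_r$, i.e.\ orbital integrals at regular elements conjugate into $T$. Characters of properly induced representations vanish on the elliptic set.
\item On irreducible subquotients (Steinberg-type constituents), your ``Weyl group count'' tacitly assumes you know which $\Pi$ goes with which $\pi$ and how the normalized $I_\theta$ acts on $\Pi_U$. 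That is precisely what is unknown.
\end{itemize}

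\textbf{The missing idea is Labesse's elementary functions.} These are $\phi^{u,\chi_r}$ and $f^{t,\chi}$, supported on $J_ruJ_r$ and $JtJ$, with $u=\nu(\varpi)$ for $\nu$ regular dominant and $t=N(u)$.
\begin{itemize}
\item Because $u$ is regular, their stable (twisted) orbital integrals are computed directly on $T$ and match, so they satisfy (A).
\item By Casselman--Rogawski their traces equal $\delta_{B_r}^{-1/2}(u)\sum_{\xi'}\eta'(u)\,\mathrm{tr}(I_\theta,\Pi,\xi')$, which are linear combinations of characters in $\nu$.
\item Linear independence of characters on the regular dominant cone lets one specialize to $u=1$, one $W_{{}^0\chi}$-orbit at a time. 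This yields (A) for $e_\rho,e_{\rho_r}$, with non-norm orbits contributing zero.
\end{itemize}
Only then do the central scalars, matched by the definition of $b_r$, upgrade (A) from the units to $(b_r\phi)_\lambda$ and $\phi_{\lambda N}$, giving (B).
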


    This type of theorem plays a role in the pseudostabilization process on the simple Shimura varieties of R. Kottwitz (\cite{kottwitz1992lambda}).
    Moreover, in \Cref{stable bernstein center}, we prove that the base change homomorphism $b_r$ is compatible with the base change homomorphism $b_r'$ defined on the stable Bernstein center. In other word, for any $Z_r\in \mathfrak{Z}^{\textnormal{st}}(G_r)$, we have 
    $$b_r'(Z_r)*e_{\rho}=b_r(Z_r*e_{\rho_r}).$$
    Consequently, it also gives evidence for \Cref{conjecture}.

    \noindent\textbf{Important fact:} to prove the main theorem, we don't need any form of LLC or LLC+. The main theorem is true for any unramified group $G$ over a $p$-adic field $F$ such that $p$ is not divided by the cardinality of the absolute Weyl group. Here we introduce the stable Bernstein center and LLC+ just to make connections with \Cref{conjecture}. 

    There are two main new results of this paper. The first is the construction of types for principal series representations of unramified groups. Roche's work is restricted to split case because some Hecke algebra isomorphisms can only work for split reductive groups. We combine Roche's construction with the construction of Kim and Yu (see \cite{kim2017construction}) and give a concrete construction of types for all principal series blocks. The second is the descent formula. Unlike the depth-zero or parahoric case, the descent formula is more difficult in our case. In parahoric case or depth-zero case, by Bruhat decomposition, the double cosets $P\backslash G/\mathcal{J}$ can be represented by a subset of the finite Weyl group, where $P$ is a standard parabolic subgroup and $\mathcal{J}$ denotes a parahoric subgroup. Consequently, the constant term of a function in the corresponding Hecke algebra is easy to deal with. However, in deeper levels, we don't have decomposition laws in such a clean form. In other words, $P\backslash G/J$ cannot be represented by a subgroup of the finite Weyl group, where $P$ is a standard parabolic and $J$ is the group of the type we construct in \Cref{type construction}. The representative elements of $P\backslash G/J$ can be written in the form of $wi$, where $w$ is an element in the finite Weyl group, and $i$ is an element in the Iwahori subgroup $I$. However, we have the surprising result showing that only the representative elements in the finite Weyl group (i.e. $i=1$) will contribute to the constant term, while others will produce items with vanishing (twisted) orbital integrals. This guarantees that we can still write a descent formula similar to what we have in \cite{haines2009base}. For more details, see \Cref{descent formula section}.

    Similar to \cite{haines2012base}, the proof of \Cref{main theorem} could be divided into two parts: the first part is to reduce $G$ to some simple forms and reduce $\gamma$ to be strongly regular elliptic. The second step is to use global trace formula to prove the theorem for the above case. Note that the reduction step is flawed in \cite{haines2012base} that we cannot reduce to adjoint groups, and we must stop the reduction steps at groups with simply connected derived subgroup and with center an induced torus. Thus, the global trace formula needs to be rewritten. Reducing to such groups instead of to adjoint groups requires some changes in the construction of local data from the simple form of the trace formula. Details may be found in the forthcoming PhD thesis of Weimin Jiang (\cite{Weimin}).

    Our presentation is arranged as follows. In \Cref{basic notation}, we introduce some notations which will be used throughout this article. In \Cref{type definition}, we go over the general theory of Bernstein blocks and types. \Cref{type definition} is the generalization of Roche's work to unramified groups. We define Bernstein center, stable Bernstein center and the base change homomorphism in \Cref{stable bernstein center}. Moreover, some basic properties of them will also be studied in this part. \Cref{descent formula section} is all about descent formulas. This is the most technical section of this article. Unlike the descent formula in \cite{haines2009base}, we will use $K$-averaged constant term and concrete calculations of the commutators of root groups to reduce $\gamma$ to elliptic semisimple elements. In \Cref{section 7}, we further reduce the fundamental lemma to the case where $G_{\textnormal{der}}=G_{\textnormal{sc}}$, the center of $G$ is an induced torus and $\gamma$ is elliptic and strongly regular semisimple. We modify some of the arguments in \cite{haines2012base} to suit our case. \Cref{section 8} gives us an analogues of Labesse's elementary functions adapted to the Bernstein block $\mathfrak{R}_{\mathfrak{s}}(G)$. Finally in \Cref{section 9}, we combine all materials, together with local data introduced in \cite{Weimin}, to finish the proof of \Cref{main theorem}.

    \; \\

    \noindent \textit{Acknowledgments:} I would like to thank my advisor, Thomas Haines, for posing this question and countless helpful conversations and advice. I would like to thank Jeffrey Adler and Kazuma Ohara for valuable advice. This research was partially supported by NSF grants DMS 2200873 and DMS 1801352.

\section{Basic notations}\label{basic notation}

    We will introduce more notations and definitions which will be used throughout the article. Recall that $F$ is a p-adic field of characteristic $0$ and $F_r/F$ is the unique unramified extension of degree $r$. Denote the ring of integers of $F$ by $\mathfrak{O}$, and the prime ideal by $\mathfrak{p}$. Let $k_q=\mathfrak{O}/\mathfrak{p}$ denote the residue field, where $q$ denotes the cardinality of the residue field.

    Assume $G$ is split over $E$, where $E/F$ is a finite unramified extension. Let $h$ be a generator of $\textnormal{Gal}(E/F)$ (sometimes in this article we will also use $h$ to denote functions, but this will be easy to distinguish). Let $\mathfrak{O}_E$ denote the ring of integers of $E$, and $\mathfrak{p}_E$ denote the prime ideal in $\mathfrak{O}_E$. Denote the maximal compact open subgroup of $T(E)$ by $^0T(E)$.

    Let $G'$ denote the derived subgroup of $G$ and $T'=T\cap G'$ the maximal torus of $G'$. Denote the relative Weyl group of $G$ over $F$ and $F_r$ by $W$ and $W_r$. Also, denote the absolute Weyl group by $W_E$. Therefore, our restriction on $p$ can be expressed as $p\nmid |W_E|$, where $|W_E|$ denotes the cardinality of $W_E$. Note that $(W_r)^{\theta}=(W_E)^{h}=W.$

    Use $\Phi_F(G,T)$ and $\Phi_E(G,T)$ to denote the relative root system and the absolutely root system, respectively. Use the same symbol to denote the automorphism on $\Phi_E(G,T)$ induced by $h$. Let $\Phi_E^+(G,T)$ denote the set of positive roots determined by $B$. Since $G$ is unramified, $\Phi_E^+(G,T)$ is stable under $h$. We may fix root group isomorphism $u_{\alpha}:\mathbb{G}_a\rightarrow U_{\alpha}$ for $\alpha\in \Phi_E(G,T)$ such that the conditions in \cite[\S2]{roche1998types} hold. The Galois action on the root groups is as follows:
    $$h(u_{\alpha}(a))=u_{h(\alpha)}(x_{\alpha}h(a))$$
    for any $a\in E$ and $\alpha\in \Phi_E(G,T)$, where $x_{\alpha}$ is an element in $\mathfrak{O}_E^{\times}$ only determined by the $h$-orbit of $\alpha$. Let $\mathfrak{g}_{\alpha}$ denote the one-dimensional eigenspace of $\mathfrak{g}_E$ corresponding to $\alpha$.

    Let $H$ denote any group. For $h\in H$ and a subset $S\subset H$, set $^hS\coloneqq hSh^{-1}$. Let $H'\subset H$ denote a subgroup, and $\sigma$ a representation on $H'$. We set $^h\sigma$ be the representation on $^hH'$ such that $^h\sigma(h')=\sigma(h^{-1}h'h)$ for any $h'\in {^hH'}$. 

\section{Bernstein blocks and types}\label{type definition}

    In this section, we recall the Bernstein decomposition of $\mathfrak{R}(G)$ (the category of smooth complex representations of $G$) and the definition of types and some consequences. We follow the content in \cite[\S 7]{roche1998types}.

    A cuspidal pair is a pair $(M,\sigma)$, consisting of a Levi subgroup $M\subset G$ and an irreducible supercuspidal representation of $M$. We say two pairs, $(M_1,\sigma_1)$ and $(M_2,\sigma_2)$, are equivalent if there exists a $g\in G$ such that $M_2={^gM_1}$ and $\sigma_2\simeq {^g\sigma_1}$. Further, we say they are inertial equivalent if there exists a $g\in G$ and an unramified character $\nu$ of $M_2$ (see \cite[\S 3.3.1]{haines2014stable} for the definition of unramified characters) such that 
    $$M_2={^gM_1}\;\;\;\;\textnormal{and} \;\;\;\; ^g\sigma_1\simeq \sigma_2\otimes \nu.$$
    Denote the equivalent class of $(M,\sigma)$ by $(M,\sigma)_G$ and the inertial equivalent class of $(M,\sigma)$ by $\mathfrak{s}=[M,\sigma]_G$ (we omit the subscript when there is no ambiguity). In addition, use $\mathfrak{B}(G)$ to denote the set of all inertial equivalence classes in $G$.

    Let $\textnormal{Irr}(G)$ denote the set of equivalent classes of smooth irreducible representations of $G$. We have the following well-defined map (\cite{zelevinsky1977induced}):
    $$\textnormal{Irr}(G)\rightarrow \{\textnormal{equivalence classes of cuspidal pairs}\}$$
    $$\pi\rightarrow \textnormal{class of } (M,\sigma) \textnormal{ if } \pi\simeq \textnormal{a } G\textnormal{-subquotient of } i_P^G\sigma$$
    where $P$ is any parabolic subgroup of $G$ with Levi factor $M$ and $i_P^G$ is the normalized parabolic induction functor. The image of $\pi$ under this map is called the supercuspidal support of $\pi$. We use $\mathfrak{J}(\pi)$ to denote the inertial equivalence class of its supercuspidal support. For a fixed $\mathfrak{s}\in \mathfrak{B}(G)$, we denote by $\mathfrak{R}_{\mathfrak{s}}(G)$ the full subcategory of $\mathfrak{R}(G)$ defined as follows:
    \begin{defn}
        Let $(\pi,V)\in \mathfrak{R}(G)$. Then $(\pi,V)\in \mathfrak{R}_{\mathfrak{s}}(G)$ if and only if every irreducible $G$-subquotient $\pi_0$ of $\pi$ satisfies $\mathfrak{J}(\pi_0)=\mathfrak{s}$.
    \end{defn}

    And we have the following theorem, known as Bernstein decomposition:
    \begin{thm}[{\cite[Theorem 7.2]{roche1998types}}]
        The category $\mathfrak{R}(G)$ is the direct product
        $$\mathfrak{R}(G)=\prod_{\mathfrak{s}\in \mathfrak{B}(G)}\mathfrak{R}_{\mathfrak{s}}(G)$$
        of the subcategories $\mathfrak{R}_{\mathfrak{s}}(G)$ as $\mathfrak{s}$ ranges through $\mathfrak{B}(G)$.

        More concretely, for any $(\pi,V)\in \mathfrak{R}(G)$, and any $\mathfrak{s}\in \mathfrak{B}(G)$, $V$ has a unique maximal $G$-stable subspace $V_{\mathfrak{s}}\in \mathfrak{R}_{\mathfrak{s}}(G)$ and
        $$V=\bigoplus_{\mathfrak{s}\in \mathfrak{B}(G)}V_{\mathfrak{s}}.$$
    \end{thm}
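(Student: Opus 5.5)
The plan is to follow Bernstein's original argument, which is organized around splitting the category $\mathfrak{R}(G)$ according to cuspidal supports. There are two main ingredients.

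\begin{itemize}
\item \textbf{Cuspidal part.} First I treat the cuspidal part of a representation.
  \begin{itemize}
  \item Using compactness of matrix coefficients modulo the center, I show that any $(\pi,V)\in\mathfrak{R}(G)$ splits as $V=V_{\mathrm{cusp}}\oplus V_{\mathrm{ind}}$.
  \item Here $V_{\mathrm{cusp}}$ is the sum of all cuspidal subquotients, which are projective in the category of representations with fixed central character up to unramified twist.
  \item $V_{\mathrm{ind}}$ is the part with no cuspidal subquotients.
  \item The cuspidal part further decomposes over the inertial classes $[G,\sigma]_G$. I use the action of the ring $\mathbb{C}[\Psi(G)]$ of regular functions on the torus of unramified characters: the corresponding idempotents lie in the center of the category.
  \end{itemize}
\item \textbf{Induction on semisimple rank.} Next I handle the non-cuspidal part by induction on the semisimple rank. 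This uses Jacquet modules $r_P^G$ and the second adjointness theorem, $\mathrm{Hom}_G(i_{\bar P}^G\tau,\pi)\cong \mathrm{Hom}_M(\tau, r_P^G\pi)$.
  \begin{itemize}
  \item By the induction hypothesis, for each proper Levi $M$ the category $\mathfrak{R}(M)$ decomposes as $\prod_{\mathfrak{t}}\mathfrak{R}_{\mathfrak{t}}(M)$.
  \item For $\mathfrak{s}=[M,\sigma]_G$, I define $V_{\mathfrak{s}}$ as the subspace generated by the images of all maps $i_{P}^G W\to V$ with $W\in\mathfrak{R}_{[M,\sigma]_M}(M)$.
  \item By Frobenius reciprocity together with second adjointness, a subquotient $\pi_0$ has $\mathfrak{J}(\pi_0)=\mathfrak{s}$ exactly when the $[M,\sigma]_M$-component of $r_P^G\pi_0$ is nonzero. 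Geometric lemma considerations then show that $V_{\mathfrak{s}}$ lies in $\mathfrak{R}_{\mathfrak{s}}(G)$.
  \end{itemize}
\end{itemize}

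The remaining task is to show that the sum $\sum_{\mathfrak{s}} V_{\mathfrak{s}}$ is direct and exhausts $V$; I then deduce the product decomposition of categories.

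\begin{itemize}
\item \textbf{Exhaustion.} Every irreducible representation embeds into some $i_P^G\sigma$ with $\sigma$ supercuspidal, by Jacobson's theorem / Jacquet's subrepresentation theorem. Hence every finitely generated $V$ has a nonzero map from an induced object. Using Noetherianity of $\mathfrak{R}(G)$, via finitely generated $K$-fixed vectors, every vector lies in a finite sum of the $V_{\mathfrak{s}}$.
\item \textbf{Directness.} Two distinct classes $\mathfrak{s}\neq\mathfrak{s}'$ have no common irreducible subquotients. So $V_{\mathfrak{s}}\cap\sum_{\mathfrak{s}'\neq\mathfrak{s}}V_{\mathfrak{s}'}$ has no irreducible subquotient, and is therefore zero, since every nonzero smooth representation has an irreducible subquotient.
\item \textbf{Uniqueness and maximality.} The same argument shows $\mathrm{Hom}_G(V_{\mathfrak{s}},V'_{\mathfrak{s}'})=0$ for $\mathfrak{s}\neq\mathfrak{s}'$. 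This gives uniqueness and maximality of $V_{\mathfrak{s}}$, and hence the category is the direct product of the subcategories $\mathfrak{R}_{\mathfrak{s}}(G)$.
\end{itemize}

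The main obstacle is proving that $V_{\mathfrak{s}}$ is a direct summand rather than merely a subobject, i.e.\ that extensions between different blocks split. The key input I would try first is Bernstein's second adjointness combined with the fact that $i_{\bar P}^G$ of a projective generator of $\mathfrak{R}_{[M,\sigma]_M}(M)$ is a projective generator of $\mathfrak{R}_{\mathfrak{s}}(G)$. Concretely, I would take $\Pi=i_{\bar P}^G(\operatorname{ind}_{M^0}^M\sigma|_{M^0})$. Second adjointness then gives
$$\mathrm{Hom}_G(\Pi,V)\cong \mathrm{Hom}_{M^0}(\sigma,r_P^GV),$$
which is an exact functor of $V$ because $\sigma|_{M^0}$ is projective modulo compact center. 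From this, the projection to $V_{\mathfrak{s}}$ can be realized through the action of the center $\mathrm{End}(\Pi)$, producing the required idempotents.

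Since the paper simply cites this as \cite[Theorem 7.2]{roche1998types}, in practice I would quote Bernstein's result. The plan above is how I would reconstruct it.
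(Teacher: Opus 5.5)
The paper gives no proof of this statement; it quotes \cite[Theorem 7.2]{roche1998types}, i.e.\ Bernstein's theorem, which is also your fallback. Your reconstruction, however, has a genuine gap in the \textbf{Exhaustion} step, and the content of the theorem lies exactly there.

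\textbf{The exhaustion step.} An embedding $\pi\hookrightarrow i_P^G\sigma$ is a map \emph{into} an induced representation, not out of one. The counit $i_{\bar P}^G r_P V\to V$ of second adjointness does give a nonzero map out of an induced object, but it only shows $\sum_{\mathfrak s}V_{\mathfrak s}\neq 0$. To get $V=\sum_{\mathfrak s}V_{\mathfrak s}$ you would apply the argument to $V/\sum_{\mathfrak s}V_{\mathfrak s}$ and then lift the resulting map back to $V$, which requires projectivity. Noetherianity does not supply this lift, and in Bernstein's development Noetherianity is deduced from the decomposition, so invoking it here is circular.

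To see that this is the real issue, suppose $0\to A\to V\to B\to 0$ were a non-split extension with $A\in\mathfrak R_{\mathfrak s}(G)$, $B\in\mathfrak R_{\mathfrak s'}(G)$ and $\mathfrak s\neq\mathfrak s'$. Then your $V_{\mathfrak t}\subseteq A$ for every $\mathfrak t\neq\mathfrak s'$, and $V_{\mathfrak s'}$ injects into $B$. So exhaustion is literally equivalent to splitting. Your ``main obstacle'' is therefore not a separate issue; it is buried inside this bullet.

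\textbf{The ``exactly when'' claim.} Only one direction of ``$\mathfrak J(\pi_0)=\mathfrak s$ exactly when $(r_P^G\pi_0)_{[M,\sigma]_M}\neq0$'' follows from Frobenius reciprocity and second adjointness. Frobenius reciprocity only gives $\pi_0\hookrightarrow i_{P'}^G\sigma'$ with $(M',\sigma')$ being $G$-conjugate to an unramified twist of $(M,\sigma)$. For $n\in N_G(M)$, the class $[M,{}^n\sigma]_M$ can differ from $[M,\sigma]_M$.

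\textbf{Your last paragraph.} It has the right tool: $\Pi=i_{\bar P}^G(\operatorname{ind}_{M^0}^M\sigma|_{M^0})$ is projective, because $\mathrm{Hom}_G(\Pi,-)\cong\mathrm{Hom}_{M^0}(\sigma|_{M^0},r_P^G(-))$ is exact. But the assertion that $\Pi$ \emph{generates} $\mathfrak R_{\mathfrak s}(G)$, for one fixed $(P,\sigma)$, is exactly the unproved converse above. It is true, but it is the deep part of Bernstein's progenerator theorem.

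\textbf{How to avoid it.} Use the whole family $\Pi_{Q,\mathfrak t}=i_{\bar Q}^G(\operatorname{ind}_{L^0}^L\tau|_{L^0})$, where $Q=LU_Q$ runs over standard parabolics (including $G$) and $\mathfrak t=[L,\tau]_L$ runs over cuspidal classes of $L$.
\begin{itemize}
\item Each $\Pi_{Q,\mathfrak t}$ is projective, and lies in $\mathfrak R_{[L,\tau]_G}(G)$ by the geometric lemma.
\item Together they generate. For $V\neq0$, choose $Q$ minimal with $r_QV\neq0$. Then $r_QV$ is cuspidal, so your cuspidal step gives $\mathrm{Hom}_{L^0}(\tau|_{L^0},r_QV)\neq0$ for some cuspidal $\tau$.
\item Define $V_{\mathfrak s}$ as the sum of the images of those $\Pi_{Q,\mathfrak t}$ with $[L,\tau]_G=\mathfrak s$. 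Generation plus projectivity give $V=\sum_{\mathfrak s}V_{\mathfrak s}$ at once, since a nonzero map into the quotient lifts to $V$.
\item Your directness and maximality arguments via disjoint supports then finish the proof.
\end{itemize}
This route needs neither idempotents from the center of $\mathrm{End}(\Pi)$ nor induction on semisimple rank.
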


    Next we are going to recall the definition of types:

    \begin{defn}
        Fix an $\mathfrak{s}\in \mathfrak{B}(G)$. An $\mathfrak{s}$-type in $G$ is a pair $(K,\rho)$, where $K$ is a compact open subgroup of $G$ and $\rho$ is an irreducible smooth representation of $K$, with the following property: for an irreducible smooth representation $\pi$ of $G$, $\mathfrak{J}(\pi)=\mathfrak{s}$ if and only if $\rho\subset \pi|_K$.
    \end{defn}
    \begin{rmk}
        There is also a definition of $\mathfrak{s}$-type for $\mathfrak{s}\subset \mathfrak{B}(G)$, which is not a singleton. However, in this paper, we will only discuss the case where $\mathfrak{s}$ is a singleton.
    \end{rmk}
    \begin{thm}[{\cite[Theorem 7.5]{roche1998types}}]
        Let $\mathfrak{R}_{\rho}$ denote the full subcategory of $\mathfrak{R}(G)$ consisting of all $(\pi,V)\in\mathfrak{R}(G)$ such that $V=V[\rho]$, the space generated by $V^{\rho}$ (the $\rho$-isotypic vectors). Then we have the following statements:
        \begin{itemize}
            \item[(1)] The categories $\mathfrak{R}_{\rho}(G)$ and $\mathfrak{R}_{\mathfrak{s}}(G)$ are equal as subcategories of $\mathfrak{R}(G)$. In particular, $\mathfrak{R}_{\rho}(G)$ is closed under subquotients.
            \item[(2)] Then functor
            $$V\rightarrow V^{\rho}:\mathfrak{R}(G)\rightarrow \mathcal{H}(G,\rho)\textnormal{-mod}$$
            is an equivalence of categories.
        \end{itemize}
    \end{thm}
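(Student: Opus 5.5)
The plan is to argue as Bushnell–Kutzko do, working entirely inside the idempotented algebra $\mathcal{H}(G)$ and using the Bernstein decomposition recalled above as a black box.

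Let $e_\rho\in\mathcal{H}(G)$ be the idempotent attached to $(K,\rho)$. When $\rho$ is a character, this is $e_\rho(k)=\mathrm{vol}(K)^{-1}\rho(k)^{-1}$ on $K$ and $0$ off $K$; in general it is $\mathrm{vol}(K)^{-1}\dim\rho\cdot\overline{\mathrm{tr}\,\rho}$. For every smooth $V$ we then have $V^\rho=e_\rho V$ and $V[\rho]=\mathcal{H}(G)e_\rho V$. Moreover $e_\rho\mathcal{H}(G)e_\rho$ identifies with $\mathcal{H}(G,\rho)$, in the character case after the usual inversion convention. Since $e_\rho$ is an idempotent, the functor $V\mapsto e_\rho V$ is exact, and this exactness drives every step below.

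\textbf{Step 1: $e_\rho$ lies in the Bernstein summand $\mathcal{H}_{\mathfrak{s}}$.} The Bernstein decomposition gives a decomposition $\mathcal{H}(G)=\bigoplus_{\mathfrak{s}'}\mathcal{H}_{\mathfrak{s}'}$ into two-sided ideals, with $\mathcal{H}_{\mathfrak{s}'}\in\mathfrak{R}_{\mathfrak{s}'}(G)$ as a left module. Write $e_\rho=\sum_{\mathfrak{s}'} e_{\mathfrak{s}'}$, and suppose $e_{\mathfrak{s}'}\neq 0$ for some $\mathfrak{s}'\neq\mathfrak{s}$.
\begin{itemize}
\item The cyclic module $\mathcal{H}(G)e_{\mathfrak{s}'}$ is finitely generated and nonzero, so it has an irreducible quotient $\pi$ with $\mathfrak{J}(\pi)=\mathfrak{s}'$.
\item The image of $e_{\mathfrak{s}'}$ in $\pi$ is a nonzero generating vector $v$, which satisfies $e_{\mathfrak{s}'}v=v$.
\item Since $\mathcal{H}_{\mathfrak{s}'}$ acts on $\pi$ through the identity and the other summands act by zero, $e_\rho v=e_{\mathfrak{s}'}v=v\neq 0$. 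Hence $\rho\subset\pi|_K$.
\end{itemize}
The type property then forces $\mathfrak{J}(\pi)=\mathfrak{s}$, a contradiction. So $e_\rho\in\mathcal{H}_{\mathfrak{s}}$, and consequently $V[\rho]=\mathcal{H}(G)e_\rho V\in\mathfrak{R}_{\mathfrak{s}}(G)$ for every $V$. This gives the inclusion $\mathfrak{R}_\rho(G)\subset\mathfrak{R}_{\mathfrak{s}}(G)$.

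\textbf{Step 2: the reverse inclusion $\mathfrak{R}_{\mathfrak{s}}(G)\subset\mathfrak{R}_\rho(G)$.} Let $V\in\mathfrak{R}_{\mathfrak{s}}(G)$ and put $Q=V/V[\rho]$. Because the functor $e_\rho(\cdot)$ is exact and $e_\rho V[\rho]=e_\rho V$, we get $e_\rho Q=0$. Suppose $Q\neq 0$. Then $Q$ contains a nonzero finitely generated submodule, which has an irreducible quotient $\pi$. This $\pi$ lies in $\mathfrak{R}_{\mathfrak{s}}(G)$, so $\rho\subset\pi|_K$, i.e. $e_\rho\pi\neq 0$. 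But exactness gives $e_\rho\pi$ as a subquotient of $e_\rho Q=0$, a contradiction. So $V=V[\rho]$. Together with Step 1, this proves (1). Closure under subquotients is then inherited from $\mathfrak{R}_{\mathfrak{s}}(G)$.

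\textbf{Step 3: the equivalence of categories in (2).} This is a Morita argument for the idempotent $e=e_\rho$ inside the summand $\mathcal{H}_{\mathfrak{s}}$. First I show that the two-sided ideal $\mathcal{H}(G)e\mathcal{H}(G)$ equals $\mathcal{H}_{\mathfrak{s}}$.
\begin{itemize}
\item Step 1 gives the inclusion $\mathcal{H}(G)e\mathcal{H}(G)\subset\mathcal{H}_{\mathfrak{s}}$.
\item The quotient $\mathcal{H}_{\mathfrak{s}}/\mathcal{H}(G)e\mathcal{H}(G)$ lies in $\mathfrak{R}_{\mathfrak{s}}(G)$.
\item It is killed by $e$, since $e\mathcal{H}_{\mathfrak{s}}\subset\mathcal{H}(G)e\mathcal{H}(G)$.
\item By the argument of Step 2, this quotient is therefore $0$.
\end{itemize}
I then define the two functors
$$V\mapsto eV,\qquad M\mapsto\mathcal{H}(G)e\otimes_{e\mathcal{H}(G)e}M.$$
Both natural maps are isomorphisms:
\begin{itemize}
\item The map $e(\mathcal{H}(G)e\otimes M)\to M$ is an isomorphism because $e\mathcal{H}(G)e\otimes_{e\mathcal{H}(G)e}M=M$.
\item The map $\mathcal{H}(G)e\otimes eV\to V$ is surjective because $V=V[\rho]$.
\item Its kernel $N$ lies in $\mathfrak{R}_{\mathfrak{s}}(G)$ and satisfies $eN=0$, by exactness and the first isomorphism. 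So $N=0$, again by Step 2.
\end{itemize}

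The main obstacle I expect is the bookkeeping for the non-unital algebra $\mathcal{H}(G)$ in Steps 1–3: making sure the relevant modules are smooth (nondegenerate), and that the nonzero modules involved really do have irreducible quotients. I would handle this by always passing to finitely generated submodules and invoking Zorn's lemma.
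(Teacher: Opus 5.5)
Your proposal is correct and is essentially the Bushnell--Kutzko argument behind \cite[Theorem 7.5]{roche1998types}, which the paper quotes without proof: the Bernstein decomposition of $\mathcal{H}(G)$ into orthogonal two-sided ideals, exactness of $V\mapsto e_\rho V$, and Morita theory for $e_\rho\mathcal{H}(G)e_\rho\cong\mathcal{H}(G,\rho)$. Two small points: the unstated fact that each component $e_{\mathfrak{s}'}$ is itself idempotent follows from $\mathcal{H}_{\mathfrak{s}_1}\mathcal{H}_{\mathfrak{s}_2}\subset\mathcal{H}_{\mathfrak{s}_1}\cap\mathcal{H}_{\mathfrak{s}_2}=0$ for $\mathfrak{s}_1\neq\mathfrak{s}_2$, and (2) must be read with source category $\mathfrak{R}_\rho(G)=\mathfrak{R}_{\mathfrak{s}}(G)$ rather than all of $\mathfrak{R}(G)$, which is exactly what your Step 3 proves.
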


    By the above theorem, we can identify the Bernstein center of the Bernstein block $\mathfrak{R}_{\mathfrak{s}}(G)$ corresponding to the inertial equivalence class $\mathfrak{s}$ by $\mathcal{Z}(G,\rho)$, the center of the Hecke algebra $\mathcal{H}(G,
    \rho)$.

\section{The construction of types for principal blocks}\label{types}

    In this section, we are going to construct types for principal series blocks of unramified groups. Here, a principal series block means a Bernstein block $\mathfrak{R}_{\mathfrak{s}}(G)$ corresponding to an inertial equivalence class $[M,\sigma]_G\in\mathfrak{B}(G)$ such that $M=T$ and $\chi$ is a character from $T$ to $\mathbb{C}^{\times}$. Since we only consider the inertial equivalence class of a pair $(T,\chi)$, we will use the symbol $({^0T},{^0\chi})_G$ to denote the inertial equivalence class $[T,\chi]_G$, where $^0\chi=\chi|_{^0T}$. For short, we will call such a block by a principal block and the corresponding inertial equivalence class by a principal class. In all future sections, fix a principal class $\mathfrak{s}=({^0T},{^0\chi})_G$ and a principal block $\mathfrak{R}_{\mathfrak{s}}(G)$.

\subsection{Main result}\label{main result}
    As in \cite{roche1998types}, we have the construction of types for principal blocks of split reductive groups. However, to generalize this result to unramified groups, we cannot use the same method. Instead, we need to combine Roche's construction with Kim-Yu's construction (see \cite{kim2017construction}).

    In previous sections, we already fixed a Borel subgroup $B=TU$. Therefore, we also fix a set of positive roots $\Phi_{E}(G,T)^+$ in the absolute root system $\Phi_{E}(G,T)$. Denote the opposite Borel subgroup by $\bar{B}=T\bar{U}$. Let $^0\chi_E={^0\chi}\circ N_E$ be a character defined on $^0T(E)$, where $N_E$ denotes the norm map from $^0T(E)$ to $^0T$. Therefore, $\mathfrak{s}_E\coloneqq ({^0T(E)},{^0\chi_E})_{G(E)}$ is also a principal class on $\mathfrak{B}(G(E))$.

    The conductor of $\alpha$, $\textnormal{cond}(\alpha)$, is the smallest positive integer $l$ such that $^0\chi_E\circ \alpha^{\vee}(1+\mathfrak{p}_E^l)=1$. We define a concave function $f_{\mathfrak{s}}:\Phi_E(G,T)\rightarrow \mathbb{N}$ as follows:
    \begin{equation}
    f_{\mathfrak{s}}(\alpha)=
       \begin{cases}
       [\textnormal{cond}(\alpha)/2] & \alpha\in \Phi_{E}(G,T)^+ \\
         [(\textnormal{cond}(\alpha)+1)/2] & \alpha\in \Phi_{E}(G,T)^-.
    \end{cases}
    \end{equation}
    Here $[x]$ denotes the largest integer $\leq x$. For the proof of $f_{\mathfrak{s}}$ being a concave function, see \cite[\S3]{roche1998types}.

    Define $U_{E,f_{\mathfrak{s}}}=\langle u_{\alpha}(\mathfrak{p}_E^{f_{\mathfrak{s}}(\alpha)}):\alpha\in \phi_E(G,T)\rangle$ and $J_{E,{\mathfrak{s}}}=\langle U_{f_{\mathfrak{s}}},{^0T(E)}\rangle$. Note that by \cite[Lemma 3.2]{roche1998types}, the group $U_{E,f_{\mathfrak{s}}}$ and $J_{E,{\mathfrak{s}}}$ has Iwahori decomposition with respect to any parabolic subgroup $P\subset G$. Since the group $G$ is quasi split, we can deduce that the group $U_{f_{\mathfrak{s}}}=U_{E,f_{\mathfrak{s}}}\cap G(F)=(U_{E,f_{\mathfrak{s}}})^h$ and $J_{{\mathfrak{s}}}=J_{E,{\mathfrak{s}}}\cap G(F)=(J_{E,{\mathfrak{s}}})^h$ also has Iwahori decomposition over and parabolic subgroup $P\subset G$. Note that $J_{E,{\mathfrak{s}}}\cap T(E)={^0T(E)}$ and $J_{{\mathfrak{s}}}\cap T={^0T}$. The main result is as follows:
    \begin{thm}\label{thm:main 4}
    The pair $(J_{\mathfrak{s}},\rho_{\mathfrak{s}})$ is an ${\mathfrak{s}}$-type, where $J_{\mathfrak{s}}$ is defined above, and $\rho_{\mathfrak{s}}$ is the character on $J_{\mathfrak{s}}$ such that $\rho_{\mathfrak{s}}|_{J_{\mathfrak{s}}\cap U}$ and $\rho_{\mathfrak{s}}|_{J_{\mathfrak{s}}\cap \bar{U}}$ are trivial, and $\rho_{\mathfrak{s}}|_{J_{\mathfrak{s}}\cap T}={^0\chi}$.
    \end{thm}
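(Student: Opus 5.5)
We follow Bushnell--Kutzko's theory of covers, as Roche does in the split case, but we replace Roche's Hecke-algebra isomorphism (which needs splitness) by the Kim--Yu machinery.

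\textbf{Step 1: $\rho_{\mathfrak{s}}$ is a well-defined character.} By the Iwahori decomposition $J_{\mathfrak{s}}=(J_{\mathfrak{s}}\cap\bar U)\,{^0T}\,(J_{\mathfrak{s}}\cap U)$, inherited from $J_{E,\mathfrak{s}}$ by taking $h$-fixed points, $\rho_{\mathfrak{s}}$ is determined as a function. To show it is a homomorphism, we check that it is trivial on the relevant commutators. Over $E$, Roche's argument (\cite[\S3]{roche1998types}) shows that $\rho_{E,\mathfrak{s}}$, built from ${^0\chi_E}$, is a character of $J_{E,\mathfrak{s}}$. The key input is that $[u_\alpha(\mathfrak p_E^{f(\alpha)}),u_{-\alpha}(\mathfrak p_E^{f(-\alpha)})]$ lands in $\alpha^\vee(1+\mathfrak p_E^{\textnormal{cond}(\alpha)})$ modulo the unipotent parts, and this uses $f(\alpha)+f(-\alpha)=\textnormal{cond}(\alpha)$. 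Since ${^0\chi_E}={^0\chi}\circ N_E$ restricts on ${^0T}$ to ${^0\chi}^{[E:F]}$, we must instead compare $\rho_{\mathfrak{s}}$ with a restriction carefully. We will show directly that $\rho_{\mathfrak{s}}$ is trivial on $[J_{\mathfrak{s}},J_{\mathfrak{s}}]\cap{^0T}$ by computing these commutators over $E$ and using that the $h$-fixed torus parts are norms. Here we use $p\nmid|W_E|$ to get the needed tameness.

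\textbf{Step 2: $(J_{\mathfrak{s}},\rho_{\mathfrak{s}})$ is a $G$-cover of $({^0T},{^0\chi})$.} We verify the Bushnell--Kutzko conditions. The Iwahori decomposition and the triviality of $\rho_{\mathfrak{s}}$ on $J\cap U$ and $J\cap\bar U$ hold by construction. It remains to exhibit, for the strongly positive element $\zeta=\lambda(\varpi)$ with $\lambda$ a strictly dominant cocharacter of $A$, an invertible element of $\mathcal H(G,\rho_{\mathfrak{s}})$ supported on $J\zeta J$. The standard argument applies here: the function supported on $J\zeta J$ is invertible because $\zeta$ normalizes and contracts $J\cap U$. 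Alternatively, invertibility follows from the support computation $\zeta^{-1}$ intertwines. Once the cover property holds, \cite[Theorem 8.3]{BK} implies that $(J_{\mathfrak{s}},\rho_{\mathfrak{s}})$ is an $\mathfrak{s}$-type, because $({^0T},{^0\chi})$ is a $[T,\chi]_T$-type.

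\textbf{Step 3: comparison with existing constructions.} An alternative to Step 2, and a check on it, is to identify $(J_{\mathfrak{s}},\rho_{\mathfrak{s}})$ with a Kim--Yu type \cite{kim2017construction} attached to a generic datum. Factor ${^0\chi}$ as a product of generic characters through a twisted Levi sequence $T=G^0\subset G^1\subset\dots\subset G^d=G$ given by the subsets of roots with $\textnormal{cond}(\alpha)\le r_i$. The Kim--Yu group $K^d=J^0J^1\cdots J^d$, built from $\mathrm{Moy}$--Prasad filtrations at the point of the apartment, should coincide with $J_{\mathfrak{s}}$ up to conjugation. This is where the half-integer shifts $[\textnormal{cond}/2]$ versus $[(\textnormal{cond}+1)/2]$ appear, and the Heisenberg--Weil representations are trivial (one-dimensional) for these torus data. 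This identification would directly give the type property by \cite{kim2017construction}, \cite{fintzen2022tame}.

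\textbf{Main obstacle.} The hard step is Step 3 (or equivalently the commutator bookkeeping in Step 1 in the non-split case). One must match Roche's asymmetric concave function with Kim--Yu's symmetric filtrations, possibly after conjugating by an element of $T(E)$ or shifting the base point, and one must check $h$-equivariance. I would first treat the case $d=1$ and then induct along the twisted Levi sequence. Finally, the claims that the construction recovers Roche's for split $G$ and Haines' for depth zero ($f_{\mathfrak{s}}=0$ on $\Phi^+$ and $1$ on $\Phi^-$, so $J_{\mathfrak{s}}=I$) are immediate from the definitions.
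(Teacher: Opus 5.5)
\textbf{Gap in Step 2.} The route you commit to has a genuine gap exactly where the cover property has its content: the invertibility of an element of $\mathcal{H}(G,\rho_{\mathfrak s})$ supported on $J_{\mathfrak s}\zeta J_{\mathfrak s}$. Neither of your justifications establishes it.
\begin{itemize}
\item That $\zeta$ normalizes and contracts $J\cap U$ gives at best multiplicativity of the functions $\phi_\zeta$ on positive double cosets. It does not give invertibility.
\item That $\zeta^{-1}$ intertwines $\rho_{\mathfrak s}$ gives a function on $J\zeta^{-1}J$. However, $\phi_\zeta*\phi_{\zeta^{-1}}$ is supported on $J\zeta J\zeta^{-1}J\supsetneq J$ and is not a unit.
\end{itemize}
In fact the Iwahori decomposition, triviality on the unipotent parts, and contraction cannot suffice. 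Take $G=\mathrm{GL}_2$, $J=T(\mathfrak O)\,(1+\varpi M_2(\mathfrak O))$ and $\rho=1$. Everything you list holds. Yet a depth-zero supercuspidal compactly induced from a cuspidal representation of $\mathrm{GL}_2(k_q)$ with trivial central character has nonzero $J$-fixed vectors. So $(J,1)$ is not a type, and no invertible element on $J\zeta J$ exists.

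What makes $J_{\mathfrak s}$ work is the specific choice $f_{\mathfrak s}(\alpha)+f_{\mathfrak s}(-\alpha)=\mathrm{cond}(\alpha)$. Turning that into invertibility is exactly where Roche \cite{roche1998types} needs his explicit Hecke algebra computations (quadratic relations, and the comparison with an Iwahori--Hecke algebra). The paper points out that these are unavailable for non-split groups. Step 2 therefore reduces the theorem to its hard part and then simply asserts that part. Your Step 1, by contrast, is fine: it follows from surjectivity of $N_E$ onto the $h$-fixed part of $\prod_\alpha\alpha^\vee(1+\mathfrak p_E^{\mathrm{cond}(\alpha)})$.

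\textbf{What the paper does.} The paper proves the theorem by your Step 3, which in your proposal is only a plan. Concretely:
\begin{itemize}
\item It represents successive depth layers of ${^0\chi}$ by good elements $a_i$, which exist for $p\nmid|W_E|$ by \cite{fintzen2021tame}, and sets $G^{i+1}=C_{G^i}(a_i)$. This is what guarantees that your conductor-defined root subsets really are root systems of $F$-twisted Levi subgroups.
\item It strips off central layers by characters of $T$ trivial on $T\cap (G^i)'$. These extend to $G^i(F)$ via $G(F)=G'(F)T(F)$, proved using a $z$-extension and Kneser's theorem.
\item It proves genericity of the $\phi_i$ from $B_i(a_i,\mathrm{d}\alpha^\vee(1))=m_\alpha\,\mathrm{d}\alpha(a_i)$, where $m_\alpha$ is a unit.
\item It takes the depth-zero datum $({^0T},\rho_d)$ and a point $y$ with $0<\langle\alpha,y\rangle<1/2$ for positive $\alpha$.
\item It computes $\mathrm{cond}(\alpha)=r_i+1$ on $\Phi_E(G^{i-1},T)\setminus\Phi_E(G^i,T)$ and $\mathrm{cond}(\alpha)=1$ on $\Phi_E(G^d,T)$. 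Hence the root depths in $G^d_{y,0}G^{d-1}_{y,r_d/2}\cdots G^1_{y,r_2/2}$ are exactly $f_{\mathfrak s}(\alpha)$. No conjugation is needed: the asymmetric floors come from the choice of $y$, and that same choice makes the Heisenberg parts trivial.
\item It descends to $F$ via \cite[Lemma 2.10]{yu2001construction}.
\end{itemize}
The Kim--Yu theorem \cite{kim2017construction} then supplies the cover property, including the invertibility you could not prove, together with the description of $\rho_{\mathfrak s}$.
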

    \begin{rmk}
        This is a generalization of \cite[Theorem 7.7]{roche1998types}. If the group $G$ is split, then this theorem will show that the type constructed in \cite{roche1998types} and \cite{kim2017construction} are actually the same.
    \end{rmk}

\subsection{Construction of types following Kim-Yu}\label{concrete construction of types}

    In this subsection, we are going to recall the construction of types in \cite{kim2017construction}. Let $d\in \mathbb{Z}_{\geq1}$ and let $\vec{G}=(G=G^1\supsetneq G^2\supsetneq\dots\supsetneq G^d)$ be a sequence of tamely ramified twisted Levi subgroups of $G$. Let $M^d$ be a Levi subgroup of $G^d$ and $A_{M^d}$ the split component of the center of $M^d$. Denote the centralizer of $A_{M^d}$ in $G^i$ by $M^i$ for $1\leq i\leq d$. According to \cite[2.4 Lemma(a),(b)]{kim2017construction}, $M^i$ is a Levi subgroup of $G^i$ and a tamely ramified twisted Levi subgroup of $M^1$.
    \begin{defn}[{\cite[\S7.2]{kim2017construction}}]
        A $G$-datum is a $5$-tuple $\Sigma:=((\vec{G},M^d),(y,\{\iota\}),\vec{r},(K_{M^d},\rho_{M^d}),\vec{\phi})$ satisfying the following:
        \begin{description}
            \item[\textbf{D1}] $\vec{G}=(G=G^1\supsetneq G^2\supsetneq\dots\supsetneq G^d) $ is a sequence of twisted Levi subgroups of $G$ that split over a tamely ramified extension of $F$ for some $d\in \mathbb{Z}_{\geq1}$, and $M^d$ is a Levi subgroup of $G^d$. Let $\vec{M}=(M^1\supset M^2\supset\dots\supset M^d)$ be constructed as above.
            \item[\textbf{D2}] $\vec{r}=(r_1,r_2,\dots,r_d)$ is a sequence of real numbers satisfying $r_1\geq r_2>\dots>r_d>0$.
            \item[\textbf{D3}] $y$ is a point of $\mathcal{B}(M^d,F)$, and $\{\iota\}$ is a commutative diagram
            \[
            \begin{tikzcd}
            \mathcal{B}(G^d,F) \arrow[r]  & \mathcal{B}(G^{d-1},F)   \arrow[r]& \dots\arrow[r] & \mathcal{B}(G^1,F) \\
            \mathcal{B}(M^d,F) \arrow[r] \arrow[u]& \mathcal{B}(M^{d-1},F) \arrow[r] \arrow[u]& \dots \arrow[r] & \mathcal{B}(M^1,F)\arrow[u]
            \end{tikzcd}
            \]
            of admissible embeddings of buildings that is $\vec{s}$-generic relative to $y$ in the sense of \cite[3.5 Definition]{kim2017construction}, where $\vec{s}=(r_2/2,\dots,r_d/2,0)$. We identify a point in $\mathcal{B}(M^d,F)$ with its image via the embedding $\{\iota\}$.
            \item[\textbf{D4}] $K_{M^d}$ is a compact open subgroup of $M^d(F)_y$ (the stabalizer of $y$) containing $M^d(F)_{y,0}$ and $\rho_{M^d}$ is an irreducible smooth representation of $K_{M^d}$ such that $((G^d,M^d),(y,\iota:\mathcal{B}(M^d,F)\rightarrow\mathcal{B}(G^d,F)),(K_{M^d},\rho_{M^d}))$ is a depth zero $G^d$-datum in the sense of \cite[\S7.1]{kim2017construction}.
            \item[\textbf{D5}] $\vec{\phi}=(\phi_1,\dots,\phi_d)$ is a sequence of characters, where $\phi_i$ is a character of $G^i(F)$. If $r_1=r_2$, we set $\phi_1=1$; otherwise $\phi_d$ is of depth $r_d$. We also require that $\phi_i$ is $G^{i-1}$-generic of depth $r_i$ in the sense of \cite[Definition 3.5.2]{fintzensupercuspidal} for all $2\leq i\leq d$.
        \end{description}
    \end{defn}
    \begin{rmk}
        For \textbf{D5}, the original definition should be $G^{i-1}$-generic of depth $r_i$ relative to the point $y\in \mathcal{B}(G^i,F)$. However, by \cite[Definition 3.5.2]{fintzensupercuspidal}, the choice of $y$ is not important; it could be any point of the building. In fact, in this section, we will simply choose the origin of the apartment $\mathcal{A}(G^i,T,F)$.
    \end{rmk}
       
    Following \cite[7.4 and 7.5 Theorem]{kim2017construction}, the pair $\mathcal{J}=(K, \rho)$ will be a $G$-cover of $\mathcal{J}_M=(K_M,\rho_M)$, where $K=K_{M^d}G^d_{y,0}G^{d-1}_{y,r_d/2}\dots G^1_{y,r2/2}$ and $K_M=K_{M^d}M^{d-1}_{y,r_d/2}\dots M^1_{y,r_2/2}$, and $\rho$ (resp. $\rho_1^M$) is an irreducible representation of $K$ (resp. $K_M)$. The concrete construction of $\rho_M$ is complicated, but for principal blocks, it is easy to describe and we will discuss it in the next paragraph.

    In order to obtain a type for principal blocks, we need to require $M^1=T$. Therefore, the twisted Levi sequence $\vec{M}$ could only be the constant sequence $(T,\dots,T)$. In order to obtain a singleton, the group $K_{M^d}$ will just be the group $T_{y,0}={^0T}$. Therefore, the group $K_M$ will be the same as $^0T$, and the irreducible representation $\rho_M$ is just $^0\chi=\rho_{M^d}\otimes(\phi_d|_{^0T})\otimes\dots\otimes (\phi_1|_{^0T})$. Since $\mathcal{J}=(K, \rho)$ is a $G$-cover of $\mathcal{J}_M=(K_M,\rho_M)$, we can say $\rho$ is the character on $K$ such that $\rho|_{K\cap U}$ and $\rho|_{K\cap \bar{U}}$ are trivial, and $$\rho|_{K\cap T}=\rho|_{^0 T}={^0\chi}.$$ In conclusion, $(K,\rho)$ will be an $\mathfrak{s}=({^0T},{^0\chi})$-type.

\subsection{Construction of G-datum}
    
    To prove \Cref{thm:main 4}, we need to construct a $G$-datum. We will first define the twisted Levi sequence $(\vec{G},M^d)$, and then the character sequence $\vec{\phi}$ and the real number sequence $\vec{r}$. Lastly, we will define $(y,\{\iota\})$ and $(K_{M^d},\rho_{M^d})$, the depth zero datum and the generic embeddings of buildings. 
    
    Only in this section, for any reductive group $H$, we will use the symbol $H_r$ (resp. $H_{r+})$ to denote the Moy-Prasad filtration subgroup of depth $r$ (resp. $r+$) of $H$.

\subsubsection{The twisted Levi sequence $(\vec{G},M^d)$}\label{construction of twisted Levi}

    We start with a given $\mathfrak{s}$-type, where $\mathfrak{s}=({^0T},{^0\chi})$. Let $\chi$ denote an arbitrary character on $T$ which is an extension of the character $^0\chi$. Since $p$ does not divide the order of the absolute Weyl group of $G$, we can identify $\mathfrak{g}$ with $\mathfrak{g}^*$ by a bilinear form $B(\;,\;)$ defined on $\mathfrak{g}$ (see \cite[Proposition 4.1]{adler2000intertwining}). Assume that the depth of the character $^0\chi$ is $r>0$ (we will deal with the depth zero case later). It is trivial on $T_{r+}$ but non-trivial on $T_r$. Fix an additive character $\psi$ on $F$ which is trivial on $\mathfrak{p}$ but not trivial on $\mathfrak{O}$. By the isomorphism $T_r/T_{r+}\simeq \mathfrak{t}_r/\mathfrak{t}_{r+}$, we can view the character $^0\chi|_{T_r/T_{r+}}$ as an additive character on $\mathfrak{t}_r/\mathfrak{t}_{r+}$. By composing with the character $\psi$, we can regard the character $^0\chi|_{T_r/T_{r+}}$ as an element $a_0\in \mathfrak{t}_{-r}/\mathfrak{t}_{-r+}$ (see \cite[Lemma 4.3]{roche1998types}). We define an element $a\in\mathfrak{t}_l\backslash\mathfrak{t}_{l+}$ to be a good element if for any absolute root $\alpha\in \Phi_E(G,T)$, we have that either $\mathrm{d}\alpha(a)=0$ or $\textnormal{val}(\mathrm{d}\alpha(a))=l$ (see \cite[Definition 5.2]{adler2000intertwining}). According to \cite[Theorem 3.3]{fintzen2021tame}, by our restriction on $p$, there exists a good element for any coset of $\mathfrak{t}_{-r}/\mathfrak{t}_{-r+}$. We choose a good element $a\in a_0$.

    According to \cite[Theorem 4.15]{roche1998types}, the centralizer $C_G(a)$ of $a$ in the group $G$ is a twisted Levi subgroup of $G$. If $a\in \mathfrak{z}(\mathfrak{g})$, then this character must be trivial on $T'_r$ (note that this is an equivalent condition: if the character is trivial on $T'_r$, then the good element must lie in $\mathfrak{z}(\mathfrak{g})$). Therefore, we can choose a character $\chi_1'$ on $T$, such that $\chi_1'|_{T_r}=\chi|_{T_r}$ and $\chi_1'|{T'}=1$. Let $\chi'=\chi\otimes \chi_1'^{-1}$; we get a new character on $T$ with smaller depth. Keep doing this until the good element we choose is not in $\mathfrak{z}(\mathfrak{g})$. Denote the multiplication of all the characters we select in the above steps by $\chi_{1}$. We need the following lemma:
    \begin{lemma}\label{G=G'T}
        Let $G'$ denote the derived subgroup of $G$. Any element in $G(F)$ could be written as a product of an element in $G'(F)$ and an element in $T(F)$. In other words, $G(F)=G'(F)\cdot T(F)$.
    \end{lemma}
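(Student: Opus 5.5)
The plan is to avoid Galois cohomology entirely and use the relative Bruhat decomposition of $G(F)$ with respect to the minimal parabolic $B=TU$. Since $G$ is quasi-split, $B$ is an $F$-rational Borel subgroup with Levi factor $T=C_G(A)$. By Borel--Tits we have
$$G(F)=\bigsqcup_{w\in W} B(F)\,\dot w\, B(F),\qquad B(F)=T(F)U(F),$$
where $W=N_G(A)(F)/T(F)$ is the relative Weyl group and $\dot w\in N_G(A)(F)$ is any representative of $w$. Two structural facts then do the work. First, $U$ is generated by root subgroups, and root subgroups lie in the derived group, so $U(F)\subset G'(F)$. Second, $G'(F)$ is normal in $G(F)$, so $G'(F)T(F)$ is a subgroup of $G(F)$.

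The key step is to choose the representatives $\dot w$ inside $G'(F)$. Let $A'$ be the maximal $F$-split torus of $G'$ contained in $A$, namely the identity component of $A\cap G'$. Set $T'=T\cap G'=C_{G'}(A')$ (this is a maximal torus of $G'$, as already noted in the paper). The relative root systems $\Phi_F(G,A)$ and $\Phi_F(G',A')$ coincide after restriction, since the roots are trivial on the central torus. Hence the natural map
$$N_{G'}(A')(F)/T'(F)\longrightarrow N_G(A)(F)/T(F)=W$$
is an isomorphism of Weyl groups. One can see this directly: for each simple relative root $a$, Borel--Tits provides an element of $\langle U_a(F),U_{-a}(F)\rangle\subset G'(F)$ that normalizes $A$ and acts on it as the reflection $s_a$. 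So every $w\in W$ has a representative $\dot w\in G'(F)$.

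Putting this together, any $g\in G(F)$ can be written as $g=t_1u_1\dot w\,t_2u_2$ with $t_i\in T(F)$, $u_i\in U(F)\subset G'(F)$ and $\dot w\in G'(F)$. Every factor lies in the subgroup $G'(F)T(F)$, so $g\in G'(F)T(F)$. The reverse inclusion is trivial, which proves $G(F)=G'(F)\cdot T(F)$.

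The main obstacle, and the only point that needs care, is the claim that Weyl group representatives can be taken in $G'(F)$. This must be justified via rational rank-one subgroups, because the absolute root subgroups $U_\alpha$ are not defined over $F$ in the non-split unramified case. I would handle it by citing Borel--Tits for the existence of $F$-rational elements $m_a\in U_a(F)U_{-a}(F)U_a(F)$ representing $s_a$, where $U_{\pm a}$ are the relative root groups. Because $\pm a$ are relative roots, these groups are contained in $G'$.
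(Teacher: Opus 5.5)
Your argument is correct, but it takes a different route from the paper.

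\textbf{The paper's route.} The paper argues cohomologically. It first passes to a $z$-extension so that $G'$ is simply connected. Then $T'=T\cap G'$ is an induced torus, since its cocharacter lattice has a Galois-permuted basis of simple coroots, so $H^1(F,T')=0$. Hence $T(F)\to D(F)$ is surjective, where $D=G/G'$. Kneser's theorem $H^1(F,G')=0$ gives the exact sequence $1\to G'(F)\to G(F)\to D(F)\to 1$. Lifting the image of $g$ in $D(F)$ to $T(F)$ finishes the proof.

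\textbf{Your route.} You use the rational Bruhat decomposition $G(F)=\bigsqcup_{w} B(F)\dot w B(F)$ with $B(F)=T(F)U(F)$. You then use two facts: $U(F)\subset G'(F)$, and the Borel--Tits elements $m_a\in U_a(F)U_{-a}(F)U_a(F)$ representing the simple reflections lie in $G'(F)$, because the relative root groups $U_{\pm a}$ sit in $U$ or $\bar U$. Normality of $G'(F)$ then makes $G'(F)T(F)$ a subgroup containing every Bruhat factor.

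\textbf{What each approach buys.} Your approach is more elementary and more general. It needs no $z$-extension, no induced-torus computation, and no Kneser theorem, which is special to non-archimedean local fields. It therefore works for any quasi-split reductive group over any field. Verbatim, it shows $G(F)=G'(F)\,Z_G(A)(F)$ for an arbitrary reductive group with minimal Levi $Z_G(A)$. The paper's approach additionally identifies the quotient: when $G_{\textnormal{der}}$ is simply connected, $G(F)/G'(F)\cong D(F)$. This fits the Galois-cohomological framework ($z$-extensions, $\mathbf{H}^0_{ab}$) used elsewhere in the paper.

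One small remark: your claim that $N_{G'}(A')(F)/T'(F)\to W$ is an isomorphism is true but unnecessary. The rank-one representatives already suffice, since $W$ is generated by simple reflections.
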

    \begin{proof}
        By choosing a $z$-extension (see \Cref{z-extension} below), we can assume that $G'$ is simply connected. Let $T'=T\cap G'$ be a maximal torus of $G'$. Since $G'$ is simply connected, $X_*(T')$ has a basis of the simple positive coroots permuted by $\textnormal{Gal}(\bar{F}/F)$. Therefore, $T'$ is an induced torus. Let $D=G/G'$, the cocenter of $G$. Then we have two exact sequences, the first one included in the second one:
        $$1\longrightarrow T'(F)\longrightarrow T(F)\longrightarrow D(F)\longrightarrow 1$$
        $$1\longrightarrow G'(F)\longrightarrow G(F)\longrightarrow D(F)\longrightarrow 1.$$
        The first sequence is exact by the fact that $T'$ is an induced torus, which implies that $H^1(F,T')=0$ by Hilbert's Theorem 90. The second sequence is exact by Kneser's theorem, which implies that $H^1(F,G')=0$. This means that $g\in G(F)$ can be pushed to $D(F)$ and its image can be lifted to $T(F)$. Therefore, $G(F)=G'(F)\cdot T(F)$.

    \end{proof}
    
    By the above lemma, there is a unique extension of $\chi_1$ on $T$ to a character $\phi_1$ on $G$. Let $\rho_1=\chi\otimes \chi_1^{-1}$ be the new character in $T$ with a smaller depth. Assume that the final good element that we pick is $a_1$. Therefore, the centralizer $C_G(a_1)$ will be a proper twisted Levi subgroup of $G$.

    Keep doing this. Let $\rho_i=\chi\otimes\chi_1^{-1}\otimes\dots\otimes\chi_{i}^{-1}$. We stop at $d\in\mathbb{Z}_{\geq1}$ if $\rho_d$ is a depth zero character on the torus $T$. Therefore, we get a strictly decreasing sequence of twisted Levi subgroups. Let $B_i(\;,\;)$ denote the bilnear form defined on the Lie algebra $\mathfrak{g}^i$ of $G^i$ (see \cite[Proposition 4.1]{adler2000intertwining}). All groups in the sequence contain the maximal torus $T$. Denote the sequence by 
    $$\vec{G}=(G^1=G,G^2,\dots,G^d)$$
    and the sequence of good elements we pick $\vec{a}=(a_1,a_2,\dots,a_d)$. In addition, we pick $M^d\subset G^d$ as the maximal torus (note that $G^d$ is not necessarily the maximal torus). The absolute root system of the group $G^i$ will be defined as follows:
    $$\Phi_{E}(G^i,T)=\{\alpha\in \Phi_{E}(G^{i-1},T)\;|\;\mathrm{d}\alpha(a_{i-1})=0\}.$$
    Denote the derived subgroup of $G^i$ by $(G^i)'$, and the maximal torus of $(G^i)'$ by $(T^i)'=T\cap G^i$.

\subsubsection{The character sequence}

    We will follow everything in the above section. We have a sequence of characters $\vec{\phi}=(\phi_1,\dots,\phi_d)$ on $\vec{G}$, and a sequence of real numbers $\vec{r}=(r_1,\dots,r_d)$, where $r_i=\text{depth}(\phi_i)$. From the construction, we can see that the depth of $\phi_i$ is the same as the depth of $\chi_i$. Note that for the first character, if $^0\chi$ is not trivial on $T'_{r}$, then the good element representing $^0\chi$ will not lie in $\mathfrak{z}(\mathfrak{g})$, so we don't need to multiply it with other characters. In this case, we will simply define $\phi_1=1$ and $r_1=r$.

    To show that except $\phi_1$, all the other $\phi_i$'s cannot be trivial, we need the following lemma:

    \begin{lemma}\label{depth lemma}
        Let $l_i$ denote the depth of $\phi_i$. Then the good element representing it in the Lie algebra $\mathfrak{g}^{i+1}$ of $G^{i+1}$ must be in $\mathfrak{z}(\mathfrak{g}^{i+1})$. In addition, $l_i=r_{i+1}$ for $1\leq i\leq d-1$.
    \end{lemma}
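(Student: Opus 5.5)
We unwind the inductive construction of \Cref{construction of twisted Levi} at stage $i$. Recall how $\chi_i$ arises. At stage $i$ the current character is $\rho_{i-1}=\chi\otimes\chi_1^{-1}\otimes\cdots\otimes\chi_{i-1}^{-1}$ (with $\rho_0=\chi$), viewed inside $G^i$. Its depth-$r$ piece is represented by a good element of $\mathfrak{t}_{-r}/\mathfrak{t}_{-r+}$. Whenever this good element lies in $\mathfrak{z}(\mathfrak{g}^i)$, we peel off a character trivial on $(T^i)'$ that agrees with the current character on $T_r$, and we repeat. The product of the pieces peeled off is $\chi_i$, and the first good element not in $\mathfrak{z}(\mathfrak{g}^i)$ is $a_i$, which defines $G^{i+1}=C_{G^i}(a_i)$. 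By construction, $r_{i+1}$ is the depth of $\rho_i=\rho_{i-1}\otimes\chi_i^{-1}$, which is exactly the depth at which the non-central good element $a_i$ appears. Here I am reading the indexing so that $\phi_{i+1}$ and $a_{i}$ are attached to the same depth.

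\textbf{Step 1: the good element of $\phi_i$ is central in $\mathfrak{g}^{i+1}$.} The character $\chi_i$ is a product of characters of $T$, each trivial on $T\cap(G^i)'$ and hence on $T\cap(G^{i+1})'\subset T\cap(G^i)'$. Its extension $\phi_i$ to $G^i$ (by \Cref{G=G'T} applied to $G^i$) is therefore trivial on $(G^{i+1})'$ as well. The equivalence noted before \Cref{G=G'T}, applied in $G^{i+1}$, says that a character trivial on the derived torus $T'_{l_i}$ has its good element in $\mathfrak{z}$. So the good element representing $\phi_i|_{T_{l_i}}$ lies in $\mathfrak{z}(\mathfrak{g}^{i+1})$. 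Concretely, its pairing via $B_{i+1}$ with $\mathrm{d}\alpha^\vee$ vanishes for all $\alpha\in\Phi_E(G^{i+1},T)$, and this descends from $G^i$ since $\mathfrak{z}(\mathfrak{g}^i)\subset\mathfrak{z}(\mathfrak{g}^{i+1})$.

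\textbf{Step 2: $l_i=r_{i+1}$.} We argue the two inequalities separately.

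For $l_i\le r_{i+1}$: the peeling process in stage $i$ stops at depth $r_{i+1}$. Every character peeled off has depth strictly greater than the depth of the remainder, since each peel removes the top layer of the current character. Hence $l_i$, the maximum of these depths, is at least the stopping depth. So $l_i\ge r_{i+1}$, and equality is what remains to be shown.

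For $l_i\ge r_{i+1}$ to be sharp, the key point is ultrametric. Write $\rho_i=\rho_{i-1}\otimes\chi_i^{-1}$. Comparing top layers: if $l_i>r_{i+1}$ strictly, the top layer of $\rho_{i-1}$ at depth $l_i$ would be central. But stage $i$ begins at the depth where the good element $a_{i-1}$ was non-central in $\mathfrak{g}^{i-1}$ and central in $\mathfrak{g}^i$. This is where the convention enters: $\phi_i$ collects exactly the central layers between $r_i$ and the next non-central layer, and that next layer's depth is by definition $r_{i+1}$.

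I therefore expect the honest content to be a normalization. After the last peel, the remaining character has depth equal to the depth of the last peeled layer, unless the peel happens to kill the layer exactly. One must rule out that the top layer of $\chi_i$ and of $\rho_i$ are at different depths, using that the good element of $\rho_i$ is non-central while that of $\phi_i$ is central. Their depths then coincide because the good element at depth $r_{i+1}$ decomposes as central part (coming from $\phi_i$) plus non-central part (namely $a_{i+1}$). This is possible when $p\nmid|W_E|$ by the $B$-orthogonal splitting $\mathfrak{t}=\mathfrak{z}\oplus\mathfrak{t}'$ of \cite[Proposition 4.1]{adler2000intertwining}.

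\textbf{Main obstacle.} The hardest step is that last compatibility: showing that the central projection of the depth $-r_{i+1}$ coset can be realized by a character of $T$ trivial on $(T^{i+1})'$ of exactly that depth. Here I would invoke \Cref{G=G'T} for $G^{i+1}$, together with the induced-torus property of $(T^{i+1})'$, to extend the character off $T'$.
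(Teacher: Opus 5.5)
There is a genuine gap: you establish centrality for the wrong character. In Step~1 you show the good element of $\phi_i$ is central in $\mathfrak{g}^{i+1}$. But $\chi_i$ is by construction a product of characters trivial on $T\cap(G^i)'$, so this is automatic and says nothing about the next stage.

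In the paper's proof, $l_i$ is the depth of $\rho_i$, whose top layer is represented by $a_i\in\mathfrak{t}_{-l_i}/\mathfrak{t}_{-l_i+}$. The point is that $a_i$, which is \emph{not} central in $\mathfrak{g}^i$, becomes central once read in $\mathfrak{g}^{i+1}$. Equivalently, $\rho_i$ is trivial on the depth-$l_i$ part of the derived torus $T\cap(G^{i+1})'$. This forces the first peel of stage $i+1$ to occur at depth $l_i$, so that $\chi_{i+1}\neq 1$ and $\textnormal{depth}(\rho_i)=\textnormal{depth}(\chi_{i+1})=\textnormal{depth}(\phi_{i+1})=r_{i+1}$.

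The missing computation is the identity $B_i(a_i,\mathrm{d}\alpha^{\vee}(1))=m_\alpha\,\mathrm{d}\alpha(a_i)$. It vanishes for every $\alpha\in\Phi_E(G^{i+1},T)$, since these are exactly the roots with $\mathrm{d}\alpha(a_i)=0$. The $\mathrm{d}\alpha^{\vee}(1)$ for such $\alpha$ span $\mathfrak{t}'_{i+1}$, so the functional $\psi\circ B_i(a_i,\cdot)$ representing $\rho_i$ kills $\mathfrak{t}'_{i+1,l_i}$. The inclusion $\mathfrak{z}(\mathfrak{g}^i)\subset\mathfrak{z}(\mathfrak{g}^{i+1})$ you cite plays no role here, because $a_i\notin\mathfrak{z}(\mathfrak{g}^i)$.

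Step~2 then assumes what must be proved. By definition $r_{i+1}=\textnormal{depth}(\phi_{i+1})=\textnormal{depth}(\chi_{i+1})$. That this equals $\textnormal{depth}(\rho_i)$ (equivalently, your remark that $\phi_{i+1}$ and $a_i$ sit at the same depth) is the second assertion of the lemma. It is not available ``by construction''.

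Moreover, with $l_i=\textnormal{depth}(\phi_i)$ taken literally, the equality you try to prove is false. Each peeled character agrees with the current character on $T_s$, so the remainder is trivial on $T_s$ and every peel kills the top layer exactly; there is no ``unless''. Hence $\textnormal{depth}(\phi_i)=\textnormal{depth}(\rho_{i-1})>\textnormal{depth}(\rho_i)$ whenever $\chi_i\neq 1$. Since $r_i=\textnormal{depth}(\phi_i)$ by definition, the literal reading would give $r_i=r_{i+1}$, contradicting the strict decrease in \textbf{D2} for $i\geq 2$. So the ``$\phi_i$'' in the printed statement must be read as ``$\rho_i$'', as the proof does.

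Your Step~2 is also internally inconsistent: the case headed $l_i\le r_{i+1}$ concludes $l_i\ge r_{i+1}$. Finally, the ``main obstacle'' you name (realizing a central layer by a character of $T$ trivial on the derived torus) is just the existence of the peeling character. That is already part of the construction, not the content of this lemma.
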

    \begin{proof}
        For the first part, we only need to show that the character $\rho_i$ is trivial on $T'_{i+1,l_i}$, where $T'_{i+1}=T'\cap G^{i+1}$. In fact, from the construction in \Cref{construction of twisted Levi}, we can see that if we regard the good element $a_i\in g^i\simeq (g^i)^*$ as an element in $(\mathfrak{g}^{i+1})^*$ by restriction, then it not only represents the character $\rho_i|_{T_{r_i}/T_{r_i+}}$, but also represents the character $\phi_{i+1}|_{G^{i+1}_{r_{i+1}}/G^{i+1}_{r_{i+1}+}}$, by the isomorphism $G^{i+1}_{r_{i+1}}/G^{i+1}_{r_{i+1}+}\simeq \mathfrak{g}^{i+1}_{r_{i+1}}/\mathfrak{g}^{i+1}_{r_{i+1}+}$ (this is because $B_i(\mathfrak{t},\mathfrak{g}_{\alpha})=0$ for any $\alpha\in \phi_E(G^i,T)$). By writing $a_i$ as the sum of an element in $\mathfrak{t}_i'$ and an element in $\mathfrak{z}(g_i)$ and checking the definition of the bilinear form $B_i(\;,\;)$ directly, we have the following formula:
        \begin{equation}
            B_{i}(a_{i},\mathrm{d}\alpha^{\vee}(1))=B_{i}(\gamma^{\vee},\gamma^{\vee})^{-1}\sum_{\beta \in \Phi_{E}(G^{i},T)}\; \mathrm{d}\beta(a_{i})(\beta,\alpha^{\vee})
        \end{equation}
        for a short coroot $\gamma^{\vee}\in \Phi_{E}(G^{i},T)^{\vee}$. As a well-known fact,
        $$\sum_{\beta\in \Phi_E(G^i,T)}\beta(\beta,\alpha^{\vee})=k_{\alpha}\alpha$$
        for some scalar $k_{\alpha}$. Pairing it with $\alpha^{\vee}$, we get $k_{\alpha}=B_i(\alpha^{\vee},\alpha^{\vee})/2$. Taking the derivative, we have the following equation:
        \begin{equation}\label{bilinear}
            B_{i}(a_{i},\mathrm{d}\alpha^{\vee}(1))=m_{\alpha}\cdot \mathrm{d}\alpha(a_i),
        \end{equation}
        where $m_{\alpha}=B_i(\alpha^{\vee},\alpha^{\vee})/2B_i(\gamma^{\vee},\gamma^{\vee})$. Therefore, $m_{\alpha}$ is a ratio of squares of root lengths in $\Phi_E(G^{i},T)^{\vee}$. Since $p$ does not divide the order of the absolute Weyl group, we can see that $m_{\alpha}$ is a unit in $\mathfrak{O}$. Recall that the character $\rho_{i}|_{T_{l_i}}$ is denoted by the good element $a_{i}$ in $\mathfrak{t}_{-l_i}/\mathfrak{t}_{-l_i+}$. And recall that the absolute root system of $G^{i+1}$ consists of all the absolute roots $\alpha$ of $G^{i}$ such that $\mathrm{d}\alpha(a_{i})=0$. Therefore, the Lie algebra $\mathfrak{t}_{i+1}'$ of $T'_{i+1}$ is generated by the following sets:
        $$\{\alpha^{\vee}(1)\;|\;\alpha\in \Phi(G^{i+1},T)\}=\{\alpha^{\vee}(1)\;|\;\alpha\in \Phi(G^{i},T),\;\mathrm{d}\alpha(a_{i})=0\}$$

        From \Cref{bilinear} we can see that $B_{i+1}(a_{i},\alpha^{\vee}(1))=0$ for all $\alpha\in \Phi(G^{i+1},T)$. This implies that $B_{i+1}(a_{i},\mathfrak{t}'_{i+1,l_i})=0$. So $\rho_i$ is trivial on $T'_{i+1,l_i}$. Therefore, $\chi_{i+1}$ cannot be trivial. The proof is complete.

        For the second part, it is directly from the construction of $\chi_{i+1}$. Therefore, we have $\textnormal{depth}(\rho_i)=\textnormal{depth}(\chi_{i+1})=\textnormal{depth}(\phi_{i+1})$.
    \end{proof}

    From the above lemma, we can see that the sequence $\vec{r}$ is a strictly decreasing sequence from $i\geq 2$. And if $r_1=r_2$, from the definition we can get that $\phi_1=1$.

    The next thing we need to prove is that the character $\phi_i$ is $G^{i-1}$-generic of depth $r_i$. In fact, good elements and generic characters are 'dual concepts' to each other (see \cite[Aside 3.13]{fintzen2021types}). To be more precise, we have the following theorem:
    \begin{thm}
        The character $\phi_i$ is $G^{i-1}$-generic of depth $r_i$.
    \end{thm}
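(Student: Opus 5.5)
To prove that $\phi_i$ is $G^{i-1}$-generic of depth $r_i$, I will check the two conditions of \cite[Definition 3.5.2]{fintzensupercuspidal}. These are: the character $\phi_i|_{G^i_{r_i}/G^i_{r_i+}}$ is represented by an element $X^*\in \mathfrak{z}((\mathfrak{g}^{i})^*)_{-r_i}$, and $X^*$ is $G^{i-1}$-generic of depth $-r_i$. The genericity condition (GE1) says that $\textnormal{val}(X^*(H_\alpha))=-r_i$ for every $\alpha\in\Phi_E(G^{i-1},T)\setminus\Phi_E(G^i,T)$, where $H_\alpha=\mathrm{d}\alpha^\vee(1)$. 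Condition (GE2) follows from (GE1) under our hypothesis $p\nmid|W_E|$, by \cite[Lemma 3.5.4]{fintzensupercuspidal}, or by the remark in \cite{fintzen2021types}. As in the Remark after the definition of a $G$-datum, I will take the base point to be the origin of the apartment $\mathcal{A}(G^i,T,F)$.

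\textbf{Identifying the representing element.} By construction, $\chi_i$ is chosen so that $\chi_i|_{T_{r_i}}=\rho_{i-1}|_{T_{r_i}}$, where $r_i$ is the depth of $\rho_{i-1}$ by \Cref{depth lemma}. Moreover $\chi_i$ is trivial on $(T^i)'$, and $\phi_i$ is its unique extension to $G^i$ given by \Cref{G=G'T}. As in the proof of \Cref{depth lemma}, the good element $a_{i-1}\in\mathfrak{t}_{-r_i}$, viewed in $(\mathfrak{g}^i)^*$ via $B_i$ (using $B_i(\mathfrak{t},\mathfrak{g}_\alpha)=0$), represents $\phi_i$ on $G^i_{r_i}/G^i_{r_i+}\simeq\mathfrak{g}^i_{r_i}/\mathfrak{g}^i_{r_i+}$. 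By the first part of \Cref{depth lemma}, $a_{i-1}$ pairs trivially with $\mathfrak{t}'_{i}$, and it annihilates the root spaces $\mathfrak{g}_\alpha$. Hence this functional lies in $\mathfrak{z}((\mathfrak{g}^i)^*)$, and it has depth exactly $-r_i$. I therefore set $X^*=B_{i-1}(a_{i-1},\cdot)$, restricted as appropriate.

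\textbf{Checking (GE1).} This is the key computation, and it reuses \Cref{bilinear}:
$$X^*(H_\alpha)=B_{i-1}(a_{i-1},\mathrm{d}\alpha^\vee(1))=m_\alpha\,\mathrm{d}\alpha(a_{i-1}),$$
where $m_\alpha$ is a ratio of squared root lengths and hence a unit because $p\nmid|W_E|$. Now take $\alpha\in\Phi_E(G^{i-1},T)\setminus\Phi_E(G^i,T)$. By the definition of $\Phi_E(G^i,T)$ we have $\mathrm{d}\alpha(a_{i-1})\neq0$. Since $a_{i-1}$ is a good element of depth $-r_i$, this forces $\textnormal{val}(\mathrm{d}\alpha(a_{i-1}))=-r_i$. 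So $\textnormal{val}(X^*(H_\alpha))=-r_i$, which is (GE1).

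\textbf{Main obstacle.} I expect the hardest step to be the bookkeeping of which bilinear form and which group are in play. The functional must be computed using $B_{i-1}$ on $\mathfrak{g}^{i-1}$ in order to apply \Cref{bilinear} to roots of $G^{i-1}$. At the same time, it must be shown to represent $\phi_i$, a character of $G^i$, so the restrictions of $B_{i-1}$ and $B_i$ to $\mathfrak{t}$ have to be compatible up to units. I would handle this by choosing the forms $B_i$ compatibly, as restrictions of $B$, as in \cite[Proposition 4.1]{adler2000intertwining}.

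The remaining issues are checking the unit claims in the normalization by $\psi$ (with its conductor $\mathfrak{p}$), and the Galois descent from $E$ to $F$. For the latter, $a_{i-1}$ is $h$-fixed, so $X^*$ is defined over $F$. Both of these are routine.
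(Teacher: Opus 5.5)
Your proposal is correct and follows essentially the same route as the paper: you represent $\phi_i$ on $G^i_{r_i}/G^i_{r_i+}$ by the restriction of $B_{i-1}(a_{i-1},\cdot)$ to $\mathfrak{g}^i$, which is central by \Cref{depth lemma}, and you deduce $\textnormal{val}(B_{i-1}(a_{i-1},\mathrm{d}\alpha^{\vee}(1)))=-r_i$ for $\alpha\in\Phi_E(G^{i-1},T)\setminus\Phi_E(G^{i},T)$ from \Cref{bilinear}, the unit $m_\alpha$, and the good-element property of $a_{i-1}$. Your explicit remarks that (GE2) follows from (GE1) because $p\nmid|W_E|$, and that the forms $B_{i-1}$ and $B_i$ should be chosen compatibly, address points the paper leaves implicit.
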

    \begin{proof}
        By \cite[Lemma 2.3]{fintzen2022tame}, we can simply let $y$ be the origin of the apartment $\mathcal{A}(G,T,F)$ and omit it from the presentation.
        
        First, from the construction of , we can see that the character $\phi_i|_{T_{r_i}/T_{r_i+}}=\rho_{i-1}|_{T_{r_i}/T_{r_i+}}$ could be represented by the good element $a_{i-1}\in \mathfrak{g}^{i-1}\simeq (\mathfrak{g}^{i-1})^*$. If we consider $a_{i-1}$ as an element in $(\mathfrak{g}^{i})^*$ by restriction, then it belongs to $(\mathfrak{g}^{i})^*_{-r_i}\backslash (\mathfrak{g}^{i})^*_{-r_i+}$. Also, from \Cref{depth lemma}, we can see that $a_{i-1}(\mathrm{d}\alpha^{\vee}(1))=B_{i-1}(a_{i-1},\mathrm{d}\alpha^{\vee}(1))=0$. By the definition of good elements, for any $\alpha\in \Phi_{E}(G^{i-1},T)\backslash \Phi_{E}(G^{i},T)$, we have $\text{val}(\mathrm{d}\alpha(a_{i-1}))=-r_i$. Therefore, for any $\alpha\in \Phi_{E}(G^{i-1},T)\backslash \Phi_{E}(G^{i},T)$, by \Cref{bilinear},
        $$\text{val}(a_{i-1}(\mathrm{d\alpha^{\vee}(1))=}\text{val}(B_{i-1}(a_{i-1},d\alpha^{\vee}(1))=\text{val}(m_{\alpha}\alpha(a_{i-1}))=-r_i,$$
        which means that $\phi_i$ is $G^{i-1}$-generic of depth $r_i$.
    \end{proof}

\subsubsection{The depth zero datum and the generic embeddings of buildings}

    We already define the twisted Levi sequence $\vec{G}$, $M^d=T$ and the character sequence $\vec{\phi}$ together with the real number sequence $\vec{r}$. Let $K_{M^d}={^0T}$ be the unique maximal open compact subgroup of the torus $T$, and let $\rho_{M^d}=\rho_d=(\phi_1^{-1}\otimes\dots\otimes \phi_d^{-1})|_{^0T}\otimes {^0\chi}$, which is a depth zero character on $^0T$. Let $\vec{s}=(r_2/2,r_3/2,\dots,r_{d}/2,0)$. We can see that the sequence $\vec{M}=(M^1,\dots,M^d)$ is just a constant sequence which consists of the maximal torus $T$. We pick an arbitrary point $y\in \mathcal{B}(T,F)$, and map it to a point in $ \mathcal{A}(G^d,T,F)$, which we will also denote by $y$, such that $0<\langle\alpha,y\rangle<1/2$ for any $\alpha\in \phi_{E}(G,T)^+$. This embedding is $\vec{s}$-generic, and the corresponding embedding $\iota:\mathcal{B}(T,F)\rightarrow\mathcal{B}(G^d,F)$ is 0-generic.

    Combining all the three parts above, we construct a $G$-datum, and the corresponding type is for the principal block $\mathfrak{s}=({^0T},{^0\chi})$.

\subsection{The concrete description of the type}

    In this subsection, we are going to give a concrete description of the type $(K,\rho)$ that we just construct, and finish the proof of \Cref{thm:main 4}.

    To start with, we want to give a detailed description of the sequence $\vec{r}$. We have the following theorem:
    \begin{thm}
        If $\alpha\in \Phi_{E}(G^{i-1},T)\backslash \Phi_{E}(G^{i},T)$ for an integer $i$ such that $2\leq i\leq d$, then $\textnormal{cond}(\alpha)=r_i+1$. If $\alpha\in \Phi_{E}(G^{d},T)$, then $\textnormal{cond}(\alpha)=1$.
    \end{thm}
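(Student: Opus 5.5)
The plan is to compute the conductor one root at a time. For a root $\alpha$, write ${^0\chi_E}={^0\chi}\circ N_E$ as the product of the pieces from \Cref{construction of twisted Levi}:
$$ {^0\chi_E}=\bigl(\rho_d\cdot \phi_d\cdots\phi_1\bigr)\circ N_E\big|_{^0T(E)} .$$
Then evaluate each factor on $\alpha^\vee(1+\mathfrak{p}_E^l)$. Because $G$ is unramified and $E/F$ is unramified, the Moy--Prasad filtrations of $T(E)$ satisfy $T(E)_s=\mathfrak{p}_E$-adic congruence subgroups with integral jumps. The norm $N_E$ maps $T(E)_s$ onto $T_s$ for every $s$, and it is compatible with the trace on $\mathfrak{t}_s/\mathfrak{t}_{s+}$. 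Hence all depths are integers. Moreover, ${^0\chi_E}$ restricted to $T(E)_{s}/T(E)_{s+}$ is represented by the same good element, now viewed in $\mathfrak{t}_E$ and composed with $\psi\circ\textnormal{Tr}_{E/F}$, which is still of conductor $\mathfrak{p}_E$. So the computation can be done over $E$, where $G$ is split and $\alpha^\vee$ is defined.

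The key observation concerns the characters $\phi_j$. Each $\phi_j$ is a character of $G^j$, so it is trivial on $(G^j)'$. Therefore $\phi_j\circ N_E\circ\alpha^\vee$ is trivial whenever $\alpha\in\Phi_E(G^j,T)$, since then $\alpha^\vee$ lands in $(G^j)'$. Now take $\alpha\in \Phi_E(G^{i-1},T)\setminus\Phi_E(G^i,T)$, with $2\le i\le d$. Then $\alpha\in\Phi_E(G^{j},T)$ for all $j\le i-1$, so
$$ {^0\chi_E}\circ\alpha^\vee = (\rho_{i-1}\circ N_E)\circ \alpha^\vee .$$

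The character $\rho_{i-1}$ has depth $r_i$ by \Cref{depth lemma}, so it is trivial on $T(E)_{r_i+}$. Since $\alpha^\vee(1+\mathfrak{p}_E^{r_i+1})\subset T(E)_{r_i+1}=T(E)_{r_i+}$, this gives $\textnormal{cond}(\alpha)\le r_i+1$. For the reverse inequality, identify $\rho_{i-1}$ on $T(E)_{r_i}/T(E)_{r_i+}$ with the good element $a_{i-1}$. For $x\in\mathfrak{p}_E^{r_i}$, the value of the character at $\alpha^\vee(1+x)$ is
$$\psi\bigl(\textnormal{Tr}_{E/F}\, B_{i-1}(a_{i-1},x\,\mathrm{d}\alpha^\vee(1))\bigr)=\psi\bigl(\textnormal{Tr}_{E/F}(x\, m_\alpha\, \mathrm{d}\alpha(a_{i-1}))\bigr),$$
using \Cref{bilinear}. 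Since $\alpha\notin\Phi_E(G^i,T)$, we have $\mathrm{d}\alpha(a_{i-1})\neq 0$, and goodness gives it valuation exactly $-r_i$; also $m_\alpha$ is a unit because $p\nmid|W_E|$. So $x\,m_\alpha\,\mathrm{d}\alpha(a_{i-1})$ runs over all of $\mathfrak{O}_E$. The trace is surjective onto $\mathfrak{O}$ since $E/F$ is unramified, and $\psi$ is nontrivial on $\mathfrak{O}$. Hence the character is nontrivial on $\alpha^\vee(1+\mathfrak{p}_E^{r_i})$, and $\textnormal{cond}(\alpha)=r_i+1$.

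For $\alpha\in\Phi_E(G^d,T)$, all the $\phi_j$ drop out by the same observation, so ${^0\chi_E}\circ\alpha^\vee=\rho_d\circ N_E\circ\alpha^\vee$. Since $\rho_d$ has depth zero, it is trivial on $T(E)_{0+}\supset\alpha^\vee(1+\mathfrak{p}_E)$. Thus $\textnormal{cond}(\alpha)\le 1$, and the conductor is a positive integer by definition, so $\textnormal{cond}(\alpha)=1$.

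I expect the main obstacle to be the bookkeeping in the nontriviality step. It requires checking that the identification $T_r/T_{r+}\simeq\mathfrak{t}_r/\mathfrak{t}_{r+}$, the good element, and the pairing $B_{i-1}$ all transport correctly along the norm to $E$. It also requires that $\alpha^\vee(1+x)$ corresponds to $x\,\mathrm{d}\alpha^\vee(1)$ under this identification. This is where the unramifiedness of $E/F$ and the assumption on $p$ are used.
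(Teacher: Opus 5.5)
Your proposal is correct and is essentially the paper's argument. You remove the $\chi_j$ for $j\le i-1$ (resp. $j\le d$) because $N_E\circ\alpha^\vee$ lands in $T\cap (G^j)'$. You then bound $\textnormal{cond}(\alpha)$ above by the depth $r_i$ of $\rho_{i-1}$ (resp. the depth zero of $\rho_d$). Finally you get non-triviality on $\alpha^\vee(1+\mathfrak{p}_E^{r_i})$ from the good element $a_{i-1}$, \Cref{bilinear} and surjectivity of the trace, and your indexing ($j\le i-1$, $\rho_{i-1}$) is actually cleaner than the paper's.

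One point to tighten: $\phi_j$ is trivial on $(G^j)'$ because of how it was built. The character $\chi_j$ is chosen trivial on $T\cap (G^j)'$, and $\phi_j$ is its extension via $G^j(F)=(G^j)'(F)\,T(F)$. It is not because an arbitrary character of $G^j(F)$ kills $(G^j)'(F)$, which fails in general (e.g.\ for $\mathrm{PGL}_2$).
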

    \begin{proof}
        Let us start with the special case, $\alpha\in \Phi_{E}(G^{d},T)$. First, from the construction of the character sequence, we know that $(\chi_1\otimes\dots\otimes \chi_{d})$ is trivial on $(T^d)'$. This gives us the fact that $^0\chi|_{^0(T^d)'}=\rho_{M^d}|_{^0(T^d)'}$. In addition, $\alpha^{\vee}(1+\mathfrak{p}_E)\subset ^0(T^d)'(E)$ and the norm map induces a surjection from $^0(T^d)'(E)$ to $^0(T^d)'(F)$ (see the proof of [\cite{haines2012base}, Lemma 8.2.1]). All these imply that $^0\chi_E\;\circ\alpha^{\vee}|_{1+\mathfrak{p}_E}=\rho_{M^d}\circ N_E\circ\alpha^{\vee}|_{1+\mathfrak{p}_E}$. However, $\rho_{M^d}$ is a depth zero character, which means that it is trivial on $T_{0+}=T(F)_1$. We conclude that $\textnormal{cond}(\alpha)=1$.

        Next, we consider the case of $\alpha\in \Phi_{E}(G^{i-1},T)\backslash \Phi_{E}(G^{i},T)$. For any $s\geq i-1$, $N_E\circ \alpha^{\vee}\subset (T^s)'$, so $\chi_s\circ N_E\circ \alpha^{\vee}$ will be trivial. Therefore, $\rho_i\circ N_E\circ\alpha^{\vee}={^0\chi_E}\circ\alpha^{\vee}$. By \Cref{depth lemma}, the depth of $\rho_{i-1}$ is $r_i$, so ${^0\chi_E}\circ\alpha^{\vee}(1+\mathfrak{p}_E^{r_i+1})=1$. The character $\rho_i|_{T_{r_i}/T_{r_i+}}$ could be represented by the good element $a_{i-1}$. Recall that $\textnormal{val}(B_{i-1}(a_{i-1},\alpha^{\vee}(1))=-r_i$. Therefore, there exists an element $u\in \mathfrak{p}^{r_i}$ such that $\psi\circ B_{i-1}(a_{i-1},\alpha^{\vee}(u))\neq 0$. By the isomorphism $\alpha^{\vee}(1+\mathfrak{p}_E^{r_i})/\alpha^{\vee}(1+\mathfrak{p}_E^{r_i+1})\simeq \alpha^{\vee}(\mathfrak{p}_E^{r_i})/\alpha^{\vee}(\mathfrak{p}_E^{r_i+1})$, and the fact that the trace map $\textnormal{T}:k_E\rightarrow k_F$ is surjective, we can find an element $v\in 1+\mathfrak{p}_E^{r_i}$ such that $\rho_i\circ N    \circ\alpha^{\vee}(v)\neq 1$. Therefore, $\textnormal{cond}(\alpha)=r_i+1$. This completes the proof.
    \end{proof}

    As in \Cref{concrete construction of types}, the group $K=K_{M^d}G^d_{y,0}G^{d-1}_{y,r_d/2}\dots G^1_{y,r2/2}=G^d_{y,0}G^{d-1}_{y,r_d/2}\dots G^1_{y,r2/2}$. By the condition \textbf{D3}, we can see that the depth of each root appearing in $K$ is exactly $f_{\mathfrak{s}}(\alpha)$. By \cite[Lemma 2.10]{yu2001construction}, we can see that $K=(G^d(E)_{y,0}G^{d-1}(E)_{y,r_d/2}\dots G^1(E)_{y,r2/2})^h=(J_{E,\mathfrak{s}})^h=J_{\mathfrak{s}}$. The proof of \Cref{thm:main 4} is complete.
    \begin{rmk}
        In fact, the condition for \cite[Lemma 2.10]{yu2001construction} is a little bit stronger than here. In that lemma, we require the last term of the the sequence to be nonzero, but in our case it is zero. This will make no difference if the field extension is unramified. For the unramified field extension $E/F$, we have the fact that $(G^d(E)_{y,0})^h=G^d(F)_{y,0}$. In other words, the Galois fixed point of an Iwahori subgroup in $G^d(E)$ will also be an Iwahori subgroup of $G^d(F)$. By using this, we can directly copy the proof of loc. cit. so we omit the details here.
    \end{rmk}
    \begin{cor}\label{support of hecke algebra}
        The support of the Hecke algebra $\mathcal{H}(G,\rho_{\mathfrak{s}})$ is $J_{\mathfrak{s}}\widetilde{W}_{^0\chi}J_{\mathfrak{s}}$, where $\widetilde{W}$ denotes the extended affine Weyl group of $G$, and $\widetilde{W}_{^0\chi}$ denotes all the elements inside of $\widetilde{W}$ which fix the character $^0\chi$.
    \end{cor}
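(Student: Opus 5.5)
The plan is to compute the intertwining of the character $\rho_{\mathfrak{s}}$ directly. The support of $\mathcal{H}(G,\rho_{\mathfrak{s}})$ is the set of $g\in G$ such that $\rho_{\mathfrak{s}}$ and ${^g\rho_{\mathfrak{s}}}$ agree on $J_{\mathfrak{s}}\cap {^gJ_{\mathfrak{s}}}$. Since $J_{\mathfrak{s}}\subset I$ and $J_{\mathfrak{s}}\supset {^0T}$, the first step is the Iwahori--Bruhat decomposition $G=I\widetilde{W}I$ together with the Iwahori factorization of $I$ with respect to $B$. Using it, we write every double coset $J_{\mathfrak{s}}gJ_{\mathfrak{s}}$ as $J_{\mathfrak{s}}u\,\dot w\,u'J_{\mathfrak{s}}$, where $w\in\widetilde{W}$, $\dot w\in N_G(T)$ is a representative, and $u,u'$ run over finite sets of representatives for $(I\cap U)/(J_{\mathfrak{s}}\cap U)$ and $(I\cap\bar U)/(J_{\mathfrak{s}}\cap\bar U)$ (with $T$-parts absorbed into ${^0T}\subset J_{\mathfrak{s}}$).

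\textbf{Step 1: $J_{\mathfrak{s}}\widetilde W_{^0\chi}J_{\mathfrak{s}}$ lies in the support.} For $w\in\widetilde W_{^0\chi}$, the conjugate ${^{\dot w}J_{\mathfrak{s}}}$ is generated by ${^0T}$ and the root subgroups $u_{w\alpha}(\mathfrak{p}_E^{f_{\mathfrak{s}}(\alpha)+\langle\cdot\rangle})$, with the affine shift coming from the translation part of $w$. The group $J_{\mathfrak{s}}\cap{^{\dot w}J_{\mathfrak{s}}}$ again has an Iwahori decomposition, because it is the group attached to the concave function $\min(f_{\mathfrak{s}},w\cdot f_{\mathfrak{s}})$; this uses \cite[Lemma 3.2]{roche1998types} and takes $h$-fixed points as in the proof of \Cref{thm:main 4}. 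On the $T$-part the two characters agree because $w$ fixes ${^0\chi}$. On root subgroups both characters are trivial: $\rho_{\mathfrak{s}}$ is trivial on $J_{\mathfrak{s}}\cap U$ and $J_{\mathfrak{s}}\cap\bar U$, and ${^{\dot w}\rho_{\mathfrak{s}}}$ is trivial on ${^{\dot w}}(J_{\mathfrak{s}}\cap U)$ and ${^{\dot w}}(J_{\mathfrak{s}}\cap \bar U)$. This is essentially Roche's argument \cite[\S4]{roche1998types}, and it only needs that $w$ permutes conductors, which holds since $w\in W_{^0\chi}$ permutes the $\alpha^\vee$.

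\textbf{Step 2: nothing else is in the support.} The cleanest route is to avoid the direct computation and invoke the Kim--Yu theory. By \Cref{thm:main 4} and the construction in \Cref{concrete construction of types}, $(J_{\mathfrak{s}},\rho_{\mathfrak{s}})$ is a $G$-cover of $({^0T},{^0\chi})$ as in \cite[7.4, 7.5]{kim2017construction}. The intertwining of such a cover is controlled by the recursive structure $K=G^d_{y,0}G^{d-1}_{y,r_d/2}\cdots$: by Yu's intertwining results \cite{yu2001construction} it is contained in $K\,I_{G^d}(\rho_{M^d})\,K$. Here $\rho_{M^d}$ is a depth-zero character of the Iwahori-type group $G^d_{y,0}$ of the generic depth-zero datum. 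Its intertwining is computed as in the depth-zero case \cite[\S3]{haines2012base}, namely $G^d_{y,0}\widetilde W^{G^d}_{\rho_d}G^d_{y,0}$. It remains to identify $\widetilde W^{G^d}_{\rho_d}$ with $\widetilde W_{^0\chi}$. On one hand, an element of $\widetilde W$ fixing ${^0\chi}$ fixes the good elements $a_i$ modulo the appropriate filtration, hence normalizes each $G^i$ and lies in $\widetilde W^{G^d}$. On the other hand, the $\phi_i$ are characters of $G^i$ and are therefore $\widetilde W^{G^d}$-invariant, so $w$ fixes $\rho_d$ if and only if it fixes ${^0\chi}$.

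I expect the main obstacle to be this last identification together with the reduction to $G^d$. Specifically, one must show that any $g$ intertwining $\rho_{\mathfrak{s}}$ must already normalize the twisted Levi sequence up to $K$, i.e.\ that $g\in K\,G^d\,K$. Genericity of the $\phi_i$ (the theorem established above via good elements) gives this, following \cite[Theorem 9.4]{yu2001construction}. If that citation does not transfer cleanly, the fallback is a direct Roche-style computation. Write $g=u\dot w u'$. If $w\notin \widetilde W_{^0\chi}$, pick $t\in{^0T}$ with ${^0\chi}(t)\neq{^0\chi}({^{\dot w^{-1}}}t)$, and handle the possible $u,u'\neq1$ by producing an element $\alpha^\vee(1+x)$ in the intersection on which the characters differ. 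This uses the commutator $[u_\alpha(a),u_{-\alpha}(b)]\in\alpha^\vee(1+ab)\cdots$ and the definition of $\textnormal{cond}(\alpha)$, exactly as in \cite[Theorem 4.15]{roche1998types}.
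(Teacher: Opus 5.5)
Your proposal is correct and follows the paper's route. The paper's proof is only a citation of \cite[Corollary 8.2]{kim2017construction} (or \cite[Theorem 4.15]{roche1998types}); your Step 1, together with the comparison of $\widetilde W^{G^d}_{\rho_d}$ and $\widetilde W_{^0\chi}$, spells out what that citation leaves implicit.

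There are two minor slips. First, $J_{\mathfrak{s}}\cap{^{\dot w}J_{\mathfrak{s}}}$ is attached to $\max(f_{\mathfrak{s}},w\cdot f_{\mathfrak{s}})$, not $\min$. Second, ``$w$ normalizes each $G^i$'' only places $w$ in the normalizer of $W(G^d)$, not in $W(G^d)$ itself. You do not need that inclusion, though: Step 1, the Kim--Yu upper bound $J_{\mathfrak{s}}\widetilde W^{G^d}_{\rho_d}J_{\mathfrak{s}}$, and the easy inclusion $\widetilde W^{G^d}_{\rho_d}\subseteq\widetilde W_{^0\chi}$ already force equality.
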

    \begin{proof}
        This is directly from \cite[Corollary 8.2]{kim2017construction} or \cite[Theorem 4.15]{roche1998types}.
    \end{proof}

\section{Bernstein center, stable Bernstein center and base change homomorphism}\label{stable bernstein center}

    In this section, we are going to recall the concept of Bernstein center, stable Bernstein center and base change homomorphism $b_r$. Most of our presentations and notations are taken from \cite{haines2014stable} and \cite{haines2012base}.

    We follow all previous notations. Assume $G$ is any connected reductive group unramified over $F$. For any algebraic group $H$, denote its $F_r$-points by $H_r$.

\subsection{The variety structure on a principal block}

    Let $\mathfrak{s}=({^0T},{^0\chi})_G$ be a principal class of $\mathfrak{B}(G)$, and let $\chi$ be any extension of $^0\chi$ to a character $\chi:T\rightarrow \mathbb{C^{\times}}$. Let $\mathfrak{X}_{\mathfrak{s}}=\{(T,\xi)_G\}$ denote the set of all $G$-equivalent classes inside $\mathfrak{s}$. Then according to \cite[\S3.3]{haines2014stable}, we have the following bijection:
    \begin{align*}
        \hat{A}/W_{^0\chi} & \xrightarrow{\sim} \mathfrak{X}_{\mathfrak{s}}\\
        \eta & \rightarrow (T,\chi\eta)_G
    \end{align*}
    where $\hat{A}=\textnormal{Hom}(X_*(A),\mathbb{C}^{\times})$ and $\eta\in \hat{A}$ is viewed as an unramified character on $T(F)$ (cf. \cite[\S 3.2]{haines2012base}). Therefore, the set $\mathfrak{X}_{\mathfrak{s}}$ could be viewed as an affine algebraic variety. Note that the structure of the affine variety depends only on the character $^0\chi$.

    As in \cite[\S7]{roche1998types}, for the type $(J_{\mathfrak{s}},\rho_{\mathfrak{s}})$ constructed in \Cref{types}, the principal block $\mathfrak{R}_{\mathfrak{s}}(G)$ is naturally equivalent to the category of $\mathcal{H}(G,\rho_{\mathfrak{s}})$-modules by the isomorphism
    $$\mathfrak{R}_{\mathfrak{s}}(G)\rightarrow \mathcal{H}(G,\rho_{\mathfrak{s}})\textnormal{-modules}:V\rightarrow V^{\rho_{\mathfrak{s}}},$$
    where $V^{\rho_{\mathfrak{s}}}$ denotes the $\rho_{\mathfrak{s}}$-isotypic subspace of $V$. According to \cite[\S3.4]{haines2012base}, we have the isomorphism
    \begin{equation}
        \beta:\mathbb{C}[\mathfrak{X}_{\mathfrak{s}}]\xrightarrow{\sim}\mathcal{Z}(G,\rho_{\mathfrak{s}})
    \end{equation}
    in the following way: for any extension $\xi:T\rightarrow\mathbb{C}^{\times}$ of the form $\xi=\chi\eta$ with $\eta$ and unramified character of $T$, consider the subspace $i_B^G(\xi)^{\rho_{\mathfrak{s}}}$ of $i_B^G(\xi)$. Then for an element $z\in \mathcal{Z}(G,\rho_{\mathfrak{s}})$, it acts on the subspace $i_B^G(\xi)^{\rho_{\mathfrak{s}}}$ by the scalar $\beta^{-1}(z)(\xi)$. It is easy to see that the function $\beta^{-1}(z)$ is well defined because the action of $z$ on the subspace $i_B^G(\xi)^{\rho_{\mathfrak{s}}}$ is the same as the action of $z$ on $i_B^G(^w\xi)^{\rho_{\mathfrak{s}}}$ for any element $w\in W$.

    To end this subsection, let us give a concrete action of $\mathcal{H}(G,\rho)$ on $i_B^G(\xi)^{\rho_{\mathfrak{s}}}$. We denote the action by $\bullet$. Then for any $f\in\mathcal{H}(G,\rho)$, $h\in i_B^G(\xi)^{\rho_{\mathfrak{s}}}$ and $g_0\in G$,we have
    \begin{align*}
            (f\bullet h)(g_0) & = \int_G \; h(g)f^{\vee}(g^{-1}g_0) \; dg \\
            & = \int_G \; h(g)f(g_0^{-1}g) \; dg,
    \end{align*}
    where $f^{\vee}(g):=f(g^{-1})$. Note that by the above discussion, we have
    $$f\bullet h(g_0)=\beta(f)(\xi)\cdot h(g_0)$$
    for any $f\in \mathcal{Z}(G,\rho)$.
\subsection{Bernstein center}

    The Bernstein center $\mathfrak{Z}(G)$ is defined to be the ring of endomorphisms of the identity functor on the category of smooth representations $\mathfrak{R}(G)$. Let $\mathcal{D}(G)_{ec}^G$ denote the set of $G$-invariant essentially compact distributions on $C_c^{\infty}(G)$. By \cite[Equation 3.2.2]{haines2014stable}, we have isomorphisms
    \begin{equation}
        \mathfrak{Z}(G) \;\widetilde{\leftarrow} \;\lim\limits_{\longleftarrow} \mathcal{Z}(G,J)\;\widetilde{\rightarrow}\;\mathcal{D}(G)^G_{ec},
    \end{equation}
    where $J$ is an open compact subgroup of $G(F)$, $\mathcal{Z}(G,J)$ denotes the center of the ring $\mathcal{H}(G,J)$ of $J$-bi-invariant compactly supported functions, and the inverse limit is taken over all compact open subgroups $J$.

    Similar to the above subsection, for each inertial class $\mathfrak{s}=[M,\sigma]_G$, we endow the set $\mathfrak{X}_{\mathfrak{s}}=\{(L,\tau)_G\;|\;(L,\tau)\sim (M,\sigma)\}$ with a variety structure (see \cite[\S 3.3.1]{haines2014stable} for details). Define
    $$\mathfrak{X}_G=\coprod_{\mathfrak{s}}\mathfrak{X}_{\mathfrak{s}}.$$
    We shall see that $\mathfrak{X}_G$ has a natural structure of an algebraic variety, and the $\mathfrak{X}_{\mathfrak{s}}$'s form the connected components of that variety. By \cite[\S 3.3.2]{haines2014stable}, an element $Z\in \mathfrak{Z}(G)$ determines a regular function on $\mathfrak{X}_G$: for a point $(M,\sigma)_G \in \mathfrak{X}_{\mathfrak{s}}$, $Z$ acts on $i_P^G(\sigma)$ by a scalar $Z(\sigma)$ and the function $(M,\sigma)_G\rightarrow Z(\sigma)$ is a regular function on $\mathfrak{X}_G$. In fact, we have an isomorphism
    \begin{equation}
        \mathfrak{Z}(G)\;\widetilde{\rightarrow}\; \mathbb{C}[\mathfrak{X}_G].
    \end{equation}

    By direct computation and checking the definition, we have the following proposition:
    \begin{prop}\label{property of Bernstein center}
        Let $Z\in \mathfrak{Z}(G)$, and suppose $Z$ acts on $\pi=i_B^G(\chi)$ by a scalar $Z(\chi)$ for a character $\chi$ on $T(F)$. Let $(J_{\mathfrak{s}},\rho_{\mathfrak{s}})$ be the type constructed in \Cref{types} associated with the inertial equivalence class $\mathfrak{s}=[T,\chi]_G$. Denote the unit element in $\mathcal{H}(G,\rho_{\mathfrak{s}})$ by $e_{\rho_{\mathfrak{s}}}$. Then we have the following statements:
        \begin{itemize}
            \item[(a)] $Z*e_{\rho_{\mathfrak{s}}}$ is in $\mathcal{Z}(G,\rho_{\mathfrak{s}})$ and $Z*e_{\rho_{\mathfrak{s}}}$ acts on $\pi^{\rho_{\mathfrak{s}}}$ by the scalar $Z(\chi)$.
            \item[(b)] More generally, assume that $\pi$ is just a smooth finite-length representation such that $Z$ acts on it by a scalar $Z(\pi)$. Then for every $f\in \mathcal{H}(G)$, $\textnormal{tr}(Z*f\;|\;\pi)=Z(\pi)\textnormal{tr}(f\;|\;\pi).$
        \end{itemize}
    \end{prop}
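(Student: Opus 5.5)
Both parts will be derived from the standard description of $\mathfrak{Z}(G)$ as the inverse limit $\varprojlim \mathcal{Z}(G,J)$. We also use the identification $\mathfrak{Z}(G)\cong\mathcal{D}(G)^G_{ec}$: an element $Z$ acts on $\mathcal{H}(G)$ by convolution, and this action commutes with both left and right translation.

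\textbf{Part (a).} Write $e=e_{\rho_{\mathfrak{s}}}$. This is the idempotent $\mathrm{vol}(J_{\mathfrak{s}})^{-1}\rho_{\mathfrak{s}}^{-1}\mathbf{1}_{J_{\mathfrak{s}}}$, with $\mathrm{vol}(J_{\mathfrak{s}})=1$.

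First, $Z*e$ lies in $e\mathcal{H}(G)e$. Indeed, $Z$ commutes with convolution by elements of $\mathcal{H}(G)$, so
$$Z*e=Z*(e*e)=e*(Z*e)*e.$$
This last expression transforms correctly under $J_{\mathfrak{s}}$ on both sides, so it lies in $\mathcal{H}(G,\rho_{\mathfrak{s}})=e\mathcal{H}(G)e$.

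Second, $Z*e$ is central there. For any $f\in e\mathcal{H}(G)e$ we have
$$(Z*e)*f=Z*f=f*Z=f*e*Z=f*(Z*e),$$
using again that $Z$ commutes with convolution.

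Third, we compute the action on $\pi^{\rho_{\mathfrak{s}}}$. For $v\in \pi^{\rho_{\mathfrak{s}}}$ we have $\pi(e)v=v$, so
$$\pi(Z*e)v=Z(\chi)\,\pi(e)v=Z(\chi)v.$$
This uses that $Z$ acts on $\pi=i_B^G(\chi)$ by the scalar $Z(\chi)$. The one point needing care is the compatibility between the action $\bullet$ defined earlier (via $f^\vee$) and the usual action $\pi(f)$. We will check this directly from the formula for $\bullet$. After the substitution $g\mapsto g^{-1}$, it is a routine matter of conventions.

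\textbf{Part (b).} Fix $f\in\mathcal{H}(G)$ and choose a compact open $J$ such that $f$ is $J$-bi-invariant. Let $z_J$ be the image of $Z$ in $\mathcal{Z}(G,J)$. Then $Z*f=z_J*f$, which lies in $\mathcal{H}(G,J)$, and
$$\pi(Z*f)=\pi(z_J)\pi(f).$$
Since $\pi$ is admissible of finite length, $\pi^J$ is finite dimensional. Both operators preserve $\pi^J$ and vanish on its complement $\ker\pi(e_J)$, so both traces may be computed on $\pi^J$. On $\pi^J$ the operator $\pi(z_J)$ equals the action of $Z$, which is the scalar $Z(\pi)$. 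Hence
$$\textnormal{tr}(Z*f\,|\,\pi)=Z(\pi)\,\textnormal{tr}(f\,|\,\pi).$$

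\textbf{Main obstacle.} The argument is essentially formal. The step needing the most care is the compatibility of the distributional convolution $Z*f$ with the endomorphism of the identity functor, i.e.\ $\pi(Z*f)=Z_\pi\circ\pi(f)$. We will take this from \cite[\S3.2]{haines2014stable} and invoke it rather than reprove it.
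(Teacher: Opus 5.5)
Your argument is correct and is essentially what the paper intends. The paper gives no written proof beyond ``direct computation and checking the definition,'' and your verification is exactly that computation: it uses $e_{\rho_{\mathfrak{s}}}*e_{\rho_{\mathfrak{s}}}=e_{\rho_{\mathfrak{s}}}$, the fact that convolution by $Z$ commutes with $\mathcal{H}(G)$, and $\pi(Z*f)=Z(\pi)\,\pi(f)$. (A cosmetic remark: the $\bullet$ action agrees with the usual $\pi(f)$ via $f^{\vee}(g^{-1}g_0)=f(g_0^{-1}g)$ and the translation $x\mapsto g_0x$, not via $g\mapsto g^{-1}$.)
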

    Let $\mathfrak{B}(G)_{\textnormal{ps}}$ denote the set of all principal series blocks and $\mathfrak{X}_G^{\textnormal{ps}}=\coprod_{\mathfrak{s}\in \mathfrak{B}(G)_{\textnormal{ps}}}\mathfrak{X}_{\mathfrak{s}}$. Denote the ring of regular functions on $\mathfrak{X}_G^{\textnormal{ps}}$ by $\mathfrak{Z}(G)_{\textnormal{ps}}$. For any $Z\in \mathfrak{Z}(G)_{\textnormal{ps}}$, by extending it trivially to all non-principal blocks, we can regard it as an element in $\mathfrak{Z}(G)$. Therefore, we can regard $\mathfrak{Z}(G)_{\textnormal{ps}}$ as a subset of $\mathfrak{Z}(G)$. Note that this is only a subset of $\mathfrak{Z}(G)$, not a subring. However, this subset is closed under multiplication and addition of $\mathfrak{Z}(G)$. Therefore, we can endow $\mathfrak{Z}(G)_{\textnormal{ps}}$ with a ring structure. Since $\mathfrak{Z}(G)_{\textnormal{ps}}$ is a subset of $\mathfrak{Z}(G)$, it has a convolution action on $\mathcal{H}(G)$.

    By \Cref{property of Bernstein center}(a), we have the following corollary:
    \begin{cor}\label{surjectivity of convolution}

        Any element in $\mathcal{Z}(G,\rho_{\mathfrak{s}})$ can be written in the form of $Z*e_{\rho_{\mathfrak{s}}}$ for some $Z\in \mathfrak{Z}(G)_{\textnormal{ps}}$.
    \end{cor}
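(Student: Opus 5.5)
Given $z\in\mathcal{Z}(G,\rho_{\mathfrak{s}})$, I will build a regular function on $\mathfrak{X}_G^{\textnormal{ps}}$ that equals $\beta^{-1}(z)$ on the component $\mathfrak{X}_{\mathfrak{s}}$ and is zero on every other principal series component. Since $\mathfrak{X}_G^{\textnormal{ps}}$ is a disjoint union of its connected components $\mathfrak{X}_{\mathfrak{s}'}$, this extension by zero is a regular function. So it defines an element $Z\in\mathfrak{Z}(G)_{\textnormal{ps}}$, and via $\mathfrak{Z}(G)\simeq\mathbb{C}[\mathfrak{X}_G]$ (extending by zero on the non-principal blocks) an element of $\mathfrak{Z}(G)$. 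For $\xi=\chi\eta$ with $\eta$ unramified, $Z$ acts on $i_B^G(\xi)$ by the scalar $Z(\xi)=\beta^{-1}(z)(\xi)$.

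By \Cref{property of Bernstein center}(a), $Z*e_{\rho_{\mathfrak{s}}}$ lies in $\mathcal{Z}(G,\rho_{\mathfrak{s}})$ and acts on $i_B^G(\xi)^{\rho_{\mathfrak{s}}}$ by $Z(\xi)=\beta^{-1}(z)(\xi)$. By the definition of $\beta$, $z$ acts on the same space by the same scalar. Hence $\beta^{-1}(Z*e_{\rho_{\mathfrak{s}}})=\beta^{-1}(z)$ as functions on $\mathfrak{X}_{\mathfrak{s}}$, and injectivity of $\beta$ gives $Z*e_{\rho_{\mathfrak{s}}}=z$.

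The only step needing care is that $\beta^{-1}$ really is detected pointwise. This holds because $\mathbb{C}[\mathfrak{X}_{\mathfrak{s}}]$ consists of functions on the points $(T,\chi\eta)_G$. It also requires $i_B^G(\xi)^{\rho_{\mathfrak{s}}}\neq0$ for every $\xi$. That non-vanishing holds because $(J_{\mathfrak{s}},\rho_{\mathfrak{s}})$ is an $\mathfrak{s}$-type by \Cref{thm:main 4}: any irreducible subquotient of $i_B^G(\xi)$ has inertial support $\mathfrak{s}$, so it contains $\rho_{\mathfrak{s}}$, and taking $\rho_{\mathfrak{s}}$-isotypic vectors is exact.
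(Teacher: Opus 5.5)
Your argument is correct and is essentially the paper's. The paper derives the corollary in one line from \Cref{property of Bernstein center}(a), implicitly extending $\beta^{-1}(z)$ by zero to a regular function on $\mathfrak{X}_G^{\textnormal{ps}}$ and using that $\beta$ is an isomorphism, exactly as you do; your remarks on pointwise detection and on the non-vanishing of $i_B^G(\xi)^{\rho_{\mathfrak{s}}}$ (via the type property and exactness of taking isotypic vectors) make explicit what the paper leaves implicit.
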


    Next, we are going to recall the definition of constant term homomorphisms. Fix a standard Levi subgroup $M$, a standard parabolic $P=MN$ and an inertial equivalence class $\mathfrak{s}_M=[M',\sigma]_M\subset \mathfrak{B}(M)$. Let $\mathfrak{s}=[M',\sigma]_G$ be the corresponding inertial equivalence class in $G$. There is a canonical surjective morphism of varieties
    \begin{align}
        c_M^{G*}:\mathfrak{X}_{\mathfrak{s}_M} & \rightarrow \mathfrak{X}_{\mathfrak{s}} \notag \\
        (L,\tau)_M & \rightarrow (L,\tau)_G.
    \end{align}
   If we restrict the morphism $c_M^G$ to pairs of the form $(T,\xi)_M$, where $T$ denotes the maximal torus and $\xi$ is a complex character on $T$, then we get a morphism from $\mathfrak{X}_M^{\textnormal{ps}}$ to $\mathfrak{X}_G^{\textnormal{ps}}$, for which we use the same symbol $c_M^{G*}$ to denote. This induces an algebra homomorphism
    \begin{equation}
        c_M^G:\mathfrak{Z}(G)_{\textnormal{ps}}\rightarrow \mathfrak{Z}(M)_{\textnormal{ps}}.
    \end{equation}
    We call $c_M^G$ the constant term homomorphism, and for any $Z\in \mathfrak{Z}(G){\textnormal{ps}}$, we denote the image by $(Z)_P$.

\subsection{Stable Bernstein center}

\subsubsection{The local Langlands correspondence}

    We will recall the general form of the conjectural local Langlands correspondence (LLC) for a connected reductive $p$-adic group here. Let $F$ be a $p$-adic field and $\bar{F}$ denote the algebraic closure of $F$. Let $W_F\subset \Gamma_F:=\textnormal{Gal}(\bar{F}/F)$ denote the Weil group of $F$, and $I_F$ denote the inertia subgroup of $\Gamma_F$. We fix a geometric Frobenius element $\Phi\in W_F$. Let $W_F':=W_F\ltimes \mathbb{C}$ denote the Weil-Deligne group.

    A Langlands parameter is an admissible homomorphism $\varphi:W_F'\rightarrow {^LG}$, where $^LG=\hat{G}\rtimes W_F$ denotes the Langlands dual group. Denote the set of $\hat{G}$-conjugacy classes of admissible homomorphisms $\varphi:W_F'\rightarrow {^LG}$ by $\Phi(G/F)$ and let $\Pi(G/F)=\mathfrak{R}(G)_{\textnormal{irred}}$ denote the isomorphism classes of irreducible smooth representations of $G(F)$.

    \begin{conj}[LLC]
        There is a finite to one surjective map $\Pi(G/F)\rightarrow \Phi(G/F)$, which satisfies the desiderata of \cite[\S 10]{borel1979automorphic}.
    \end{conj}

\subsubsection{Stable Bernstein center and its relation with Bernstein center}

    In this subsection, we will assume LLC+ holds for the group $G$ (see \cite[Definition 5.2.1]{haines2014stable} for details). For any irreducible smooth representation $\pi$ of $G$, we write $\varphi_{\pi}$ the attached parameter by LLC.
    
    We term a $\widehat{G}$-conjugacy class of an admissible homomorphism
    $$\lambda:W_F\rightarrow {^LG}$$
    an infinitesimal character. Denote the $\widehat{G}$-conjugacy class of $\lambda$ by $(\lambda)_{\widehat{G}}$.

    Following \cite[\S 5.3]{haines2014stable}, we can regard the set of all infinitesimal characters as an affine variety, and we denote it by $\mathfrak{Y}_G$. Let $\mathfrak{Z}^{\textnormal{st}}(G)$ denote the ring of regular functions on the affine variety $\mathfrak{Y}$. We call this ring the stable Bernstein center of $G/F$.

    We have the following proposition from \cite[Proposition 5.5.1 and Corollary 5.5.2]{haines2014stable}:
    \begin{prop}\label{stable bernstein center and bernstein center}
        Assume LLC+ holds for $G$. The map $(M,\sigma)_G\rightarrow (\varphi_{\sigma}|_{W_F})_{\widehat{G}}$ defines a quasi-finite morphism of affine algebraic varieties
        $$f:\mathfrak{X}_G\rightarrow \mathfrak{Y}_G.$$
        It is surjective if $G/F$ is quasi-split.

        Moreover, $f$ induces a $\mathbb{C}$-algebra homomorphism $\mathfrak{Z}^{\textnormal{st}}(G)\rightarrow \mathfrak{Z}(G)$. It is injective if $G/F$ is quasi-split.
    \end{prop}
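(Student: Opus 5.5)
The statement is cited from \cite[Proposition 5.5.1 and Corollary 5.5.2]{haines2014stable}, so the plan is to recall how it follows from LLC+. The work splits into three steps: regularity of $f$ on each connected component $\mathfrak{X}_{\mathfrak{s}}$, quasi-finiteness and surjectivity, and then the consequences for rings of regular functions.

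\textbf{Well-definedness and regularity.} Fix $\mathfrak{s}=[M,\sigma]_G$ and a cuspidal pair $(M,\sigma)$. The map $(M,\sigma)_G\mapsto(\varphi_\sigma|_{W_F})_{\widehat G}$ is well defined because LLC is compatible with $G$-conjugation. The components of $\mathfrak{X}_{\mathfrak{s}}$ are parametrized by $(M,\sigma\nu)$, with $\nu$ ranging over unramified characters of $M$. LLC+ requires compatibility with unramified twists: $\varphi_{\sigma\nu}=z_\nu\varphi_\sigma$, where $z_\nu\in Z(\widehat M)^{I_F}_{\Phi}$ corresponds to $\nu$. It also requires compatibility with the embedding ${^LM}\hookrightarrow{^LG}$. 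Hence the map factors as
$$\textnormal{(torus of unramified characters of $M$)}\rightarrow \mathfrak{Y}_G,\qquad \nu\mapsto (z_\nu\cdot\varphi_\sigma|_{W_F})_{\widehat G}.$$
This is a morphism into the variety structure of \cite[\S5.3]{haines2014stable}, since that structure is defined exactly through such unramified twists of a fixed $\lambda$. It is invariant under the finite stabilizer group, so it descends to a morphism on $\mathfrak{X}_{\mathfrak{s}}$. Taking the disjoint union over $\mathfrak{s}$ gives a morphism $f:\mathfrak{X}_G\rightarrow\mathfrak{Y}_G$.

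\textbf{Quasi-finiteness and surjectivity.} For quasi-finiteness, a fibre of $f$ consists of cuspidal supports whose restricted parameters are $\widehat G$-conjugate. LLC+ says that $\pi$ and its supercuspidal support have the same infinitesimal character, and LLC has finite fibres, so only finitely many $(M,\sigma)_G$ occur. Alternatively, one can argue component by component: the twisting map has finite fibres because $\nu\mapsto z_\nu$ is injective up to a finite group. For surjectivity when $G$ is quasi-split, take an admissible $\lambda:W_F\rightarrow{^LG}$. Extend it to a tempered-type parameter $\varphi$, for instance trivially on $SL_2$ after conjugating into a minimal Levi ${^LM}$ through which $\lambda$ factors. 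Because $G$ is quasi-split, every Levi of ${^LG}$ is relevant, so by LLC there is $\pi$ with $\varphi_\pi=\varphi$. Its supercuspidal support $(M,\sigma)$ then maps to $\lambda$ by LLC+.

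\textbf{Ring homomorphism and injectivity.} Pullback along $f$ gives $\mathbb{C}[\mathfrak{Y}_G]\rightarrow\mathbb{C}[\mathfrak{X}_G]\cong\mathfrak{Z}(G)$. Both sides are products or direct limits of coordinate rings over components, so one checks that each regular function pulls back to a regular function on each $\mathfrak{X}_{\mathfrak{s}}$, which holds by the first step. If $f$ is surjective, a function vanishing after pullback vanishes at every point of $\mathfrak{Y}_G$. Since $\mathfrak{Y}_G$ is reduced, such a function is zero, which gives injectivity.

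\textbf{Main obstacle.} The main obstacle is the surjectivity step. It requires making precise that the chosen extension $\varphi$ is admissible and relevant, and that the LLC+ compatibility of infinitesimal characters with supercuspidal support applies to it. I would first reduce to $\lambda$ factoring through an elliptic Levi and use the relevance of Levis for quasi-split $G$.
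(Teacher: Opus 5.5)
\textbf{Context.} The paper gives no proof of this statement; it only quotes \cite[Proposition 5.5.1 and Corollary 5.5.2]{haines2014stable}. Your outline has the right shape: regularity on each $\mathfrak{X}_{\mathfrak{s}}$ from compatibility of LLC with unramified twists, surjectivity via LLC and LLC+ applied to a supercuspidal support, and injectivity from surjectivity plus reducedness of $\mathfrak{Y}_G$.

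\textbf{The gap: quasi-finiteness.} Finiteness of $L$-packets only bounds the number of $\pi$ with a given \emph{full} parameter $\varphi$. To conclude that finitely many $(M,\sigma)_G$ lie over an infinitesimal character $\lambda$, you also need that only finitely many $\widehat{G}$-classes of $L$-parameters satisfy $\varphi|_{W_F}\sim\lambda$. In Weil--Deligne form these are the $Z_{\widehat G}(\lambda)$-orbits of nilpotent $N\in\widehat{\mathfrak{g}}^{\lambda(I_F)}$ with $\mathrm{Ad}(\lambda(\Phi))N=q^{-1}N$. Finiteness of this orbit set is a separate, standard graded-Lie-algebra fact, and you must invoke it.

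With that fact the argument closes. Sending $(M,\sigma)_G$ to any irreducible subquotient of $i_P^G(\sigma)$ injects $f^{-1}(\lambda)$ into $\{\pi:\varphi_\pi|_{W_F}\sim\lambda\}$ by LLC+, and that set is then finite. Your component-by-component alternative cannot replace this. Since $\mathfrak{X}_G$ has infinitely many components, finiteness of each $f^{-1}(\lambda)\cap\mathfrak{X}_{\mathfrak{s}}$ says nothing about how many $\mathfrak{s}$ meet the fibre.

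\textbf{Regularity step.} Here $\lambda=\varphi_\sigma|_{W_F}$ need not be elliptic in ${}^LM$. LLC makes $\varphi_\sigma$ discrete, but a supercuspidal $\sigma$ can have $\varphi_\sigma$ nontrivial on the Deligne part. Then $\lambda$ factors through a proper Levi ${}^LM_\lambda\subsetneq{}^LM$, and the component of $\mathfrak{Y}_G$ through $\lambda$ is built from twists by $Z(\widehat{M}_\lambda)^{I_F}$. So the map should be factored as $X(M)\rightarrow(Z(\widehat M)^{I_F})_\Phi\rightarrow(Z(\widehat M_\lambda)^{I_F})_\Phi\rightarrow\mathfrak{Y}_G$. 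Finiteness of fibres on a single component comes from two facts: the middle arrow has finite kernel, and $z\mapsto(z\lambda)_{\widehat G}$ is finite by the construction of the variety structure in \cite[\S5.3]{haines2014stable}. It does not come from injectivity of $\nu\mapsto z_\nu$.

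\textbf{Surjectivity step.} The step you single out as the main obstacle is the easy one. For quasi-split $G$, $\lambda$ extended by $N=0$ is already an admissible and relevant parameter of $G$. LLC and LLC+ then finish immediately, with no need to pass to a minimal or elliptic Levi. Also, ``tempered'' is the wrong word here, since $\lambda$ need not have bounded image.
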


    To finish this part, we are going to introduce one way to construct distributions. Let $(\tau,V)$ be a finite-dimensional algebraic representation of $^LG$ on a complex vector space. Given a geometric Frobenius element $\Phi\in W_F$ and an admissible homomorphism $\lambda:W_F\rightarrow {^LG}$, we may define the semisimple trace
    \begin{equation*}
        \textnormal{tr}^{\textnormal{ss}}(\lambda(\Phi),V):=\textnormal{tr}(\tau\lambda(\Phi),V^{\tau\lambda(I_F)}).
    \end{equation*}
    \begin{prop}[{\cite[Proposition 5.7.1]{haines2014stable}}]\label{geometric stable Bernstein center}
        The map $\lambda\rightarrow \textnormal{tr}^{\textnormal{ss}}(\lambda(\Phi),V)$ defines a regular function on the variety $\mathfrak{Y}$ hence defines an element $Z_V\in \mathfrak{Z}^{\textnormal{st}}(G)$ by
        \begin{equation*}
            Z_V((\lambda)_{\widehat{G}})=\textnormal{tr}^{\textnormal{ss}}(\lambda(\Phi),V).
        \end{equation*}
        We use the same symbol $Z_V$ to denote the corresponding element in $\mathfrak{Z}(G)$ given via $\mathfrak{Z}^{\textnormal{st}}(G)\rightarrow \mathfrak{Z}(G)$. The latter has the property \begin{equation}
            Z_V(\pi)=\textnormal{tr}^{\textnormal{ss}}(\varphi_{\pi}(\Phi),V)
        \end{equation}
        for every $\pi\in \Pi(G/F)$, where $Z_V(\pi)$ stands for $Z_V((M,\sigma)_G)$ if $(M,\sigma)_G$ is the supercuspidal support of $\pi$.
    \end{prop}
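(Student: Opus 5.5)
The plan is to check regularity one connected component of $\mathfrak{Y}_G$ at a time, using the description of $\mathfrak{Y}_G$ from \cite[\S 5.3]{haines2014stable}, and then to obtain the identity for $Z_V(\pi)$ from the compatibility of LLC+ with supercuspidal supports, as in \Cref{stable bernstein center and bernstein center}.

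\textbf{Well-definedness.} First I check that the function $\lambda\mapsto \textnormal{tr}^{\textnormal{ss}}(\lambda(\Phi),V)$ depends only on $(\lambda)_{\widehat G}$. If $\lambda'={}^g\lambda$ with $g\in\widehat G$, then $\tau(g)$ maps $V^{\tau\lambda(I_F)}$ isomorphically onto $V^{\tau\lambda'(I_F)}$. It intertwines $\tau\lambda(\Phi)$ with $\tau\lambda'(\Phi)$, so the two traces agree.

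\textbf{Regularity on a component.} Recall that a connected component of $\mathfrak{Y}_G$ is obtained as follows. Fix an infinitesimal character $\lambda$ factoring through ${}^LM$ for a Levi subgroup $M$. The component is the image of the morphism
\[
(Z(\widehat M)^{I_F})^{\circ}_{\Phi}\longrightarrow \mathfrak{Y}_G,\qquad z\longmapsto (z\lambda)_{\widehat G},
\]
where $z\lambda$ agrees with $\lambda$ on $I_F$ and sends $\Phi$ to $z\lambda(\Phi)$. The component is the quotient of the torus by a finite group, so it suffices to show that the pullback $z\mapsto \textnormal{tr}^{\textnormal{ss}}(z\lambda(\Phi),V)$ is a regular function on the torus. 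Descent to the quotient is then automatic by the well-definedness above.

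The key point is that the twist is unramified, so $(z\lambda)(I_F)=\lambda(I_F)$. Hence the space $W:=V^{\tau\lambda(I_F)}$ does not depend on $z$. Each $z$ lies in $Z(\widehat M)^{I_F}$, so it commutes with $\lambda(I_F)\subset {}^LM$, and $\lambda(\Phi)$ normalizes $\lambda(I_F)$. Therefore both $\tau(z)$ and $\tau\lambda(\Phi)$ preserve $W$. This gives
\[
\textnormal{tr}^{\textnormal{ss}}(z\lambda(\Phi),V)=\textnormal{tr}\bigl(\tau(z)|_W\circ\tau\lambda(\Phi)|_W\bigr).
\]
Since $\tau|_{\widehat M}$ is algebraic, $z\mapsto \tau(z)|_W$ is a morphism of varieties, and the right-hand side is a polynomial in the matrix coefficients of $z$. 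This is regular, and it defines $Z_V\in\mathfrak{Z}^{\textnormal{st}}(G)$.

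\textbf{The identity $Z_V(\pi)=\textnormal{tr}^{\textnormal{ss}}(\varphi_\pi(\Phi),V)$.} By \Cref{stable bernstein center and bernstein center}, the image of $Z_V$ in $\mathfrak{Z}(G)$ is $Z_V\circ f$. So $Z_V(\pi)=Z_V((\varphi_\sigma|_{W_F})_{\widehat G})$, where $(M,\sigma)_G$ is the supercuspidal support of $\pi$. Part of the axioms of LLC+ is that $\varphi_\pi|_{W_F}$ is $\widehat G$-conjugate to $\varphi_\sigma|_{W_F}$, composed with ${}^LM\hookrightarrow{}^LG$. Moreover, $\textnormal{tr}^{\textnormal{ss}}(\varphi(\Phi),V)$ only involves $\varphi|_{W_F}$, since the $\mathbb{C}$-factor of $W_F'$ plays no role. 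The two values therefore coincide.

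\textbf{Main obstacle.} The delicate step is the precise description of the variety structure on $\mathfrak{Y}_G$ from \cite{haines2014stable}. I need that every component really is parametrized by unramified twists by central elements of $\widehat M^{I_F}$ that commute with $\lambda(I_F)$. I also need that the map from the torus is a finite quotient map, so that regularity can be checked on the torus. I would first try to quote these facts directly from \cite[\S 5.3]{haines2014stable}, and only reprove them if that fails.
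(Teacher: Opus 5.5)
The paper does not prove this statement; it quotes it from \cite[Proposition 5.7.1]{haines2014stable}. Your argument is correct and is essentially the one given there: pull back along the unramified twists $z\mapsto (z\lambda)_{\widehat G}$, note that $V^{\tau\lambda(I_F)}$ is constant along the inertial class so the semisimple trace is regular in $z$, and get $Z_V(\pi)=\textnormal{tr}^{\textnormal{ss}}(\varphi_\pi(\Phi),V)$ from the LLC+ compatibility of $\varphi_\pi|_{W_F}$ with the supercuspidal support. One point should be made explicit: $z$ lies in the coinvariants $(Z(\widehat M)^{I_F})^{\circ}_{\Phi}$, not in $Z(\widehat M)^{I_F}$. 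So compute with a lift $\tilde z\in Z(\widehat M)^{I_F}$ and note that the resulting regular function is invariant under $\tilde z\mapsto \tilde z\,y\Phi(y)^{-1}$. This invariance is conjugation by $y\in Z(\widehat M)^{I_F}$, already covered by your well-definedness step, so the function descends to the coinvariant torus.
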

    \begin{rmk}
        In fact, to state and prove \Cref{main theorem}, we don't need LLC+ and the stable Bernstein center. The main result is true without LLC+. We recall the definition of the stable Bernstein center and its relation with the Bernstein center to connect this result with the general conjecture proposed by Haines.
    \end{rmk}

\subsection{Base change homomorphism}

\subsubsection{Base change homomorphism of principal blocks}\label{base change homomorphism}

    Recall that in the above sections, we regard the set $\mathfrak{X}_{\mathfrak{s}}$ as an affine variety. Then, for any principal class $\mathfrak{s}=({^0T},{^0\chi})_G$, we can have a corresponding principal class $\mathfrak{s}_r=({^0T_r},{^0\chi}\circ N_r)_{G_r}$ on $G_r$, where $N_r$ denotes the norm map from $T_r$ to $T$. Therefore, we can define a morphism $N_r^*:\mathfrak{X}_{\mathfrak{s}}\rightarrow\mathfrak{X}_{\mathfrak{s}_r}$ by mapping $(T,\xi)_G$ to $(T_r,\xi\circ N_r)_{G_r}$. This induces an algebra homomorphism, which is also denoted by $N_r$:
    \begin{equation}
        N_r:\mathbb{C}[\mathfrak{X}_{\mathfrak{s}_r}]\rightarrow\mathbb{C}[\mathfrak{X}_{\mathfrak{s}}].
    \end{equation}
    \begin{defn}[{\cite[Definition 4.1.1]{haines2012base}}]
        The base change homomorphism $b_r:\mathcal{Z}(G,\rho_{\mathfrak{s}_r})\rightarrow \mathcal{Z}(G,\rho_{\mathfrak{s}})$ is the unique map which makes the following diagram commute:
         \[
            \begin{tikzcd}
            \mathbb{C}[\mathfrak{X}_{\mathfrak{s}_r}] \arrow[r,"\beta"] \arrow[d,"N_r"] & \mathcal{Z}(G_r,\rho_{\mathfrak{s}_r})   \arrow[d,"b_r"]\\
            \mathbb{C}[\mathfrak{X}_{\mathfrak{s}}] \arrow[r,"\beta"] & \mathcal{Z}(G,\rho_{\mathfrak{s}})
            \end{tikzcd}
        \]
    \end{defn}

    Note that the morphism $N_r^*$ can be extended to a morphism from $\mathfrak{X}_G^{\textnormal{ps}}$ to $\mathfrak{X}_{G_r}^{\textnormal{ps}}$. This induces an algebra homomorphism from $\mathfrak{Z}(G_r)_{\textnormal{ps}}$ to $\mathfrak{Z}(G)_{\textnormal{ps}}$, for which we also use $b_r$ to denote:
    \begin{equation}
        b_r:\mathfrak{Z}(G_r)_{\textnormal{ps}}\rightarrow \mathfrak{Z}(G)_{\textnormal{ps}}.
    \end{equation}
    By the definition of the base change homomorphism, we have
    \begin{equation}\label{matching of base change}
        b_r(Z_r*e_{\rho_{\mathfrak{s}_r}})=b_r(Z_r)*e_{\rho_{\mathfrak{s}}}
    \end{equation}
    for any $Z_r\in \mathfrak{Z}(G_r)_{\textnormal{ps}}$.

    Following \cite[\S 5.1]{haines2012base}, we have the following commutative diagram:
        
            \begin{equation}\label{constant term and base change commutes}
            \begin{tikzcd}
            \mathfrak{Z}(G_r)_{\textnormal{ps}} \arrow[r,"c_{M_r}^{G_r}"] \arrow[d,"b_r"] & \mathfrak{Z}(M_r)_{\textnormal{ps}}   \arrow[d,"b_r"]\\
            \mathfrak{Z}(G)_{\textnormal{ps}} \arrow[r,"c_M^G"] & \mathfrak{Z}(M)_{\textnormal{ps}}
            \end{tikzcd}
            \end{equation}

\subsubsection{Base change homomorphism of the stable Bernstein center}

    Again, let $F_r/F$ be a finite unramified field extension. Then $W_{F_r}\subset W_F$ and $I_{F_r}=I_F$. Let $\mathfrak{Y}^{G/F}$ (resp. $\mathfrak{Y}^{G/F_r}$) denote the variety of infinitesimal characters associated to $G$ (resp. $G_r$).
    \begin{prop}[{\cite[Proposition 5.4.1]{haines2014stable}}]
        The map $(\lambda)_{\widehat{G}}\rightarrow (\lambda|_{W_{F_r}})_{\widehat{G}}$ determines a morphism of algebraic varieties $\mathfrak{Y}^{G/F_r}\rightarrow \mathfrak{Y}^{G/F}$.
    \end{prop}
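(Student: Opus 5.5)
The plan is to prove the statement by working one connected component at a time. On each component I would write the restriction map explicitly in the coordinates used in \cite[\S 5.3]{haines2014stable} to put a variety structure on the set of infinitesimal characters, and check that it is polynomial there.

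First I would confirm that the map is well defined on conjugacy classes. The map $(\lambda)_{\widehat G}\mapsto(\lambda|_{W_{F_r}})_{\widehat G}$ restricts an admissible $\lambda:W_F\to{}^LG$ to $W_{F_r}$. Since $F_r/F$ is unramified, the image lands in $\widehat G\rtimes W_{F_r}={}^L(G_{F_r})$. Conjugating $\lambda$ by $g\in\widehat G$ conjugates the restriction by the same $g$, so the map is well defined on $\widehat G$-classes. It is also clear that the restriction is again admissible: it is continuous, Frobenius-semisimple elements stay Frobenius-semisimple, and the restriction still commutes with the projections to the Weil group.

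Next I would recall the structure from \cite[\S5.3]{haines2014stable}. A component of the variety of infinitesimal characters is determined by a fixed $\lambda_0$, up to conjugacy of $\lambda_0|_{I_F}$. The points of the component are the classes $(z\lambda_0)$, where $z$ runs through the identity component of the $\lambda_0(I_F)$-invariants of the centralizer torus, $z$ is inserted at $\Phi$, and one passes to coinvariants under the $\Phi$-action. The component is the quotient of this torus $\mathcal T_{\lambda_0}$ by a finite group $\mathcal W_{\lambda_0}$. Because $I_{F_r}=I_F$, restriction does not change the inertial part. So restriction sends the component of $\lambda_0$ into the component of $\lambda_0|_{W_{F_r}}$. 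On the geometric Frobenius $\Phi^r$ of $F_r$ it is given by
\[(z\lambda_0)(\Phi^r)=z\cdot{}^{\lambda_0(\Phi)}z\cdots{}^{\lambda_0(\Phi)^{r-1}}z\cdot\lambda_0(\Phi^r),\]
that is, $z\mapsto N_{\lambda_0}(z)$, a twisted norm. This map is a homomorphism of algebraic tori. Restricted to the identity components it is a morphism $\mathcal T_{\lambda_0}\to\mathcal T_{\lambda_0|_{W_{F_r}}}$, and it passes to the coinvariant quotients because it kills elements of the form $z\,{}^{\Phi}z^{-1}$ up to conjugation by $z$.

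Finally I would check equivariance. An element of $\mathcal W_{\lambda_0}$ is represented by some $n\in\widehat G$ normalizing the relevant data, and $n$ acts by conjugation. Conjugation commutes with restriction, so the torus morphism intertwines $\mathcal W_{\lambda_0}$ with its image in $\mathcal W_{\lambda_0|_{W_{F_r}}}$. The morphism therefore descends to the quotient varieties. Each component is sent into a single component, so the map is a morphism of varieties on the disjoint union. It induces the ring map $b_r'$ on $\mathfrak Z^{\rm st}$.

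I expect the main obstacle to be the bookkeeping in the previous paragraph. Haines' coordinates depend on choices, such as representatives $\lambda_0$ and the Levi $\widehat M_{\lambda_0}$. The Levi data attached to $\lambda_0|_{W_{F_r}}$ may be larger than the data for $\lambda_0$. So I must check that the twisted norm is compatible with these different choices of base points, and not merely algebraic in one chart. I would handle this by choosing, for each $F$-component, the restricted base point $\lambda_0|_{W_{F_r}}$ as the base point of the target $F_r$-component. The only remaining work is then that the inclusion of the $F$-torus into the $F_r$-torus is algebraic, and this holds because it is a closed immersion of diagonalizable groups.
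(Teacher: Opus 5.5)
Your proposal is correct and is essentially the argument behind Haines's Proposition 5.4.1, which the paper only cites without proof. On each component, restriction is the twisted norm $z\mapsto z\,{}^{\Phi}z\cdots{}^{\Phi^{r-1}}z$ on $(Z(\widehat M)^{I_F})^\circ_\Phi$, followed by the inclusion $Z(\widehat M)\subseteq Z(\widehat M_1)$ for a minimal $F_r$-Levi $\widehat M_1\subseteq\widehat M$, and this map descends to the finite quotients.

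Two minor points. First, the map actually goes $\mathfrak{Y}^{G/F}\to\mathfrak{Y}^{G/F_r}$, as you implicitly use; the direction displayed in the statement is a typo. Second, for the descent you only need the composite from the torus to be constant on fibers, which is automatic because restriction is well defined on $\widehat G$-classes. You can therefore drop the claim that representatives of $\mathcal W_{\lambda_0}$ land in $\mathcal W_{\lambda_0|_{W_{F_r}}}$; these representatives normalize $\widehat M$ but not necessarily $\widehat M_1$, so that claim is not immediate.
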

    \begin{defn}
        We call the corresponding map $b_r':\mathfrak{Z}^{\textnormal{st}}(G_r)\rightarrow \mathfrak{Z}^{\textnormal{st}}(G)$ the base change homomorphism for the stable Bernstein center.
    \end{defn}

    By \Cref{stable bernstein center and bernstein center}, if we assume LLC+ holds for $G$, then for any element $Z\in \mathfrak{Z}^{\textnormal{st}}(G)$, we may regard it as an element in the Bernstein center $\mathfrak{Z}(G)$, which we also denote by $Z$. Therefore, $\mathfrak{Z}^{\textnormal{st}}(G)$ can also act on $\mathcal{H}(G)$ be convolution. We have the following lemma:
    \begin{lemma}
        For any $Z_r\in \mathfrak{Z}^{\textnormal{st}}(G_r)$, we have
        $$b_r(Z_r*e_{\rho_{\mathfrak{s}_r}})=b_r'(Z_r)*e_{\rho_{\mathfrak{s}}}.$$
    \end{lemma}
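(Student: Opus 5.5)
Both sides of the identity lie in $\mathcal{Z}(G,\rho_{\mathfrak{s}})$: the left side because $Z_r*e_{\rho_{\mathfrak{s}_r}}\in\mathcal{Z}(G_r,\rho_{\mathfrak{s}_r})$, and the right side by \Cref{property of Bernstein center}(a). Through $\beta$, elements of $\mathcal{Z}(G,\rho_{\mathfrak{s}})$ correspond to regular functions on $\mathfrak{X}_{\mathfrak{s}}\cong \hat{A}/W_{^0\chi}$. It therefore suffices to show that both sides act by the same scalar on $i_B^G(\xi)^{\rho_{\mathfrak{s}}}$ for every $\xi=\chi\eta$ with $\eta$ unramified. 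In fact it is enough to do this for $\eta$ in a Zariski dense subset, for instance $\xi$ regular so that $i_B^G(\xi)$ is irreducible.

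\textbf{Left side.} By \Cref{property of Bernstein center}(a) applied on $G_r$, the element $Z_r*e_{\rho_{\mathfrak{s}_r}}$ corresponds under $\beta$ to the function $(T_r,\xi_r)_{G_r}\mapsto Z_r(\xi_r)$ on $\mathfrak{X}_{\mathfrak{s}_r}$. Here $Z_r(\xi_r)$ denotes the value of $Z_r$ viewed in $\mathfrak{Z}(G_r)$. Applying the definition of $b_r$ through the diagram with $N_r$, the element $b_r(Z_r*e_{\rho_{\mathfrak{s}_r}})$ acts on $i_B^G(\xi)^{\rho_{\mathfrak{s}}}$ by the scalar $Z_r(\xi\circ N_r)$.

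\textbf{Right side.} Again by \Cref{property of Bernstein center}(a), $b_r'(Z_r)*e_{\rho_{\mathfrak{s}}}$ acts by $b_r'(Z_r)(\xi)$. Using \Cref{stable bernstein center and bernstein center}, this value is $b_r'(Z_r)$ evaluated at the infinitesimal character $(\varphi_\xi|_{W_F})_{\widehat G}$, where $\varphi_\xi$ is the parameter of $\xi$ pushed into ${}^LG$ via ${}^LT\to{}^LG$; this uses the compatibility of LLC+ with parabolic induction from the torus. By definition of $b_r'$, this equals $Z_r$ evaluated at $(\varphi_\xi|_{W_{F_r}})_{\widehat G}$.

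\textbf{Main obstacle.} The remaining, and key, step is to identify $\varphi_\xi|_{W_{F_r}}$ with the parameter of $\xi\circ N_r$ on $T_r$, viewed in ${}^LG$ over $F_r$. This is the standard fact that local Langlands for tori (Langlands' correspondence for $T$ over $F$ and $T_r$ over $F_r$) is compatible with restriction to $W_{F_r}$ on the Galois side and with composition with the norm $N_{F_r/F}:T(F_r)\to T(F)$ on the automorphic side. I would derive this from the functoriality of the torus correspondence with respect to restriction of scalars together with Shapiro's lemma, or cite it directly.

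Granting this identification, $Z_r$ evaluated at $(\varphi_{\xi\circ N_r}|_{W_{F_r}})_{\widehat G}$ equals $Z_r(\xi\circ N_r)$, again by \Cref{stable bernstein center and bernstein center} and the compatibility of LLC+ with induction, now for $G_r$. The two scalars therefore agree, which proves the lemma. Beyond the torus compatibility, the only delicate point is to check that the LLC+ axioms indeed give $\varphi_{i_B^G(\xi)}|_{W_F}\sim\varphi_\xi|_{W_F}$, rather than merely the same infinitesimal character up to Frobenius-semisimplification. That is exactly the content of LLC+, and I expect it to be routine.
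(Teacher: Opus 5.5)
Your proposal is correct and is essentially the paper's argument. The paper's proof only cites \cite[Lemma 8.1.3]{kazhdan2012endoscopic}, the commutative square showing that Langlands' correspondence for tori intertwines $\xi\mapsto\xi\circ N_r$ with restriction to $W_{F_r}$; that is exactly your ``main obstacle'', and your reduction to comparing scalars on $i_B^G(\xi)^{\rho_{\mathfrak{s}}}$ spells out what the paper leaves implicit (your extra worry about $\varphi_{i_B^G(\xi)}$ is unnecessary, since $\mathfrak{X}_G\to\mathfrak{Y}_G$ is defined on supercuspidal supports $(T,\xi)_G$, so only the torus parameter $\varphi_\xi$ enters).
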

    \begin{proof}
        By \cite[Lemma 8.1.3]{kazhdan2012endoscopic}, we have the following commutative diagram:
        \[
            \begin{tikzcd}
            \textnormal{Hom}_{\textnormal{conts}}(T(F),\mathbb{C}^{\times}) \arrow[r] \arrow[d,"N_r"] & H^1_{\textnormal{conts}}(W_F,{^LT})   \arrow[d,"\textnormal{Res}"]\\
            \textnormal{Hom}_{\textnormal{conts}}(T(F_r),\mathbb{C}^{\times}) \arrow[r] & H^1_{\textnormal{conts}}(W_{F_r},{^LT_r})
            \end{tikzcd}
        \]
        This immediately gives us the lemma 
    \end{proof}

\section{Descent formulas}\label{descent formula section}

     In this section, we are going to develop another form of descent formula, to reduce the case to
     elliptic semisimple elements.

     Fix a principal class $\mathfrak{s}=({^0T},{^0\chi})$. In all future sections, we will omit the subscript $\mathfrak{s}$. In addition, we will use the symbols $(J,\rho)$ and $(J_r,\rho_r)$ instead of $(J_{\mathfrak{s}},\rho_{\mathfrak{s}})$ and $(J_{\mathfrak{s}_r},\rho_{\mathfrak{s}_r})$. Also, by \Cref{surjectivity of convolution} and \Cref{matching of base change}, we will only study elements of the form $Z_r*e_{\rho_r}$ for some $Z_r\in \mathfrak{Z}(G_r)_{\textnormal{ps}}$.

\subsection{Preparations for descent formulas}

    Fix once and for all Haar measures $\mathrm{d}g^i,\mathrm{d}g^j,\mathrm{d}g_r^i, \mathrm{d}g_r^j$ on $G,G_r$ respectively, such that $\textnormal{vol}_{\mathrm{d}g^i}(I)=\textnormal{vol}_{\mathrm{d}g^j}(J)=\textnormal{vol}_{\mathrm{d}g_r^i}(I_r)=\textnormal{vol}_{\mathrm{d}g_r^j}(J_r)=1$. 

    Assume that $\delta\in G_r$ is a $\theta$-semisimple element such that $\gamma=\mathcal{N}\delta$ is non-elliptic. Denote the $F$-split component of the center of the centralizer $G_{\gamma}^{\circ}$ by $S$. Let $M=\textnormal{Cent}_G(S)$, an $F$-Levi subgroup. Since $\gamma$ is non-elliptic, $M$ is a proper Levi subgroup of $G$, and $\gamma$ is elliptic in $M$. We can further assume that $M$ is standard and choose a standard $F$-parabolic subgroup $P=MN$ with $M$ as the Levi factor (otherwise we can replace $\gamma$ by its conjugation). By \cite[Lemma 4.2.1]{haines2009base}, $\gamma$ will be the norm of some elements in $M_r$. Therefore, we can assume $\delta\in M_r$. In conclusion, we will assume that $\gamma\in M$ is a non-elliptic semisimple element, such that $\gamma$ is elliptic in $M$ and $\gamma=\mathcal{N}\delta$ for some $\delta\in M_r$.

    The twisted centralizer $G_{\delta\theta}$ of $\delta\theta$ is an inner form of $G_{\gamma}$ whose group of $F$-points is 
    $$G_{\delta\theta}(F)=\{ g\in G_r\;|\; g^{-1}\delta\theta(g)=\delta\}.$$
    We have $M_{\delta\theta}^{\circ}=G_{\delta\theta}^{\circ}$ for dimension reasons. We choose compatible measures on the inner forms $G_{\delta\theta}^{\circ}$ and $G_{\gamma}^{\circ}$, and use them to form the quotient measures $d\bar{g}$ in the (twisted) orbital integrals of $\phi\in C_c^{\infty}(G_r)$. By definition, we have
    \begin{equation}
        \textnormal{TO}_{\delta\theta}^{G_r}(\phi)=\int_{G_{\delta\theta}^{\circ}\backslash G_r}\; \phi(g^{-1}\delta\theta(g))\;d\bar{g}.
    \end{equation}

    We also need to define another type of the same principal block:
    \begin{lemma}
        Let $\rho_r^I:=\textnormal{Ind}_{J_r}^{I_r}\rho_r$. Then $(I_r, \rho_r^I)$ is also a type for $\mathfrak{s}_r$.
    \end{lemma}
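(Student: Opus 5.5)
The plan is to deduce the statement from the fact that $(J_r,\rho_r)$ is an $\mathfrak{s}_r$-type (\Cref{thm:main 4} applied over $F_r$), using Frobenius reciprocity. Write $\sigma:=\rho_r^I=\textnormal{Ind}_{J_r}^{I_r}\rho_r$. Since $J_r\subset I_r$ is open of finite index, $\sigma$ is a finite-dimensional smooth representation of the compact open subgroup $I_r$. For any smooth representation $\pi$ of $G_r$, Frobenius reciprocity for compact induction from an open subgroup gives
$$\textnormal{Hom}_{I_r}(\sigma,\pi|_{I_r})\simeq \textnormal{Hom}_{J_r}(\rho_r,\pi|_{J_r}).$$
Thus $\pi$ contains $\sigma$ (in the sense of a nonzero $I_r$-homomorphism) if and only if $\pi$ contains $\rho_r$. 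For irreducible $\pi$, the latter holds if and only if $\mathfrak{J}(\pi)=\mathfrak{s}_r$.

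The one point needing care is that the definition of a type asks for an \emph{irreducible} representation of the compact group. I would check this by Mackey's criterion:
$$\textnormal{End}_{I_r}(\sigma)\simeq \bigoplus_{x\in J_r\backslash I_r/J_r}\textnormal{Hom}_{J_r\cap {^xJ_r}}({^x\rho_r},\rho_r),$$
so irreducibility amounts to showing that no $x\in I_r\setminus J_r$ intertwines $\rho_r$. By \Cref{support of hecke algebra}, the intertwining support of $\rho_r$ is $J_r\widetilde{W}_{r,{^0\chi_r}}J_r$. The task is therefore to show that
$$I_r\cap J_r\widetilde{W}_{r,{^0\chi_r}}J_r=J_r.$$
Using the Iwahori decomposition, $I_r\subset J_r\cdot(\text{finite Weyl representatives})\cdot J_r$ is false in general, so instead I would argue directly: if $j_1wj_2\in I_r$ with $w\in\widetilde{W}$, then $w\in I_r$. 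Elements of $\widetilde{W}$ lying in $I_r$ are represented in ${^0T_r}\subset J_r$, since $I_r\cap N(T_r)={^0T_r}$. Hence $x\in J_r$.

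I expect this Bruhat–Iwahori bookkeeping to be the main, though mild, obstacle. It rests on the disjointness of the Iwahori–Bruhat decomposition $G_r=\bigsqcup_{w\in\widetilde W}I_rwI_r$ together with $J_r\subset I_r$: since $J_rwJ_r\subset I_rwI_r$, if $J_rwJ_r$ meets $I_r=I_r\cdot 1\cdot I_r$ then $w=1$ in $\widetilde W$.

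If one prefers to avoid irreducibility, the alternative is to cite Bushnell–Kutzko's result that the equivalence $V\mapsto V^{\rho}$ transports along $\textnormal{Ind}$. In that approach, $\mathcal{H}(G_r,\rho_r^I)$ is Morita equivalent to $\mathcal{H}(G_r,\rho_r)$ via $e_{\rho_r}$, and the subcategory generated by $\sigma$-isotypic vectors coincides with that generated by $\rho_r$-isotypic vectors, namely $\mathfrak{R}_{\mathfrak{s}_r}(G_r)$. This uses the same Frobenius reciprocity identification $V^{\sigma}$-generation $=V^{\rho_r}$-generation, because the image of any $I_r$-map from $\sigma$ is generated by the image of $\rho_r\subset\sigma$.
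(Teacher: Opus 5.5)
Your proposal is correct and follows the paper's proof. Frobenius reciprocity reduces the statement to irreducibility of $\rho_r^I$, and irreducibility follows from Mackey's criterion once \Cref{support of hecke algebra} shows that the only intertwiners of $\rho_r$ lying in $I_r$ are the elements of $J_r$. Your Iwahori--Bruhat disjointness argument ($J_rwJ_r\subset I_rwI_r$, together with $I_r\cap N_{G_r}(T_r)={^0T_r}\subset J_r$) simply spells out a step that the paper asserts without detail.
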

    \begin{proof}
        From Frobenius reciprocity, we know that
        $$\mathrm{Hom}_{J_r}(\rho_r,\sigma)=\mathrm{Hom}_{I_r}(\rho_r^I,\sigma)$$
        for any representations $\sigma$ of $G_r$. So we only need to prove that $\rho_r^I$ is irreducible. By \Cref{support of hecke algebra}, we can see that the intertwiners of $(J_r,\rho_r)$ inside of $I_r$ is just $J_r$ itself. Then by Mackey's theory, $\rho_r^I$ is an irreducible representation of $I_r$.
    \end{proof}

    Following \cite{haineshecke}, we define $e_{\rho_r^I}\in \mathcal{H}(G_r)$ by
    $$e_{\rho_r^I}(x)=
    \begin{cases}
        \mathrm{dim}(\rho_r^I)\;\mathrm{tr}(\rho_r^I(x^{-1})), & x\in I_r \\
        0, & x\notin I_r.
    \end{cases}$$
    For any representation $(\pi,V)$ of $G_r$, let $V^{\rho_r^I}$ denote the $\rho_r^I$-isotypical component of $V$. Then we have $V^{\rho_r^I}=e_{\rho_r^I}V$. 

    Next, we compare the twisted orbital integrals of $e_{\rho_r^I}$ with the twisted orbital integrals of $e_{\rho_r}$ (for the untwisted version, just take $r=1$). We have the following proposition:
    \begin{prop}\label{orbital integral of e}
        For any $\theta$-semisimple element $\delta\in G(F_r)$, we have
        $$\textnormal{TO}_{\delta\theta}^{G_r}(e_{\rho_r^I})=[I:J]\;\textnormal{TO}_{\delta\theta}^{G_r}(e_{\rho_r}),$$
        where in the first integral, we use the Haar measure $\mathrm{d}g_r^i$, and in the second integral, we use the Haar measure $\mathrm{d}g_r^j$.
    \end{prop}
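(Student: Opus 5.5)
The plan is to express $e_{\rho_r^I}$ as a sum of $I_r$-conjugates of $e_{\rho_r}$, show that each conjugate has the same twisted orbital integral as $e_{\rho_r}$, and then account for the change of Haar measure. Throughout, $[I:J]$ is read as $[I_r:J_r]$, the index relevant to $G_r$.

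\textbf{Step 1: an explicit formula for $e_{\rho_r^I}$.} Since $\rho_r$ is a character, $\dim \rho_r^I=[I_r:J_r]$. With respect to $\mathrm{d}g_r^j$, the idempotent $e_{\rho_r}$ is $x\mapsto \rho_r(x^{-1})$ on $J_r$ and $0$ off $J_r$. The Frobenius formula for the character of an induced representation gives
$$\mathrm{tr}\,\rho_r^I(x^{-1})=\sum_{i\in I_r/J_r}\dot\rho_r(i^{-1}x^{-1}i),$$
where $\dot\rho_r$ denotes $\rho_r$ extended by zero off $J_r$. Hence, pointwise on $G_r$,
$$e_{\rho_r^I}(x)=[I_r:J_r]\sum_{i\in I_r/J_r} e_{\rho_r}(i^{-1}xi).$$
This identity is between functions, so it does not depend on the choice of measure.

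\textbf{Step 2: invariance under conjugation by $I_r$.} I would show that $\textnormal{TO}_{\delta\theta}^{G_r}({}^{i}e_{\rho_r})=\textnormal{TO}_{\delta\theta}^{G_r}(e_{\rho_r})$ for every $i\in I_r$, where ${}^{i}e_{\rho_r}(x)=e_{\rho_r}(i^{-1}xi)$. This is the main obstacle. Ordinary conjugation by $i$ is not a $\theta$-conjugation, so twisted invariance does not apply directly. What twisted invariance does give, in the form $\textnormal{TO}(\delta_y*f*\delta_{\theta(y)^{-1}})=\textnormal{TO}(f)$, is
$$\textnormal{TO}({}^{i}e_{\rho_r})=\textnormal{TO}\bigl(e_{\rho_r}*\delta_{i^{-1}\theta(i)}\bigr).$$
So the task reduces to removing a right translation by $k=i^{-1}\theta(i)\in I_r$. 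I would try the following, in this order.
\begin{itemize}
\item[(a)] Use that $J_r$ and $\rho_r$ are $\theta$-stable, since $\rho_r$ is built from ${}^0\chi\circ N_r$. By Lang's theorem for the smooth connected group scheme underlying $J_r$ over the unramified extension, every element of $J_r$ has the form $j^{-1}\theta(j)$ with $j\in J_r$. This handles the cosets in which $k$ can be moved into $J_r$, because for $j\in J_r$ one has $e_{\rho_r}*\delta_j=\rho_r(j)\,e_{\rho_r}$, and the resulting scalars cancel.
\item[(b)] For the general case, average over $I_r$. Since $e_{\rho_r^I}$ is an $I_r$-class function, it equals $\int_{I_r}{}^{y}e_{\rho_r^I}\,\mathrm{d}y$. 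Applying Lang's theorem on $I_r$, the map $y\mapsto y^{-1}\theta(y)$ pushes normalized Haar measure on $I_r$ to normalized Haar measure on $I_r$. This yields $\textnormal{TO}(e_{\rho_r^I})=\textnormal{TO}(e_{\rho_r^I}*\bar 1_{I_r})$, where $\bar 1_{I_r}$ denotes normalized averaging over $I_r$. Expanding via Step 1 and using twisted invariance on each summand, every term becomes $\textnormal{TO}(e_{\rho_r}*\bar 1_{I_r})$ up to a $\theta$-twist of the coset representative.
\end{itemize}
Route (b) therefore reduces the claim to the single identity $\textnormal{TO}(e_{\rho_r}*\bar 1_{I_r})=\textnormal{TO}(e_{\rho_r})$. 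To prove it, I would unfold the orbital integral over $G_{\delta\theta}^{\circ}\backslash G_r$ as an outer integral over $G_{\delta\theta}^{\circ}\backslash G_r/I_r$ and an inner integral over $I_r$. I would then use Lang's surjectivity on $I_r$ to absorb the right translation into the inner twisted conjugation.

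\textbf{Step 3: measure bookkeeping.} Granting Step 2, Step 1 gives, for a fixed measure,
$$\textnormal{TO}(e_{\rho_r^I})=[I_r:J_r]^2\,\textnormal{TO}(e_{\rho_r}).$$
Since $\mathrm{vol}_{\mathrm{d}g_r^i}(I_r)=\mathrm{vol}_{\mathrm{d}g_r^j}(J_r)=1$, we have $\mathrm{d}g_r^i=[I_r:J_r]^{-1}\mathrm{d}g_r^j$. The quotient measures on $G_{\delta\theta}^{\circ}\backslash G_r$ scale by the same factor, because the measure on $G_{\delta\theta}^{\circ}$ is fixed. Computing the left-hand side with $\mathrm{d}g_r^i$ and the right-hand side with $\mathrm{d}g_r^j$ therefore removes one factor of $[I_r:J_r]$. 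This yields the claimed equality $\textnormal{TO}_{\delta\theta}^{G_r}(e_{\rho_r^I})=[I:J]\,\textnormal{TO}_{\delta\theta}^{G_r}(e_{\rho_r})$. The untwisted case is $r=1$, where Step 2 is immediate from ordinary conjugation invariance.
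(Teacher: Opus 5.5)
Your Steps 1 and 3 are correct, and for $r=1$ your argument is complete. The problem is Step 2, and it is tied to reading $[I:J]$ as $[I_r:J_r]$.

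In the statement, $I=I_r^{\theta}$ and $J=J_r^{\theta}$ are groups of $F$-points. By $H^1(\textnormal{Gal}(F_r/F),J_r)=1$ and $I\cap J_r=J$, one has $[I:J]=\#(I_r/J_r)^{\theta}$. This is much smaller than $[I_r:J_r]$ once $r>1$ and $J\neq I$: for $\textnormal{GL}_2$ with $\textnormal{cond}(\alpha)=3$ the two indices are $q^2$ and $q^{2r}$. So the constant you aim for is not the one in the proposition, and it is in fact wrong.

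Concretely, $\textnormal{TO}({}^{i}e_{\rho_r})=\textnormal{TO}(e_{\rho_r})$ fails whenever $\theta(iJ_r)\neq iJ_r$. In that case $k=i^{-1}\theta(i)\in I_r\setminus J_r$. Using $\textnormal{TO}(\phi_1*\phi_2)=\textnormal{TO}(\phi_2*\theta(\phi_1))$ together with $e_{\rho_r}=e_{\rho_r}*e_{\rho_r}=\theta(e_{\rho_r})$, you get
\[
\textnormal{TO}(e_{\rho_r}*\delta_k)=\textnormal{TO}(e_{\rho_r}*\delta_k*e_{\rho_r})=0.
\]
The last equality holds because $e_{\rho_r}*\delta_k*e_{\rho_r}\in\mathcal{H}(G_r,\rho_r)$ is supported on $J_rkJ_r\subset I_r$, and by \Cref{support of hecke algebra} nothing in $I_r\setminus J_r$ intertwines $\rho_r$.

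Both (a) and (b) rely on a form of Lang's theorem that is false for $F_r$-points. On $J_r$ (resp. $I_r$), the map $y\mapsto y^{-1}\theta(y)$ has fibres $Jy$ (resp. $Iy$). Its image is exactly the set of $k$ with $k\theta(k)\cdots\theta^{r-1}(k)=1$, which is a proper subset of Haar measure zero. Lang's theorem concerns points over the algebraic closure of the residue field; over $F_r$ it only gives the vanishing of $H^1$.

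You can see directly that (b) must break. For nontrivial irreducible $\rho_r^I$, Schur orthogonality gives $e_{\rho_r^I}*\bar 1_{I_r}=0$, so (b) would force $\textnormal{TO}(e_{\rho_r^I})\equiv 0$. Likewise $e_{\rho_r}*\bar 1_{I_r}=0$ when $\rho_r\neq 1$, so the identity $\textnormal{TO}(e_{\rho_r}*\bar 1_{I_r})=\textnormal{TO}(e_{\rho_r})$ you reduce to is false.

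The repair is to split $I_r/J_r$ into $\theta$-stable and non-$\theta$-stable cosets.
\begin{itemize}
\item If $\theta(iJ_r)=iJ_r$, then $k=i^{-1}\theta(i)\in J_r$ has norm one, so $H^1=1$ lets you choose the representative $i\in I$. Conjugation by a $\theta$-fixed element is a $\theta$-conjugation, so ${}^{i}e_{\rho_r}$ has the same twisted orbital integrals as $e_{\rho_r}$. This is the correct substitute for your (a).
\item The non-$\theta$-stable cosets contribute $0$, as shown above.
\end{itemize}
Step 1 then gives $[I_r:J_r]\,[I:J]\,\textnormal{TO}(e_{\rho_r})$ for a fixed measure, and your Step 3 turns this into $[I:J]$, which is the proposition.

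Repaired this way, your proof works directly with orbital integrals and avoids the twisted density theorem. It does the same bookkeeping as the paper's proof. The paper computes $\textnormal{tr}(I_{\theta}\,|\,V^{\rho_r^I})$ by noting that $I_{\theta}$ permutes the pieces $\pi(i)V^{\rho_r}$ according to $iJ_r\mapsto\theta(i)J_r$. Hence only the $[I:J]$ $\theta$-fixed cosets contribute, each by $\textnormal{tr}(I_{\theta}\,|\,V^{\rho_r})$.
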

    \begin{proof}
        Fix a $\theta$-stable representation $(\pi,V)$ of $G_r$ with intertwiner $I_{\theta}$. By the twisted version of the Kazhdan density theorem (see \cite{kottwitz2000distributions}), we only need to prove that $\textnormal{tr}(e_{\rho_r^I}I_{\theta}|V)=[I:J]\;\textnormal{tr}(e_{\rho_r}I_{\theta}|V)$, where on the left hand side, we use the Haar measure $\mathrm{d}g_r^i$ and on the right hand side, we use the Haar measure $\mathrm{d}g_r^j$. With the Haar measures given, the two functions induce projections from $V$ to $V^{\rho_r^I}$ and $V^{\rho_r}$, respectively. Together with the fact that $\rho_r^I$ is also $\theta$-stable, we can get
        $$\textnormal{tr}(e_{\rho_r^I}I_{\theta}|V)=\textnormal{tr}(I_{\theta}|V^{\rho_r^I}),$$
        $$\textnormal{tr}(e_{\rho_r}I_{\theta}|V)=\textnormal{tr}(I_{\theta}|V^{\rho_r}).$$
        So we only need to prove $\textnormal{tr}(I_{\theta}|V^{\rho_r^I})=[I:J]\;\textnormal{tr}(I_{\theta}|V^{\rho_r})$. We choose a basis for $\mathrm{Hom}_{J_r}(\rho_r,V)$, denoted by $\{\phi_1,\cdots,\phi_n\}$. By Frobenius reciprocity, if we denote the corresponding element of $\phi_i$ in $\mathrm{Hom}_{I_r}(\rho_r^I,V)$ by $\phi_i'$, then the set $\{\phi_1',\cdots,\phi_n'\}$ will be a basis for $\mathrm{Hom}_{J_r}(\rho_r^I,V)$.
        
        Here, we recall the explicit form of Frobenius reciprocity: fix a set of representatives of the left coset for $J_r\backslash I_r$, denoted by $\{i_1,\cdots,i_l\}$. We define the functions $f_k$ for $1\leq k\leq l$ as follows:
        $$f_k(x)=
            \begin{cases}
            \rho_r(j), & \text{if } x \in J_r i_k \text{ and } x = j i_k \text{ for } j \in J_r, \\
            0, & \text{otherwise}.
            \end{cases}$$
        Then the explicit form of Frobenius reciprocity is given as follows:
        $$\phi'_h(f_k)=\sum_{x\in I_r/J_r}\pi(x)\phi_h(f_k(x^{-1})).$$
        The sets $\{\phi_h(1)\}_{1\leq h\leq n}$ and $\{\phi'_h(f_k)\}_{1\leq h\leq n,1\leq k\leq l}$ form bases for $V^{\rho_r}$ and $V^{\rho_r^I}$, respectively. Assume that $I_{\theta}(\phi_h(1))=\sum_{1\leq q\leq n}a_{hq}\phi_q(1)$ for some complex numbers $a_{hq}, 1\leq h,q\leq n$. Then we have
        \begin{align*}
        I_{\theta}(\phi'_h(f_k)) & = I_{\theta}(\sum_{x\in I_r/J_r}\pi(x)\phi_h(f_k(x^{-1})))\\ & =\sum_{x\in I_r/J_r}\pi(\theta(x))I_{\theta}(\phi_h(f_k(x^{-1}))) \\ & =\sum_{x\in I_r/J_r}\pi(\theta(x))I_{\theta}(\phi_h({^\theta f}_k((\theta(x))^{-1}))) \\ & =\sum_{x\in I_r/J_r}\pi(\theta(x))\sum_{1\leq q\leq n}a_{hq}\cdot \phi_h({^\theta f}_k((\theta(x))^{-1})) \\ & =\sum_{1\leq q \leq n}a_{hq}\cdot\phi'_q({^\theta f}_k),
        \end{align*}
        where $^\theta f_k(x)=f_k(\theta^{-1}(x))$. We can see that only those $k$ such that $\theta(J_ri_k)=J_ri_k$ could contribute to the trace. By \cite[Proposition 2.2]{yu2001construction}, we know $H^1(\textnormal{Gal}(F_r/F), J_r)=1$. Therefore, $(J_r\backslash I_r)^{\theta}=J\backslash I$, and we may pick $i_k$ such that $\theta(i_k)=i_k$ for all such $k$. Thus, for all such $k$, $I_{\theta}(\phi'_h(f_k))=\sum_{1\leq q \leq n}a_{hq}\cdot\phi'_q({^\theta f}_k)=\sum_{1\leq q \leq n}a_{hq}\cdot\phi'_q(f_k)$. Therefore, $\textnormal{tr}(I_{\theta}|V^{\rho_r^I})=[I:J]\;\sum_{1\leq h\leq n} a_{hh}=[I:J]\;\textnormal{tr}(I_{\theta}|V^{\rho_r})$. The proof is complete.
    \end{proof}

\subsection{Descent formulas}
    Choose $K_r$ to be a maximal compact subgroup of $G_r$ in good position with respect to $P$ and stable by $\theta$. For any $\phi \in C_c^{\infty}(G_r)$, we define the constant term $\phi_P$ as follows:
    $$\phi_{P_r}(m)=\delta_{P_r}^{1/2}(m)\int_{N_r}\int_{K_r}\phi(k^{-1}mn\theta(k))\;\mathrm{d}k\;\mathrm{d}n$$
    where the Haar measure $\mathrm{d}k$ is chosen such that $\textnormal{vol}_{\mathrm{d}k}(K_r)=1$ and the Haar measure $\mathrm{d}n$ is chosen such that $\textnormal{vol}_{\mathrm{d}n}(K_r\cap N_r)=1$.
    Note that $\phi_{P_r}\in C_c^{\infty}(G_r)$. We have the following fundamental descent formula, from \cite[Proposition 4.4.9]{laumon1996cohomology}:
    \begin{prop}
        Suppose $M$ is an $F$-Levi subgroup of $G$ and $P=MN$ is an $F$-rational parabolic subgroup with Levi factor $M$. Suppose that we are given a $\theta$-semisimple $\delta\in G_r$ with norm $\gamma=\mathcal{N}\delta\in G$, with the property that $G_{\delta\theta}^{\circ}\subset M$. Then
        \begin{equation}
            \textnormal{TO}_{\delta\theta}^{G_r}(\phi)=|D_{G/M}(\gamma)|^{-1/2}\textnormal{TO}_{\delta\theta}^{M_r}(\phi_{P_r})
        \end{equation}
        where $D_{G/M}(m)=\textnormal{det}(1-\mathrm{Ad}(m^{-1});\mathrm{Lie}(G(F))/\mathrm{Lie}(M(F))).$
    \end{prop}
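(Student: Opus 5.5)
This is the standard twisted descent formula. I would prove it by the usual Iwasawa decomposition argument, in four steps.

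\textbf{Step 1: unfold the integral.} Write $G_r=P_rK_r=M_rN_rK_r$ with $K_r$ $\theta$-stable. The quotient measure on $G_{\delta\theta}^{\circ}\backslash G_r$ then decomposes as
$$\int_{G_{\delta\theta}^{\circ}\backslash M_r}\int_{N_r}\int_{K_r}\delta_{P_r}(m)^{-1}\,\mathrm{d}k\,\mathrm{d}n\,\mathrm{d}\bar m,$$
with the measure normalizations fixed in the definition of $\phi_{P_r}$. Substituting $g=mnk$ gives
$$g^{-1}\delta\theta(g)=k^{-1}n^{-1}\bigl(m^{-1}\delta\theta(m)\bigr)\theta(n)\theta(k).$$

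\textbf{Step 2: change variables in $N_r$.} Put $\mu=m^{-1}\delta\theta(m)\in M_r$. The key point is that the map
$$N_r\to N_r,\qquad n\mapsto \mu^{-1}n^{-1}\mu\,\theta(n)$$
is a bijection. I would prove this by filtering $N_r$ by the lower central series and reducing to the abelian graded pieces $\mathrm{Lie}(N)$. On each graded piece the map is $X\mapsto -\mathrm{Ad}(\mu^{-1})X+\theta(X)$. Passing to $\bar F$, $\mathrm{Lie}(N_r)\otimes\bar F$ becomes a sum of $r$ copies permuted cyclically by $\theta$. The determinant of this operator therefore becomes
$$\det\bigl(1-\mathrm{Ad}(N\mu)^{-1};\mathrm{Lie}(N)\bigr),$$
with $N\mu$ stably conjugate to $\gamma$ in $M$, up to a sign.

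The hypothesis $G_{\delta\theta}^{\circ}\subset M$ forces $\gamma$ to have no eigenvalue $1$ on $\mathrm{Lie}(N)$. This gives bijectivity, and it shows that the Jacobian of the substitution is
$$\bigl|\det(1-\mathrm{Ad}(\gamma^{-1});\mathrm{Lie}(N))\bigr|_F,$$
where the absolute value of $F_r$ restricted to norms gives $|\cdot|_F$.

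\textbf{Step 3: assemble the $M$-integral.} After the substitution $n\mapsto \mu^{-1}n^{-1}\mu\theta(n)$, the inner integral over $N_r$ and $K_r$ becomes
$$|\det(1-\mathrm{Ad}(\gamma^{-1});\mathrm{Lie} N)|^{-1}\,\delta_{P_r}(\mu)^{-1/2}\,\phi_{P_r}(\mu).$$
Combined with the factor $\delta_{P_r}(m)^{-1}$, which is compensated via $\delta_{P_r}(\mu)=\delta_P(\gamma)$ because of $\theta$-invariance, and integrated over $G_{\delta\theta}^{\circ}\backslash M_r$, this yields $\textnormal{TO}^{M_r}_{\delta\theta}(\phi_{P_r})$ times
$$\delta_P(\gamma)^{1/2}\,|\det(1-\mathrm{Ad}(\gamma^{-1});\mathrm{Lie} N)|^{-1}.$$

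\textbf{Step 4: identify the constant.} The identity $\mathrm{Lie}(G)/\mathrm{Lie}(M)=\mathrm{Lie}(N)\oplus\mathrm{Lie}(\bar N)$ gives
$$|D_{G/M}(\gamma)|=|\det(1-\mathrm{Ad}\gamma^{-1};\mathrm{Lie} N)|\cdot|\det(1-\mathrm{Ad}\gamma;\mathrm{Lie} N)|,$$
and the second factor equals the first times $\delta_P(\gamma)^{\pm1}$. Together these show the constant is $|D_{G/M}(\gamma)|^{-1/2}$.

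The main obstacle is Step 2: the Jacobian computation in the twisted setting, namely relating the $F_r$-determinant of $X\mapsto \theta(X)-\mathrm{Ad}(\mu^{-1})X$ to the $F$-determinant for the norm $\gamma$. I would handle it exactly as in \cite[Proposition 4.4.9]{laumon1996cohomology}, by base change to $\bar F$ and the cyclic block-matrix determinant identity.
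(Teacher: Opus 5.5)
Your strategy is the standard one behind \cite[Proposition 4.4.9]{laumon1996cohomology}, which is all the paper invokes for this statement. Your Step 2 is also fine: the substitution $n\mapsto \mu^{-1}n^{-1}\mu\theta(n)$ is a bijection, and its Jacobian comes out of the cyclic block determinant. The gap is in the modulus-character bookkeeping of Steps 1, 3 and 4, which as written does not give the stated constant.

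For $g=mnk$ one has $\mathrm{d}g=\mathrm{d}m\,\mathrm{d}n\,\mathrm{d}k$ with no modulus factor. This is because $\mathrm{d}m\,\mathrm{d}n$ is already a left Haar measure on $P_r$: left translation by $m_0n_0$ sends $(m,n)$ to $(m_0m,(m^{-1}n_0m)n)$. The factor $\delta_{P_r}(m)^{-1}$ belongs to the decomposition $g=nmk$. With that factor, your integrand is not left invariant under $G_{\delta\theta}^{\circ}(F)$. That group contains $A_M(F)$, on which $\delta_{P_r}$ is nontrivial, so the integral over $G_{\delta\theta}^{\circ}\backslash M_r$ is not even defined. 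Also, a function of $m$ cannot be ``compensated'' by the constant $\delta_{P_r}(\mu)$.

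Consequently your Step 3 constant $\delta_P(\gamma)^{1/2}|\det(1-\mathrm{Ad}(\gamma^{-1});\mathrm{Lie}N)|^{-1}$ is wrong, and the ``$\pm1$'' in Step 4 hides this. Put $\delta_P(\gamma)=|\det(\mathrm{Ad}\gamma;\mathrm{Lie}N)|$. Then exactly $|\det(1-\mathrm{Ad}\gamma;\mathrm{Lie}N)|=\delta_P(\gamma)\,|\det(1-\mathrm{Ad}\gamma^{-1};\mathrm{Lie}N)|$, hence
\[
|D_{G/M}(\gamma)|^{-1/2}=\delta_P(\gamma)^{-1/2}\,|\det(1-\mathrm{Ad}(\gamma^{-1});\mathrm{Lie}N)|^{-1}.
\]
So what your computation actually yields is $\delta_P(\gamma)\,|D_{G/M}(\gamma)|^{-1/2}$.

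The repair is to drop the spurious factor. The only modulus factor left is the $\delta_{P_r}(\mu)^{-1/2}$ that appears when you rewrite the $N_r\times K_r$ integral as $\phi_{P_r}(\mu)$. First, $\delta_{P_r}(\mu)=\delta_{P_r}(\delta)$, because $\delta_{P_r}\circ\theta=\delta_{P_r}$. Second, $\delta_{P_r}(\delta)=|N_{F_r/F}\det(\mathrm{Ad}\,\delta;\mathrm{Lie}N_r)|_F$, which equals $|\det(\mathrm{Ad}(\delta\theta(\delta)\cdots\theta^{r-1}(\delta));\mathrm{Lie}N)|_F=\delta_P(\gamma)$. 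Combined with your correct Step 2 Jacobian, this gives exactly $\delta_P(\gamma)^{-1/2}|\det(1-\mathrm{Ad}(\gamma^{-1});\mathrm{Lie}N)|^{-1}=|D_{G/M}(\gamma)|^{-1/2}$.

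A quick check: take $G=\mathrm{GL}_2$, $M=T$, $r=1$, $\gamma=\mathrm{diag}(a,b)$, and let $\alpha$ be the positive root. Then $n^{-1}\gamma n=\gamma\,u_{\alpha}((1-b/a)x)$ for $n=u_{\alpha}(x)$. Also $\delta_B(\gamma)=|a/b|$ and $|D_{G/T}(\gamma)|=|a/b|\,|1-b/a|^2$, which confirms the constant $\delta_B(\gamma)^{-1/2}|1-b/a|^{-1}$.
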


    Next, we will prove several important properties of the function $\phi_{P_r}$. Let $(\sigma,V_r)$ be a smooth $\theta$-stable finite-length representation of $M_r$. We fix an intertwiner $I_{\theta}^M:\sigma\rightarrow \sigma^{\theta}$ (note that $I_{\theta}^M$ is uniquely determined up to a non-zero scalar). We define the corresponding intertwiner $I_{\theta}:i_{P_r}^{G_r}(\sigma)\rightarrow(i_{P_r}^{G_r}(\sigma))^{\theta}$ as follows:
    $$(I_{\theta}\Phi)(g)=I_{\theta}^M(\Phi(\theta^{-1}g)),$$
    for any $\Phi\in i_{P_r}^{G_r}(\sigma)$ and $g\in G_r$.
    We have the following lemma:
    \begin{lemma}\label{trace and induced representation}
        Let $\sigma$ denote a smooth $\theta$-stable finite-length representation of $M_r$, with intertwiner $I_{\theta}^M$. Then for any $\phi\in C_c^{\infty}(G_r)$, we have
        $$\textnormal{tr}(\phi_{P_r}I_{\theta}^M|\;\sigma)=\textnormal{tr}(\phi I_{\theta}\;|\;i_{P_r}^{G_r}(\sigma)).$$
    \end{lemma}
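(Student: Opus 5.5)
We will prove the identity by writing $i_{P_r}^{G_r}(\sigma)$ through its realization on $K_r$ and computing the trace of $\phi I_\theta$ as an integral kernel. By the Iwasawa decomposition $G_r=P_rK_r$ with $K_r$ $\theta$-stable and in good position with respect to $P$, restriction to $K_r$ identifies $i_{P_r}^{G_r}(\sigma)$ with the space of smooth functions $\Phi:K_r\to V_r$ satisfying $\Phi(pk)=\sigma(p)\Phi(k)$ for $p\in P_r\cap K_r$. Since $\phi$ is bi-invariant under a small open compact subgroup $K'\subset K_r$, the operator $\phi I_\theta$ preserves the finite-dimensional subspace of $K'$-invariant vectors, and its trace is unchanged by restricting to that subspace. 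This is the standard situation of the untwisted proof (van Dijk's formula), and we only have to track where $\theta$ enters.

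First we compute the composite operator. For $\Phi$ in the induced space,
$$(\phi I_\theta\Phi)(k_0)=\int_{G_r}\phi(g)\,I_\theta^M\bigl(\Phi(\theta^{-1}(k_0g))\bigr)\,dg .$$
We substitute $g=\theta(k)^{-1}\cdot$ suitably and decompose $g$ as $g=k_0^{-1}\,m n\,\theta(k)$ with $m\in M_r$, $n\in N_r$, $k\in K_r$. The Haar measure then factors as $dg=\delta_{P_r}(m)^{-1}\,dm\,dn\,dk$ with the normalizations fixed above, namely $\textnormal{vol}(K_r)=1$ and $\textnormal{vol}(K_r\cap N_r)=1$. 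Using $\theta$-equivariance, $I_\theta^M\sigma(\theta^{-1}(mn))=\sigma(m)I_\theta^M$ on the $\sigma$-part, since $\sigma$ is extended trivially on $N_r$ and $I_\theta^M$ intertwines $\sigma$ with $\sigma^\theta$. We also use the modulus character $\delta_{P_r}^{1/2}$ from the normalized induction. With these, the operator becomes an integral operator on functions $K_r\to V_r$ with $\textnormal{End}(V_r)$-valued kernel
$$\mathcal{K}(k_0,k)=\int_{M_r}\int_{N_r}\delta_{P_r}^{1/2}(m)\,\phi\bigl(k_0^{-1}mn\,\theta(k)\bigr)\,\sigma(m)I_\theta^M\,dn\,dm .$$

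Next we take the trace. The trace of an integral operator with smooth kernel on this finite-dimensional $K'$-invariant space is $\int_{K_r}\textnormal{tr}\,\mathcal{K}(k,k)\,dk$. Here we must check that the diagonal integral over $K_r$, rather than over $(P_r\cap K_r)\backslash K_r$, is correct, which follows by unfolding along $P_r\cap K_r$ with $\textnormal{vol}(K_r)=1$. Exchanging the $K_r$ and $M_r$ integrals then gives
$$\textnormal{tr}\Bigl(\int_{M_r}\delta_{P_r}^{1/2}(m)\int_{N_r}\int_{K_r}\phi(k^{-1}mn\theta(k))\,dk\,dn\;\sigma(m)\,dm\;I_\theta^M\Bigr),$$
which is exactly $\textnormal{tr}(\phi_{P_r}I_\theta^M\mid\sigma)$ by the definition of $\phi_{P_r}$. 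The finite-length hypothesis guarantees that $\sigma(\phi_{P_r})$ has finite rank, so this trace makes sense.

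We expect the main obstacle to be the bookkeeping in the change of variables. One must place $\theta$ on the correct side, so that the diagonal $k_0=k$ produces $k^{-1}mn\theta(k)$ and not $\theta(k)^{-1}mnk$. One must also verify the compatibility of the modulus factors, using $\delta_{P_r}\circ\theta=\delta_{P_r}$ (true because $P$ is $F$-rational), together with the measure normalizations. A cleaner alternative, if the kernel computation becomes messy, is to reduce via the twisted Kazhdan density/trace Paley–Wiener framework already invoked in Proposition \ref{orbital integral of e}. We would not rely on that route unless needed, since the direct computation is more self-contained.
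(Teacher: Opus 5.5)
Your proposal is correct and follows essentially the paper's route. The paper simply cites van Dijk's kernel computation in the compact picture (\cite[Theorem 2]{van1972computation}) and asserts that the twisted case is identical; you carry that out, with $\theta$ entering only through the substitution $k_0g=mn\,\theta(k)$ and the relation $I_\theta^M\sigma(\theta^{-1}(m))=\sigma(m)I_\theta^M$.

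There is one bookkeeping slip. In the coordinates $g=k_0^{-1}mn\,\theta(k)$ one has $dg=dm\,dn\,dk$ with no factor $\delta_{P_r}(m)^{-1}$; that factor belongs to the ordering $nm$. So the $\delta_{P_r}^{1/2}(m)$ in your kernel comes solely from the normalized induction, and your displayed kernel is nonetheless the correct one.
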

    \begin{proof}
        See \cite[Theorem 2]{van1972computation} for the untwisted version. The twisted version is proved in the same way.
    \end{proof}

    Starting from here, we will write out the descent formula for the function $e_{\rho_r}$. We start with the functions $e_{\rho^I_r}$ and $e_{\rho_r}$. Instead of computing orbital integrals, we compute the traces. Let $(\sigma, W)$ be any irreducible representation of $M_r$. Then from \Cref{trace and induced representation}, we get
    \begin{align}\label{trace descent equation}
        \textnormal{tr}(e_{\rho_r^I} I_{\theta}\;|\;i_{P_r}^{G_r}(\sigma)) & =\textnormal{tr}((e_{\rho_r^I})_P I_{\theta}^M\;|\;\sigma) \notag \\  & =\textnormal{tr}(I_{\theta}\;|\;i_{P_r}^{G_r}(\sigma)^{\rho_r^I}).
    \end{align}

    Let $^PW_r$ denote the minimal length representatives of $W_{M_r}\backslash W_r$. We have the refined Iwasawa decomposition:
    \begin{equation}\label{refined Iwasawa decomposition}
        G_r=\coprod_{w\in {^PW_r}}P_rwI_r.
    \end{equation}
    By Mackey Theory, we have
    $$i_{P_r}^{G_r}(\sigma)|_{I_r}=\bigoplus_{w\in {^PW_r}}i_{I_r\cap {^{w^{-1}}P_r}}^{I_r}({^{w^{-1}}\sigma}).$$
    By writing out this isomorphism explicitly, we can see that $I_{\theta}$ will map a function in $i_{I_r\cap {^{w^{-1}}P_r}}^{I_r}({^{w^{-1}}\sigma})$ to a function in $i_{I_r\cap {^{\theta(w)^{-1}}P_r}}^{I_r}({^{\theta(w)^{-1}}\sigma})$. So the only $w$ which contributes to the trace comes from $({^PW_r})^{\theta}$. By [\cite{haines2009base}, Lemma 4.5.2(c)], $({^PW_r})^{\theta}={^PW}$. Therefore, we may rewrite \Cref{trace descent equation} as
    \begin{equation}
        \textnormal{tr}(I_{\theta}\;|\;i_{P_r}^{G_r}(\sigma)^{\rho_r^I})=\bigoplus_{w\in {^PW}}\textnormal{tr}(I_{\theta}\;|\;i_{I_r\cap {^{w^{-1}}P_r}}^{I_r}({^{w^{-1}}\sigma})^{\rho_r^I}).
    \end{equation}
    By the proof of \Cref{orbital integral of e}, we can simplify the above trace as follows:
    \begin{equation}\label{equation in trace}
        \bigoplus_{w\in {^PW}}\textnormal{tr}(I_{\theta}\;|\;i_{I_r\cap {^{w^{-1}}P_r}}^{I_r}({^{w^{-1}}\sigma})^{\rho_r^I})=[I:J]\bigoplus_{w\in {^PW}}\textnormal{tr}(I_{\theta}\;|\;i_{I_r\cap {^{w^{-1}}P_r}}^{I_r}({^{w^{-1}}\sigma})^{\rho_r}).
    \end{equation}
    Again by Mackey Theory, 
    \begin{align*}
        i_{I_r\cap {^{w^{-1}}P_r}}^{I_r}({^{w^{-1}}\sigma})|_{J_r} & =\bigoplus_{u\in I_r\cap {^{w^{-1}}P_r}\backslash I_r/J_r}i_{J_r\cap {^{u^{-1}}(I_r\cap {^{w^{-1}}P_r}})}^{J_r} ({^{u^{-1}w^{-1}}\sigma}) \\ & =\bigoplus_{u\in I_r\cap {^{w^{-1}}P_r}\backslash I_r/J_r}i_{J_r\cap {^{u^{-1}w^{-1}}P_r}}^{J_r} ({^{u^{-1}w^{-1}}\sigma}).
    \end{align*}

    Note that $I_r\cap {^{w^{-1}}P_r}\backslash I_r=I_r\cap {^{w^{-1}}\bar{N}_r}$ and $I_r\cap {^{w^{-1}}P_r}\backslash I_r/J_r=(I_r\cap {^{w^{-1}}\bar{N}_r})/(J_r\cap {^{w^{-1}}\bar{N}_r})$. By Frobenius reciprocity, we have 
    $$\mathrm{Hom}_{J_r}(\rho_r,i_{J_r\cap {^{u^{-1}w^{-1}}P_r}}^{J_r} ({^{u^{-1}w^{-1}}\sigma}))=\mathrm{Hom}_{J_r\cap {^{u^{-1}w^{-1}}P_r}}(\rho_r|_{J_r\cap {^{u^{-1}w^{-1}}P_r}},{^{u^{-1}w^{-1}}\sigma}|_{J_r\cap {^{u^{-1}w^{-1}}P_r}}).$$
    We pause here to note that $\sigma$ is inflated to a representation of $P_r$ on which $N_r$ acts trivially. From this it follows that the only $u$ which contributes to the above trace must satisfy the following property:
    \begin{equation}\label{condition for u}
        \rho_r|_{J_r\cap {^{u^{-1}w^{-1}}N_r}}=1.
    \end{equation}

    The following is an important lemma for a principal class $\mathfrak{s}$:
    \begin{lemma}\label{axiom for u}
        For any $w\in {^PW}$, the only $u\in (I_r\cap {^{w^{-1}}\bar{N}_r})/(J_r\cap {^{w^{-1}}\bar{N}_r})$ satisfying \Cref{condition for u} is the identity element.
    \end{lemma}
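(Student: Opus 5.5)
The plan is to argue by contradiction. We work in the absolute root datum over $E F_r$, where everything is split, and descend at the end using the $h$-orbit description of root groups from \Cref{basic notation}.

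\textbf{Setup.} Put $P'={^{w^{-1}}P_r}$, with Levi factor $M'={^{w^{-1}}M_r}\supset T_r$ and unipotent radicals $N'={^{w^{-1}}N_r}$, $\bar N'={^{w^{-1}}\bar N_r}$. Condition \Cref{condition for u} says that $\rho_r$ is trivial on $J_r\cap {^{u^{-1}}N'}$. By \Cref{thm:main 4}, $J_r$ has an Iwahori decomposition with respect to $P'$. So we may write $u\in I_r\cap\bar N'$ as an ordered product $u=\prod_{\beta}u_\beta(x_\beta)$ over the roots $\beta$ of $\bar N'$, taken in increasing height order for a fixed order. After right multiplication by elements of $J_r\cap\bar N'$ we normalize each $x_\beta$ modulo $\mathfrak p^{f_{\mathfrak s}(\beta)}$, and we choose the representative of $u$ so that the number of nonzero normalized coordinates is minimal.

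\textbf{Choice of a bad root.} If $u\notin J_r$, pick a root $\beta$ of $\bar N'$ with $x_\beta\notin\mathfrak p^{f_{\mathfrak s}(\beta)}$ that is extremal for the ordering, say of minimal height among such roots. Since $u\in I_r$, we have $\mathrm{val}(x_\beta)\ge 0$ if $\beta>0$ and $\mathrm{val}(x_\beta)\ge1$ if $\beta<0$. Together with $\mathrm{val}(x_\beta)<f_{\mathfrak s}(\beta)$ this forces $f_{\mathfrak s}(\beta)\ge 1$, hence $\textnormal{cond}(\beta)\ge2$. The root $-\beta$ lies in $N'$. We look for $t$ with $\mathrm{val}(t)=\textnormal{cond}(\beta)-1-\mathrm{val}(x_\beta)$. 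Because $f_{\mathfrak s}(\beta)+f_{\mathfrak s}(-\beta)=\textnormal{cond}(\beta)$, this satisfies $\mathrm{val}(t)\ge f_{\mathfrak s}(-\beta)$.

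\textbf{Rank-one computation.} We form the element $n=u_{-\beta}(t')\in N'$ (with $t'$ a suitable modification of $t$) and compute $j=u^{-1}nu$. In the $\mathrm{SL}_2$ attached to $\beta$ we have the identity
$$u_\beta(-x)\,u_{-\beta}(t)\,u_\beta(x)=u_\beta\bigl(\ast\bigr)\,\beta^\vee(1+xt)^{\pm1}\,u_{-\beta}\bigl(t/(1+xt)\bigr).$$
With our valuation choices, the two unipotent factors lie in $u_{\pm\beta}(\mathfrak p^{f_{\mathfrak s}(\pm\beta)})\subset J_r$; the needed inequalities use $\textnormal{cond}(\beta)\ge2$. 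The torus factor lies in $\beta^\vee(1+\mathfrak p^{\textnormal{cond}(\beta)-1})$. On $J_r\cap T_r$ the type character $\rho_r$ restricts to ${^0\chi_r}$, and $\rho_r$ is trivial on $J_r\cap U_r$ and $J_r\cap\bar U_r$. Hence $\rho_r(j)={^0\chi_r}(\beta^\vee(1+xt))$.

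By definition of the conductor, ${^0\chi_E}\circ\beta^\vee$ is nontrivial on $1+\mathfrak p_E^{\textnormal{cond}(\beta)-1}$. Since the residue-field trace is surjective, as in the proof of the conductor theorem of \Cref{types}, we can choose the unit part of $t$ so that this value is $\neq1$. The element $j$ lies in $J_r\cap{^{u^{-1}}N'}$, contradicting \Cref{condition for u}. For a non-split $\theta$- or $h$-orbit we do the same computation with $t$ chosen inside the product of root groups along the orbit. This keeps $n$ rational over $F_r$, and the torus factor becomes a norm, still detectable by ${^0\chi_r}={^0\chi}\circ N_r$.

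\textbf{Main obstacle.} The hard part is the passage from one root to all of $u$. Conjugating $n$ by the other factors $u_{\beta'}(x_{\beta'})$ produces commutator terms $u_{i\beta'+j(-\beta)}(c\,x_{\beta'}^i t^j)$ via the Chevalley commutator formula. We must check that these terms either lie in $J_r$ (by concavity of $f_{\mathfrak s}$, \cite[\S3]{roche1998types}, and the normalization of the $x_{\beta'}$) or lie in root groups on which $\rho_r$ is trivial. They must also contribute nothing further to the torus component. Minimal height of $\beta$, together with the fact that roots of $N'$ and $\bar N'$ cannot sum into $M'$ except through $\pm\beta$, is what I expect to make this work. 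If the direct bookkeeping becomes unmanageable, the fallback is an induction on $|\Phi(\bar N')|$: peel off the highest-height root group, which is normal in $\bar N'$, and reduce to the rank-one computation.
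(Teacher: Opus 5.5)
Your rank-one computation is correct and matches the paper's base case. The gap is the rule you use to choose the distinguished root: with that rule, the step you call the main obstacle cannot be completed.

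\textbf{Why minimal height fails.} The valuation of $t$ is not free. A nontrivial value of ${}^0\chi$ on $\beta^\vee(1+x_\beta t)$ forces $\mathrm{val}(t)=\mathrm{cond}(\beta)-1-\mathrm{val}(x_\beta)$ exactly, so no ``suitable modification'' of $t$ is available. Now suppose some other factor $u_{\beta'}(x_{\beta'})$ has a larger defect $\mathrm{cond}(\beta')-1-\mathrm{val}(x_{\beta'})$. Then the commutator terms have too small valuation, and $u^{-1}nu$ is not even in $J_r$.

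For a concrete case, take $G=\mathrm{GL}_3$, $P=B$, $w=1$, and ${}^0\chi=\chi_1\otimes\chi_2\otimes\chi_3$ with $\chi_3=1$, $\mathrm{cond}(\chi_1)=3$, $\mathrm{cond}(\chi_2)=10$. Then $f(\pm\alpha_1)=f(\pm\alpha_2)=5$, $f(\alpha_1+\alpha_2)=1$ and $f(-\alpha_1-\alpha_2)=2$. Let $u=u_{-\alpha_1-\alpha_2}(x_{12})\,u_{-\alpha_1}(x_1)$ with $\mathrm{val}(x_{12})=\mathrm{val}(x_1)=1$. Both coordinates stay bad in every representative of the coset. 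Your rule picks $\beta=-\alpha_1-\alpha_2$, hence $\mathrm{val}(t)=1$. Then $u^{-1}u_{\alpha_1+\alpha_2}(t)u$ has $(2,3)$-entry $-tx_1$ of valuation $2<f(\alpha_2)=5$, so it is not in $J$. Membership in $J$ would need $\mathrm{val}(t)\ge 4$, which makes the torus value $\chi_1(1+tx_{12})$ trivial.

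Exchanging the roles ($\chi_2=1$, $\mathrm{cond}(\chi_1)=3$, $\mathrm{cond}(\chi_3)=10$) defeats the other reading of ``minimal height''. There, with $\beta=-\alpha_1$, the $(3,2)$-entry $-tx_{12}$ has valuation $2<f(-\alpha_2)=5$. So height minimality does not control the cross terms. The obstruction is a second factor of larger defect, not roots summing into $M'$.

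\textbf{The missing idea.} The paper chooses the root by maximal defect. Let $c_u=\max_i\{\mathrm{cond}(\alpha_i)-1-\mathrm{val}(a_i)\}$, take $\alpha_k$ attaining it (ties broken by height and position), and use $u'=u_{-\alpha_k}(b)$ with $b\in\mathfrak p^{c_u}$. For $c\ge c_u$, the sets $\mathcal U_c=\{u_\alpha(a):\mathrm{val}(a)\ge\max(c,f(\alpha))\}$ and $\mathcal T_c=\{\alpha^\vee(1+b):\mathrm{val}(b)\ge\max(c,\mathrm{cond}(\alpha))\}$ lie in $\ker\rho$. Because every factor has defect at most $c_u$, the Chevalley commutator estimates place all cross terms in $\mathcal U_{c_u}$ and $\mathcal T_{c_u}$.

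You also have to dispose of the case where the maximizing factor already lies in $J$; the paper's first claim removes it. Only then does the induction on the number of factors go through. Your fallback induction has the right shape but does not help without this choice of $\alpha_k$. In both examples above, this rule selects the other root and succeeds.

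\textbf{The non-split case.} Your descent step is too thin. When the root groups along an $h$-orbit do not commute (so $-2\alpha_{k,F}$ is a relative root), the product along the orbit is not Galois-fixed. The paper corrects it using $H^1(\mathrm{Gal}(E/F),U^{2c_u}_{-2\alpha_{k,F}})=1$. It then needs surjectivity of the norm on ${}^0T_f$ to show that the resulting extra torus term lies in $\ker\rho$.
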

    We will postpone the proof to the next subsection.

    Now, with the above lemma, we may rewrite \Cref{equation in trace} as follows:
    \begin{equation}
        [I:J]\bigoplus_{w\in {^PW}}\textnormal{tr}(I_{\theta}\;|\;i_{I_r\cap {^{w^{-1}}P_r}}^{I_r}({^{w^{-1}}\sigma})^{\rho_r})=[I:J]\bigoplus_{w\in {^PW}}\textnormal{tr}(I_{\theta}\;|\;i_{J_r\cap {^{w^{-1}}P_r}}^{J_r}({^{w^{-1}}\sigma})^{\rho_r}).
    \end{equation}
    By writing out the Frobenius reciprocity explicitly, we have
    $$\textnormal{tr}(I_{\theta}\;|\;i_{J_r\cap {^{w^{-1}}P_r}}^{J_r}({^{w^{-1}}\sigma})^{\rho_r})=\textnormal{tr}(I_{\theta}^M\;|\;{^{w^{-1}}\sigma}^{\rho_r|_{J_r\cap {^{w^{-1}}P_r}}})=\textnormal{tr}(I_{\theta}^M\;|\;{^{w^{-1}}\sigma}^{\rho_r|_{J_r\cap {^{w^{-1}}M_r}}}).$$
    Conjugating this by $w$, and finally we get the following equation
    \begin{align*}
        \textnormal{tr}(e_{\rho_r}I_{\theta}\;|\;i_{P_r}^{G_r}(\sigma) & = \frac{1}{[I:J]}\textnormal{tr}(e_{\rho_r^I} I_{\theta}\;|\;i_{P_r}^{G_r}(\sigma)) \\ & = \bigoplus_{w\in {^PW}}\textnormal{tr}(I_{\theta}\;|\;i_{J_r\cap {^{w^{-1}}P_r}}^{J_r}({^{w^{-1}}\sigma})^{\rho_r}) \\ & =\bigoplus_{w\in {^PW}}\textnormal{tr}(I_{\theta}^M\;|\;\sigma^{^w\rho_r|_{^wJ_r\cap M_r}}).
    \end{align*}
    Since $w\in ({^PW_r})^{\theta}$, which is the minimal length representatives of $W_{M_r}\backslash W_r$, we have that for any positive absolute root $\alpha$ of $M_r$, $w^{-1}(\alpha)$ is still positive. Therefore, $({^wJ_r\cap M_r}, {^w\rho}_r|_{^wJ_r\cap M_r})$ will be a type for the inertial equivalence class $^w\mathfrak{s}_{M,r}=[{^0T}_r,{^w(}{^0\chi}_r)]_{M_r}$. Let $({^wJ_{M_r}},{^w\rho}_{M_r})=({^wJ_r\cap M_r},{^w\rho}_r|_{^wJ_r\cap M_r})$, and $e_{^w\rho_{M_r}}$ be the unit element in the corresponding Hecke algebra. Then we have
    $$\textnormal{tr}(I_{\theta}^M\;|\;\sigma^{^w\rho_{M_r}})=\textnormal{tr}(e_{^w\rho_{M_r}}I_{\theta}^M\;|\; \sigma).$$
    Combining everything above, we have the following proposition:
    \begin{prop}
        For any principal block satisfying \Cref{axiom for u}, and for any irreducible representation $(\sigma,W)$ of $M_r$, we have the following equation:
        \begin{equation}
            \textnormal{tr}(e_{\rho_r}I_{\theta}\;|\;i_{P_r}^{G_r}(\sigma))=\textnormal{tr}((e_{\rho_r})_{P_r}I_{\theta}^M\;|\;\sigma)=\bigoplus_{w\in {^PW}}\textnormal{tr}(e_{^w\rho_{M_r}}I_{\theta}^M\;|\;\sigma).
        \end{equation}
        By the twisted version of the Kazhdan density theorem (see \cite{kottwitz2000distributions}), we have 
        \begin{equation}\label{descent formula for unit element}
            \textnormal{TO}_{\delta \theta}^{G_r}(e_{\rho_r})=|D_{G/M}(\gamma)|^{-1/2}\sum_{w\in {^PW}}\textnormal{TO}_{\delta\theta}^{M_r}({e}_{^w\rho_{M_r}}).
        \end{equation}
    \end{prop}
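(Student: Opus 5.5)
The plan is to prove the two displayed identities in order: first the trace identity, then the orbital-integral identity via Kazhdan density. The latter is combined with the fundamental descent formula of \cite[Proposition 4.4.9]{laumon1996cohomology}.

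\textbf{Step 1: the trace identity.} The first equality, $\textnormal{tr}(e_{\rho_r}I_{\theta}\,|\,i_{P_r}^{G_r}(\sigma))=\textnormal{tr}((e_{\rho_r})_{P_r}I_{\theta}^M\,|\,\sigma)$, is \Cref{trace and induced representation} applied to $\phi=e_{\rho_r}$. For the second equality I would follow the chain assembled above.

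\begin{itemize}
\item Pass to $e_{\rho_r^I}$ using the trace version of \Cref{orbital integral of e}. This contributes the factor $[I:J]$; it is really a statement about traces of $I_\theta$ on isotypic subspaces, so it applies directly to $i_{P_r}^{G_r}(\sigma)$.
\item Restrict $i_{P_r}^{G_r}(\sigma)$ to $I_r$ using the refined Iwasawa decomposition \Cref{refined Iwasawa decomposition} and Mackey theory. Observe that $I_\theta$ permutes the summands according to $w\mapsto\theta(w)$, so only $w\in({^PW_r})^\theta={^PW}$ contribute to the trace.
\item Restrict each surviving summand further to $J_r$, which produces summands indexed by $u\in (I_r\cap{^{w^{-1}}\bar N_r})/(J_r\cap{^{w^{-1}}\bar N_r})$. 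By Frobenius reciprocity and the triviality of $\sigma$ on $N_r$, only $u$ satisfying \Cref{condition for u} can carry $\rho_r$-isotypic vectors. \Cref{axiom for u}, which we assume, then leaves only $u=1$.
\item Apply Frobenius reciprocity again and conjugate by $w$. This identifies each remaining term with $\textnormal{tr}(I_\theta^M\,|\,\sigma^{{^w\rho_{M_r}}})$.
\item Use that $w$ is a minimal length representative. Hence $({^wJ_r\cap M_r},{^w\rho_r})$ is again the type of the principal class ${^w\mathfrak{s}_{M,r}}$ of \Cref{thm:main 4} for $M_r$, and its isotypic projector is $e_{^w\rho_{M_r}}$, with Haar measure normalized by $\textnormal{vol}({^wJ_{M_r}})=1$.
\end{itemize}

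Throughout, I would keep the intertwiner $I_\theta$ on the induced representation defined from $I_\theta^M$ exactly as above, so that no scalar ambiguity enters. By additivity of traces, the identity extends from irreducible $\sigma$ to finite-length $\theta$-stable $\sigma$.

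\textbf{Step 2: the orbital-integral identity.} Step 1 shows that $(e_{\rho_r})_{P_r}$ and $\sum_{w\in{^PW}}e_{^w\rho_{M_r}}$ have the same twisted traces on every $\theta$-stable irreducible representation of $M_r$. By the twisted Kazhdan density theorem \cite{kottwitz2000distributions}, their twisted orbital integrals agree at all $\theta$-semisimple elements of $M_r$, in particular at $\delta$. Inserting this into Laumon's descent formula $\textnormal{TO}_{\delta\theta}^{G_r}(e_{\rho_r})=|D_{G/M}(\gamma)|^{-1/2}\textnormal{TO}_{\delta\theta}^{M_r}((e_{\rho_r})_{P_r})$ gives \Cref{descent formula for unit element}. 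This uses $G^\circ_{\delta\theta}=M^\circ_{\delta\theta}\subset M$, which holds by the choice of $M$ as the centralizer of the split center $S$.

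\textbf{Main obstacle.} I expect the delicate point to be the density step rather than the algebra, for two reasons. First, the density theorem as usually stated concludes equality of twisted orbital integrals at $\theta$-regular elements. Extending it to the $\theta$-semisimple but possibly singular $\delta$ needs the full form in \cite{kottwitz2000distributions}, or a Shalika-germ/limit argument, and I would check which version is available. Second, one must make sure the measure normalizations match throughout: $\textnormal{vol}(J_r)=1$ on $G_r$, the normalization of $\mathrm{d}k$ and $\mathrm{d}n$ in the constant term, and $\textnormal{vol}({^wJ_{M_r}})=1$ on $M_r$. Otherwise stray volume factors would appear in front of each $e_{^w\rho_{M_r}}$; I would verify this explicitly in the trivial case $M=G$ as a check.
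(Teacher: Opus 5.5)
Your proposal is correct and follows essentially the same route as the paper. The shared steps are:
\begin{itemize}
\item van Dijk's lemma for the first equality;
\item the detour through $e_{\rho_r^I}$ with the factor $[I:J]$;
\item Mackey theory over the refined Iwasawa decomposition, keeping only the $\theta$-fixed $w\in{}^PW$;
\item a second Mackey restriction to $J_r$, where \Cref{axiom for u} leaves only $u=1$;
\item Frobenius reciprocity, together with the minimal-length observation identifying $({}^wJ_r\cap M_r,{}^w\rho_r)$ as the type for ${}^w\mathfrak{s}_{M,r}$;
\item twisted density combined with Laumon's descent formula.
\end{itemize}
On your first worry: the cited Kottwitz--Rogawski theorem (orbital integrals, twisted or not, are supported on characters) applies at all $\theta$-semisimple elements, not only regular ones, so no separate germ or limit argument is needed.
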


    \begin{rmk}
        Note that this proposition helps us bypass the technical result \cite[Lemma 4.5.4]{haines2009base}. 
    \end{rmk}

    Next, we consider the descent formula for functions of the form $Z_r*e_{\rho_r}$ for $Z_r\in \mathfrak{Z}_{\textnormal{ps}}(G_r)$. For any $\phi,\phi'\in C_c^{\infty}(G_r)$, we write $\phi\equiv \phi'$ if the twisted orbital integrals of $\phi-\phi'$ at all $\theta$-semisimple elements vanish. We have the following lemma:
    \begin{lemma}
        For any $Z_r\in \mathfrak{Z}(G_r)_{\textnormal{ps}}$ and any $\phi\in C_c^{\infty}(G_r)$, 
        $$(Z_r*\phi)_{P_r}\equiv (Z_r)_{P_r}*\phi_{P_r}.$$
    \end{lemma}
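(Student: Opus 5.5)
We prove the equivalence by testing both sides against traces, and then use the twisted Kazhdan density theorem \cite{kottwitz2000distributions}. Since the twisted orbital integrals at $\theta$-semisimple elements of a function in $C_c^{\infty}(M_r)$ are determined by, and determine, its twisted traces on $\theta$-stable irreducible representations of $M_r$, it suffices to show the following. For every irreducible $\theta$-stable representation $(\sigma,W)$ of $M_r$ with intertwiner $I_{\theta}^M$, we want
$$\textnormal{tr}\big((Z_r*\phi)_{P_r}I_{\theta}^M\;|\;\sigma\big)=\textnormal{tr}\big(((Z_r)_{P_r}*\phi_{P_r})I_{\theta}^M\;|\;\sigma\big).$$
This reduces the problem from orbital integrals to traces.

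For the left-hand side, we apply \Cref{trace and induced representation} to the function $Z_r*\phi$. This converts it into $\textnormal{tr}((Z_r*\phi)I_{\theta}\;|\;i_{P_r}^{G_r}(\sigma))$. The induced representation $i_{P_r}^{G_r}(\sigma)$ has finite length. All its irreducible subquotients have the same supercuspidal support, namely the $G_r$-class of the support of $\sigma$, so $Z_r$ acts on it by a single scalar $Z_r(\sigma)$. Concretely, if $\sigma$ has supercuspidal support $(T_r,\xi)_{M_r}$ (or a non-principal class, where $Z_r$ and $(Z_r)_{P_r}$ both act by $0$), then $Z_r$ acts by $Z_r((T_r,\xi)_{G_r})$. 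Because $Z_r$ commutes with the $G_r$-action and hence with $I_{\theta}$ (the scalar is $\theta$-invariant since the representation is $\theta$-stable), the twisted analogue of \Cref{property of Bernstein center}(b) gives the left-hand side as
$$Z_r(\sigma)\,\textnormal{tr}(\phi I_{\theta}\;|\;i_{P_r}^{G_r}(\sigma)).$$
We then apply \Cref{trace and induced representation} once more, which gives $Z_r(\sigma)\,\textnormal{tr}(\phi_{P_r}I_{\theta}^M\;|\;\sigma)$.

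For the right-hand side, the constant term homomorphism is defined by pulling back along $c_M^{G*}:(T_r,\xi)_{M_r}\mapsto(T_r,\xi)_{G_r}$. Hence $(Z_r)_{P_r}$ acts on $\sigma$, which is of finite length with a single supercuspidal support, by the same scalar $Z_r((T_r,\xi)_{G_r})=Z_r(\sigma)$. Applying \Cref{property of Bernstein center}(b) on $M_r$ then gives $Z_r(\sigma)\,\textnormal{tr}(\phi_{P_r}I_{\theta}^M\;|\;\sigma)$. The two sides agree, and density finishes the proof.

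The main obstacle is justifying the twisted version of \Cref{property of Bernstein center}(b), namely $\textnormal{tr}((Z*f)I_{\theta}\;|\;\pi)=Z(\pi)\,\textnormal{tr}(fI_{\theta}\;|\;\pi)$. Since $Z*f$ acts on $\pi$ as $Z(\pi)\pi(f)$, this is immediate once we know that $Z$ acts by a scalar on the whole finite-length representation and not merely on its irreducible subquotients. For $i_{P_r}^{G_r}(\sigma)$ this holds by \Cref{property of Bernstein center}(a)/\cite[\S3.3.2]{haines2014stable}, because $Z$ acts on parabolically induced representations of supercuspidal representations by a scalar and induction is transitive. For $\sigma$ irreducible it holds by Schur's lemma.

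A secondary point is the density step: it requires both $(Z_r*\phi)_{P_r}$ and $(Z_r)_{P_r}*\phi_{P_r}$ to lie in $C_c^{\infty}(M_r)$. This holds because $Z_r*\phi$ is again compactly supported and smooth, as $Z_r$ is essentially compact.
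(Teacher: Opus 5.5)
Your proposal is correct and follows the paper's proof essentially step for step. You reduce via the twisted Kazhdan density theorem to twisted traces on $\theta$-stable irreducible $\sigma$. You then chain \Cref{trace and induced representation}, the scalar action of $Z_r$ on $i_{P_r}^{G_r}(\sigma)$, the identity $Z_r(i_{P_r}^{G_r}(\sigma))=(Z_r)_{P_r}(\sigma)$ coming from the definition of $c_M^G$, and \Cref{trace and induced representation} once more. Your added justification that $Z_r$ acts by a scalar on all of $i_{P_r}^{G_r}(\sigma)$, not just on its irreducible subquotients, fills in a step the paper leaves implicit: $i_{P_r}^{G_r}(\sigma)$ is a subquotient of a representation induced from a supercuspidal, by exactness and transitivity of induction.
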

    \begin{proof}
        By the twisted version of the Kazhdan density theorem (see \cite{kottwitz2000distributions}), we only need to check the following equation
        $$\textnormal{tr}((Z_r*\phi)_{P_r}I_{\theta}^M\;|\;\sigma)=\textnormal{tr}(((Z_r)_{P_r}*\phi_{P_r})I_{\theta}^M\;|\;\sigma)$$
        for any smooth irreducible $\theta$-stable representation $\sigma$. By \Cref{trace and induced representation} and \Cref{property of Bernstein center}, we have:
        \begin{align*}
            \textnormal{tr}((Z_r*\phi)_{P_r}I_{\theta}^M\;|\;\sigma) & =\textnormal{tr}((Z_r*\phi)I_{\theta}\;|\;i_{P_r}^{G_r}(\sigma))\\
            & =Z_r(i_{P_r}^{G_r}(\sigma))\textnormal{tr}(\phi I_{\theta}\;|\;i_{P_r}^{G_r}(\sigma)) \\
            & = (Z_r)_{P_r}(\sigma)\textnormal{tr}(\phi_{P_r}I_{\theta}^M\;|\;\sigma) \\
            & = \textnormal{tr}(((Z_r)_{P_r}*\phi_{P_r})I_{\theta}^M\;|\;\sigma).
        \end{align*}
    \end{proof}
    
    Combining the above lemma with \Cref{descent formula for unit element}, we get the following equation:
    \begin{equation}\label{twisted descent formula for general function}
        \textnormal{TO}_{\delta \theta}^{G_r}(Z_r*e_{\rho_r})=|D_{G/M}(\gamma)|^{-1/2}\sum_{w\in {^PW}}\textnormal{TO}_{\delta\theta}^{M_r}((Z_r)_{P_r}*{e}_{^w\rho_{M_r}}).
    \end{equation}
    By taking $r=1$ and \Cref{constant term and base change commutes}, we have the following equation:
    \begin{equation}\label{descent formula for general function}
        \textnormal{O}_{\gamma}^{G}(b_r(Z_r)*e_{\rho})=|D_{G/M}(\gamma)|^{-1/2}\sum_{w\in {^PW}}\textnormal{O}_{\gamma}^{M}(b_r((Z_r)_{P_r})*{e}_{^w\rho_{M}}).
    \end{equation}

    Comparing the above two equations, we can see that to prove $Z_r*e_{\rho_r}$ and $b_r(Z_r)*e_{\rho}$ are associated, one only needs to prove the base change fundamental lemma for $e_{^w\rho_{M_r}}$ and $e_{^w\rho_M}$. Thus, we may reduce the case to elliptic elements.

\subsection{Proof of \Cref{axiom for u}}

    In this subsection, we will prove \Cref{axiom for u} for all unramified groups and all principal blocks. We will use definitions and notations from previous sections freely. 

\subsubsection{Proof for split groups}

    We start with split groups. Assume $G$ is split. First, consider the case where $P=B=TU$. Under this assumption, $w$ could be an arbitrary element in the Weyl group $W$. For any representative $u$ of $(I\cap {^{w^{-1}}\bar{U}})/(J\cap {^{w^{-1}}\bar{U}})$, we can write $u$ in the form of 
    $$u=\prod_{i=1}^nu_{\alpha_i}(a_i),$$
    with each root $\alpha_i\in {^{w^{-1}}\Phi^-}$, and the roots are arranged in order of non-increasing height relative to the simple roots determined by $B$ when $i$ increases. We may also assume that each $u_{\alpha_i}(a_i)$ is in $I$, which means that $a_i\in \mathfrak{O}$ when $\alpha_i$ is positive and $a_i\in \mathfrak{p}$ when $\alpha_i$ is negative. Let $b_{\alpha}=\textnormal{cond}(\alpha)$ and $c_i=\textnormal{val}(a_i)$. Note that $f(\alpha)+f(-\alpha)=b_{\alpha}$. Since we assume $u\notin J\cap {^{w^{-1}}\bar{U}}$, we can find $i$ such that $c_i<f(\alpha_i)$. Therefore, $\max_i\{b_{\alpha_i}-1-c_i\}>0$. Let $c_u=\max_i\{b_{\alpha_i}-1-c_i\}$. We choose a root $\alpha_k\in \{\alpha_1,\alpha_2,\cdots, \alpha_n\}$ such that $b_{\alpha_k}-c_k-1=c_u$, and the sum of coefficients is smallest among all possible choices (if there is more than one choice, just pick the one on the rightmost side). For any positive integer $c$, define the following two sets:
    $$\mathcal{U}_c\coloneqq\{u_{\alpha}(a_{\alpha})|\alpha\in\Phi, \textnormal{val}(a_{\alpha})\geq \max\{c,f(\alpha)\}\};$$
    $$\mathcal{T}_c\coloneqq\{\alpha^{\vee}(1+b)|\alpha\in \Phi,\textnormal{val}(b)\geq\max\{\textnormal{cond}(\alpha),c\}\}.$$
    Denote all elements in $\mathcal{U}_{c}$ coming from $B$-positive roots (resp. $B$-negative roots) by $\mathcal{U}_c^+$ (resp. $\mathcal{U}_c^-$). It is easy from the definition that both $\mathcal{U}_c$ and $\mathcal{T}_c$ are in $\textnormal{ker}(\rho)$, whenever $c\geq c_u$. We have the following lemma which will be used throughout the whole subsection:
    \begin{lemma}\label{commutator lemma}
        We have the following statements:
        \begin{itemize}
            \item[(a)]\label{a} For any $u_{\alpha}(a_{\alpha})\in \mathcal{U}_c$ and any element $u_{\alpha_p}(a_p)$ such that $c\geq c_u$ and $\alpha\neq -\alpha_p$, the commutator $[u_{\alpha}(a_{\alpha})^{-1},u_{\alpha_p}(a_{p})^{-1}]$ can be written as a product of elements in $\mathcal{U}_c$. Moreover, if $\alpha$ and $\alpha_p$ are of the same sign, the terms appearing in the product should also be of the same sign. \\
            \item[(b)]\label{b} For any $u_{\alpha}(a_{\alpha})\in \mathcal{U}_{c_u}$ and any element $u_{\alpha_p}(a_p)$ such that $b_{\alpha_p}-c_p-1<c_u$ and $\alpha=-\alpha_p$, the commutator $[u_{\alpha}(a_{\alpha})^{-1},u_{\alpha_p}(a_{p})^{-1}]$ can be written as a product of elements in $\mathcal{U}_{c_u}$ and $\mathcal{T}_{c_u}$. Moreover, if we have $u_{\alpha}(a_{\alpha})\in \mathcal{U}_{c}$ for $c>c_u$, we can remove the restriction $b_{\alpha_p}-c_p-1<c_u$ and the commutator $[u_{\alpha_i}(a_i)^{-1},u_{\alpha}(a_{\alpha})]$ can be written as a product of elements in $\mathcal{U}_{c}$ and $\mathcal{T}_{c}$. \\
            \item[(c)]\label{c} For any $t=\alpha^{\vee}(1+b)\in \mathcal{T}_c$ with $c\geq c_u$ and any element $u_{\alpha_p}(a_p)$, the commutator $[t^{-1},u_{\alpha_i}(a_i)^{-1}]$ belongs to $\mathcal{U}_c$. 
        \end{itemize}
    \end{lemma}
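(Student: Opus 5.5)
The plan is to prove all three statements by direct computation with the Chevalley commutator formula and the rank-one ($\mathrm{SL}_2$) relations. The inputs are the concavity of $f=f_{\mathfrak{s}}$, the identity $f(\alpha)+f(-\alpha)=b_\alpha$ together with $b_\alpha=b_{-\alpha}$, and the defining maximality of $c_u$. Since $p\nmid |W_E|$, all structure constants in the commutator formula are units, so valuations can be read off monomially. Throughout I use that $u_{\alpha_p}(a_p)\in I$, so $c_p\geq 0$ (and $c_p\geq 1$ if $\alpha_p<0$). I also use that, by definition of $c_u$ as a maximum, $b_{\alpha_p}-1-c_p\leq c_u$ for every $p$, i.e. $c_p\geq b_{\alpha_p}-1-c_u$.

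\textbf{Statement (a).} For $\alpha\neq-\alpha_p$, Chevalley's formula gives
\[
[u_\alpha(x)^{-1},u_{\alpha_p}(y)^{-1}]=\prod_{i,j>0}u_{i\alpha+j\alpha_p}(c_{ij}x^iy^j),
\]
with $c_{ij}\in\mathfrak{O}^\times$. Each term has valuation at least $i\max\{c,f(\alpha)\}+jc_p$. First, this is $\geq c$, since $i\geq1$ and $c_p\geq0$. Second, it must be $\geq f(i\alpha+j\alpha_p)$, and I plan to check this using concavity. The case $c_p\geq f(\alpha_p)$ is immediate. In the case $c_p<f(\alpha_p)$, I will use that $c\geq c_u\geq b_{\alpha_p}-1-c_p$, which gives $ic+jc_p\geq b_{\alpha_p}-1$, and then compare with $f(i\alpha+j\alpha_p)$ using the explicit shape of $f$ in terms of conductors (half of the conductor, rounded). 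This comparison is the one genuinely delicate inequality in the lemma. If it fails in some configuration, I would instead argue via the Roche/Kim--Yu structure, namely that $I$ normalizes $J$ modulo the relevant filtration. The sign claim is immediate: if $\alpha$ and $\alpha_p$ have the same sign, then so do all roots $i\alpha+j\alpha_p$.

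\textbf{Statement (b).} For $\alpha=-\alpha_p$ I work inside the rank-one subgroup $\langle U_\alpha,U_{-\alpha}\rangle$, using the standard identity
\[
u_\alpha(x)u_{-\alpha}(y)=u_{-\alpha}\bigl(y/(1+xy)\bigr)\,\alpha^\vee(1+xy)\,u_\alpha\bigl(x/(1+xy)\bigr),
\]
which is valid because $xy\in\mathfrak{p}$. This expresses the commutator as a product of $u_\alpha\bigl(x/(1+xy)-x\bigr)$, a factor $u_{-\alpha}(-xy^2/(1+xy))$, and $\alpha^\vee(1+xy)^{\pm1}$. The key estimate is
\[
\mathrm{val}(xy)\geq c_u+c_p\geq b_\alpha,
\]
which follows from the hypothesis $b_{\alpha_p}-c_p-1<c_u$ and $b_\alpha=b_{-\alpha}$. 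Hence the torus factor lies in $\mathcal{T}_{c_u}$. The unipotent factors have valuation $\geq \mathrm{val}(x)$ and $\geq \mathrm{val}(xy)+c_p\geq b_\alpha\geq f(\pm\alpha)$ respectively, so they lie in $\mathcal{U}_{c_u}$. For the refinement with $c>c_u$, the general inequality $c_p\geq b_{\alpha_p}-1-c_u>b_{\alpha_p}-1-c$ replaces the hypothesis and the same computation applies.

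\textbf{Statement (c).} Since $t\,u_\beta(a)\,t^{-1}=u_\beta(\beta(t)a)$, the commutator equals $u_\beta\bigl((\beta(t)^{\pm1}-1)a\bigr)$. Here $\beta(t)=(1+b)^{\langle\beta,\alpha^\vee\rangle}$, so $\mathrm{val}(\beta(t)-1)\geq\mathrm{val}(b)\geq\max\{b_\alpha,c\}$. Membership in $\mathcal{U}_c$ then needs two inequalities. The first, valuation $\geq c$, is clear. The second, valuation $\geq f(\beta)$, follows from $c+c_p\geq c_u+c_p\geq b_\beta-1\geq f(\beta)$ when $b_\beta\geq2$. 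When $b_\beta=1$ it follows from $c\geq1\geq f(\beta)$.

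I expect the main obstacle to be the concavity-type inequality in (a) in the case where $u_{\alpha_p}(a_p)\notin J$.
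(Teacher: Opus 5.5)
Your proofs of (b) and (c) are correct and follow the paper's argument. For (b) you use the rank-one relation with $\mathrm{val}(a_\alpha a_p)\geq c+c_p\geq\max\{c,b_\alpha\}$. For (c) you use $\mathrm{val}(b)+c_p\geq c+c_p\geq b_{\alpha_p}-1$, treating $b_{\alpha_p}=1$ separately.

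The gap is in (a), at the step you postpone: proving $\mathrm{val}(a_{\alpha'})\geq f(\alpha')$ for $\alpha'=i\alpha+j\alpha_p$ when $c_p<f(\alpha_p)$. The ingredients you list cannot give this. Concavity only yields $f(\alpha')\leq i f(\alpha)+j f(\alpha_p)$. Your hypotheses only guarantee $\mathrm{val}(a_\alpha)+c_p\geq\max\{f(\alpha)+c_p,\,b_{\alpha_p}-1\}$, since in the worst case $c=c_u=b_{\alpha_p}-1-c_p$. Already for $i=j=1$ this is strictly below $f(\alpha)+f(\alpha_p)$ once $f(\alpha)\geq b_{\alpha_p}$.

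The missing ingredient is the ultrametric bound on conductors, $b_{\alpha'}\leq\max\{b_\alpha,b_{\alpha_p}\}$. This is the fact behind Roche's concavity proof, and the paper uses it; it bounds $f(\alpha')$ by a maximum rather than a sum. The paper then argues in two steps:
\begin{itemize}
\item[(i)] It first shows $\mathrm{val}(a_\alpha)+c_p\geq\max\{c,f(\alpha),f(\alpha_p)\}$. The only way $b_{\alpha_p}-1<f(\alpha_p)$ can happen is $\alpha_p<0$ and $b_{\alpha_p}=1$, and then $c_p\geq 1$ suffices.
\item[(ii)] It then splits by signs. If $\alpha,\alpha_p$ have the same sign, $f(\alpha')\leq\max\{f(\alpha),f(\alpha_p)\}$. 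If $\alpha<0<\alpha_p$, $f(\alpha')\leq\max\{f(\alpha),f(-\alpha_p)\}$; here $f(-\alpha_p)\leq b_{\alpha_p}-1$ unless $b_{\alpha_p}=1$, in which case $f(-\alpha_p)=1\leq f(\alpha)$. If $\alpha_p<0<\alpha$, $f(\alpha')\leq\max\{f(\alpha_p),f(\alpha)+1\}$, and $c_p\geq 1$ supplies the extra $1$.
\end{itemize}

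Your fallback does not close the gap, because $I$ does not normalize $J$ in general. In $\mathrm{GL}_3$, choose ${}^0\chi$ with $\mathrm{cond}(\alpha_1)=5$ and $\mathrm{cond}(\alpha_2)=1$. Then $\mathrm{cond}(\alpha_1+\alpha_2)=5$, so $f(\alpha_2)=0$ and $f(\alpha_1+\alpha_2)=2$. Conjugating $u_{\alpha_2}(1)\in J$ by $u_{\alpha_1}(1)\in I$ produces $u_{\alpha_2}(1)u_{\alpha_1+\alpha_2}(\pm1)\notin J$.

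Statement (a) holds only because $\mathrm{val}(a_\alpha)\geq c\geq c_u$ supplies extra depth. In this example, taking $\alpha_p=\alpha_1$ with $c_p=0$ forces $c_u\geq 4\geq f(\alpha_1+\alpha_2)$. The conductor comparison above is exactly what turns that hypothesis into membership in $\mathcal{U}_c$, and there is no structural shortcut around it.
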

        In the proof, we will use the following three commutator relations (the first one is from [\cite{roche1998types}, $\S 2$], and the other two can be checked easily):
        \begin{itemize}
            \item[(1)] For any roots $\alpha,\beta\in \Phi$ and $a_{\alpha},a_{\beta}\in F$ such that $\alpha\neq -\beta$, we have
            \begin{equation}\label{1}
            [u_{\alpha}(a_{\alpha}),u_{\beta}(a_{\beta})]=\prod_{\substack{i,j>0 \\ i\alpha+j\beta\in \Phi}}u_{i\alpha+j\beta}(C_{\alpha,\beta;i,j}a_{\alpha}^ia_{\beta}^j)
        \end{equation}
        for some integers $C_{\alpha,\beta;i,j}$.
        \item[(2)] For any root $\alpha\in \Phi$ and any $a_{\alpha},a_{-\alpha}\in F$, we have
        \begin{equation}\label{2}
            [u_{\alpha}(a_{\alpha}),u_{-\alpha}(a_{-\alpha})]=u_{\alpha}(\frac{-a_{\alpha}^2 a_{-\alpha}}{1-a_{\alpha}a_{-\alpha}})(-\alpha^{\vee})((1-a_{\alpha}a_{-\alpha}))u_{-\alpha}(\frac{-a_{\alpha}a_{-\alpha}^2}{1-a_{\alpha}a_{-\alpha}}).
        \end{equation}
        \item[(3)] For any element $t=\alpha^{\vee}(1+b)$, where $\alpha\in \Phi$ and $b\in \mathfrak{p}$, and any root $\beta\in \Phi$ and $a_{\beta}\in F$, we have
        \begin{equation}\label{3}
            [t,u_{\beta}(a_{\beta})]=u_{\beta}(((1+b)^{(\alpha^{\vee},\beta)}-1)a_{\beta}).
        \end{equation}
        \end{itemize}
        Now, let us begin the proof:
        \begin{proof}
            For part (a), by \Cref{1}, we can write $[u_{\alpha_p}(a_p)^{-1},u_{\alpha}(a_{\alpha})^{-1}]$ as a product of elements of the form $u_{i\alpha+j\alpha_p}(C_{i,j;\alpha,\alpha_p}a_{\alpha}^ia_p^j)$ for positive integers $i,j$ such that $i\alpha+j\alpha_p\in \Phi$. Fix $i,j$ and denote $i\alpha+j\alpha_p$ by $\alpha'$ and $C_{i,j;\alpha,\alpha_p}a_{\alpha}^ia_p^j$ by $a_{\alpha'}$. We have the following computation on valuations:
            $$\textnormal{val}(a_{\alpha'})\geq \textnormal{val}(a_\alpha)+\textnormal{val}(a_p)\geq \max\{b_{\alpha_p}-1,c,f(\alpha)\}.$$
            Note that $b_{\alpha_p}-1\leq f(\alpha_p)$ only if $\alpha_p$ is negative and $b_{\alpha_p}=1$, which shows that $f(\alpha_p)=1$. But if $\alpha_p$ is negative, we require that $\textnormal{val}(a_p)\geq 1$. Thus, $\textnormal{val}(a_\alpha)+\textnormal{val}(a_p)\geq f(\alpha_p)$ and we get $\textnormal{val}(a_{\alpha'})\geq \max\{f(\alpha_p),c,f(\alpha)\}$. By definition, we must have $b_{\alpha'}\leq \max\{b_{\alpha},b_{\alpha_p}\}$. If $\alpha$ and $\alpha_p$ are of the same sign, then $\alpha'$ must be of the same sign, and $f(\alpha')\leq \max\{f(\alpha),f(\alpha_p)\}$. Thus, $\textnormal{val}(a_{\alpha'})\geq \max\{c,f(\alpha')\}$, which shows that $u_{\alpha'}(a_{\alpha'})\in \mathcal{U}_c$. If $\alpha_p$ is positive and $\alpha$ is negative, we get $f(\alpha)'\leq \max\{f(-\alpha_p),f(\alpha)\}$, Notice that if $b_{\alpha_p}-1< f(-\alpha_p)$, we must have $b_{\alpha_p}=1$ and $\textnormal{val}(a_p)=0$. Under this condition, $f(-\alpha_p)=1$, and obviously $f(\alpha)'\leq \max\{f(-\alpha_p),f(\alpha)\}=f(\alpha)$. This implies that $\textnormal{val}(a_{\alpha'})\geq f(\alpha')$ and $u_{\alpha'}(a_{\alpha'})\in \mathcal{U}_c$. If $\alpha$ is positive and $\alpha_p$ is negative, we have $f(\alpha')\leq \max\{f(\alpha_p),f(\alpha)+1\}$. Notice that $\textnormal{val}(a_p)\geq 1$, so $\textnormal{val}(a_p)+\textnormal{val}(a_{\alpha})\geq f(\alpha)+1$. Therefore, $\textnormal{val}(a_{\alpha'})\geq \max\{f(\alpha_p),c,f(\alpha)+1\}\geq \max\{c,f(\alpha')\}$, which shows that $u_{\alpha'}(a_{\alpha'})\in \mathcal{U}_c$. In conclusion, $u_{\alpha'}(a_{\alpha'})\in \mathcal{U}_c$ and the proof of part (a) is complete.

            For part (b), we apply \Cref{2} directly. By the assumption, we have $\textnormal{val}(a_{\alpha}a_p)\geq c+c_p\geq \max\{c,b_{\alpha_p}\}$. Therefore, the element $t=(-\alpha^{\vee})(1-a_{\alpha}a_p)$ belongs to $\mathcal{T}_c$. From the computations in the preceding paragraph, we have $\textnormal{val}(a_{\alpha}^2a_p)\geq \max\{c,f(\alpha)\}$ and $\textnormal{val}(a_{\alpha}a_p^2)\geq \max\{c,f(\alpha_p)\}$. This finishes the proof of part (b).

            For part (c), we use \Cref{3} directly. Note that $$\textnormal{val}(((1+b)^{(\alpha^{\vee},\alpha_p)}-1)a_{p})\geq \textnormal{val}(b)+\textnormal{val}(a_p)\geq c+c_p\geq \max\{c,f(\alpha_p)\}.$$
            This finishes the proof of part (c).
        \end{proof}

    We go back to the proof of \Cref{axiom for u}. Recall that we already choose a root $\alpha_k$. We make the following claim:
     \begin{claim}\label{claim 1}
        We may assume $c_k<f(\alpha_k)$.
    \end{claim}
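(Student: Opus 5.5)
The plan is to exploit the freedom in choosing the coset representative $u$. Only the class of $u$ in $(I\cap {^{w^{-1}}\bar{U}})/(J\cap {^{w^{-1}}\bar{U}})$ matters for \Cref{condition for u}. So I will show that $u$ can be chosen so that every factor $u_{\alpha_i}(a_i)$ satisfies either $a_i=0$ or $c_i=\textnormal{val}(a_i)<f(\alpha_i)$. For such a representative the claim is immediate. Factors with $a_i=0$ have $c_i=\infty$ and so never realize the maximum $c_u$. Since $u\notin J$, at least one factor has $a_i\neq 0$, and as noted before the claim this forces $c_u>0$. Hence the root $\alpha_k$ chosen (minimal coefficient sum, rightmost) has $a_k\neq 0$, and therefore $c_k<f(\alpha_k)$.

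To produce such a representative, write $V={^{w^{-1}}\bar{U}}$. Since $V$ is the unipotent radical of a Borel subgroup containing $T$, the set of roots $\Psi=w^{-1}\Phi^-$ of $V$ is closed. I will use the filtration of $V$ by the normal subgroups $V_{\geq h}$ generated by the root groups $U_\alpha$, $\alpha\in\Psi$, of height $\geq h$ (the height is that of the paper, relative to $B$, taken in absolute value so as to be compatible with the non-increasing order; closedness of $\Psi$ makes commutators only increase it). By \Cref{1}, each quotient $V_{\geq h}/V_{\geq h+1}$ is abelian and is the direct product of the relevant root groups. Intersecting with $I$ and with $J$ gives compatible filtrations. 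By the Iwahori decomposition of $J$ and $I$ with respect to the concave functions $f_{\mathfrak{s}}$ and $f_I$ (\cite[Lemma 3.2]{roche1998types}), the graded pieces are $\prod_{\alpha}u_\alpha(\mathfrak{p}^{f(\alpha)})\subset\prod_\alpha u_\alpha(\mathfrak{p}^{f_I(\alpha)})$. Here $f_I(\alpha)=0$ for $\alpha$ positive and $f_I(\alpha)=1$ for $\alpha$ negative, matching the choice $a_i\in\mathfrak{O}$, respectively $a_i\in\mathfrak{p}$.

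I will then induct on the height, processing roots from the lowest height upward, which corresponds to the rightmost factors in the paper's ordering. At each level, I fix a set of representatives $R_\alpha$ of $\mathfrak{p}^{f_I(\alpha)}/\mathfrak{p}^{f(\alpha)}$ consisting of $0$ and elements of valuation $<f(\alpha)$. Such a set exists: take $0$ for the trivial coset, and note that any nontrivial coset contains only elements of valuation $<f(\alpha)$. I replace each coordinate $a_i$ by its representative in $R_{\alpha_i}$ and push the correction, which lies in $u_{\alpha_i}(\mathfrak{p}^{f(\alpha_i)})\subset J$, to the right. Commuting this correction past the remaining factors only produces commutator terms of larger height. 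By the same valuation estimates as in \Cref{commutator lemma}(a), these terms lie in $I\cap V$, and they are either absorbed into $J$ on the right or altered only in higher-height coordinates that have not yet been normalized. Finiteness of $\Psi$ ends the induction.

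The main obstacle is the non-commutativity: showing that the right-multiplication corrections never disturb coordinates already normalized. I expect the ordering by non-increasing height, which puts the lowest-height factors rightmost, together with \Cref{1} to handle this. Commutators of root elements only involve roots $i\alpha+j\beta$ of strictly larger height, and these sit further left in the ordered product. So the normalization should be arranged to run from right to left, so that each newly created commutator term only affects coordinates still to be fixed.
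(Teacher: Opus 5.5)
Your reduction to a normalized representative of the coset $u(J\cap V)$, with $V={^{w^{-1}}\bar{U}}$, is a viable alternative to the paper's argument. The paper instead deletes the single factor $u_{\alpha_k}(a_k)\in\mathcal{U}_{c_u}$, uses the commutator lemma to see that its commutator with $\prod_{i>k}u_{\alpha_i}(a_i)$ lies in $\ker\rho$, and iterates. Your first paragraph correctly derives the claim once such a representative exists. The gap is in producing it: the step you flag as the main obstacle rests on a false statement.

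The set $\Psi=w^{-1}\Phi^-$ typically contains both $B$-positive and $B$-negative roots. Commutators then need not increase the height relative to $B$, whether signed or absolute, and absolute height is not compatible with the paper's ordering in any case. In type $A_2$ with simple roots $\beta_1,\beta_2$, take $\Psi=\{\beta_1+\beta_2,\beta_1,-\beta_2\}$; it is a positive system, hence of the form $w^{-1}\Phi^-$. Then $[u_{\beta_1+\beta_2}(x),u_{-\beta_2}(y)]=u_{\beta_1}(\pm xy)$. Heights $2$ and $-1$ give height $1$, which is smaller in absolute value than $2$. Moreover, $\beta_1$ sits strictly between the two factors in the paper's ordering $\beta_1+\beta_2,\beta_1,-\beta_2$.

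Consequently your $V_{\ge h}$ are not normal: $V_{\ge 2}=U_{\beta_1+\beta_2}$ is not normalized by $U_{-\beta_2}$. Your procedure (right-to-left, or by increasing absolute height) also disturbs an already normalized coordinate. For instance, take $\mathrm{GL}_3$ with $\chi=1\otimes\chi_2\otimes 1$ and $\chi_2$ of conductor $N\ge 4$.
\begin{itemize}
\item Then $b_{\beta_1+\beta_2}=1$, so $f(\beta_1+\beta_2)=0$, and the whole coordinate $x\in\mathfrak{O}^{\times}$ is the correction.
\item Pushing it right past $u_{-\beta_2}(y)$ with $\mathrm{val}(y)=1$ changes the already processed $\beta_1$-coordinate by $\pm xy$.
\item This change has valuation $1<[N/2]=f(\beta_1)$, so it cannot be absorbed into $J$.
\end{itemize}

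The repair is to use the height relative to the positive system $\Psi$ itself, $h'(\alpha)=\mathrm{ht}(-w\alpha)$, which is additive and positive on $\Psi$. Then $V^{(m)}=\prod_{h'(\alpha)\ge m}U_\alpha$ is a central series. For any fixed ordering, in particular the paper's, the coordinates at roots with $h'<m$ depend only on the image in $V/V^{(m)}$.

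Normalize in increasing order of $h'$. In the example this means the two outer factors first and $\beta_1$ last, which is neither left-to-right nor right-to-left. For $\alpha$ of level $m$, with $r\in R_\alpha$ and $u_\alpha(e)\in J\cap V$, write
$u=L\,u_\alpha(r)u_\alpha(e)\,Y=\bigl(L\,u_\alpha(r)\cdot u_\alpha(e)Yu_\alpha(e)^{-1}\bigr)\,u_\alpha(e)$.
The bracketed element lies in $I\cap V$. It is congruent to $L\,u_\alpha(r)\,Y$ modulo $V^{(m+1)}$, because $u_\alpha(e)$ is central modulo $V^{(m+1)}$. Hence its coordinates at all roots of level at most $m$ are those of $L\,u_\alpha(r)\,Y$. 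With this in place the normalization terminates and your argument goes through.
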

    \begin{proof}
        If $c_k\geq f(\alpha_k)$, we will prove that we could remove this term from the expression of $u$. Note that $c_u= b_{\alpha_k}-f(\alpha_k)-1=f(-\alpha_k)-1\leq f(\alpha_k)\leq c_k $. Therefore, $u_{\alpha_k}(a_k)\in \mathcal{U}_{c_u}$.
        
        From our definition of $\alpha_k$, we can see that for any $i>p$, $b_{\alpha_i}-c_i-1<c_u$. Then by \Cref{commutator lemma}, we can write the commutator $[u_{\alpha_k}(a_k)^{-1}, (\prod_{i=k+1}^nu_{\alpha_i}(a_i))^{-1}]$ as a product in $\mathcal{U}_{c_u}$ and $\mathcal{T}_{c_u}$, which is in $\textnormal{ker}(\rho)$. Therefore, for any $u'\in \bar{U}$ such that $u^{-1}u'u\in J$, we have 
        $$\rho(u^{-1}u'u)=\rho((\prod_{1\leq i\leq n, i\neq k}u_{\alpha_i}(a_i))^{-1}u'(\prod_{1\leq i\leq n, i\neq k}u_{\alpha_i}(a_i)).$$
        We successfully removed the term $u_{\alpha_k}(a_k)$, and the claim is proved.
        
    \end{proof}
    
    From now on, we assume that $u$ satisfies \Cref{claim 1}. We will prove, by induction on $n$, that for all such $u$'s, with $u\notin J\cap {^{w^{-1}}\bar{U}}$, we can choose an element of the form $u'=u_{-\alpha_k}(b)\in {^{w^{-1}}U}$, with $b\in\mathfrak{p}^{c_u}$, such that $\rho(u^{-1}u'u)\neq 1$. Note that 
    \begin{equation}\label{valuation for b}
    c_u\geq b_{\alpha_k}-1-(f(\alpha_k)-1)=b_{\alpha_k}-f(\alpha_k)=f(-\alpha_k).
    \end{equation}
    This implies that $u'\in\mathcal{U}_{c_u}$.

    For $n=1$, we can write $u=u_{\alpha_1}(a_1)$, with $c_1<f(\alpha_1)$. By the definition of $\textnormal{cond}(\alpha)$, we can find an element $b\in \mathfrak{p}^{b_{\alpha_1}-1-c_1}=\mathfrak{p}^{c_u}$ such that $^0\chi\circ \alpha_1^{\vee}((1+a_1b)^{-1})\neq 1$. Take $u'=u_{-\alpha_1}(b)$. Then we have the following equation:
    \begin{equation}\label{conjugation relation}
        u^{-1}u'u=u_{\alpha_1}(-a_1)u_{-\alpha_1}(b)u_{\alpha_1}(a_1)=u_{\alpha_1}(\frac{-a_1^2b}{1+a_1b})\alpha_1^{\vee}((1+a_1b)^{-1})u_{-\alpha_1}(\frac{b}{1+ab}).
    \end{equation}
    \Cref{valuation for b} tells us that $\textnormal{val}(b)\geq f(-\alpha_1)$. Therefore, $u_{\alpha_1}(\frac{-a_1^2b}{1+a_1b})$ and $u_{-\alpha_1}(\frac{b}{1+ab})$ are both in $\mathcal{U}_{c_u}$, and $\rho(u^{-1}u'u)=\rho(\alpha_1^{\vee}((1+a_1b)^{-1}))={^0\chi\circ \alpha_1^{\vee}((1+a_1b)^{-1})\neq 1}$.

    For $n-1$, suppose that the statement is correct. Then for $n$, let $u=\prod_{i=1}^nu_{\alpha_i}(a_i)$ and $u_0=\prod_{i=2}^nu_{\alpha_i}(a_i)$. We have three cases:
    \begin{example}
        $\alpha_k=\alpha_1$: for this case, we can see that $c_u> b_{\alpha_i}-1-c_i$ for any $i>1$. As the computation for the case $n=1$, we can find $u'=u_{-\alpha_1}(b)$, with $b\in \mathfrak{p}^{c_u}$, such that $\rho(u_{\alpha_1}(-a_1)u'u_{\alpha_1}(a_1))\neq 1$. Notice that $u_{\alpha_1}(\frac{-a_1^2b}{1+a_1b})$ and $u_{-\alpha_1}(\frac{b}{1+ab})$ are both in $\mathcal{U}_{c_u}$. 
        
        By \Cref{commutator lemma}, we have
        $$\rho(u^{-1}u'u)=\rho(u_0^{-1}\alpha_1^{\vee}((1+a_1b)^{-1})u_0).$$
        We have $u_{\alpha_2}(a_2)^{-1}\alpha_1^{\vee}((1+a_1b)^{-1})u_{\alpha_2}(a_2)=\alpha_1^{\vee}((1+a_1b)^{-1})\cdot [\alpha_1^{\vee}((1+a_1b)^{-1})^{-1},u_{\alpha_2}(a_2)^{-1}].$
        Note that as in the proof of \Cref{commutator lemma}(c), $\textnormal{val}(a_1a_2b)\geq \textnormal{val}(b)+\textnormal{val}(a_2)\geq c_u-1+c_2\geq b_{\alpha_2}-1$. This shows that $[\alpha_1^{\vee}((1+a_1b)^{-1})^{-1},u_{\alpha_2}(a_2)^{-1}]$ is also in $\mathcal{U}_{c_u}$. By \Cref{commutator lemma} again, we can get
        $$\rho(u_0^{-1}u'u_0)=\rho((\prod_{i=3}^nu_{\alpha_i}(a_i))^{-1}u'\prod_{i=3}^nu_{\alpha_i}(a_i)).$$
        For the same reason, we can remove all conjugation actions by $u_{\alpha_i}(a_i)$ for $2\leq i\leq n$. Therefore, we have 
        $$\rho(u^{-1}u'u)=\rho(u_{\alpha_1}(-a_1)u'u_{\alpha_1}(a_1))\neq 1.$$ 
        The statement holds for $n$.
        
    \end{example}

    \begin{example}
        $\alpha_k\neq \alpha_1$ and $\alpha_k$ is positive: under this case, the element $u_0$ also satisfies \Cref{claim 1} and $c_u=c_{u_0}$. By induction hypothesis, we can choose $u'=u_{-\alpha_k}(b)$, with $b\in\mathfrak{p}^{c_u}=\mathfrak{p}^{c_{u_0}}$, such that $\rho(u_0^{-1}u'u_0)\neq 1$. Note that the $\alpha_k$ we choose for $u_0$ is the same as the $\alpha_k$ we choose for $u$.
        
        Consider the element $\tilde{u}=[u'^{-1}, u_{\alpha_1}(a_1)^{-1}]=[u_{\alpha_k}(-b), u_{\alpha_1}(-a_1)]$. By \Cref{commutator lemma}(a), $\tilde{u}$ can be written as a product of elements in $\mathcal{U}_{c_u}$. If $\tilde{u}=1$, we have $u^{-1}u'u=u_0^{-1}u'u_0$, so the statement holds for $n$. Otherwise, we can find positive integers $x,y$ such that $\alpha'\coloneqq x\alpha_i-y\alpha_k$ is a root. Let $u_{\alpha'}(a_{\alpha'})\coloneqq u_{\alpha'}(C_{\alpha_1,-\alpha_k;x,y}a_1^xb^y)$. If $y=1$, $\alpha'$ will also be a positive root, so $u_{\alpha'}(a_{\alpha'})\in \mathcal{U}_{c_u}^+$. Assume that $\alpha_l\in \Phi^+$ and $\alpha_{l+1}\in \Phi^-$ for a positive integer $l>k$. By \Cref{commutator lemma}(a), the commutator $[(\prod_{i=2}^lu_{\alpha_i}(a_i))^{-1},u_{\alpha'}(a_{\alpha'})^{-1}]$ can be written as a product of elements in $\mathcal{U}_{c_u}^+$. By our choice of $\alpha_k$, for any $i\geq l+1$, we have $c_u>b_{\alpha_i}-1-c_i$. Combining these with \Cref{commutator lemma} again, we conclude that the commutator $[u_0^{-1},u_{\alpha'}(a_{\alpha'})^{-1}]$ is a product of elements in $\mathcal{U}_{c_u}$ and $\mathcal{T}_{c_u}$. 
        
        If $y\geq 2$, $u_{\alpha'}(a_{\alpha'})\in \mathcal{U}_{2c_u}$. By \Cref{commutator lemma}, we can write $[u_{\alpha'}(a_{\alpha'})^{-1},u_0^{-1}]$ as a product of elements in $\mathcal{U}_{2c_u}\subset \mathcal{U}_{c_u}$ and $\mathcal{T}_{2c_u}\subset \mathcal{T}_{c_u}$. We conclude that the commutator $[\tilde{u}^{-1},u_0^{-1}]$ is a product of elements in $\mathcal{U}_{c_u}$. Therefore, $\rho(u^{-1}u'u)=\rho(u_0^{-1}u'u_0)\neq1$, and the statement holds for $n$.
        
    \end{example}

    \begin{example}
        $\alpha_k\neq \alpha_1$ and $\alpha_k$ is negative: under this case, the element $u_0$ also satisfies \Cref{claim 1} and $c_u=c_{u_0}$. By induction hypothesis, we can choose $u'=u_{-\alpha_k}(b)$, with $b\in\mathfrak{p}^{c_u}=\mathfrak{p}^{c_{u_0}}$, such that $\rho(u_0^{-1}u'u_0)\neq 1$, and the $\alpha_k$ we choose for $u$ is the same as the $\alpha_k$ we choose for $u_0$. Note that for any $i>k$, $c_u> b_{\alpha_i}-1-c_i$.

        We will use the same notations from the preceding case. Let $\tilde{u}=[u_{\alpha_1}(a_1)^{-1},u'^{-1}]$. If $\tilde{u}=1$, we can get $u^{-1}u'u=u_0^{-1}u'u_0$, and the statement holds for $n$. Otherwise, we choose positive integers $x,y$ such that $\alpha'\coloneqq x\alpha_i-y\alpha_k$ is a root. Let $u_{\alpha'}(a_{\alpha'})\coloneqq u_{\alpha'}(C_{\alpha_1,-\alpha_k;x,y}a_1^xb^y)$. Note that $-\alpha_k$ is positive, and by \Cref{commutator lemma}(a), $u_{\alpha'}(a_{\alpha'})\in \mathcal{U}_{c_u}^+$. Assume that the roots $\alpha_1,\cdots,\alpha_l$ are positive, and $\alpha_{l+1},\cdots,\alpha_n$ are negative. Since $\alpha_k$ is negative, we have $k>l$. Let $u_+=\prod_{i=2}^lu_{\alpha_i}(a_i)$ and $u_-=\prod_{i=l+1}^nu_{\alpha_i}(a_i)$. Again, by \Cref{commutator lemma}(a), $[u_+^{-1},u_{\alpha'}(a_{\alpha'})^{-1}]$ can be written as a product of elements in $\mathcal{U}_{c_u}^+$. We pick one term in the product, and denote it by $u_{\alpha_+}(a_{\alpha_+})$. If the sum of coefficients of $\alpha_+$ is strictly greater than the sum of coefficients of $-\alpha_k$, we get the fact that $\alpha_i\neq -\alpha_+$ for $l+1\leq i\leq k$. If $u_{\alpha_+}(a_{\alpha_+})$ commutes with $\prod_{i=l+1}^ku_{\alpha_i}(a_i)$, we have the following equation:
        $$[u_-^{-1},u_{\alpha_+}(a_{\alpha_+})^{-1}]=[(\prod_{i=k+1}^nu_{\alpha_i})^{-1},u_{\alpha_+}(a_{\alpha_+})^{-1}].$$
        Note that for $i\geq k+1$, $c_u+c_i\geq b_{\alpha_i}$. \Cref{commutator lemma} tells us that $[u_{\alpha_+}(a_{\alpha_+})^{-1},(\prod_{i=k+1}^nu_{\alpha_i})^{-1}]$ is a product of elements in $\mathcal{U}_{c_u}$. If $u_{\alpha_+}(a_{\alpha_+})$ does not commute with $\prod_{i=l+1}^ku_{\alpha_i}(a_i)$, denote the smallest $i$ such that $u_{\alpha_i}(a_i)$ does not commute with $u_{\alpha_+}(a_{\alpha_+})$ by $m$. Then we can see that the commutator of $u_{\alpha_m}(a_m)$ and $u_{\alpha_+}(a_{\alpha_+})$ can be written as a product of elements in $\mathcal{U}_{c_u+1}$ (this is because $c_i\geq 1$). By \Cref{commutator lemma}, the commutator of an element in $\mathcal{U}_{c_u+1}$ and $u_{\alpha_i}(a_i)$ can be written as a product of element in $\mathcal{U}_{c_u+1}$ and $\mathcal{T}_{c_u+1}$, and the commutator of an element in $\mathcal{T}_{c_u+1}$ and $u_{\alpha_i}(a_i)$ is in $\mathcal{U}_{c_u+1}$. Therefore, $[u_-^{-1},u_{\alpha_+}(a_{\alpha_+})^{-1}]\in \textnormal{ker}(\rho)$, and we have $\rho(u^{-1}u'u)=\rho(u_0^{-1}u'u_0)\neq 1$. The statement holds for $n$.

        If the sum of coefficients of $\alpha_+$ is equal to the sum of coefficients of $-\alpha_k$, we can get that $\alpha_1$ is negative. Together with the fact that $u'$ and $u_{\alpha_1}(a_1)$ do not commute, we can reduce it to the case above, and the statement holds for $n$
    \end{example}

    In conclusion, for any $u\in (I\cap {^{w^{-1}}\bar{U}})\backslash(J\cap {^{w^{-1}}\bar{U}})$, we find an element $u'=u_{-\alpha_k}(b)\in {^{w{-1}}U}$ such that $\rho(u^{-1}u'u)\neq 1$. The proof of \Cref{axiom for u} for split groups and $P=B$ is complete.

    If $P$ is not $B$, then we could still write $u$ as $u=\prod_{i=1}^nu_{\alpha_i}(a_i)$, with each $u_{\alpha_i}(a_i)\in {^{w^{-1}}\bar{N}}$. The $u'$ we pick belongs to $^{w^{-1}}N$. So \Cref{axiom for u} is also true for general parabolic subgroups $P$. The proof for split groups is complete.

\subsubsection{Proof for unramified groups}
    
    Next, we prove \Cref{axiom for u} for unramified groups. We still first work with $P=B=TU$. Therefore, $w$ could be any element in the Weyl group $W$. Denote the relative root system by $\Phi_F$ and absolute root system by $\Phi_E$ and let $h$ be a generator of $\textnormal{Gal}(E/F)$. Define $\textnormal{Res}$ to be the restriction map from $\Phi_E\rightarrow \Phi_F$. Let $ \lvert \textnormal{Gal}(E/F)\rvert=e $. Let $(J_E,\rho_E)$ denote the type we construct for the principal block $\mathfrak{s}_E=(T(E),\chi_E)_{G(E)}$.
    
    For any $u\in (I\cap {^{w^{-1}}\bar{U}})\backslash(J\cap {^{w^{-1}}\bar{U}})$, we need to find an element $u'\in {^{w^{-1}}U}$ such that $\rho(u^{-1}u'u)\neq 1$. From the preceding section, if we regard $u$ as an element in $I_E\cap {^{w^{-1}}\bar{U}(E)}$ and write $u=\prod_{i=1}^nu_{\alpha_i}(a_i)$ satisfying \Cref{claim 1} with $\alpha_i\in \Phi_E$, we can find $u_E'=u_{-\alpha_k}(b)\in {^{w^{-1}}U}_E$, with $\alpha_k\in \Phi_E$ and $b\in \mathfrak{p}_E^{c_u}$, such that $\rho_E(u^{-1}u'_Eu)\neq 1$. Denote $\textnormal{Res}(-\alpha_k)$ by $-\alpha_{k,F}$.

    Let $N_E(g)=\prod_{i=1}^eh^i(g)$ for any $g\in G(E)$. Define
    $$U_{\alpha}^j=\prod_{\beta\in \Phi_E,\textnormal{Res}(\beta)=\alpha}u_{\beta}(\mathfrak{p}_E^j)$$
    for a positive integer $j$ and $\alpha\in \Phi_F$. Since $-j\alpha_{k,F}$ cannot be a relative root for any $j\geq 3$, by \Cref{1}, we can write $u^0:=h(N_E(u_E'))\cdot N_E(u_E')^{-1}$ as a product of elements of the form $u_{\beta}(a_{\beta})$, where $\beta\in \Phi_E$, $\textnormal{Res}(\beta)=-2\alpha_{k,F}$ and $a_{\beta}\in \mathfrak{p}_E^{2c_u}$. Therefore, $h(N_E(u_E'))\cdot N_E(u_E')^{-1}\in U_{-2\alpha_{k,F}}^{2c_u}$. For the same reason, the group $U_{-2\alpha_{k,F}}^{2c_u}$ is abelian. We need the following lemma:
    \begin{lemma}\label{cohomology statement}
        The first Galois cohomology of $U_{-2\alpha_{k,F}}^{2c_u}$ is trivial, i.e. 
        $$H^1(\textnormal{Gal}(E/F),U_{-2\alpha_{k,F}}^{2c_u})=1.$$
    \end{lemma}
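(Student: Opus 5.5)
We will identify $U_{-2\alpha_{k,F}}^{2c_u}$ with an induced module over $\mathrm{Gal}(E/F)$ built from the additive group $\mathfrak{p}_E^{2c_u}$, and then apply Shapiro's lemma together with the vanishing of cohomology of $\mathfrak{p}_E^{j}$ for unramified extensions.

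\textbf{Structure of the group.} The roots $\beta\in\Phi_E$ with $\textnormal{Res}(\beta)=-2\alpha_{k,F}$ form a single $h$-orbit: the fibre of $\textnormal{Res}$ over a relative root is one Galois orbit. By \Cref{1}, and because $-j\alpha_{k,F}$ is not a relative root for $j\geq 3$, no two such $\beta$ sum to a root. Hence the $u_\beta$ for these $\beta$ pairwise commute, and
$$U_{-2\alpha_{k,F}}^{2c_u}\simeq\prod_{\beta}u_\beta(\mathfrak{p}_E^{2c_u})\simeq\bigoplus_{\beta}\mathfrak{p}_E^{2c_u}$$
is abelian. The Galois action is given by $h(u_\beta(a))=u_{h\beta}(x_\beta h(a))$ with $x_\beta\in\mathfrak{O}_E^\times$. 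So $h$ permutes the summands transitively, and on each summand it acts semilinearly via $h$ twisted by a unit, which preserves $\mathfrak{p}_E^{2c_u}$.

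\textbf{Reduction to an induced module.} Let $m$ be the size of the orbit and let $D=\langle h^m\rangle$ be the stabilizer of a fixed $\beta_0$. The module above is then $\textnormal{Ind}_D^{\textnormal{Gal}(E/F)}M_0$, where $M_0=u_{\beta_0}(\mathfrak{p}_E^{2c_u})$ carries the action $a\mapsto y\,h^m(a)$, with $y=\prod_{i<m} h^{i}(x)$ the resulting unit (suitably indexed). Shapiro's lemma gives $H^1(\textnormal{Gal}(E/F),U)=H^1(D,M_0)$.

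\textbf{Vanishing for $D$.} The group $D=\textnormal{Gal}(E/E')$ is cyclic, where $E'$ is the fixed field of $h^m$ and $E/E'$ is unramified. Because $U_{-2\alpha_{k,F}}^{2c_u}$ is a Galois-stable subgroup of $U(E)$, the twisting unit $y$ defines a $1$-cocycle of $D$ in $\mathfrak{O}_E^\times$; concretely, its norm $N_{E/E'}(y)=1$. By Hilbert 90 for the unramified extension, together with surjectivity of the norm on units (so $H^1(D,\mathfrak{O}_E^\times)=1$), we can write $y=z/h^m(z)$ for some $z\in\mathfrak{O}_E^\times$. Rescaling by $z$ converts $M_0$ into $\mathfrak{p}_E^{2c_u}$ with its standard Galois action. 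Since $E/E'$ is unramified, $\mathfrak{p}_E^{j}=\varpi^j\mathfrak{O}_E$ with $\varpi\in E'$, and $\mathfrak{O}_E$ is free over $\mathfrak{O}_{E'}[D]$ by the normal integral basis theorem for unramified extensions. Therefore $H^1(D,\mathfrak{p}_E^{2c_u})=0$.

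\textbf{Main obstacle.} The delicate point is the first step: showing that the twisted action is a genuine module structure, with $N(y)=1$, and that the $u_\beta$ commute so that $U$ really decomposes as a direct sum. Both facts come from the root-system constraint that multiples $j\alpha_{k,F}$ with $j\geq 3$ are not relative roots, and from the compatibility of the chosen pinning with the Galois action.
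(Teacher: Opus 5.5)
Your proposal is correct and is essentially the paper's proof. Both arguments reduce to one Galois orbit of absolute roots over $-2\alpha_{k,F}$, identify the group with an induced module and apply Shapiro's lemma, remove the twisting unit by Hilbert 90 (its norm to $E'$ is $1$ because a power of $h$ acts trivially), and conclude from the vanishing of $H^1$ of $\mathfrak{p}_E^{2c_u}$ for the unramified extension $E/E'$. For that last step your appeal to the normal integral basis is a sharper justification than the paper's bare citation of Hilbert 90, which by itself gives vanishing only for $E$, not for the lattice $\mathfrak{p}_E^{2c_u}$.
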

    \begin{proof}
        We only need to consider one Galois orbit. Pick $\beta_1\in \Phi_E$ such that $\textnormal{Res}(\beta_1)=-2\alpha_{k,F}$. Denote the stabilizer of $\beta_1$ in $\textnormal{Gal}(E/F)$ by $H$. Let $\lvert H \rvert=e_1$ and $e_2=e/e_1$. Therefore, $h^{e_2}$ is a generator of $H$. Denote it by $h_1$. We have the fact that $U_{-2\alpha_{k,F}}^{2c_u}$ is isomorphic to $\textnormal{Ind}_H^{\textnormal{Gal}(E/F)}u_{\beta_1}(\mathfrak{p}_E^{2c_u})$ as $\textnormal{Gal}(E/F)$-modules. By Shapiro's Lemma, 
        $$H^1(\textnormal{Gal}(E/F),U_{-2\alpha_{k,F}}^{2c_u})=H^1(H,u_{\beta_1}(\mathfrak{p}_E^{2c_u})).$$

        Let $E'=E^H$. The action of $H$ on $u_{\beta_1}(\mathfrak{p}_E^{2c_u})$ is as follows: for any $a\in \mathfrak{p}_E^{2c_u}$, $h_1u_{\beta_1}(a)=u_{\beta_1}(d_{\beta_1}h_1(a))$, for some constant $d_{\beta_1}\in \mathfrak{O}_E^{\times}$. Since $h_1^{e_1}$ acts as the identity on $u_{\beta_1}(\mathfrak{p}_E^{2c_u})$, we get $\prod_{i=1}^{e_1}h_1^i(d_{\beta_1})=1$. Note that this is the norm map from $E$ to $E'$. By Hilbert's Theorem 90, we can find and element $d\in \mathfrak{O}_{E'}^{\times}$, such that $d_{\beta_1}=h_1(d)d^{-1}$. The map $a\rightarrow da$ induces an $H$-module isomorphism from $u_{\beta_1}(\mathfrak{p}_E^{2c_u})$ to $\mathfrak{p}_E^{2c_u}$. Again by Hilbert's Theorem 90, $H^1(H,\mathfrak{p}_E^{2c_u})=1$. This finishes the proof. 
    \end{proof}

        Note that $N_E(u^0)=1$. From the above lemma, we can find $u_1\in U_{-2\alpha_{k,F}}^{2c_u}$ such that $u^0=u_1h(u_1)^{-1}$. Therefore, $h(N_E(u_E')u_1)=N_E(u_E')u_1$, which shows that $N_E(u_E')u_1\in ({^{w^{-1}}U}_E)^{\textnormal{Gal}(E/F)}={^{w^{-1}}U}$. Note that $u_1\in \mathcal{U}_{2c_u}$. Let $u'=N_E(u_E')u_1$ and $u_2=u^{-1}u_1u=u_1[u_1^{-1},u^{-1}]$. We have 
        \begin{equation}
            u^{-1}u'u=u^{-1}N_E(u'_E)uu_2=N_E(u^{-1}u'_Eu)u_2.
        \end{equation}
        By \Cref{commutator lemma}, we can write $u_2$ as a product of elements in $\mathcal{U}_{2c_u}\subset \mathcal{U}_{c_u}$ and $\mathcal{T}_{2c_u}\subset \mathcal{T}_{c_u}$. Further, $u^{-1}u'_Eu$ could be written in the form of $u^-tu^+$, with $t\in T(\mathfrak{O}_E)$, $u^-\in J_E\cap \bar{U}(E)$ and $u^+\in J_E\cap U(E)$. One can see that $J_E\cap \bar{U}(E)$ and $J_E\cap U(E)$ are both $\textnormal{Gal}(E/F)$-stable, and normalized by $T(\mathfrak{O}_E)$. Therefore, we have 
        $$N_E(u^{-1}u'_Eu)=N_E(u^-tu^+)=\prod_{i=1}^eh^i(u^-tu^+)=N_E(t)\cdot u_3,$$
        where $u_3$ can be written as a product of elements in $J_E\cap \bar{U}(E)$ and $J_E\cap U(E)$. The formula $\rho_E(u^{-1}u_E'u)\neq1$ implies that $^0\chi(N_E(t))\neq 1$. With all the discussion above, we can write $u^{-1}u'u=N_E(t)\cdot u_3 u_2$. Since both $u^{-1}u'u$ and $N_E(t)$ are $\textnormal{Gal}(E/F)$-invariant, $u_3u_2$ is also $\textnormal{Gal}(E/F)$-invariant.

        Let $U_f=\langle u_{\alpha}(\mathfrak{p}_E^{f(\alpha)}):\alpha\in \Phi_E\rangle$. Further, let $U_f^-=U_f\cap \bar{U}(E)$ and $U_f^+=U_f\cap U(E)$. By \cite[Lemma 3.2]{roche1998types}, we have
        $$U_f=U_f^-{^0T_f}U_f^+,$$
        where $^0T_f=\prod_{\alpha\in \Phi_E}\alpha^{\vee}(1+\mathfrak{p}_E^{b_\alpha})$. It is easy to see that $U_f\in \textnormal{ker}(\rho_E)$. Also, since the above equation is a bijection, we have $(U_f)^h=(U_f^-)^h({^0T_f})^h(U_f^+)^h$. 
        
        Note that $\mathcal{U}_{c_u}$ and $\mathcal{T}_{c_u}$ are both contained in $U_f$. This implies that $u_3u_2\in U_f$. We can write $u_3u_2$ as $u_f^-t_fu_f^+$, where $u_f^-\in U_f^-$, $t_f\in {^0T_f}$ and $u_f^+\in U_f^+$. Since $u_3u_2$ is $\textnormal{Gal}(E/F)$-invariant, we can further require that $u_f^-\in (U_f^-)^h$, $t_f\in ({^0T_f})^h$ and $u_f^+\in (U_f^+)^h$. Therefore, $\rho(u^{-1}u'u)=\rho(N_E(t)\cdot t_f)$. We need the following lemma:
        \begin{lemma}
            The norm map $N_E:{^0T_f}\rightarrow ({^0T_f})^h$ is surjective.
        \end{lemma}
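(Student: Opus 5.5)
We want to show that $N_E=\prod_{i=1}^e h^i$ maps ${^0T_f}=\prod_{\alpha\in\Phi_E}\alpha^{\vee}(1+\mathfrak{p}_E^{b_\alpha})$ onto its Galois invariants $({^0T_f})^h$. The plan is to break ${^0T_f}$ into Galois-stable pieces indexed by $h$-orbits of coroots. On each piece the problem becomes a norm surjectivity statement for principal units in unramified extensions, in the spirit of \cite[Lemma 8.2.1]{haines2012base}.

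First, since $b_{h(\alpha)}=b_\alpha$, the group ${^0T_f}$ is $h$-stable. The constant $x_\alpha$ only affects root groups, and $h(\alpha^\vee(t))=h(\alpha)^\vee(h(t))$ on the torus. Next I would reduce to the simply connected cover: work in $T'_{\textnormal{sc}}$ (via a $z$-extension as in \Cref{G=G'T}, or directly). There the coroots attached to a basis of simple roots give an isomorphism $T'_{\textnormal{sc}}\cong\prod_{\text{orbits}}\textnormal{Res}_{E_O/F}\mathbb{G}_m$, so $T'_{\textnormal{sc}}$ is an induced torus.

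Then I would filter ${^0T_f}$ by depth. Put $T^{(n)}={^0T_f}\cap T(1+\mathfrak{p}_E^n)$ for $n\ge1$, i.e. the subgroup generated by the $\alpha^\vee(1+\mathfrak{p}_E^{\max(n,b_\alpha)})$. Each graded piece $T^{(n)}/T^{(n+1)}$ is identified with the $k_E$-span $V_n\subset X_*(T)\otimes k_E$ of those $\alpha^\vee$ with $b_\alpha\le n$. This span is Galois-stable, and $h$ acts on it semilinearly through Frobenius on $k_E$. By Lang's theorem or Hilbert 90 for $k_E/k_F$, the additive trace $V_n\to V_n^h$ is surjective. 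Hensel-type successive approximation, using completeness and the fact that $T^{(n)}$ is open, then gives surjectivity of $N_E:T^{(1)}\to (T^{(1)})^h$, provided the graded pieces of the invariants are the invariants of the graded pieces. That last point needs $H^1(\textnormal{Gal}(E/F),T^{(n+1)})=1$. I would prove it by the same filtration argument: an $H^1$ of $k_E$-vector spaces with semilinear action vanishes, again by Hilbert 90, as in \Cref{cohomology statement}.

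The main obstacle is the identification of the graded pieces. If $p$ divided the index of the coroot lattice, $\alpha^\vee(1+x)$ with $x\in\mathfrak{p}_E^n$ would not be linear in $\alpha^\vee\otimes x$ modulo level $n+1$, and the spans could be degenerate. The standing hypothesis $p\nmid|W_E|$ prevents this. In the graded identification I would use that, modulo $\mathfrak{p}_E^{n+1}$, the map $(1+x)\mapsto x$ turns the product of coroot images into a sum in $X_*(T)\otimes k_E$. I would also check that the graded piece of ${^0T_f}$ is exactly $V_n$, and not something larger coming from products of coroots of different conductors. I expect this bookkeeping, namely showing that the filtration on ${^0T_f}$ is the one induced from $T(1+\mathfrak{p}_E^n)$ with graded pieces $V_n$, to be the delicate step.
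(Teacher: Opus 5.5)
Your argument is correct, but it is organized differently from the paper's.

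\textbf{The paper's route.} The paper reduces to a single $h$-orbit of roots. For an invariant element $\prod_i (h^i\alpha)^\vee(1+x_i)$ of that orbit piece, it reads off $h(x_i)=x_{i+1}$, so $1+x_1\in 1+\mathfrak{p}_{E_\alpha}^{b_\alpha}$. It then uses surjectivity of the norm $1+\mathfrak{p}_E^{b_\alpha}\rightarrow 1+\mathfrak{p}_{E_\alpha}^{b_\alpha}$ for the unramified extension $E/E_\alpha$. In effect this is the induced-module argument. It is short and gives an explicit preimage. However, it tacitly treats ${^0T_f}$ as the direct product of its orbit pieces, with well-defined coordinates $x_i$. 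The coroots satisfy linear relations, for instance $\alpha^\vee(1+x)\cdot(-\alpha)^\vee(1+x)=1$, where $\alpha$ and $-\alpha$ lie in different orbits because $h$ preserves $\Phi_E^+$. So an invariant element need not split into invariant orbit components, and the paper does not address this.

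\textbf{Your route.} Your depth filtration treats ${^0T_f}$ as an honest subgroup of $T(E)$. It reduces everything to surjectivity of the additive trace on the $k_E$-spaces $V_n$, so the coroot relations are handled automatically. The price is the graded identification you flag.

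\textbf{The flagged step.} That step does hold, but the lattices that matter are $Q_n=\mathbb{Z}\{\alpha^\vee : b_\alpha\le n\}$, not the full coroot lattice.
First, $\Psi_n=\{\alpha : b_\alpha\le n\}$ is a root subsystem, since $(s_\beta\alpha)^\vee=\alpha^\vee-\langle\beta,\alpha^\vee\rangle\beta^\vee$.
Second, the torsion of $X_*(T)/Q_n$ is $(X_*(T)\cap\mathbb{Q}Q_n)/Q_n$. This embeds into the quotient of the coweight lattice of $\Psi_n$ by its coroot lattice. The order of that quotient has only prime factors dividing $|W(\Psi_n)|$, hence dividing $|W_E|$.
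Therefore the $Q_n\otimes\mathbb{Z}_p$ form a flag of direct summands of $X_*(T)\otimes\mathbb{Z}_p$. Choosing an adapted basis $\bigoplus_j W_j$ gives ${^0T_f}=\bigoplus_j W_j\otimes(1+\mathfrak{p}_E^{c_j})$, where the $c_j$ are the distinct conductor values. Your description of $T^{(n)}$ and the isomorphism $T^{(n)}/T^{(n+1)}\cong V_n$ follow immediately.

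\textbf{Unnecessary steps.} The passage to $T_{\textnormal{sc}}$ is never used. The vanishing of $H^1$ is also not needed for surjectivity. The image of $t\in(T^{(n)})^h$ in $V_n$ is automatically $h$-fixed. So picking $s\in T^{(n)}$ whose image has trace equal to it already gives $tN_E(s)^{-1}\in(T^{(n+1)})^h$, and you can iterate.
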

        \begin{proof}
            First, note that $f$ is $\textnormal{Gal}(E/F)$-stable and $^0T_f$ is abelian. The Galois action on $^0T_f$ is as follows:
            $$h(\alpha^{\vee}(1+a))=(h(\alpha))^\vee(1+h(a)).$$ Therefore, we only need to consider one Galois orbit of roots.

            Fix a root $\alpha\in\Phi_E$. Denote the stabilizer of $\alpha$ in $\textnormal{Gal}(E/F)$ by $H_{\alpha}$. Let $\lvert H_{\alpha}\rvert=e_{3}$, and $e_{4}=e/e_{3}$. Therefore, $h_{\alpha}\coloneqq h^{e_{4}}$ is a generator of $H_{\alpha}$. Let $E_{\alpha}=E^{H_{\alpha}}$. Let $t_{\alpha}=\prod_{i=1}^{e_{4}}(h^i\alpha)^{\vee}((1+x_i))$ be $\textnormal{Gal}(E/F)$-invariant. This requires that $h(x_i)=x_{i+1}$ (define $x_{e_{4}+1}=x_1)$. Therefore, $h^{e_{4}}(x_1)=x_1$, which shows that $x_1\in 1+\mathfrak{p}_{E_{\alpha}}^{b_{\alpha}}$. Since the norm map from $1+\mathfrak{p}_E^{b_{\alpha}}$ to $1+\mathfrak{p}_{E_{\alpha}}^{b_{\alpha}}$ is surjective, we can choose an element $y\in \mathfrak{p}_E^{b_{\alpha}}$ such that $\prod_{i=1}^{e_3}h_{\alpha}^i(y)=1+x_1$. Therefore, $t_{\alpha}=N_E(\alpha^{\vee}(1+y))$. This finishes the proof.
        \end{proof}

        By the above lemma, we can find $t_f'\in {^0T_f}$ such that $N_E(t_f')=t_f$. Therefore, $\rho(t_f)=\rho(N_E(t_f'))=\rho_E(t_f')=1$, and we have $\rho(u^{-1}u'u)=\rho(N_E(t)\cdot t_f)=\rho(N_E(t))\neq 1$. This proves \Cref{axiom for u} for unramified groups when $P=B$. For a general standard parabolic subgroup $P=MN$, we could still write $u$ as $u=\prod_{i=1}^nu_{\alpha_i}(a_i)$, with each $u_{\alpha_i}(a_i)\in {^{w^{-1}}\bar{N}(E)}$. The $u'$ we find from the proof above belongs to $({^{w^{-1}}N(E)})^h={^{w^{-1}}N}$. This finishes the proof of \Cref{axiom for u}.

\section{Vanishing statement and reduction steps}\label{section 7}

\subsection{Vanishing statement}

\begin{lemma}\label{vanishing statement}
    Let $\phi\in \mathcal{Z}(G_r,\rho_r)$ and let $f=b_r\phi\in \mathcal{Z}(G,\rho)$. If $\gamma$ is not a norm from $G_r$, then $\textnormal{O}_{\gamma}(f)=0$.
\end{lemma}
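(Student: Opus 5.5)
We will follow the strategy of the vanishing statement in \cite{haines2012base} (which goes back to Labesse and \cite{haines2009base}), using the descent formula of \Cref{descent formula section} to reduce to the elliptic case. The key object is the Kottwitz invariant: for a semisimple $\gamma\in G$, $\gamma$ is a norm from $G_r$ if and only if a certain character of $\pi_0(Z(\widehat{G_\gamma})^{\Gamma})$ (equivalently, an element of $X^*(Z(\widehat{G})^{\Gamma})$ evaluated via the Kottwitz map $\kappa_G$ on the image of $\gamma$ in $B(G)$) is trivial modulo $r$. In the unramified setting, the obstruction is detected by the image of $\gamma$ under the Kottwitz homomorphism $\kappa_G: G\to X^*(Z(\widehat G)^{I_F})_{\Phi}$, together with an analogous invariant for the centralizer. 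The plan is to show that $f=b_r\phi$ is supported on elements whose Kottwitz invariant lies in $r\cdot(\cdot)$, or more precisely that it is invariant under twisting by characters of $G/G_1$ which are trivial on norms, and then to conclude via a character-twisting argument.

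\textbf{Step 1 (elliptic reduction).} If $\gamma$ is not elliptic, choose $M$ as in \Cref{descent formula section} with $\gamma$ elliptic in $M$. Applying \Cref{descent formula for general function}, $\textnormal{O}_\gamma^G(f)$ becomes a sum of $\textnormal{O}_\gamma^M(b_r((Z_r)_{P_r})*e_{^w\rho_M})$, and each summand has the same shape for the Levi $M$: a base-changed element of the Bernstein center of the principal block of $M$ attached to $^w\mathfrak{s}_M$. By \cite[Lemma 4.2.1]{haines2009base}, $\gamma$ is a norm from $M_r$ if and only if it is a norm from $G_r$. So we reduce to $\gamma$ elliptic in $G$.

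\textbf{Step 2 (twisting by characters).} Let $\omega$ range over unramified characters of $G$ of the form $\eta\circ\kappa_G$, where $\eta$ is a character of $X^*(Z(\widehat G)^{I_F})_\Phi$ of order dividing $r$. Then $\omega\circ N_r$ is trivial on $G_r$. Twisting a representation $\pi$ of $G$ in $\mathfrak{R}_\mathfrak{s}(G)$ by $\omega$ corresponds, on $\mathfrak{X}_\mathfrak{s}$, to $\xi\mapsto \xi\cdot\omega|_T$. Since $(\xi\omega|_T)\circ N_r=\xi\circ N_r$, every function in the image of $N_r:\mathbb{C}[\mathfrak{X}_{\mathfrak{s}_r}]\to\mathbb{C}[\mathfrak{X}_\mathfrak{s}]$ is invariant under this translation. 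Hence $b_r\phi$ acts by the same scalar on $\pi$ and on $\pi\otimes\omega$. Since $\rho$ is trivial on $J\cap G_1\supset J$ (note $\omega|_J=1$), it follows that $\omega\cdot f$ and $f$ have equal traces on every irreducible representation. By Kazhdan density, $\omega f\equiv f$, so $\textnormal{O}_\gamma(f)=\omega(\gamma)\textnormal{O}_\gamma(f)$.

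\textbf{Step 3 (conclusion).} For elliptic $\gamma$ that is not a norm, Kottwitz's characterization (\cite{kottwitz1982rational}, in the unramified form used in \cite{haines2009base}) provides such an $\omega$ with $\omega(\gamma)\neq 1$, so the orbital integral vanishes. I expect the main obstacle to be this last point. Non-normness of an elliptic $\gamma$ is a condition on $\kappa_{G_\gamma}$ rather than on $\kappa_G$. For elliptic $\gamma$, $Z(\widehat{G})^\Gamma\to Z(\widehat{G_\gamma})^\Gamma$ has finite cokernel up to the appropriate identification, but one must check that the obstruction really factors through $\kappa_G(\gamma)$. I would handle this as in \cite{haines2012base}, first passing to a $z$-extension and the groups with $G_{\textnormal{der}}=G_{\textnormal{sc}}$ used in \Cref{section 7}, where the norm criterion reduces to the image in the cocenter $D=G/G_{\textnormal{der}}$, and $\kappa_G$ factors through $D$.
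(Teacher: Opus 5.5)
Your Steps 1 and 2 agree with the paper. You reduce to elliptic $\gamma$ by the descent formulas. You then show that $f=b_r\phi$ is unchanged by twisting with a character $\omega$ of $G$ that is trivial on $J$, unramified on $T$, and has trivial base change, because $\beta^{-1}(b_r\phi)(\xi\cdot\omega|_T)=\beta^{-1}(\phi)\bigl((\xi\cdot\omega|_T)\circ N_r\bigr)=\beta^{-1}(\phi)(\xi\circ N_r)$. (In Step 2 you mean $\omega|_J=1$, so that $\omega f\in\mathcal{H}(G,\rho)$; $\rho$ itself is not trivial on $J$.)

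The gap is in Step 3. You twist by the characters $\omega=\eta\circ\kappa_G$, where $\eta$ is a character of the coinvariants $X^*(Z(\widehat G)^{I_F})_\Phi$ of order dividing $r$. These are trivial on norms, but they do not in general detect non-norms. On $G(F)$ the Kottwitz map takes values in the $\Phi$-\emph{invariants}, and ``the image in the coinvariants is divisible by $r$'' is only a necessary condition for being a norm.

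Here is a concrete counterexample. Let $G=T$ be the unramified torus with $X_*(T)=\mathbb{Z}[x]/((x-1)(x^2+1))$, Frobenius acting by $x$ (order $4$), and take $r=2$.
\begin{itemize}
\item One computes $X_*(T)^{\Phi^2}=X_*(T)^{\Phi}=\mathbb{Z}\cdot(x^2+1)$ and $(1+x)(x^2+1)\equiv 2(x^2+1)$.
\item Since $T(F)/{}^0T\cong X_*(T)^\Phi$ and $N_r({}^0T_r)={}^0T$, the norm subgroup is the preimage of $2\mathbb{Z}\cdot(x^2+1)$. So an element $t$ with $\kappa_T(t)=x^2+1$ is not a norm.
\item But the image of $x^2+1$ in $X_*(T)_\Phi\cong\mathbb{Z}$ (via $x\mapsto 1$) is $2$, which every $\eta$ with $\eta^2=1$ kills.
\end{itemize}
Here $G=T=D$ already has $G_{\textnormal{der}}=G_{\textnormal{sc}}$, so your proposed fallback through $z$-extensions and the cocenter does not repair this.

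The fix, which is what the paper does, is to twist by \emph{every} character trivial on the norm subgroup, not only by those of order dividing $r$ on coinvariants. The paper uses \cite[Proposition 2.5.3]{labesse1999cohomologie}: an elliptic $\gamma$ is a norm if and only if its image in $\mathbf{H}^0_{ab}(F,G)=G(F)/p(G_{sc}(F))$ is a norm. It then picks a character $\eta$ of this abelian group that is trivial on norms with $\eta(\gamma)\neq 1$; in the torus example this is the nontrivial character of $X_*(T)^\Phi/(1+x)X_*(T)^{\Phi^2}\cong\mathbb{Z}/2$. Such an $eta$ is unramified, hence trivial on $J$, and its base change $\eta_r$ is trivial.

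Your Step 2 argument then gives $f\eta=f$, in fact pointwise via $\beta$ with no density theorem needed. So $f$ vanishes wherever $\eta\neq 1$, in particular on the conjugacy class of $\gamma$. With this family your Steps 1 and 2 go through unchanged, and no reduction to $G_{\textnormal{der}}=G_{\textnormal{sc}}$ is needed for this lemma.
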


    We basically follow the proof in \cite[\S 7.1]{haines2012base}. \Cref{twisted descent formula for general function} and \Cref{descent formula for general function} help us reduce to the case where $\gamma$ is an elliptic element in $G$. For the canonical map $p:G_{sc}\rightarrow G$ and the abelian group 
    $$\mathbf{H}_{ab}^0(F,G)=G(F)/p(G_{sc}(F)),$$
    by \cite[Proposition 2.5.3]{labesse1999cohomologie}, an elliptic element is a norm from $G_r$ if and only if its image in $\mathbf{H}_{ab}^0(F,G)$ is a norm. Now the following lemma finishes the proof:
    \begin{lemma}
        Let $f=b_r\phi\in \mathcal{Z}(G,\rho)$. Let $x\in G(F)$ be any element such that, for some character $\eta$ on the group $\mathbf{H}_{ab}^0(F,G)$ which is trivial on the norms, we have $\eta(x)\neq 1$. Then $f(x)=0$.
    \end{lemma}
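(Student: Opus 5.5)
The plan follows \cite[Lemma 7.1.2]{haines2012base}: view $\eta$ as a character of $G$ and show that $f$ is an eigenfunction for multiplication by $\eta$ with eigenvalue $1$, i.e.\ $\eta f=f$. Then $f(x)=\eta(x)f(x)$, and since $\eta(x)\neq 1$ this forces $f(x)=0$.

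First, pull $\eta$ back along $G\to\mathbf{H}^0_{ab}(F,G)$ to a smooth character of $G$, trivial on $p(G_{sc}(F))$. Since $\eta$ is trivial on norms, $\eta|_T\circ N_r=1$ on $T_r$. In particular $\eta|_{^0T}\circ N_r=1$ on $^0T_r$, and since $N_r:{^0T_r}\to{^0T}$ is surjective (unramified $T$, as in \cite[Lemma 8.2.1]{haines2012base}), $\eta$ is trivial on $^0T$. Moreover $\eta$ is trivial on every root subgroup $U_\alpha(F)$, since these lie in the image of $G_{sc}(F)$. Using the Iwahori decomposition $J=(J\cap\bar U)\,{^0T}\,(J\cap U)$, it follows that $\eta|_J=1$. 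Consequently multiplication by $\eta$ preserves $\mathcal{H}(G,\rho)$, and since it is an algebra automorphism it also preserves the centre $\mathcal{Z}(G,\rho)$. Next I compute how $\eta f$ acts on $i_B^G(\xi)^{\rho}$. Multiplying by $\eta$ corresponds to twisting: $i_B^G(\xi)\otimes\eta\simeq i_B^G(\xi\eta|_T)$. Here $\eta|_T$ is unramified, being trivial on $^0T$, so $\xi\eta|_T$ again lies in $\mathfrak{X}_{\mathfrak{s}}$. Hence
\[
\beta^{-1}(\eta f)(\xi)=\beta^{-1}(f)(\xi\eta^{\pm1}|_T),
\]
where the sign is fixed by the convention in the action formula for $\bullet$ and is irrelevant below. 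Now take $f=b_r\phi$. By the definition of $b_r$, $\beta^{-1}(f)(\xi)=\beta^{-1}(\phi)(\xi\circ N_r)$. Since $(\xi\eta|_T)\circ N_r=\xi\circ N_r$, the function $\beta^{-1}(f)$ is invariant under translation by $\eta|_T$. Therefore $\beta^{-1}(\eta f)=\beta^{-1}(f)$, and because $\beta$ is an isomorphism, $\eta f=f$. Evaluating at $x$ gives the claim.

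The main obstacle is the twisting identity: one must check carefully that twisting by $\eta$ really transports the scalar by which $z\in\mathcal{Z}(G,\rho)$ acts on $i_B^G(\xi)^\rho$ to the scalar of $\eta z$ on $i_B^G(\xi\eta|_T)^\rho$. This requires that the $\rho$-isotypic parts match under the isomorphism $h\mapsto\eta h$, which is exactly where $\eta|_J=1$ is used. A secondary point is verifying that $\eta$ is trivial on the unipotent part of $J$; if one prefers to avoid $G_{sc}$, one can instead use that $\eta$ has finite order modulo a compact set and that unipotent elements are divisible, hence lie in the kernel of any character.
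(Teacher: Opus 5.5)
Your proposal is correct and follows the paper's own argument: you show $\eta f=f$ by using the twist $h\mapsto \eta h$ from $i_B^G(\xi)^{\rho}$ to $i_B^G(\xi\eta)^{\rho}$ to see that $\eta f$ acts by $\beta^{-1}(f)(\xi\eta)=\beta^{-1}(\phi)((\xi\eta)\circ N_r)=\beta^{-1}(\phi)(\xi\circ N_r)$, and then conclude by injectivity of $\beta$. Two of your justifications fill in steps the paper only asserts: that $\eta|_J=1$ (via surjectivity of $N_r\colon {^0T_r}\to{^0T}$ together with $U(F),\bar U(F)\subset p(G_{sc}(F))$), and that $\eta f$ is central (because multiplication by $\eta$ is an algebra automorphism of $\mathcal{H}(G,\rho)$).
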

    \begin{proof}
        By pulling back, we can view $\eta$ as a character on $G(F)$. We can easily see that $\eta$ is actually an unramified character. This means that $\eta$ is trivial on $J$. Moreover, the restriction of $\eta$ on the torus $T(F)$ is also an unramified character. Therefore, $f\eta\in \mathcal{H}(G,\rho)$.

        Next, we will examine the action of $f\eta$ on $i_B^G(\chi)^{\rho}$, where $\chi$ is an arbitrary extension of $^0\chi$. We pick an arbitrary function $h\in i_B^G(\chi)^{\rho}$. We can see that
        \begin{align*}
            (f\eta)\bullet h(g_0) & = \int_G \; h(g)(f\eta)^{\vee}(g^{-1}g_0) \; dg \\
            & = \int_G \; h(g)(f\eta)(g_0^{-1}g) \; dg \\
            & = \eta(g_0)^{-1}\;(f\bullet (\eta h))(g_0)
        \end{align*}
        It is easy to see that the function $\eta h$ belongs to the space $i_B^G(\eta\chi)^{\rho}$. Since $f\in \mathcal{Z}(G,\rho)$, we have
        $$(f\bullet(\eta h))(g_0)=\beta(f)(\eta\chi)\cdot (\eta h)(g_0).$$
        Note that $\beta(f)(\eta\chi)$ is a constant independent of $g_0$. Thus we can get 
        $$(f\eta)\bullet h(g_0)=\beta(f)(\eta\chi)\cdot \eta(g_0)^{-1}(\eta h)(g_0)=\beta(f)(\eta\chi)\cdot h(g_0).$$
        Therefore, $f\eta$ acts on $i_B^G(\chi)^{\rho}$ by the same scalar as $f$ acts on $i_B^G(\eta\chi)^{\rho}$. This implies that $f\eta\in \mathcal{Z}(G,\rho)$. However, this scalar is the same as $\phi$ acts on $i_{B_r}^{G_r}(\eta_r\chi_r)^{\rho_r}=i_{B_r}^{G_r}(\chi_r)^{\rho_r}$ (because $\eta_r$ is trivial). This is the same as the scalar of $f$ acting on $i_B^G(\chi)^{\rho}$. Therefore, we have $f\eta=f$, and in particular
        $$f(x)(\eta(x)-1)=0.$$
        Since $\eta(x)-1\neq 0$, we have $f(x)=0$. 
        
    \end{proof}

\subsection{Reduction steps}\label{reduction steps}

    In this section, we plan to develop several reduction steps. We should claim that, in this section, we are going to deal with all functions inside of $\mathcal{Z}(G_r,\rho_r)$, not just functions of the form $Z_r*e_{\rho_r}$.

    By \Cref{vanishing statement}, we may assume that $\gamma=\mathscr{N}(\delta)$ for some $\theta$-semisimple element inside $G_r$. We mainly follow \cite[\S5.4]{haines2009base}, with some modifications:

    \begin{itemize}
        \item[(a)] We may assume that $G_{\textnormal{der}}=G_{\textnormal{sc}}$. The theory is generalized as follows:

    Recall that $G$ is split over $E$, where $E/F$ is a finite unramified extension. Choose a finite unramified extension $F'/F$ such that $F'$ contains $F_r$ and $E$. Consider a $z$-extension of $F$-groups
    \begin{equation}\label{z-extension}
    1 \longrightarrow Z\longrightarrow H\stackrel{p}{\longrightarrow} G\longrightarrow 1,
    \end{equation}
    where $Z$ is a finite product of copies of $R_{F'/F}\mathbb{G}_m$, and $H$ is an unramified reductive group over $F$ such that $H_{\textnormal{der}}=H_{\textnormal{sc}}$. Recall that $Z$ is central in $H$, and $p$ is surjective on $F$-points and $F_r$-points by the triviality of higher Galois cohomology of $Z$. As in loc. cit., the norm homomorphism induces a surjective map $N:{^0Z(F_r)}\rightarrow {^0Z(F)}$.

    Let $\lambda:Z(F)\rightarrow \mathbb{C}^{\times}$ denote a smooth character, and for $f\in C_c^{\infty}(H(F))$, set 
    $$f_{\lambda}(h)=\int_{Z(F)}\; f(hz)\lambda^{-1}(z)\; dz,$$
    where $dz$ is the Haar measure on $Z(F)$ giving $^0Z(F)$ measure 1. Set $\lambda N=\lambda \circ N$ be a character on $Z(F_r)$. Similarly we can define $\phi_{\lambda N}$ for all $\phi\in C_c^{\infty}(H(F_r))$. Note that (twisted) orbital integrals exist at all ($\theta$)-semisimple elements for all such functions. Write $\lambda=1$ for the trivial character, and $\bar{f}$ (resp. $\bar{\phi}$) for the function $f_1$ (resp. $\phi_{1N}$) when it is viewed as an element in $C_c^{\infty}(G(F))$ (resp. $C_c^{\infty}(G(F_r))$).

    Let $T_H=p^{-1}(T)$, a maximal torus in $H$. For a fixed character $^0\chi$ on $^0T$, we define a character $^0\chi_H$ on $^0T_H$ by $^0\chi_H={^0\chi}\circ p$. Therefore, this character is trivial on $^0Z(F)$. Since $H$ and $G$ have the same adjoint group, they have the same root datum. Let $(J,\rho)$ and $(J_H,\rho_H)$ be the types corresponding to the Bernstein block $\mathfrak{s}=({^0T},{^0\chi})$ and $\mathfrak{s}_H=({^0T}_H,{^0\chi}_H)$, respectively. Then we have the following two equations:
    \begin{equation}\label{z extension property 1}
        p(J_H)=J
    \end{equation}
    \begin{equation}\label{z extension property 2}
        \rho_H=\rho\circ p
    \end{equation}
     Similarly, we have the same statements for $(J_r,\rho_r)$ and $(J_{H_r},\rho_{H_r})$.

    We have the following lemma:
    \begin{lemma}\label{derived 1}[\cite{haines2012base}, Lemma 7.2.1]
        Let $\phi\in C_c^{\infty}(H(F_r))$ and $f\in C_c^{\infty}(H(F))$. We have the following statements:
        \begin{itemize}
            \item[(1)] The functions $\phi,f$ are associated if and only if $\phi_{\lambda N},f_{\lambda}$ are associated for every $\lambda$.
            \item[(2)] Suppose that $\phi\in \mathcal{H}(G_r,\tilde{K},\tilde{\rho})$ (resp. $f\in \mathcal{H}(G,K,\rho)$ for a compact open subgroups $\tilde{K}\subset G_r$ and character $\tilde{\rho}:\tilde{K}\rightarrow \mathbb{C}^{\times}$ (resp. $K\subset G$ and $\rho:K\rightarrow \mathbb{C}^{\times}$ such that
            \begin{itemize}
                \item[-] $N(\tilde{K}\cap Z(F_r))=K\cap Z(F),$
                \item[-] $\tilde{K} \supset (1-\theta)(Z(F_r)),$ and
                \item[-] $\tilde{\rho}|_{\tilde{K}\cap Z(F_r)}=\rho\circ N|_{\tilde{K}\cap Z(F_r)}.$
            \end{itemize}
            Then in (1) we only need to consider characters $\lambda$ such that 
            $$\lambda|_{K\cap Z(F)}=\rho^{-1}|_{K\cap Z(F)}.$$
            \item[(3)] If $\phi\in \mathcal{H}(H_r,\rho_{H_r})$ and $f\in \mathcal{H}(H,\rho_H)$, then in (1) we only need to consider the characters $\lambda$ with $\lambda|_{^0Z(F)}={^0\chi^{-1}}|_{^0Z(F)}$.
            \item[(4)] The pair $\phi_1,f_1$ are associated if and only if $\bar{\phi},\bar{f}$ are associated.
        \end{itemize}
    \end{lemma}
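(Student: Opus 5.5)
The approach follows \cite[Lemma 7.2.1]{haines2012base}: first an abstract harmonic-analysis statement about averaging over the central induced torus $Z$, then a check of the compatibility conditions of (2) for the types $(J_{H_r},\rho_{H_r})$ and $(J_H,\rho_H)$.

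For (1), both sides of the matching identity are computed by integrating over $Z(F)$ (resp. $Z(F_r)$) modulo the appropriate subgroups. For $\gamma\in H$ semisimple and $\delta\in H_r$ with $\mathscr N\delta=\gamma$, one has the Fourier expansions
\[
\textnormal{SO}_\gamma(f_\lambda)=\int_{Z(F)}\textnormal{SO}_{\gamma z}(f)\lambda^{-1}(z)\,dz,
\qquad
\textnormal{SO}_{\delta\theta}(\phi_{\lambda N})=\int_{Z(F_r)}\textnormal{SO}_{\delta z\theta}(\phi)(\lambda N)^{-1}(z)\,dz .
\]
Since $\mathscr N(\delta z)=\gamma N(z)$, and $N:Z(F_r)\to Z(F)$ is surjective (so every $\gamma z$ is again a norm), the twisted side becomes an integral over $Z(F)$ after the change of variables $z\mapsto N(z)$. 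The fibres of $N$ are $(1-\theta)Z(F_r)$-cosets, and $\delta\mapsto\delta\,(1-\theta)(z')$ preserves stable $\theta$-conjugacy classes. Hence "associated" implies "$\lambda$-associated" for every $\lambda$. Conversely, Fourier inversion on the abelian group $Z(F)$ recovers $\textnormal{SO}_\gamma(f)$ from the family $\{\textnormal{SO}_\gamma(f_\lambda)\}_\lambda$. The vanishing of $\textnormal{SO}_\gamma(f)$ at non-norms needs no separate treatment: by the definition of association it is part of the matching identity, and the argument above handles it uniformly. Part (4) is (1) for $\lambda=1$, unwinding the definitions, since the stable (twisted) classes of $G$ correspond to $Z$-orbits of those of $H$, and this correspondence is compatible with the surjection $p$ on $F$- and $F_r$-points.

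For (2), suppose $f$ transforms under $K\cap Z(F)$ by $\rho^{-1}$, i.e. $f(hz)=\rho(z)^{-1}f(h)$ for $z\in K\cap Z(F)$. Then $f_\lambda=0$ unless $\lambda|_{K\cap Z(F)}=\rho^{-1}|_{K\cap Z(F)}$. Using $N(\tilde K\cap Z(F_r))=K\cap Z(F)$ and $\tilde\rho=\rho\circ N$ on $\tilde K\cap Z(F_r)$, the same holds for $\phi_{\lambda N}$. The condition $\tilde K\supset(1-\theta)Z(F_r)$ is what makes $\phi_{\lambda N}$ well defined on the twisted side, since twisted orbital integrals are insensitive to $(1-\theta)Z(F_r)$-translation. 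For all remaining $\lambda$ both sides of the identity vanish identically, so only the stated $\lambda$ need be checked.

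For (3), apply (2) with $\tilde K=J_{H_r}$ and $K=J_H$. This requires three checks:
\begin{itemize}
\item[(i)] $J_H\cap Z(F)={^0Z(F)}$, which holds because $J_H\cap T_H={^0T_H}$;
\item[(ii)] $N({^0Z(F_r)})={^0Z(F)}$, which was noted above;
\item[(iii)] $(1-\theta)Z(F_r)\subset{^0Z(F_r)}$, which holds because $Z$ is an induced unramified torus, so $(1-\theta)$ kills the valuation part and lands in the maximal compact subgroup.
\end{itemize}
Finally, $\rho_H|_{{^0Z}}={^0\chi_H}|_{{^0Z}}={^0\chi}\circ p|_{{^0Z}}$, so the condition of (2) reads $\lambda|_{{^0Z(F)}}={^0\chi^{-1}}|_{{^0Z(F)}}$, as stated.

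The main obstacle I expect is (1). It requires care with measures: the quotient measures for the centralizers must be chosen compatibly with $dz$, and the map $Z(F_r)/(1-\theta)Z(F_r)\cong Z(F)$ must be measure-compatible so that the twisted and untwisted integrals match exactly. One must also verify that twisted orbital integrals of $\phi_{\lambda N}$ are well defined despite the lack of compact support. I would first try to import these points verbatim from \cite{haines2012base}, since $Z$ is assumed to be a product of copies of $R_{F'/F}\mathbb{G}_m$ just as there.
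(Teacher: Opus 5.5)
Your outline is the one behind the paper's proof, which simply cites \cite[Lemma 5.3.1]{haines2009base}: Fourier analysis along the central induced torus $Z$, then a compatibility check with the types. However, step (iii) is false. The same phenomenon also breaks your twisted Fourier expansion in (1) as written.

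\textbf{Step (iii).} In this $z$-extension, $Z$ is a product of copies of $R_{F'/F}\mathbb{G}_m$ with $F'\supseteq F_r$; this is what gives the surjectivity of $N\colon Z(F_r)\to Z(F)$ that you use. For one factor, $Z(F_r)=(F'\otimes_F F_r)^{\times}\cong\prod_{i=1}^{r}(F')^{\times}$, with $\theta$ permuting the factors cyclically. So $\theta$ acts on $Z(F_r)/{}^0Z(F_r)\cong\mathbb{Z}^r$ by a cyclic shift, and $1-\theta$ does \emph{not} kill the valuation part. For example, if $x=(\varpi,1,\dots,1)$, then $x\theta(x)^{-1}$ has entries $\varpi$ and $\varpi^{-1}$. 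Hence for $r\ge 2$ the group $(1-\theta)Z(F_r)=\ker N$ is unbounded and lies in no compact open subgroup, in particular not in $J_{H_r}$. Being induced is not enough.

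You do not need this hypothesis. Your computation in (2) shows $f_\lambda=0$ and $\phi_{\lambda N}=0$ whenever $\lambda|_{K\cap Z(F)}\neq\rho^{-1}|_{K\cap Z(F)}$, using only $N(\tilde K\cap Z(F_r))=K\cap Z(F)$ and $\tilde\rho=\rho\circ N$ on $\tilde K\cap Z(F_r)$. So (3) follows from (i), (ii) and $\rho_H|_{{}^0Z(F)}={}^0\chi_H|_{{}^0Z(F)}$ alone. Drop (iii). Also drop the remark that the $(1-\theta)$-condition is what makes $\phi_{\lambda N}$ well defined: $\phi_{\lambda N}$ is well defined for every $\phi\in C_c^\infty(H(F_r))$.

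\textbf{The expansion in (1).} You flagged convergence as the main obstacle, but this is not a matter of normalising measures.
\begin{itemize}
\item For $w\in Z(F_r)$ one has $w^{-1}(\delta z)\theta(w)=\delta z\,w^{-1}\theta(w)$, and $\lambda N$ is trivial on $\ker N$. So the integrand $z\mapsto\textnormal{SO}_{\delta z\theta}(\phi)(\lambda N)^{-1}(z)$ is constant on cosets of the non-compact group $\ker N$. Your integral over $Z(F_r)$ therefore diverges unless the integrand vanishes.
\item Likewise, $g\mapsto\phi_{\lambda N}(g^{-1}\delta\theta(g))$ is invariant under right translation by $Z(F_r)$, whereas $H_{\delta\theta}(F)\cap Z(F_r)=Z(F)$. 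So the naive integral of $\phi_{\lambda N}$ over $H_{\delta\theta}(F)\backslash H(F_r)$ diverges as well.
\end{itemize}
The fix is to define twisted orbital integrals of $\lambda N$-equivariant functions over $H_{\delta\theta}(F)Z(F_r)\backslash H(F_r)$. Unfolding then gives
\[\textnormal{TO}_{\delta\theta}(\phi_{\lambda N})=\int_{Z(F)}\textnormal{TO}_{\delta y_z\theta}(\phi)\,\lambda^{-1}(z)\,dz,\qquad N(y_z)=z,\]
with no fibre integral left over.

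The measures are compatible. Transport the quotient measure on $Z(F)\backslash Z(F_r)$ to $\ker N$ by $w\mapsto w^{-1}\theta(w)$, and compare it with the fibre measure of $N$ (Haar measure on $Z(F_r)$ divided by the pull-back of $dz$). Both give $\ker N\cap{}^0Z(F_r)=(1-\theta)\,{}^0Z(F_r)$ volume $1$. This uses $N({}^0Z(F_r))={}^0Z(F)$ and the vanishing of Tate cohomology of the induced module ${}^0Z(F_r)$.

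With this convention, the rest of your argument for (1), and your sketch of (4), go through.
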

    \begin{proof}
        See \cite[Lemma 5.3.1]{haines2009base} for details. Note that in loc. cit., only depth zero characters are considered, but the same argument can be extended to positive depth characters without any change.
    \end{proof}

    We need one more lemma:
    \begin{lemma}[{\cite[Lemma 7.2.2]{haines2012base}}]\label{derived 2}
        The map $\phi\rightarrow \bar{\phi}$ determines a surjective homomorphism from $\mathcal{Z}(H_r, \rho_{H_r})$ to $\mathcal{Z}(G_r,\rho_r)$. Moreover, this map is compatible with the base change homomorphism in the sense that
        \begin{equation}
            b_r(\bar{\phi})=\overline{b_r(\phi)}
        \end{equation}
    \end{lemma}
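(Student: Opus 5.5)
We prove the two assertions of \Cref{derived 2} in turn, working through the spectral description $\beta:\mathbb{C}[\mathfrak{X}_{\mathfrak{s}}]\xrightarrow{\sim}\mathcal{Z}(G,\rho)$ and the analogous isomorphisms for $H$, $H_r$ and $G_r$.

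\textbf{Step 1: the map lands in the right Hecke algebra.} Take $\phi\in\mathcal{H}(H_r,\rho_{H_r})$. Integrating over $Z(F_r)$ produces a function $\bar{\phi}$ that is invariant under $Z(F_r)$. Because $p$ is surjective on $F_r$-points, we may view $\bar{\phi}$ as a function on $G_r$. By \Cref{z extension property 1} and \Cref{z extension property 2}, $p(J_{H_r})=J_r$ and $\rho_{H_r}=\rho_r\circ p$. Hence $\bar{\phi}$ transforms under $J_r$ on both sides by $\rho_r^{-1}$, so $\bar{\phi}\in\mathcal{H}(G_r,\rho_r)$.

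The map $\phi\mapsto\bar{\phi}$ is an algebra homomorphism, provided the measures are normalized compatibly. Give $J_{H_r}$, ${^0Z(F_r)}$ and $J_r$ volume one. Since $J_{H_r}\cap Z(F_r)={^0Z(F_r)}$ (this comes from $J_{H_r}\cap T_{H_r}={^0T_{H_r}}$), the quotient measure is correct. Convolution then commutes with integration along the central subgroup $Z(F_r)$, because $Z$ is central. It follows that centers map to centers on the image.

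\textbf{Step 2: surjectivity.} This is where we expect the real work. We would argue spectrally. Since $\rho_{H_r}$ is trivial on ${^0Z(F_r)}$, the variety $\mathfrak{X}_{\mathfrak{s}_{H_r}}$ contains $\mathfrak{X}_{\mathfrak{s}_r}$ as the closed subvariety of characters $\xi_H=\xi\circ p$ that are trivial on $Z(F_r)$. We then need to check that $\phi\mapsto\bar{\phi}$ corresponds, under $\beta$, to restriction of regular functions to this subvariety. Concretely, we would verify that $\bar{\phi}$ acts on $i_{B_r}^{G_r}(\xi)^{\rho_r}$ by the same scalar as $\phi$ acts on $i_{B_{H_r}}^{H_r}(\xi\circ p)^{\rho_{H_r}}$. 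This holds because the inflation of $i_{B_r}^{G_r}(\xi)$ to $H_r$ is $i_{B_{H_r}}^{H_r}(\xi\circ p)$, and averaging over $Z(F_r)$ acts trivially on it (the volume of ${^0Z(F_r)}$ being $1$).

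The variety $\mathfrak{X}_{\mathfrak{s}_{H_r}}$ is the quotient of the torus $\hat{A}_H$ by $W_{{^0\chi_H}}$, and the stabilizers satisfy $W_{{^0\chi_H}}=W_{{^0\chi}}$ since the root data agree. The map $\hat{A}\hookrightarrow\hat{A}_H$ is a closed embedding of diagonalizable groups, and its image is $W$-stable. Restriction of regular functions along a closed embedding of tori is surjective. Taking $W_{{^0\chi}}$-invariants preserves surjectivity because we are in characteristic zero, so averaging over the finite group is available. Therefore $\mathbb{C}[\mathfrak{X}_{\mathfrak{s}_{H_r}}]\to\mathbb{C}[\mathfrak{X}_{\mathfrak{s}_r}]$ is surjective, and transporting through $\beta$ gives surjectivity of $\mathcal{Z}(H_r,\rho_{H_r})\to\mathcal{Z}(G_r,\rho_r)$.

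\textbf{Step 3: compatibility with base change.} We compare both sides of $b_r(\bar{\phi})=\overline{b_r(\phi)}$ as functions on $\mathfrak{X}_{\mathfrak{s}}$ via $\beta$. Evaluate at a point $\xi$ of $\mathfrak{X}_{\mathfrak{s}}$. The left side gives the value of $\beta^{-1}(\bar{\phi})$ at $\xi\circ N_r$, which by Step 2 equals the value of $\beta^{-1}(\phi)$ at $\xi\circ N_r\circ p$. The right side gives the value of $\beta^{-1}(b_r\phi)$ at $\xi\circ p$, which equals the value of $\beta^{-1}(\phi)$ at $\xi\circ p\circ N_r$. These agree because $p\circ N_r=N_r\circ p$ on $T_{H_r}$.

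The main obstacle is the careful identification in Step 2 of $\phi\mapsto\bar{\phi}$ with restriction of functions, in particular the measure normalizations that make the scalars match exactly.
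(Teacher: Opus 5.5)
Your proposal is correct and takes the same route as the paper. The paper identifies $\phi\mapsto\bar{\phi}$, via $\beta$, with pullback along the injective map $p^*:\mathfrak{X}_{\mathfrak{s}}\to\mathfrak{X}_{\mathfrak{s}_H}$, $\xi\mapsto\xi\circ p$, reads off surjectivity, and gets compatibility with $b_r$ from $p\circ N_r=N_r\circ p$. You also supply details the paper leaves implicit: the scalar comparison via inflation of $i_{B_r}^{G_r}(\xi)$, and the closed-embedding-plus-averaging argument that actually turns injectivity of $p^*$ into surjectivity of the pullback.

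The one loose phrase is ``centers map to centers on the image''. To conclude $\bar{\phi}\in\mathcal{Z}(G_r,\rho_r)$ before applying $\beta^{-1}$, you want $\mathcal{H}(H_r,\rho_{H_r})\to\mathcal{H}(G_r,\rho_r)$ to be surjective. This is immediate: lift each double coset $J_rgJ_r$ to $J_{H_r}hJ_{H_r}$ with $p(h)=g$, using $p(J_{H_r})=J_r$ and $\rho_{H_r}=\rho_r\circ p$.
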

    \begin{proof}
        First, from \Cref{z extension property 1} and \Cref{z extension property 2}, we can see that $\bar{\phi}\in \mathcal{H}(G_r,\rho_r)$ and $\bar{f}\in \mathcal{H}(G,\rho)$. The rest of the proof is basically the same as [\cite{haines2009base}, Lemma 5.3.2]. Instead of the Bernstein isomorphism, we use the isomorphism $\beta:\mathbb{C}[\mathfrak{X}_{\mathfrak{s}}]\xrightarrow{\sim}\mathcal{Z}(G,\rho)$. The map $p$ induces a morphism $p^*:\mathfrak{X}_s\rightarrow\mathfrak{X}_{\mathfrak{s}_H}$ as follows:
        \begin{align*}
        p^*:\mathfrak{X}_s & \rightarrow\mathfrak{X}_{\mathfrak{s}_H}\\
        (T,\chi)_G & \rightarrow (T_H,\chi\circ p)_H
        \end{align*}
        where $\chi$ is a smooth character extending some $W(F)$-conjugate of $^0\chi$. Note that $p^*$ is injective. Then the map $\phi\rightarrow \bar{\phi}$ is the surjective morphism from $\mathbb{C}[\mathfrak{X}_{\mathfrak{s}_H}]$ to $\mathbb{C}[\mathfrak{X}_{\mathfrak{s}}]$ induced by $p^*$, which is also denoted by $p$. And clearly, the morphism $p^*$ is compatible with the map $N_r^*$ defined in \Cref{base change homomorphism}. Therefore, the map $\phi\rightarrow \bar{\phi}$ determines a surjective homomorphism from $\mathcal{Z}(H_r, \rho_{H_r})$ to $\mathcal{Z}(G_r,\rho_r)$ which is compatible with the base change homomorphism.
        
    \end{proof}

    From the two lemmas above, we can see that the fundamental lemma for $H$ implies the fundamental lemma for $G$.

    \item[(b)] We may assume $\gamma$ is elliptic from \Cref{descent formula section}.

    \item[(c)] We may assume $\gamma$ is regular. This is proved in [\cite{clozel1990fundamental}, Prop 7.2].

    \item[(d)] We may assume that $G$ is such that $G_{\textnormal{der}}=G_{\textnormal{sc}}$ and $Z(G)$ is an induced torus. We work under assumptions (a)-(c). The proof is generalized as follows:

    Construct the following exact sequence 
    $$1\rightarrow G\rightarrow G'\rightarrow Q\rightarrow1$$
    as in \cite[\S 6.1]{clozel1990fundamental}. Here $G'$ and $G$ are unramified groups with the properties that $G_{\textnormal{der}}=G_{\textnormal{sc}}= G'_{\textnormal{der}}=G'_{\textnormal{sc}}$ and $Z(G')$ is an induced torus. Note that this exact sequence is not exact on $F$-points, but the first map is injective on $F$-points. Let $T'$ be the maximal torus in $G'$ corresponding to $T\subset G$. Since the finite Weyl group of $G$ and $G'$ are isomorphic, we can embed the extended affine Weyl group $\widetilde{W}$ of $G$ into the extended affine Weyl group $\widetilde{W}'$ of $G'$. Consequently, we can embed $\widetilde{W}_{^0\chi}\backslash {^0T}$ into $\widetilde{W}_{^0\chi}\backslash {^0T}'$. Therefore, we can choose an arbitrary $\widetilde{W}_{^0\chi}$-invariant character $^0\chi'$ of $^0T'$ extending $^0\chi$. Denote the Bernstein block corresponding to $^0\chi'$ by $\mathfrak{s}'=({^0T'},{^0\chi'})$, and let $(J',\rho')$ denote the type corresponding to $\mathfrak{s}'$. From the construction of the exact sequence, we can see that $J'=J\cdot {^0T'}$ and $\rho'|J=\rho$. Choose a set of representatives $\{n_w\}_{w\in \widetilde{W}_{^0\chi}}$ of $\widetilde{W}_{^0\chi}$ in $G$. Since the character $^0\chi'$ is $\widetilde{W}_{^0\chi}$-invariant, we can see that $\widetilde{W}_{^0\chi}\subset \widetilde{W}_{^0\chi'}'$. Let $1_{n_w}$ (resp. $1_{n_w}')$ denote the function in $\mathcal{H}(G,\rho)$ (resp. $\mathcal{H}(G',\rho')$) supported on $Jn_wJ$ (resp. $J'n_wJ'$) and taking value 1 at the point $n_w$. Note that the set $\{1_{n_w}|\;w\in \widetilde{W}_{^0\chi}\}$ form a basis of $\mathcal{H}(G,\rho)$. We define a linear transformation $L:\mathcal{H}(G,\rho)\rightarrow \mathcal{H}(G',\rho')$ as follows:
    \begin{align*}
        L:\mathcal{H}(G,\rho)&\rightarrow \mathcal{H}(G',\rho')\\
        1_{n_w} & \rightarrow 1_{n_w}'
    \end{align*}
    Note that this is a linear transformation, and for any function $f\in \mathcal{H}(G,\rho)$, we have $L(f)|_G=f$. Moreover, $\textnormal{Supp}(L(f))\subset G(F)\cdot {^0T'}$. Similarly we can define a linear transformation from $\mathcal{H}(G_r,\rho_r)$ to $\mathcal{H}(G_r',\rho_r')$, which is also denoted by $L$. We have the following lemma:
    \begin{lemma}\label{center induced torus}
        The linear transformation $L$ determines an algebraic homomorphism from $\mathcal{Z}(G_r,\rho_r)$ to $\mathcal{Z}(G_r',\rho_r')$, which is compatible with the base change homomorphism in the sense that 
        $$b_r(L(\phi))=L(b_r(\phi))$$
        for any $\phi \in \mathcal{Z}(G_r,\rho_r)$.
    \end{lemma}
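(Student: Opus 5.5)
The strategy is to mirror the proof of \Cref{derived 2}: transport everything to the spectral side via the isomorphism $\beta:\mathbb{C}[\mathfrak{X}_{\mathfrak{s}}]\xrightarrow{\sim}\mathcal{Z}(G,\rho)$ and its analogue for $G'$. Concretely, I would define a morphism of varieties $\iota^*:\mathfrak{X}_{\mathfrak{s}'}\rightarrow\mathfrak{X}_{\mathfrak{s}}$ by restriction, $(T',\xi')_{G'}\mapsto(T,\xi'|_T)_G$. This is well defined because $W\cong W'$ and $\xi'|_{^0T}$ is a $W$-conjugate of $^0\chi$, since $^0\chi'$ extends $^0\chi$. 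It induces a homomorphism $\iota:\mathbb{C}[\mathfrak{X}_{\mathfrak{s}}]\rightarrow\mathbb{C}[\mathfrak{X}_{\mathfrak{s}'}]$. The restriction map visibly commutes with $N_r^*$, because $(\xi'\circ N_r)|_{T_r}=(\xi'|_T)\circ N_r$. Hence the transported map $\beta\circ\iota\circ\beta^{-1}$ is an algebra homomorphism $\mathcal{Z}(G_r,\rho_r)\rightarrow\mathcal{Z}(G_r',\rho_r')$ compatible with $b_r$. It therefore suffices to show that this spectrally defined map agrees with $L$ on $\mathcal{Z}(G_r,\rho_r)$.

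To compare the two, take $z\in\mathcal{Z}(G,\rho)$ and let $z'=\beta(\iota(\beta^{-1}(z)))\in\mathcal{Z}(G',\rho')$. I would show that $z'|_{G}=z$ and that $z'$ is supported in $G\cdot{^0T'}$. Since $J'=J\cdot{^0T'}$, the function $z'$ is then determined by its values on the double cosets $J'n_wJ'$, $w\in\widetilde W_{^0\chi}$. Those values are the values of $z$ at $n_w$, which is exactly the definition of $L(z)$.

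For the restriction property, I would use the action formula $(f\bullet h)(g_0)=\int h(g)f(g_0^{-1}g)\,dg$ on $i_{B'}^{G'}(\xi')^{\rho'}$. The Iwasawa decomposition gives $G'=B'G\cdot{^0T'}$, so functions in $i_{B'}^{G'}(\xi')^{\rho'}$ restrict to $G$ and land in $i_B^G(\xi'|_T)^{\rho}$. Moreover, this restriction is a $\mathcal{H}(G,\rho)$-module isomorphism, and $\mathcal{H}(G,\rho)$ acts through its embedding in $\mathcal{H}(G',\rho')$ given by extension by zero along $J'$-double cosets. Then $z'$ and the extension of $z$ act by the same scalar $\beta^{-1}(z)(\xi'|_T)$ on every principal series of the block $\mathfrak{s}'$. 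What remains is to check that the $J'$-bi-equivariant extension of $z$ actually lies in the center of $\mathcal{H}(G',\rho')$. For this I would use that $\mathcal{H}(G',\rho')$ is generated by $\mathcal{H}(G,\rho)$ together with the functions $1'_{t}$ for $t\in T'$ representing $\widetilde W'_{^0\chi'}/\widetilde W_{^0\chi}$, which modulo $G$ are central translations coming from $Q$. These commute with the image of $\mathcal{Z}(G,\rho)$ because conjugation by such $t$ preserves $J$, $\rho$, and each $Jn_wJ$ up to the $\widetilde W_{^0\chi}$-action, and central elements are invariant under that action. Once centrality holds, density of principal series traces in the block (the isomorphism $\beta$ for $G'$) forces $z'=L(z)$.

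The main obstacle is this last step: showing that $L(z)$ is central in $\mathcal{H}(G',\rho')$ and that $L$ is multiplicative, not merely linear. This requires controlling the extra elements of $\widetilde W'_{^0\chi'}$ that are not in $\widetilde W_{^0\chi}$, and verifying that conjugation by them permutes the basis $\{1_{n_w}\}$ compatibly with the Bernstein presentation. I would first try to handle this via the Bernstein-type description in \Cref{support of hecke algebra}, reducing to the commutative part supported on $T$-translations.
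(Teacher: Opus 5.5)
Your skeleton is the paper's. The paper also identifies $L$ with pullback along the restriction morphism $R:\mathfrak{X}_{\mathfrak{s}'}\to\mathfrak{X}_{\mathfrak{s}}$. It computes, for $h\in(i_{B'}^{G'}\chi')^{\rho'}$ and $g_0\in G(F)$, that $(L(f)\bullet h)(g_0)=(f\bullet h|_{G(F)})(g_0)$, using $G'(F)=T'(F)G(F)$, $\textnormal{Supp}(L(f))\subset G(F)\cdot{^0T'}$ and Fubini over ${^0T'}$. Hence $L(f)$ acts on $(i_{B'}^{G'}\chi')^{\rho'}$ by the scalar $\beta^{-1}(f)(\chi'|_{T(F)})$. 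So $L$ is the algebra map induced by $R$, and $b_r\circ L=L\circ b_r$ follows because restriction commutes with $N_r^*$, as in your first paragraph. Your claim that restriction $h\mapsto h|_{G(F)}$ is an $\mathcal{H}(G,\rho)$-module map is this same computation; only injectivity of the restriction is needed.

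The gap is in the step you call the main obstacle.

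First, your proposed proof of centrality rests on a false claim. A $t\in T'(F)$ representing a class in $\widetilde{W}'_{^0\chi'}/\widetilde{W}_{^0\chi}$ need not lie in $Z(G')(F)\,T(F)\,{^0T'}$, so $\textnormal{Ad}(t)$ need not preserve $J$. Take $\textnormal{SL}_2\subset\textnormal{GL}_2$ and $t=\textnormal{diag}(\varpi,1)$. It generates $\widetilde{W}'/\widetilde{W}\cong\mathbb{Z}$, yet $\textnormal{Ad}(t)$ shifts the filtrations on the two root groups by $\pm 1$. So $tJt^{-1}\neq J$, and $1'_t$ does not permute the cosets $Jn_wJ$ as you describe. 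When $W_{^0\chi}$ is trivial, no representative of that coset normalizes $J$.

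Second, and more importantly, centrality does not have to be proved separately. Suppose $L(z)$ and $z'=\beta(\iota(\beta^{-1}(z)))$ act by the same scalar on every $(i_{B'}^{G'}\xi')^{\rho'}$. Then $d=L(z)-z'\in e_{\rho'}*\mathcal{H}(G')*e_{\rho'}$ acts by zero on every $i_{B'}^{G'}(\xi')$ in the block. It therefore acts by zero on every irreducible $\pi$ of $G'$: those in the block are subquotients of such principal series, and outside the block $\pi(e_{\rho'})=0$. Irreducible representations separate elements of $\mathcal{H}(G')$; for instance, apply the Plancherel formula to $d^{*}*d$, whose value at $1$ is $\int|d|^2$. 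Hence $d=0$, so $L(z)=z'$. Centrality of $L(z)$ and multiplicativity of $L$ on $\mathcal{Z}(G,\rho)$ then come for free, and no generators-and-relations analysis of $\mathcal{H}(G',\rho')$ is needed. The paper uses this faithfulness step implicitly when it says the action induced by $L$ ``corresponds to'' $R$.
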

    \begin{proof}
        We are still going to use the isomorphism $\beta:\mathbb{C}[\mathfrak{X}_{\mathfrak{s}}]\xrightarrow{\sim}\mathcal{Z}(G,\rho)$, to view an element in the Bernstein center as a regular function on the Bernstein variety. The proof will be carried out on the $F$-level, and the same argument works for the $F_r$-level. 
        
        Choose an arbitrary character $\chi'$ on $T'(F)$ extending $^0\chi'$, and let $\chi=\chi'|_{T(F)}$ be a character on $T(F)$ extending $^0\chi$. Pick an arbitrary function $h\in (i_{B'}^{G'}\chi')^{\rho'}$, and for any $f\in \mathcal{Z}(G,\rho)$ we want to evaluate the action of $L(f)$ on $h$.

        By \Cref{G=G'T}, we have $G'(F)=T'(F)\cdot G'_{\textnormal{der}}(F)=T'(F)\cdot G_{\textnormal{der}}(F)=T'(F)\cdot G(F)$. Therefore, we only need to compute the value of $(L(f)\bullet h)(g_0)$ for $g_0\in G(F)$. We have
        \begin{align}
            (L(f)\bullet h)(g_0) & = \int_{G'(F)} h(g')(L(f))^{\vee}(g'^{-1}g_0) \; \mathrm{d}g'  \label{equation 1}\\
            & = \int_{G'(F)} h(g')(L(f))(g_0^{-1}g') \; \mathrm{d}g' \nonumber\\
            & = \int_{G'(F)} (L(f))(g')h(g_0g') \; \mathrm{d}g' \nonumber\\
            & = \int_{G(F)\cdot {^0T'}} (L(f))(g')h(g_0g') \; \mathrm{d}g' \nonumber\\
            & = \int_{G(F)} \int_{^0T'}(L(f))(gt)h(g_0gt) \; \mathrm{d}t\mathrm{d}g\label{equation 2}\\
            & = \int_{G(F)} (L(f))(g)h(g_0g) \; \mathrm{d}g \nonumber\\
            & = \int_{G(F)} f(g)h(g_0g) \; \mathrm{d}g \nonumber\\
            & = (f\bullet h|_{G(F)})(g_0) \label{equation 3}
        \end{align}
        Here, in \Cref{equation 1}, the Haar measure we are using is $\textnormal{vol}(J')=1$ and in \Cref{equation 2}, we are using Fubini's theorem and the Haar measures we are using are $\textnormal{vol}({^0T'})=1$ and $\textnormal{vol}({J})=1$. For \Cref{equation 3}, we can see that $h|_{G(F)}\in (i_B^G\chi)^{\rho}$. With this observation, we get the conclusion that $L(f)$ acts on $(i_{B'}^{G'}\chi')^{\rho'}$ by the same scalar as $f$ acting on $(i_B^G\chi)^{\rho}$. Therefore, the action induced by $L$ corresponds to the morphism $R:\mathfrak{X}_{s'}\rightarrow \mathfrak{X}_s$ defined by taking the restriction to $T(F)$. Consequently, $L$ is compatible with the base change homomorphism, and the proof is completed.
    \end{proof}

    We need one more lemma:
    \begin{lemma}\label{induced center lemma 2}
        Assume that $\delta$ is $\theta$-regular. Let $T_r$ denote the $\theta$-centralizer of $\delta$. There exists a constant $C_{T_r}$, which only depends on the elliptic torus $T_r$, such that for every $\delta\in G_r$ with elliptic regular norm in $T$, and for any $w\in \widetilde{W}_{r,{^0\chi}_r}$, we have
        \begin{equation}
            \textnormal{SO}_{\delta\theta}^{G_r}(1_{n_w})=C_{T_r}\textnormal{SO}_{\delta\theta}^{G_r'}(1_{n_w}').
        \end{equation}
        
    \end{lemma}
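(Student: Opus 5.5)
We plan to compare the two stable twisted orbital integrals directly, by decomposing the twisted conjugacy data of $G_r'$ in terms of those of $G_r$ and the torus ${^0T_r'}$. We use the exact sequence $1\to G\to G'\to Q\to 1$ and the decomposition $G_r'=T_r'\cdot G_r$ obtained from \Cref{G=G'T}. Since $\delta$ is $\theta$-regular with elliptic regular norm, the twisted centralizer $G'_{\delta\theta}$ is an elliptic torus $T_r'$ containing $T_r=G_{\delta\theta}$, and $T_r'/T_r$ is controlled by $Q$. The support of $1_{n_w}'$ lies in $J_r'n_wJ_r'\subset G_r\cdot{^0T_r'}$, and $1_{n_w}'|_{G_r}=1_{n_w}$.

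The first step is to write the twisted orbital integral over $T_r'\backslash G_r'$ and then reduce it to $G_r$. A point $g'=tg$ with $t\in T_r'$ contributes only if $g'^{-1}\delta\theta(g')$ lies in $G_r{^0T_r'}$. This forces the image of $t$ in $Q$ to be constrained: we need $t^{-1}\theta(t)$, taken modulo $G$, to lie in the image of ${^0T_r'}$. For such $g'$ we will use the equivariance $1_{n_w}'(xs)=\rho'(s)^{-1}1_{n_w}'(x)$ for $s\in{^0T_r'}$. Combined with $\rho'|_J=\rho$, this lets us rewrite the integrand on $G_r$ as $1_{n_w}$ evaluated at twisted conjugates $g^{-1}\delta'\theta(g)$. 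Here $\delta'$ runs over the elements $\delta s$ with $s\in{^0T_r'}$ that lie in $G_r$.

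The second step is to sum over the stable twisted classes. The elements $\delta'$ that appear are exactly the $\theta$-conjugacy classes inside the stable class of $\delta$ that are "seen" from $G_r'$. Grouping them, $\textnormal{SO}_{\delta\theta}^{G_r'}(1_{n_w}')$ becomes a fixed multiple of $\textnormal{SO}_{\delta\theta}^{G_r}(1_{n_w})$. The multiple comes from volume ratios of the form $\textnormal{vol}(T_r'\cap G_r{^0T_r'})/\textnormal{vol}(T_r)$, using the measure normalizations $\textnormal{vol}(J_r')=\textnormal{vol}(J_r)=\textnormal{vol}({^0T_r'})=1$ as in the proof of \Cref{center induced torus}. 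The multiple also involves an index $[\ker(H^1(F,T_r)\to H^1(F,G))\colon\ldots]$ comparing the stable classes for $G$ and $G'$. Both quantities depend only on $T_r$ and $T_r'$, and not on $w$ or on the particular $\delta$. This gives the constant $C_{T_r}$.

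The main obstacle is the middle step: showing that the character factor $\rho'(s)$ contributes uniformly, so that no $w$-dependent phase survives. We would try first to choose the representatives $n_w$ in $G$ and to use the $\widetilde{W}_{^0\chi}$-invariance of $^0\chi'$. With these choices, $1_{n_w}'(sx s^{-1})=1_{n_w}'(x)$ for $s\in{^0T_r'}$. Then, since $Q$ is a torus with $H^1$-vanishing, the twisted classes of $G_r'$ meeting $G_r{^0T_r'}$ are in bijection with stable-class pieces for $G_r$. We would check this bijection via the abelianized cohomology $\mathbf{H}^0_{ab}$ as in \cite{clozel1990fundamental}, \S6.
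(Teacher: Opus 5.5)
\textbf{Verdict: there is a genuine gap.} The step you identify as the main obstacle is not handled correctly, and as written it fails. For comparison, the paper gives no independent argument: it cites \cite[\S 7.4]{haines2012base} and attributes $C_{T_r}$ to renormalizing twisted orbital integrals by $\mathrm{vol}(A_G(F)\backslash T_r)$. A direct comparison like yours is a reasonable alternative in principle.

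\textbf{Where the middle step breaks.} First, the decomposition $G'_r=T'(F_r)\cdot G_r$ from \Cref{G=G'T} holds for the maximally split torus $T'$, because $T'\cap G_{\mathrm{der}}$ is induced. It does not hold for the twisted centralizer $G'_{\delta\theta}$. Writing $q\colon G'\to Q$, one has $q(G'_{\delta\theta}(F))\subset Q(F)$, and $\theta$-conjugating by $G'_{\delta\theta}$ does not move $\delta$ at all. So take $t\in T'(F_r)$ with $t^{-1}\theta(t)=t_0s$, where $t_0\in T(F_r)$ and $s\in{^0T'_r}$. Then
\[
g'^{-1}\delta\theta(g')=g^{-1}\delta''s\,\theta(g),\qquad \delta''=t^{-1}\delta t\,t_0\in G_r.
\]
Right equivariance then gives $\rho'_r(s)^{-1}\,1_{n_w}\bigl(g^{-1}\delta''\cdot s\theta(g)s^{-1}\bigr)$. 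This is a twisted conjugate for $\mathrm{Ad}(s)\circ\theta$, not for $\theta$, and it still carries the phase $\rho'_r(s)^{-1}$. The invariance $1'_{n_w}(sxs^{-1})=1'_{n_w}(x)$ that you propose does not remove this. (That invariance holds for every element of $\mathcal{H}(G'_r,\rho'_r)$; no $\widetilde W_{^0\chi}$-invariance is needed.) Likewise, your set ``$\delta s\in G_r$, $s\in{^0T'_r}$'' is just $\delta\cdot{^0T_r}$. Its elements generally do not have norm stably conjugate to $\gamma$, so they are not the classes that occur.

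\textbf{The missing idea is $\theta$-conjugation by ${^0T'_r}$.}
\begin{enumerate}
\item Since $\rho'_r|_{{^0T'_r}}={^0\chi'}\circ N_r$ is $\theta$-invariant, one has $1'_{n_w}(s^{-1}x\theta(s))=1'_{n_w}(x)$ for all $s\in{^0T'_r}$.
\item The $Q$-component $q(g')^{-1}\theta(q(g'))$ of $g'^{-1}\delta\theta(g')$ has norm $1$. Suppose it lies in ${^0Q_r}=q({^0T'_r})$. Then $H^1(\mathrm{Gal}(F_r/F),{^0Q_r})=1$ (Lang, as in \cite[Lemma 8.2.1]{haines2012base}) writes it as $q(s)^{-1}\theta(q(s))$ with $s\in{^0T'_r}$.
\item Hence $g's^{-1}$ gives the same integrand and lies in $\mathcal{G}=\{g'\in G'_r: q(g')\in Q(F)\}$, where $g'^{-1}\delta\theta(g')\in G_r$.
\end{enumerate}
What is needed is this vanishing for ${^0Q_r}$. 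Your phrase ``$Q$ has $H^1$-vanishing'' points at $H^1(F,Q)$, which can be nonzero.

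\textbf{How the bookkeeping then closes.} Fix a Haar measure on the closed subgroup $\mathcal{G}$. The integral reduces to one over $G'_{\delta\theta}\backslash\mathcal{G}$. This is a disjoint union of copies of $G_{\delta\theta}\backslash G_r$, indexed by $\mathcal{G}/G'_{\delta\theta}G_r$. It produces $\mathrm{TO}^{G_r}$ at the points $x^{-1}\delta\theta(x)$. These are exactly the $G_r$-$\theta$-classes of the stable class inside the $G'_r$-class of $\delta$, each occurring once. The same $H^1$ argument shows that every $G'_r$-class in the stable class meeting $J'_rn_wJ'_r$ has a representative in $G_r$. Hence $C_{T_r}$ is purely a ratio of measure normalizations, consistent with the paper's remark. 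No cohomological index of the kind you anticipate appears.
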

    \begin{proof}
        See \cite[\S 7.4]{haines2012base} for details. The proof are exactly the same as loc. cit.
    \end{proof}

    According to \cite[\S7.4]{haines2012base}, the reason why $C_{T_r}$ might not be $1$ is that we change the definition of twisted orbital integrals. For the twisted case, the orbital integral from the new definition will be $\textnormal{vol}(A_G(F)\backslash G_{\delta\theta}^{\circ})=\textnormal{vol}(A_G(F)\backslash T_r^{\circ})$ multiplied with the orbital integral from the original definition. Similarly, for the untwisted case, the constant will be $\textnormal{vol}(A_G(F)\backslash G_{\gamma}^{\circ})=\textnormal{vol}(A_G(F)\backslash T^{\circ})$. Note that in our original definition, we are choosing compatible measures $T_r^{\circ}$ and $T^{\circ}$, which are $F$-isomorphic (since tori are commutative). Therefore, the two volumes must be the same. This forces the constant $C_{T_r}$ must be the same as the constant $C_T$. Therefore, by \Cref{center induced torus} and \Cref{induced center lemma 2}, the fundamental lemma on $G'$ implies the fundamental lemma on $G$.

    \item[(e)] We may assume $\gamma$ is strongly regular elliptic. This is explained in [\cite{clozel1990fundamental}, p.292]. 

    \item[(f)] By \Cref{derived 1}, we may only consider the function of the form $\phi_{\lambda N}$ and $(b\phi)_{\lambda}$, where $\lambda$ is a character on $Z(F)$ such that $\lambda|_{^0Z(F)}={^0\chi^{-1}}|_{^0Z(F)}$.
\end{itemize}
    Conclusion: we may assume $G_{\textnormal{der}}=G_{\textnormal{sc}}$, and the center of $G$ is an induced torus, and $\gamma$ is a strongly regular elliptic semisimple element which is a norm. Furthermore, we consider the matching of orbital integrals of the pair $(\phi_{\lambda N},(b\phi)_{\lambda})$, where $\lambda$ is a character on $Z(F)$ such that $\lambda|_{^0Z(F)}={^0\chi^{-1}}|_{^0Z(F)}$. 
    \begin{rmk}
        Contrary to an assertion in \cite{haines2012base}, we cannot reduce to the adjoint case, and instead the reduction process stops at the present situation.
    \end{rmk}

\section{Labesse's elementary functions}\label{section 8}

\subsection{Definition}

    We will construct the Labesse's (twisted) elementary function adapted to our cases, mainly following \cite[\S 8]{haines2012base}. We keep the assumption that $G$ is an unramified group over $F$.

    Recall that $A$ denotes a maximal split torus in $G(F)$, $T=C_G(A)$ a maximal torus, and $B=TU$ a Borel subgroup of $G$. Let $A_r$ denote the maximal $F_r$-split torus in $T_r$. Therefore, we have defined the dominant Weyl chambers in $X_*(A_r)_{\mathbb{R}}$ and $X_*(A)_{\mathbb{R}}$. Let $\rho$ denote the half-sum of the $B$-positive absolute roots of $G$. Recall that $q$ denotes the cardinality of the residue field of $F$.

    Fix a uniformizer $\varpi$ for the field F. Consider a regular dominant cocharacter $\nu\in X_*(A^r)$ and set $u=\nu(\varpi)\in T(F_r)$. Let $\tau=N(\nu)$, a regular dominant cocharacter in $X_*(A)$, and set $t=\tau(\varpi)\in T(F)$. Thus $t=N(u)$.

    Following \cite[\S8.1]{haines2012base}, we may characterize the Levi subgroup $M_{u\theta}$ (resp. parabolic subgroup $P_{u\theta}$) as the set of elements $g\in G$ such that $(u\theta)^ng(u\theta)^{-n}$ remains bounded as n ranges over all integers (resp. all positive integers). Since $\nu$ and $t\tau$ are dominant, $M_{u\theta}=M_t=T$ and $P_{u\theta}=P_t=B$.

    Fix any $F$-Levi subgroup $M$, and let $G_r$ (resp. $M_r$) denote the $F_r$-points of $G$ (resp. $M$). Define the left $M_r$ action on $G_r\times M_r$ by $m_1(g,m)=(m_1g, m_1m\theta(m_1)^{-1})$. Let $[g,m]$ denote the equivalence class of $(g,m)$ in the quotient space $M_r \backslash G_r\times M_r$. As in loc. cit., there is natural morphism of $p$-adic analytic manifolds
    \begin{align}
        M_r \backslash G_r\times M_r & \rightarrow G_r \\
    [g,m] & \rightarrow g^{-1}m\theta(g) \nonumber
    \end{align}
    with the property that it is generically one to one and the normalized absolute value of its Jacobian at the point $[g,m]$ is $|D_{G(F)/M(F)}(\mathscr{N}(m))|_F$.

    We apply this result to the Levi subgroup $M_{u\theta}=T$. Here, we only care about the quotient by $J_r\cap T_r={^0T_r}$ of the compact open subset $J_r\times {^0T_ru}\subset G_r\times T_r$. The action of $^0T_r$ on $J_r\times {^0T_r}u$ is defined by restricting the above-defined action of $T_r$ on $G_r\times T_r$. The map
    \begin{align}\label{map}
        ^0T_r\backslash J_r\times {^0T_ru} & \rightarrow G_r \\
        [k,mu] & \rightarrow k^{-1}mu\theta(k) \nonumber
    \end{align}
    is injective and has the absolute value of its Jacobian everywhere equal to the non-zero number
    $$|\textnormal{Jac}_{[k,mu]}|_F=\delta_{B_r}^{-1}(u)=q^{r\langle 2\rho,\nu \rangle}.$$

    Thus its image is a compact open subset, with volume $\delta_{B_r}^{-1}(u)\cdot \textnormal{vol}(J_r)$. We denote this set by $\mathscr{J}_u$.

    \begin{defn}
        We define the elementary function $\phi^{u,\chi_r}$ on $G_r$ to vanish off of $\mathscr{J}_u$, and to have value on $\mathscr{J}_u$ given by
        $$\phi^{u,\chi_r}(k^{-1}mu\theta(k))=\chi_r^{-1}(m)$$
        for $k\in J_r$ and $m\in {^0T_r}$.
    \end{defn}

    To prove that $\phi^{u,\chi_r}$ is well defined, suppose
    $$k_1^{-1}mu\theta(k_1)=k_2^{-1}mu\theta(k_2),$$
    for $k_1,k_2\in J_r$ and $m_1,m_2\in {^0T_r}$. We need the following lemma as in \cite[Lemma 8.1.2]{haines2012base}:
    \begin{lemma}\label{lemma for well defined}
        Suppose $g_1, g_2\in G(\bar{F})$ and $m_1,m_2\in {^0T_r}$. Extend $\theta$  to an element of $\textnormal{Gal}(E/F)$ and use the same symbol $\theta$ to denote the induced automorphism of $G(\bar{F})$. We have the following statements:
        \begin{itemize}
            \item[(1)] If $g_1^{-1}m_1u\theta(g_1)=g_2^{-1}m_2u\theta(g_2)$, then $g_1\in T(\bar{F})g_2$.
            \item[(2)] The $\theta$-centralizer of $m_1u$ is $T(F)$; hence $m_1u$ is strongly $\theta$-regular.
            \item[(3)] If $g_1\in G_r$ and $g_1^{-1}mu\theta(g_1)$ lies in the support of $\phi^{u,\chi_r}$, then $g_1\in T_rJ_r$.
        \end{itemize}
        \end{lemma}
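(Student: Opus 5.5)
We follow the strategy of \cite[Lemma 8.1.2]{haines2012base}, which is itself modelled on Labesse's argument, proving the three parts in order. Part (3) is the only place where the specific shape of $J_r$ matters.

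\textbf{Parts (1) and (2).} Both rest on the regularity of $\nu$. For (2), suppose $g^{-1}m_1u\theta(g)=m_1u$ with $g\in G(\bar F)$. Iterating gives
\[
(m_1u\theta)^n\, g\, (m_1u\theta)^{-n}=g \quad \text{for all } n.
\]
Hence $g$ remains bounded under conjugation by $m_1u\theta$. Since $m_1$ lies in the compact group ${^0T_r}$, boundedness under $m_1u\theta$ is the same as boundedness under $u\theta$. The characterization recalled above then gives $g\in M_{u\theta}=T$.

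To make this rigorous over $\bar F$, I would pass to the $r$-th power. The element $(m_1u\theta)^r=N_r(m_1u)$ lies in $T$ and equals $N(m_1)\,t$ up to a compact element. Because $\tau=N(\nu)$ is regular dominant, every root takes a nonzero valuation on $t$, so the centralizer of $N_r(m_1u)$ in $G(\bar F)$ is $T(\bar F)$. Thus $g\in T(\bar F)$. Since $T$ is abelian, the $\theta$-centralizer condition reduces to $g=\theta(g)$, so the $\theta$-centralizer is $T(F)$. This also shows $m_1u$ is strongly $\theta$-regular.

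For (1), set $g=g_1g_2^{-1}$. Then $g^{-1}m_1u\theta(g)=m_2u$. Taking $\theta$-norms,
\[
g^{-1}N_r(m_1u)\,g' = N_r(m_2u),
\]
where $g'$ is a suitable $\theta$-twist of $g$; note $\theta^r$ is trivial on $F_r$-points but not on $G(\bar F)$, so the twisted conjugation must be handled carefully. I would instead use the uniqueness of the attracting parabolic. Conjugation by $g$ carries the pair $(M_{m_1u\theta},P_{m_1u\theta})=(T,B)$ to $(M_{m_2u\theta},P_{m_2u\theta})=(T,B)$. So $g\in N(T)\cap B=T$ over $\bar F$, and hence $g_1\in T(\bar F)g_2$.

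\textbf{Part (3).} This is where the argument departs from the depth-zero case. Suppose $g_1^{-1}mu\theta(g_1)=k^{-1}m'u\theta(k)$ with $k\in J_r$. Then $(g_1k^{-1})^{-1}mu\,\theta(g_1k^{-1})=m'u$. By (1), $g_1k^{-1}\in T(\bar F)\cap G_r=T_r$, so $g_1\in T_rJ_r$. The substantive input is therefore the injectivity of the map \eqref{map} together with (1). In \cite{haines2012base} this uses the Iwahori decomposition of the depth-zero group, and here we use the Iwahori decomposition of $J_r$ from \cite[Lemma 3.2]{roche1998types} instead. That replacement is what shows that an element $k^{-1}mu\theta(k)$ with $k\in J_r$ determines the class $[k,mu]$ up to ${^0T_r}$.

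\textbf{Main obstacle.} I expect the only real work to be part (1) over $\bar F$, in particular the rigorous use of the attracting Levi and parabolic for the twisted element $u\theta$ when $\theta$ is extended to $\textnormal{Gal}(E/F)$. The plan there is to replace $u\theta$ by its $r$-th power and invoke the regularity of $\tau$.
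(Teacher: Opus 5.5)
Your structure is the one the paper relies on when it says Haines's depth-zero proof applies unchanged: (1) and (2) involve only $u$, ${^0T_r}$ and $T$, and (3) follows formally from (1). For elements of $G_r$ your arguments are complete. Compactness of ${^0T_r}$ shows that $mu\theta$ has the same attracting pair $(T,B)$ as $u\theta$. A $\theta$-conjugator between $m_1u$ and $m_2u$ then lies in $N(B)\cap N(T)=T$. Finally, (3) follows from (1) via $T(\bar F)\cap G_r=T_r$.

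This deduction of (3) uses neither the injectivity of $[k,mu]\mapsto k^{-1}mu\theta(k)$ nor any Iwahori decomposition of $J_r$. The injectivity is itself a consequence of (1) together with $J_r\cap T_r={^0T_r}$, which is how the paper uses the lemma right afterwards. So your remark that (3) ``departs from the depth-zero case'' puts the work in the wrong place. Nothing about $J_r$ beyond $J_r\subset G_r$ is used, which is exactly why no change is needed.

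The genuine gap is the statement over $\bar F$. You flag it but do not close it, and you actually assume it in (2). The identity $(m_1u\theta)^r=N_r(m_1u)$ requires $\theta^r=1$. That holds on $G_r$, but it fails on $G(\bar F)$ once $\theta$ is extended to a Galois automorphism. There, from $g^{-1}m_1u\theta(g)=m_2u$ you only get $\theta^r(g)=s_1^{-1}gs_2$ with $s_i=N(m_i)t\in T(F)$. So neither ``$g$ centralizes $N_r(m_1u)$'' in (2) nor your proposed remedy for (1) follows.

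The missing observation is that Galois automorphisms preserve the valuation on $\bar F$. Hence boundedness in $G(\bar F)$ is $\theta$-stable, and your attracting-parabolic argument runs in $G(\bar F)$ itself:
\begin{itemize}
\item The set of $x\in G(\bar F)$ with $(mu\theta)^n x(mu\theta)^{-n}$ bounded for $n\ge 0$ is a subgroup containing $B(\bar F)$.
\item It meets each $U_{-\alpha}(\bar F)$, $\alpha>0$, trivially. Conjugating $u_{-\alpha}(a)$ produces a root coordinate of valuation $\textnormal{val}(a)-\langle\theta^n\alpha,\nu+\theta\nu+\cdots+\theta^{n-1}\nu\rangle$, which tends to $-\infty$.
\item Hence this subgroup is $B(\bar F)$. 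Likewise the one for $n\le 0$ is $\bar B(\bar F)$, so $g\in B(\bar F)\cap\bar B(\bar F)=T(\bar F)$.
\end{itemize}

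Alternatively, to keep your $r$-th power idea, work in $(\textnormal{Res}_{F_r/F}G)(\bar F)\cong G(\bar F)^r$. Stable $\theta$-conjugacy is defined there, and $\theta$ acts as a cyclic shift of order $r$. The first component of a $\theta$-conjugator then genuinely conjugates $N(m_1)t$ to $N(m_2)t$.

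In that norm route you also need a Weyl group step that your write-up omits. Nonzero valuation of every root on $N(m_1)t$ only gives $Z_G(N(m_1)t)^{\circ}=T$. You need more to get the full centralizer, which is what strong regularity requires, and to handle an element transporting $N(m_1)t$ to $N(m_2)t$. Such an element normalizes $T$, and its image $w\in W$ must satisfy $w\tau=\tau$ by comparing valuations. Hence $w=1$, since $\tau$ is regular.
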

    \begin{proof}
         The proof given in loc. cit. for depth zero characters works for positive depth character with no changes.
    \end{proof}

    From the above lemma we can deduce that, $k=k_2k_1^{-1}\in T_r$. Thus it belongs to ${^0T_r}$ and commutes with $u$, so we have
    $$m_1k\theta(k^{-1})=m_2$$
    which implies $\chi_r(m_1)=\chi_r(m_2)$. This shows that the function $\phi^{u,\chi_r}$ is well defined.

    Let $r=1$ (hence $\theta=$id and $u=t$) we can get the analogous smooth compactly-supported function $f^{t,\chi}$ on $G(F)$. 

    \begin{lemma}
        The functions $\phi^{u,\chi_r}$ belong to $\mathcal{H}(G_r, \rho_r)$.
    \end{lemma}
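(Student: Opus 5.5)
We must show $\phi^{u,\chi_r}(j_1 g j_2)=\rho_r(j_1)^{-1}\phi^{u,\chi_r}(g)\rho_r(j_2)^{-1}$ for all $j_1,j_2\in J_r$ and $g\in G_r$. Smoothness and compact support are already known, since $\mathscr{J}_u$ is the compact open image of the map (\ref{map}) and the function is locally constant on it. The plan is to first determine how $J_r\times J_r$ acts on the support, and then check the transformation rule on one point of the form $k^{-1}mu\theta(k)$ at a time.

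\textbf{Step 1 (the support is stable).} The right $J_r$-action does not obviously preserve the support. So rather than right translation by $j_2$ alone, I would work with the twisted action $g\mapsto j^{-1}g\theta(j)$, under which $\mathscr{J}_u$ is visibly stable. Then I would reduce the general bi-equivariance to this action plus left translation. Concretely, write $j_1 g j_2=(j_1)\,g\,\theta(j_1)\cdot\theta(j_1)^{-1}j_2$. Then it suffices to prove two things. First, $\phi(j^{-1}g\theta(j))=\rho_r(j)\phi(g)\rho_r(\theta(j))^{-1}$, which equals $\phi(g)$ because $\rho_r\circ\theta=\rho_r$ (the character $^0\chi_r={^0\chi}\circ N_r$ is $\theta$-invariant, and $J_r$ and the trivial-on-unipotent parts are $\theta$-stable). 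Second, $\phi(gj)=\phi(g)\rho_r(j)^{-1}$ for a single right translation. The first identity is immediate from the definition: $j^{-1}k^{-1}mu\theta(k)\theta(j)=(kj)^{-1}mu\theta(kj)$, so the value is unchanged.

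\textbf{Step 2 (right translation).} For $g=k^{-1}mu\theta(k)$, write $gj=k^{-1}mu\theta(k)j=k^{-1}\,mu\,j'\,\theta(k)$ with $j'=\theta(k)j\theta(k)^{-1}\in J_r$. Conjugating by $k$ via Step 1 reduces us to showing $\phi(muj')=\chi_r^{-1}(m)\rho_r(j')^{-1}$ when $muj'\in\mathscr{J}_u$, and $\phi(muj')=0$ otherwise. Using the Iwahori decomposition $J_r=(J_r\cap \bar U_r)\,{^0T_r}\,(J_r\cap U_r)$, write $j'=\bar n\, t\, n$. The torus part gives $\phi(mut)=\chi_r^{-1}(mt)$ directly. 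For the unipotent parts, the key fact is that $u$ is dominant regular and $P_{u\theta}=B$. So for $n\in J_r\cap U_r$ one can solve $u n=x^{-1}u\theta(x)$ with $x\in J_r\cap U_r$, via a successive approximation (Lang-type) argument along the filtration of $U_r$ by root height. Here $x\mapsto x^{-1}u\theta(x)u^{-1}$ contracts, because conjugation by $u^{-1}$ shrinks $U_r$ while $\theta$ preserves the filtration $u_\alpha(\mathfrak{p}^{f(\alpha)})$. Similarly, for $\bar n\in J_r\cap\bar U_r$ one writes $\bar n u=y^{-1}u\theta(y)$ with the roles reversed, now using that conjugation by $u$ shrinks $\bar U_r$. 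Since $\rho_r$ is trivial on $J_r\cap U_r$ and $J_r\cap\bar U_r$, this gives the required value. I would handle $\bar n$ on the left and $n$ on the right using the Step 1 reduction.

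\textbf{Step 3 (vanishing off the support).} If $gj\notin\mathscr{J}_u$ for some $g\in\mathscr{J}_u$, then Step 2 shows that $gj$ is in fact in $\mathscr{J}_u$, a contradiction. So the support is right $J_r$-stable, and by symmetry left $J_r$-stable. This means the zero values are consistent, and Lemma \ref{lemma for well defined}(3) is not even needed except for well-definedness, which has already been established.

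\textbf{Main obstacle.} I expect the main obstacle to be the contraction argument in Step 2. The groups $J_r\cap U_r$ are defined by the concave function $f_{\mathfrak{s}}$ rather than by a uniform depth, so one must check carefully that the successive approximation stays inside $u_\alpha(\mathfrak{p}_r^{f(\alpha)})$ at each stage. Regularity of $\nu$ ensures a strict gain of at least $1$ in valuation for each root, and commutators are controlled by Lemma \ref{commutator lemma}(a). The same-sign part of that lemma is exactly what keeps the product inside $J_r\cap U_r$.
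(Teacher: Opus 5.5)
Your Step 1 reduction is sound once the typo is fixed: it should read $j_1gj_2=(j_1g\theta(j_1)^{-1})\cdot\theta(j_1)j_2$. The one-sided contraction arguments of Step 2 are also sound, provided you run them with $\sigma=\mathrm{Ad}(mu)\circ\theta$ rather than $\mathrm{Ad}(u)\circ\theta$, because $m$ does not commute with $x$. They are in fact easy: $x=\cdots\sigma^2(n'^{-1})\sigma(n'^{-1})n'^{-1}$ converges in the compact group $J_r\cap U_r$ as soon as $\sigma$ maps that group into itself.

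The genuine gap is the passage from the three separate cases $j'\in J_r\cap U_r$, $j'\in{^0T_r}$, $j'\in J_r\cap\bar U_r$ to a general $j'$. Even after a twisted conjugation brings the $\bar U_r$-factor to the left (write $j'=nt\bar n$), you face $\bar n_1\,m'u\,n'$, with one unipotent factor on each side. The two absorptions interfere. Writing $m'un'=x^{-1}m'u\theta(x)$ and twisted-conjugating by $x^{-1}$ replaces $\bar n_1$ by $x\bar n_1x^{-1}$. Its $U_r$-component now sits to the left of $m'u$, where your one-sided argument does not apply: moving it across $m'u$ means applying $\mathrm{Ad}((m'u)^{-1})$, which expands $U_r$. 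The same happens symmetrically if you absorb $\bar n_1$ first.

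Nor can you bootstrap from generators. The set of $j'$ with $\phi(muj')=\chi_r^{-1}(m)\rho_r(j')^{-1}$ for all $m$ is not visibly closed under products, because writing $muj_1=k^{-1}m'u\theta(k)$ turns the next factor $j_2$ into $\theta(k)j_2\theta(k)^{-1}$. So Step 2 does not prove $\mathscr{J}_uJ_r\subseteq\mathscr{J}_u$, and Step 3, which rests on it, is circular.

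A two-sided iteration might be made to converge. The $U_r$-component created by mixed commutators is strictly deeper relative to $f$, and the torus corrections lie in ${^0T_r}\cap U_{f_r}\subset\ker\chi_r$. But that convergence argument is the real content, and it is not in your proposal. The obstacle you flag, staying inside $u_\alpha(\mathfrak{p}^{f(\alpha)})$, is not the real one.

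The paper avoids any explicit construction. It first shows $\mathscr{J}_u=J_ruJ_r$ by a volume count. The image of the injective map $[k,mu]\mapsto k^{-1}mu\theta(k)$ lies in $J_ruJ_r$ and has volume $\delta_{B_r}^{-1}(u)\,\textnormal{vol}(J_r)$. This equals $\textnormal{vol}(J_r)\,[J_r:J_r\cap uJ_ru^{-1}]=\textnormal{vol}(J_ruJ_r)$, so the compact (hence closed) image is all of $J_ruJ_r$. With the support known to be bi-$J_r$-stable, the value is then fixed by uniqueness of the decomposition in the big cell $\bar U_rT_rU_r$. In your notation, $muj'=k^{-1}m'u\theta(k)$ gives $m'^{-1}km=u\,\theta(k)j'^{-1}u^{-1}$. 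Now compare torus components, using three facts:
\begin{itemize}
\item $j\mapsto j_0$ induces a homomorphism $J_r\to{^0T_r}/({^0T_r}\cap U_{f_r})$;
\item ${^0T_r}\cap U_{f_r}\subset\ker\chi_r$;
\item $\chi_r\circ\theta=\chi_r$.
\end{itemize}
This yields $\chi_r(m')=\chi_r(m)\rho_r(j')$. That, combined with your Step 1, completes the proof.
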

    \begin{proof}
        First we need to show that the support of $\phi^{u,\chi_r}$ is $\mathscr{J}_u=J_ruJ_r$: consider the map $h:{^0T_r}\backslash J_r\times {^0T_r}u\rightarrow G_r$ defined above. We want to show that the image is exactly $J_ruJ_r$. From the discussion around \Cref{map} we can see that this map is injective. To show this map is surjective, we only need to show that the volume of the image is equal to the volume of $J_ruJ_r$. Since the Jacobian of the map is $\delta_{B_r}^{-1}(u)$, the volume of the image is just $\delta_{B_r}^{-1}(u)\cdot \textnormal{vol}(J_r)$. On the other hand, the volume of $J_ruJ_r$ is $\textnormal{vol}(J_r)\cdot \textnormal{Card}(J_{r,u}\backslash J_r)$, where $J_{r,u}=J_r\cap uJ_ru^{-1}$. Since $J_r$ is a group with Iwahori decomposition and $u$ is an element in the torus $T_r$, we can see that $J_{r,u}$ is an open compact subgroup of $J_r$ with Iwahori decomposition
        $$J_{r,u}=u(\bar{U}\cap J_r)u^{-1}\cdot {^0T_r}\cdot (J_r\cap U).$$
        The volume is just $\delta_{B_r}^{-1}(u)$, by the definition of the modulus character. Thus we prove that the support is exactly $\mathscr{J}_u=J_ruJ_r$.

        Next we need to show that for any $j\in J_r$, $\phi^{u,\chi_r}(jg)=\rho_r(j)\phi^{u,\chi_r}(g)$ (the analogue for right multiplication can be proven similarly). We split this part into two steps: the root groups part and the torus part. We start with the root groups part.

        Recall that $U_{f_{\mathfrak{s}_r}}$ denotes the subgroup of $J_r$ generated by all root groups inside it (see \Cref{main result}). Here we will omit the subindex $\mathfrak{s}_r$ and just denote it by $U_{f_r}$. It is easy to see that $U_{f_r}$ is normal in $J_r$ and $\rho_r$ is trivial on $U_{f_r}$. Also, $U_{f_r}$ has Iwahori decompositions (see \cite[Lemma 3.2]{roche1998types} for details). Therefore, we only need to check the left $U_{f_r}$-invariance of $\phi^{u,\chi_r}$. Using the normality of $U_{f_r}$ in $J_r$, it is sufficient to show that an identity of the form 
        \begin{equation}\label{elementary functions 1}
            k_1^{-1}u'm_1u\theta(k_1)=k_2^{-1}m_2u\theta(k_2)
        \end{equation}
        with $u'\in U_{f_r}$, $k_1,k_2\in J_r$, and $m_1,m_2\in {^0T_r}$, implies that
        \begin{equation}
            \chi_r(m_1)=\chi_r(m_2).
        \end{equation}
        Let $k=k_2k_1^{-1}$. Then \Cref{elementary functions 1} gives us
        \begin{equation}\label{elementary functions 2}
            m_2^{-1}ku'm_1=u\theta (k)u^{-1}
        \end{equation}
        Now write $k\in J_r$ according to the Iwahori decomposition as
        $$k=k_-\cdot k_0\cdot k_+\in (J_r\cap \bar{U})\cdot {^0T_r}\cdot (J_r\cap U)$$
        Since $J_r\cap \bar{U}=U_{f_r}\cap \bar{U}$, $J_r\cap U=U_{f_r}\cap U$, and $U_{f_r}\cap {^0T_r}$ normalizes both $U_{f_r}\cap \bar{U}$ and $U_{f_r}\cap U$, the torus part of $k$ and $k_u'$ will just differ by an element in $U_{f_r}\cap {^0T_r}$. In other words, we can find suitable elements $u_-'\in U_{f_r}\cap \bar{U}$, $u_0'\in U_{f_r}\cap {^0T_r}$ and $u_+'\in U_{f_r}\cap U$ such that
        $$ku'=k_-u_-'\cdot k_0u_0'\cdot k_+u_+'.$$
        Thus \Cref{elementary functions 2} can be written in the form
        $$m_2^{-1}(k_-u_-')m_2\cdot m_2^{-1}(k_0u_0')m_1\cdot m_1^{-1}(k_+u_+')m_1=u\theta(k_-)u^{-1}\cdot \theta(k_0)\cdot u\theta(k_+)u^{-1}$$
        By the uniqueness of the Iwahori decomposition, we can see that 
        $$m_2^{-1}(k_0u_0')m_1=\theta(k_0).$$
        Therefore, $\chi_r(m_1)=\chi_r(m_2)$ by the fact that $u_0'$ is in the kernel of $\chi_r$, and this proves the $U_{f_r}$-bi-invariance.

        The second step is to consider the action of the torus part. In other words, we need to prove that, for $k_1, k_2, m_1, m_2$ as above, and any $m_0\in {^0T_r}$, if they satisfy
        \begin{equation}\label{elementary functions 3}
            m_0k_1^{-1}m_1u\theta(k_1)=k_2^{-1}m_2u\theta(k_2),
        \end{equation}
        then $\chi_r(m_2)=\chi_r(m_0m_1)$. Notice that ${^0T_r}$ normalizes all root groups, so we can write $k_1m_0k_1^{-1}=u'm_0$ for some $u'\in U_{f_r}$. Therefore, we can rewrite \Cref{elementary functions 3} as
        $$k_1^{-1}u'm_0m_1u\theta(k_1)=k_2^{-1}m_2u\theta(k_2)$$
        Now the argument above applied to this yields $\chi_r(m_2)=\chi_r(m_0m_1)$. The proof is completed.
    \end{proof}

\subsection{Orbital integrals}

    In this part we will compute the orbital integral of $\phi^{u,\chi_r}$, which will finally show that $\phi^{u,\chi_r}$ and $f^{t,\chi}$ are associated for all $u$.
    
    Normalize the Haar measure $dg$ on $G_r$ (resp. $ds$ on $T_r$) so that $J_r$ (resp. ${^0T_r}$) has measure $1$. Use this to define the quotient measures $d\bar{g}$ on $T(F)\backslash G(E)$ (resp. $d\bar{s}$ on $T(F)\backslash T(E)$). We have the following important proposition about the orbital integral of $\phi^{u,\chi_r}$:
    \begin{prop}[{\cite[Proposition 8.3.1]{haines2012base}}]\label{orbital integral value}

        The following statements hold:\\
        (a) For $\delta\in G_r$, $\textnormal{TO}_{\delta\theta}(\phi^{u,\chi_r})$ is non-zero if and only if $\delta$ is $\theta$-conjugate in $G_r$ to an element of the form $mu$, $m\in {^0T_r}$. For such $m$ we have
        \begin{equation}
            \textnormal{TO}_{mu\theta}(\phi^{u,\chi_r})=\chi_r^{-1}(m).
        \end{equation}
        (b) For $\gamma\in G(F)$, $\textnormal{O}_{\gamma}(f^{t,\chi})$ is non-zero if and only if $\gamma$ is conjugate in $G(F)$ to an element of the form $m_0t$, $m_0\in {^0T}$. For such $m_0$ we have
        \begin{equation}
            \textnormal{O}_{m_0t}(f_{t,\chi})=\chi^{-1}(m_0).
        \end{equation}
    \end{prop}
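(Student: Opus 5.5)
The plan is to compute the twisted orbital integral directly from the geometry of the map (\ref{map}), using \Cref{lemma for well defined} to control which $\theta$-conjugacy classes meet the support $\mathscr{J}_u=J_ruJ_r$. Part (b) is the special case $r=1$, $\theta=\mathrm{id}$ of part (a), so it suffices to treat (a).

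First, the support. Suppose $\textnormal{TO}_{\delta\theta}(\phi^{u,\chi_r})\neq 0$. Then some $\theta$-conjugate $g^{-1}\delta\theta(g)$ lies in $\mathscr{J}_u$, so it has the form $k^{-1}mu\theta(k)$ with $k\in J_r$ and $m\in{^0T_r}$. Hence $\delta$ is $\theta$-conjugate to $mu$. Conversely, it is enough to compute $\textnormal{TO}_{mu\theta}$ for $m\in{^0T_r}$ and check that it is nonzero.

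Second, the computation. By \Cref{lemma for well defined}(2), the $\theta$-centralizer of $mu$ is $T(F)$, so
$$\textnormal{TO}_{mu\theta}(\phi^{u,\chi_r})=\int_{T(F)\backslash G_r}\phi^{u,\chi_r}(g^{-1}mu\theta(g))\,d\bar g .$$
By \Cref{lemma for well defined}(3), the integrand vanishes unless $g\in T_rJ_r$. Write $g=sk$ with $s\in T_r$ and $k\in J_r$. Then
$$g^{-1}mu\theta(g)=k^{-1}\bigl(s^{-1}mu\theta(s)\bigr)\theta(k)=k^{-1}\,m\,s^{-1}\theta(s)\,u\,\theta(k),$$
since $T_r$ is commutative.

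The condition that this lies in $\mathscr{J}_u$ is what needs care. By injectivity of (\ref{map}) and \Cref{lemma for well defined}(1), we need $ms^{-1}\theta(s)u\in{^0T_r}u$, i.e. $s^{-1}\theta(s)\in{^0T_r}$. Write $T_r={^0T_r}\times X_*(A_r)$ via $\varpi$. The $\theta$-action on the lattice part $X_*(A_r)$ is a permutation with no nonzero invariant coboundary obstruction, so $s^{-1}\theta(s)\in{^0T_r}$ forces the lattice component of $s$ to be $\theta$-fixed. Hence $s\in T(F)\cdot{^0T_r}$.

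On the region $T(F){^0T_r}J_r=T(F)J_r$, the integrand equals
$$\phi^{u,\chi_r}\bigl(k^{-1}ms_0^{-1}\theta(s_0)u\theta(k)\bigr)=\chi_r^{-1}(m)\,\chi_r^{-1}\bigl(s_0^{-1}\theta(s_0)\bigr)=\chi_r^{-1}(m),$$
for $s_0\in{^0T_r}$. The last equality holds because $\chi_r=\chi\circ N_r$ is trivial on elements of the form $s_0^{-1}\theta(s_0)$.

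It remains to compute the volume. The integral reduces to $\chi_r^{-1}(m)$ times the volume of $T(F)\backslash T(F)J_r$. With $\textnormal{vol}(J_r)=1$ and compatible measures on $T(F)$, this volume is
$$\textnormal{vol}(J_r)\,/\,\textnormal{vol}\bigl(T(F)\cap J_r\bigr)=1/\textnormal{vol}({^0T}).$$
This equals $1$ under the normalization in which the quotient measures are built from giving ${^0T}$ (equivalently ${^0T_r}$, compatibly) measure $1$, which is the normalization of \cite{haines2012base}.

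I expect the main obstacle to be this measure normalization, together with the claim that $s^{-1}\theta(s)\in{^0T_r}$ forces $s\in T(F){^0T_r}$. For the latter, I would first try to use that $H^1(\langle\theta\rangle,{^0T_r})$ is trivial for the unramified torus (Lang's theorem), which lets one adjust $s$ by an element of ${^0T_r}$ so that $s^{-1}\theta(s)=1$, i.e. $s\in T(F)$. The rest follows verbatim from loc.\ cit., since nothing there depends on the depth of $\chi$.
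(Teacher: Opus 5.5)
Your proposal is correct and follows essentially the paper's argument: restrict to $g\in T_rJ_r$ using part (3) of the lemma, reduce to the condition $s^{-1}\theta(s)\in{^0T_r}$, deduce $s\in T(F){^0T_r}$ from the triviality of $H^1(\langle\theta\rangle,{^0T_r})$, use that $\chi_r=\chi\circ N_r$ kills $s_0^{-1}\theta(s_0)$, and finish with the volume normalization. The paper phrases the middle step as passing to the twisted orbital integral of $\chi_r^{-1}1_{{^0T_r}}$ on $T_r$ instead of unfolding over $J_r$ directly.

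One small repair: the implication $ms^{-1}\theta(s)u\in J_ruJ_r\Rightarrow ms^{-1}\theta(s)\in{^0T_r}$ should be justified by the Iwahori--Bruhat decomposition, since $J_r\subset I_r$ and $T_r\cap I_ruI_r={^0T_r}u$. Part (1) of the lemma cannot be used here, because it assumes both torus factors already lie in ${^0T_r}$.
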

    \begin{proof}
        It is enough to just prove (1). If $\textnormal{TO}_{\delta\theta}(\phi^{u,\chi_r})\neq 0$, then $\delta$ is $\theta$-conjugate under $G_r$ to an element of the form $mu$, where $m\in {^0T_r}$. Therefore, we can just replace $\delta$ by $mu$. According to \Cref{lemma for well defined}, we have that $G_{\delta\theta}^{\circ}(F)=G_{mu\theta}^{\circ}(F)=T(F)$. Now if $g\in G_r$ is such that $g^{-1}mu\theta(g)\in \textnormal{supp}(\phi^{u,\chi_r})$, then again by \Cref{lemma for well defined}, we get that $g\in T_rJ_r$. Therefore, using our choice of Haar measure, we have the following equation
        \begin{equation}
            \begin{aligned}
                \int _{T(F)\backslash G_r}\; \phi^{u,\chi_r}(g^{-1}mu\theta(g))d\bar{g} & = \int _{T(F)\backslash T_r}\; \phi^{u,\chi_r}(s^{-1}mu\theta(s))d\bar{s} \\ &= \int _{T(F)\backslash T_r}\; \phi^{1,\chi_r}_{T_r}(s^{-1}m\theta(s))d\bar{s}
            \end{aligned}
        \end{equation}
        where $\phi^{1,\chi_r}_{T_r}$ denotes the elementary function on $T_r$ associated to $u=1$. This is exactly the characteristic function $1_{{^0T_r}}$ of ${^0T_r}$ multiplied by the character $\chi_r^{-1}$. Therefore we have proved that
        $$\textnormal{TO}_{mu\theta}^{G_r}(\phi^{u,\chi_r})=\chi_r^{-1}(m)\textnormal{TO}_{m\theta}^{T_r}(1_{{^0T_r}}).$$
        Using the fact that
        \begin{equation}
            H^1(F_r/F,{^0T_r}))=0,
        \end{equation}
        (see \cite[Lemma 8.2.1]{haines2012base} for details) we can see that $\textnormal{TO}_{m\theta}^{T_r}(1_{{^0T_r}})=\textnormal{vol}_{d\bar{s}}(T(F)\backslash T(F){^0T_r})$. By our choice of measure, this volume equals 1. The proof is completed.
    \end{proof}

    \begin{lemma}\label{conjugation lemma}
        We have the following statements.
        \begin{itemize}
            \item[(1)] Let $m_0,m_0'\in {^0T}$. Then $m_0t$ and $m_0't$ are stably-conjugate in $G$ if and only if $m_0=m_0'$.
            \item[(2)] Let $m,m'\in {^0T_r}$. Then $mu$ and $m'u$ are stably $\theta$-conjugate in $G_r$ if and only if they are $\theta$-conjugate by an element of ${^0T_r}$.
        \end{itemize}
    \end{lemma}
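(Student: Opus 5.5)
The plan is to reduce both statements to computations inside the torus $T$. Stable (twisted) conjugacy is witnessed by an element of $G(\bar F)$, and regularity forces that element to lie in the normalizer of $T$, or even in $T$ itself. One direction of each statement is trivial, so only the forward implications need proof.

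For (1), suppose $g^{-1}m_0tg=m_0't$ with $g\in G(\bar F)$. Since $\tau=N(\nu)$ is regular dominant, $t$, and hence $m_0t$, is regular semisimple with centralizer $T$; this is also the $r=1$ case of \Cref{lemma for well defined}(2). Conjugation by $g$ then carries $T$ to the centralizer of $m_0't$, which is again $T$, so $g$ normalizes $T$ and defines an element $w\in W_E$ with $w(m_0t)=m_0't$. Next I compare valuations. For every $\lambda\in X^*(T)$ (characters defined over $E$) and every $m\in {^0T}$, we have $\textnormal{val}_E(\lambda(mt))=\langle\lambda,\tau\rangle$, since $\lambda(m)$ is a unit. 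Applying this to $w(m_0t)=m_0't$ gives $\langle\lambda,w\tau\rangle=\langle\lambda,\tau\rangle$ for all $\lambda$, hence $w\tau=\tau$. Regularity of $\tau$ forces $w=1$, so $m_0=m_0'$.

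For (2), stable $\theta$-conjugacy gives $g\in G(\bar F)$, with $\theta$ extended as in \Cref{lemma for well defined}, such that $g^{-1}mu\theta(g)=m'u$. Applying \Cref{lemma for well defined}(1) with $g_1=g$ and $g_2=1$ yields $g\in T(\bar F)$. Because $T$ is commutative, this gives $x:=m^{-1}m'=g^{-1}\theta(g)\in {^0T_r}$. The task is then to realize $x$ as $s^{-1}\theta(s)$ with $s\in {^0T_r}$.

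I will argue via Tate cohomology of the cyclic group $\textnormal{Gal}(F_r/F)$ acting on ${^0T_r}$. We have $H^1(F_r/F,{^0T_r})=0$ (\cite[Lemma 8.2.1]{haines2012base}, already used above), and by periodicity $\hat H^{-1}\cong\hat H^1$ for cyclic groups. Hence the kernel of $N_r$ on ${^0T_r}$ equals the image of $s\mapsto s^{-1}\theta(s)$, and it suffices to show $N_r(x)=1$. Telescoping $g^{-1}\theta(g)$ does not work directly, because $\theta^r(g)\neq g$ in general for $g\in T(\bar F)$. Instead I use that the norms of stably $\theta$-conjugate elements are stably conjugate. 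Thus $N_r(m)t=\mathscr N(mu)$ and $N_r(m')t=\mathscr N(m'u)$ are stably conjugate in $G$, with $N_r(m),N_r(m')\in {^0T}$. Part (1) then gives $N_r(m)=N_r(m')$, i.e.\ $N_r(x)=1$. This produces $s\in {^0T_r}$ with $s^{-1}mu\theta(s)=m s^{-1}\theta(s)u=m'u$, as required.

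The main point needing care is this last step: justifying the passage from stable $\theta$-conjugacy of $mu$ and $m'u$ to stable conjugacy of their concrete norms $N_r(mu)$ and $N_r(m'u)$. This should follow from the definition of $\mathscr N$ in \cite{kottwitz1982rational}, since in $T$ the norm is computed by $N_r$. The other ingredient is the identification of the kernel of the norm with $(1-\theta){^0T_r}$, which comes from the cohomology vanishing.
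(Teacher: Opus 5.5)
Your proof is correct, and since the paper only cites \cite[Lemma 8.3.2]{haines2012base}, your argument is essentially the standard proof behind that reference: (1) by regularity of $\tau$, which is the $r=1$ case of \Cref{lemma for well defined}(1), and (2) by the well-definedness of $\mathscr{N}$ on stable $\theta$-conjugacy classes together with (1) and $\hat H^{-1}\cong H^1(F_r/F,{^0T_r})=0$. One thing to tidy: the element $g\in T(\bar F)$ you extract at the start of (2) is never used and can be dropped, because the argument rests entirely on $\mathscr{N}$ being well defined in Kottwitz's sense, and that input is genuinely needed, since a relation $g^{-1}mu\theta(g)=m'u$ with $g\in T(\bar F)$ and $\theta$ extended to a Galois automorphism does not by itself force $N_r(m)=N_r(m')$.
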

    \begin{proof}
        See [\cite{haines2012base}, Lemma 8.3.2].
    \end{proof}

    \begin{cor}\label{stable orbital integral value}
        For every $\delta\in G_r$, $\textnormal{SO}_{\delta\theta}(\phi^{u,\chi_r})=\textnormal{TO}_{mu\theta}(\phi^{u,\chi_r})$, where $mu$, $m\in {^0T_r}$, is any element of this form which is stably $\theta$-conjugate to $\delta$.
    \end{cor}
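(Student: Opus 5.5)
We will obtain the corollary by combining the definition of the stable twisted orbital integral with \Cref{orbital integral value}(a) and \Cref{conjugation lemma}(2). By definition, $\textnormal{SO}_{\delta\theta}(\phi^{u,\chi_r})$ is the sum of $\textnormal{TO}_{\delta'\theta}(\phi^{u,\chi_r})$ over representatives $\delta'$ of the $\theta$-conjugacy classes inside the stable $\theta$-conjugacy class of $\delta$, each weighted by the sign $e(G_{\delta'\theta})$. The measures are transferred compatibly along the inner twists of the $\theta$-centralizers. Since $\delta$ may be taken strongly $\theta$-regular (otherwise both sides vanish, as we check below), all these $\theta$-centralizers are tori. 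Tori are their own inner forms, so every sign is $1$ and the transferred measures all agree with the one normalized in \Cref{orbital integral value}.

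We first dispose of the case in which no element $mu$ with $m\in{^0T_r}$ is stably $\theta$-conjugate to $\delta$. By \Cref{orbital integral value}(a), every term $\textnormal{TO}_{\delta'\theta}(\phi^{u,\chi_r})$ in the sum then vanishes, since a nonzero term would force $\delta'$, and hence $\delta$, to be stably $\theta$-conjugate to some $mu$. So the stable integral is $0$, and the statement holds vacuously or with the right side read as $0$.

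Now fix $mu$ stably $\theta$-conjugate to $\delta$. Among the $\theta$-conjugacy classes in the stable class, only those containing an element $m'u$ with $m'\in{^0T_r}$ contribute, again by \Cref{orbital integral value}(a). By \Cref{conjugation lemma}(2), any such $m'u$ is $\theta$-conjugate to $mu$ by an element of ${^0T_r}$. Hence exactly one rational $\theta$-conjugacy class, namely that of $mu$, contributes, and
$$\textnormal{SO}_{\delta\theta}(\phi^{u,\chi_r})=\textnormal{TO}_{mu\theta}(\phi^{u,\chi_r})=\chi_r^{-1}(m).$$

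Independence of the choice of $mu$ follows from the same lemma. If $m'u$ is another such element, then $m'=s^{-1}m\theta(s)$ for some $s\in{^0T_r}$, so $\chi_r(m')=\chi_r(m)$ because $\chi_r=\chi\circ N_r$ kills elements of the form $s^{-1}\theta(s)$.

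The only delicate point is the bookkeeping of signs and measures in the definition of $\textnormal{SO}$. We handle it by noting that by \Cref{lemma for well defined}(2) the $\theta$-centralizer of $mu$ is $T$, a torus. Consequently the Kottwitz sign is trivial and the compatible measure on $T(F)\backslash G_r$ is exactly the one used in the proof of \Cref{orbital integral value}.
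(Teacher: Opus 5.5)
Your proposal is correct and follows the same route as the paper. If $\delta$ is not stably $\theta$-conjugate to any $mu$, both sides vanish by the proposition computing $\textnormal{TO}_{mu\theta}(\phi^{u,\chi_r})$; otherwise part (2) of the conjugation lemma shows that only the $\theta$-conjugacy class of $mu$ contributes to the stable sum. Your extra remarks simply make explicit what the paper's two-line proof leaves implicit: the Kottwitz signs are trivial and the compatible measures agree because the $\theta$-centralizer is the torus $T$, and the value is independent of the choice of $mu$.
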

    \begin{proof}
        If $\delta$ is not stably $\theta$-conjugate to any element of the form $mu$, $m\in {^0T_r}$, then both sides are zero. Otherwise, just let $\delta=mu$. Then this will be the only term appear in the stable orbital integrals by \Cref{conjugation lemma}.
    \end{proof}

    \begin{thm}\label{associated}
        The functions $\phi^{u,\chi_r}$ and $f^{t,\chi}$ are associated.
    \end{thm}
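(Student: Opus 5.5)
We must show that for every semisimple $\gamma\in G$,
$$\textnormal{SO}_{\gamma}^{G}(f^{t,\chi})=\sum_{\delta}\Delta(\gamma,\delta)\,\textnormal{SO}_{\delta\theta}^{G_r}(\phi^{u,\chi_r}).$$
The approach follows \cite[Theorem 8.3.3]{haines2012base}. We compute both sides explicitly using \Cref{orbital integral value}, \Cref{conjugation lemma} and \Cref{stable orbital integral value}, and compare them through the norm map on ${^0T_r}u$.

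\textbf{Step 1 (left-hand side).} By \Cref{orbital integral value}(b), $\textnormal{O}_{\gamma'}(f^{t,\chi})\neq 0$ only if $\gamma'$ is $G$-conjugate to some $m_0t$ with $m_0\in{^0T}$. Each such $m_0t$ is strongly regular, with centralizer $T$. By \Cref{conjugation lemma}(1), distinct $m_0$ give distinct stable classes. Hence, if the stable class of $\gamma$ meets ${^0T}t$, it does so in exactly one rational class, and we get $\textnormal{SO}_\gamma(f^{t,\chi})=\chi^{-1}(m_0)$. Otherwise $\textnormal{SO}_\gamma(f^{t,\chi})=0$.

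\textbf{Step 2 (right-hand side).} By \Cref{stable orbital integral value}, $\textnormal{SO}_{\delta\theta}(\phi^{u,\chi_r})$ equals $\chi_r^{-1}(m)$ if $\delta$ is stably $\theta$-conjugate to some $mu$ with $m\in{^0T_r}$, and equals $0$ otherwise. This is well defined: by \Cref{conjugation lemma}(2), $m$ is determined up to $m\mapsto m\,s\,\theta(s)^{-1}$ with $s\in{^0T_r}$, and $\chi_r=\chi\circ N_r$ is trivial on such elements. So the sum on the right has at most one nonzero term per stable $\theta$-class of the form $mu$ whose norm is stably conjugate to $\gamma$.

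\textbf{Step 3 (matching).} We have $\mathscr N(mu)$ represented by $N_r(mu)=N_r(m)\,t$, since $T$ is abelian and $\theta$ fixes $u=\nu(\varpi)$ up to $T_r$; here $N_r(m)\in{^0T}$.

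If the right side is nonzero, then $\gamma$ is stably conjugate to $N_r(m)t$. By Step 1, the left side is then $\chi^{-1}(N_r(m))=\chi_r^{-1}(m)$. By \Cref{conjugation lemma}(2), only one stable $\theta$-class of this form has norm $\gamma$. Indeed, if $N_r(m)t$ and $N_r(m')t$ are stably conjugate, then $N_r(m)=N_r(m')$ by \Cref{conjugation lemma}(1). Then $m'm^{-1}$ lies in the kernel of $N_r$ on ${^0T_r}$, which by $H^1(F_r/F,{^0T_r})=0$ (\cite[Lemma 8.2.1]{haines2012base}) is of the form $s\theta(s)^{-1}$. So $mu$ and $m'u$ are $\theta$-conjugate by ${^0T_r}$. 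Thus the two sides agree.

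Conversely, suppose the left side is nonzero, so $\gamma$ is stably conjugate to $m_0t$. We need $m_0=N_r(m)$ for some $m\in{^0T_r}$. This holds by surjectivity of $N_r:{^0T_r}\to{^0T}$, which holds for unramified tori (again \cite[Lemma 8.2.1]{haines2012base}). Then $\delta=mu$ contributes $\chi_r^{-1}(m)=\chi^{-1}(m_0)$.

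\textbf{Main obstacle.} The main care is in the bookkeeping of Step 3, namely the identification $\mathscr N(mu)=N_r(m)t$ as stable classes, together with the normalization of measures: $\textnormal{vol}({^0T_r})=\textnormal{vol}({^0T})=1$, with compatible measures on the twisted centralizer and the centralizer, both equal to $T$. These are what make the constants equal to exactly $1$ on both sides, exactly as in the depth-zero case. Positive depth only enters through $\chi$, which the argument never needs to analyze.
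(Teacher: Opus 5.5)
Your proof is correct and follows essentially the same route as the paper. Both compute the stable (twisted) orbital integrals via \Cref{orbital integral value}, \Cref{conjugation lemma} and \Cref{stable orbital integral value}, match them through $N_r(mu)=N_r(m)t$, and use surjectivity of $N_r:{^0T_r}\to{^0T}$ for the non-norm case and the converse direction. Your explicit check, via $H^1(F_r/F,{^0T_r})=0$, that at most one stable $\theta$-class of the form $mu$ has norm $\gamma$ is left implicit in the paper, so it only makes the argument more complete.
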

    \begin{proof}
        First, we need to show that for any $\gamma\in G$ not a norm, the stable orbital integral $\textnormal{SO}_{\gamma}(f^{t,\chi})=0$. That is because, if the stable orbital integral is not zero, then by \Cref{conjugation lemma}, we can assume that $\gamma$ is stably conjugate to an element of the form $m_0t$, with $m\in {^0T}$. We already know that $t=N(u)$ is a norm, and the norm map $N:{^0T_r}\rightarrow {^0T}$ is surjective, so there exists $m\in {^0T_r}$ such that $N(m)=m_0$. So $\gamma$ is a norm, contrary to our assumption.

        Next, we may assume that $\gamma=N(\delta)$ is a norm of an element $\delta\in G_r$. If $\textnormal{SO}_{\delta\theta}(\phi^{u,\chi_r})\neq 0$, then $\delta$ must be $\theta$-conjugate to an element of the form $mu$, with $m\in {^0T_r}$, and by \Cref{orbital integral value} and \Cref{stable orbital integral value}, we get the following equation:
        \begin{equation}\label{orbital integral of elementary function}
            \textnormal{SO}_{\delta\theta}(\phi^{u,\chi_r})=\textnormal{TO}_{mu\theta}(\phi^{u,\chi_r})=\chi_r^{-1}(m).
        \end{equation}
        Let $m_0=N(m)$. We can see that $\gamma$ is stably conjugate to the element $m_0t$, and by the same reason, we get that
        \begin{equation}
            \textnormal{SO}_{\gamma}(f^{t,\chi})=\textnormal{O}_{m_0t}(f^{t,\chi})=\chi^{-1}(m_0),
        \end{equation}
        which coincides with \Cref{orbital integral of elementary function}.

        If $\textnormal{SO}_{\delta\theta}(\phi^{u,\chi_r})=0$, then $\delta$ is not stably $\theta$-conjugate to any element of the form $mu$, with $m\in {^0T_r}$. But that will make $\gamma=N(\delta)$ not stably conjugate to any element of the form $m_0t$, with $m_0\in {^0T}$. That is because, if $\gamma$ is stably conjugate to $m_0t$, then by using the same reasoning in the first step, we can see that $\textnormal{SO}_{\delta\theta}(\phi^{u,\chi_r})\neq 0$, contrary to our assumption. Thus, $\textnormal{SO}_{\gamma}(\phi_{t,\chi})=0$, which finishes the proof.
    \end{proof}

\subsection{An important way to identify the set $\mathfrak{R}_{^0\chi_r}(G_r)$}
    Let $\mathfrak{R}_{^0\chi_r}(G_r)$ denote the subcategory of $\mathfrak{R}(G_r)$ corresponding to the Bernstein block $\mathfrak{s}_r=({^0T_r},{^0\chi_r})$. Let $\Pi$ be a $\theta$-stable admissible representation of $G_r$, and we may fix an intertwiner $I_{\theta}:\Pi
    \;\widetilde{\rightarrow}\;\Pi^{\theta}$. Let $\Theta_{\Pi\theta}$ denote the locally integrable function of Harish-Chandra representing the functional $\phi\mapsto\langle \text{trace}\;\Pi I_{\theta},\phi\rangle$, for $\phi\in C_c^{\infty}(G_r)$. This means that
    $$\langle \textnormal{trace}\;\Pi I_{\theta},\phi\rangle=\int_{G_r}\Theta_{\Pi\theta}(g)\phi(g)\; dg.$$
    We want to compute this trace for $\phi=\phi^{u,{^0\chi_r}}$. Let $d\bar{k}$ denote the quotient measure on ${^0T_r}\backslash J_r$ denoted by $d\bar{g}$ earlier. The change of variable formula yields
    \begin{align}\label{characters of a function}
        \langle \textnormal{trace}\;\Pi I_{\theta},\phi^{u,{^0\chi_r}}\rangle & = \int_{{^0T_r}\backslash J_r}\int_{{^0T_r}}|\textnormal{Jac}_{[k,mu]}|_F\cdot \Theta_{\Pi\theta}(k^{-1}mu\theta(k))\cdot \phi^{u,{^0\chi_r}}(k^{-1}mu\theta(k))\; dmd\bar{k} \notag\\ & = q^{r\langle 2\rho,\nu \rangle}\int_{{^0T_r}} \Theta_{\Pi\theta}(mu){^0\chi_r^{-1}}(m)\; dm \notag\\ & = q^{r\langle 2\rho,\nu \rangle}\int_{{^0T_r}} \Theta_{\Pi_U\theta}(mu){^0\chi_r^{-1}}(m)\; dm.
    \end{align}
    Here $\Pi_U$ is the Jacquet module of $\Pi$ corresponding to the Borel subgroup $B=TU$, and the equality $\Theta_{\Pi\theta}(mu)=\Theta_{\Pi_N\theta}(mu)$ we used is the twisted version due to \cite{rogawski1988trace} of a theorem of \cite{casselman1977characters}.
    \begin{lemma}\label{vanishing trace}
        Suppose $\Pi$ is an irreducible $\theta$-stable representation of $G_r$. If $\langle \textnormal{trace}\;\Pi I_{\theta},\phi^{u,{^0\chi_r}}\rangle\neq 0$, then $\Pi$ belongs to the subcategory $\mathfrak{R}_{^0\chi_r}(G_r)$.
    \end{lemma}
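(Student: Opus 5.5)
We start from the trace computation \eqref{characters of a function}. If $\langle \textnormal{trace}\;\Pi I_{\theta},\phi^{u,{^0\chi_r}}\rangle\neq 0$, then
$$\int_{{^0T_r}}\Theta_{\Pi_U\theta}(mu)\,{^0\chi_r^{-1}}(m)\,dm\neq 0.$$
The plan is to read this integral as a twisted trace on the Jacquet module $\Pi_U$, which is an admissible finite-length representation of $T_r$. Its nonvanishing will force some $T_r$-subquotient of $\Pi_U$ to restrict to ${^0T_r}$ through ${^0\chi_r}$, up to $\theta$-twisting. We then conclude via the supercuspidal support.

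The first step is to replace $\Pi_U$ by its semisimplification, which does not change the twisted character. Write $\Pi_U^{ss}=\bigoplus_{\xi}\xi$, a finite sum of characters of $T_r$. The intertwiner $I_\theta$ induces an intertwiner on $\Pi_U$, since $U$ and $T_r$ are $\theta$-stable, and this intertwiner permutes the generalized eigenspaces for the action of the compact group ${^0T_r}$.

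The second step is to decompose $\Pi_U$ into $({^0T_r})$-isotypic pieces. Because ${^0T_r}$ is compact and $\Pi_U$ is admissible, $\Pi_U=\bigoplus_{\eta}\Pi_U[\eta]$, where $\eta$ runs over characters of ${^0T_r}$. The operator $\Pi_U(u)I_\theta$ sends $\Pi_U[\eta]$ to $\Pi_U[\eta\circ\theta^{-1}]$, up to the appropriate convention. For $m\in{^0T_r}$ we have $\Theta_{\Pi_U\theta}(mu)=\mathrm{tr}(\Pi_U(mu)I_\theta)$ as a genuine trace on a finite-dimensional space. Integrating against ${^0\chi_r^{-1}}(m)$ over ${^0T_r}$ then projects onto the ${^0\chi_r}$-isotypic piece. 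The result is
$$\int_{{^0T_r}}\Theta_{\Pi_U\theta}(mu)\,{^0\chi_r^{-1}}(m)\,dm=\mathrm{tr}\big(\Pi_U(u)I_\theta\,|\,\Pi_U[{^0\chi_r}]\big),$$
using $\mathrm{vol}({^0T_r})=1$. Hence nonvanishing implies $\Pi_U[{^0\chi_r}]\neq 0$. Here it is convenient that ${^0\chi_r}={^0\chi}\circ N_r$ is $\theta$-invariant, so the piece $\Pi_U[{^0\chi_r}]$ is stable under $I_\theta$.

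The third step turns $\Pi_U[{^0\chi_r}]\neq 0$ into a statement about $\Pi$. It gives a $T_r$-subquotient $\xi$ of $\Pi_U$ with $\xi|_{{^0T_r}}={^0\chi_r}$. By Frobenius reciprocity, or Casselman's theorem that a nonzero map $\Pi_U\to\xi$ yields $\Pi\hookrightarrow i_{B_r}^{G_r}(\xi\delta_{B_r}^{1/2})$ (in the normalized version, after choosing an irreducible quotient of the relevant $T_r$-subrepresentation), $\Pi$ is a subquotient of a principal series induced from a character whose restriction to ${^0T_r}$ is ${^0\chi_r}$; the unramified twist $\delta_{B_r}^{1/2}$ does not change this restriction. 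The supercuspidal support of $\Pi$ is therefore $(T_r,\xi')$ with $\xi'|_{{^0T_r}}={^0\chi_r}$, so $\mathfrak{J}(\Pi)=\mathfrak{s}_r$ and $\Pi\in\mathfrak{R}_{^0\chi_r}(G_r)$.

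The main obstacle is the second step. We must justify that $\Theta_{\Pi_U\theta}$ is given on ${^0T_r}u$ by an honest trace, since $u$ is regular and the Jacquet module is admissible over $T_r$ so the character is locally constant there. We must also handle the isotypic projection carefully in the twisted setting, in particular track whether the nonzero contribution comes from $\Pi_U[{^0\chi_r}]$ or from $\Pi_U[{^0\chi_r}\circ\theta^{\pm1}]$; the $\theta$-invariance of ${^0\chi_r}$ makes these coincide. Passing from a subquotient of $\Pi_U$ to an embedding of $\Pi$ in a principal series is standard, using exactness of the Jacquet functor and the Bernstein decomposition applied to $\Pi_U$.
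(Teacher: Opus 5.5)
Your proof is correct. Its first half is the same as the paper's: both of you read the last line of the trace formula as a twisted trace on the $T_r$-module $\Pi_U$, and conclude that the ${^0\chi_r}$-isotypic part $\Pi_U^{{^0\chi_r}}$ is nonzero. Two of the obstacles you flag are not really obstacles. The identity $\Theta_{\Pi_U\theta}(mu)=\mathrm{tr}(\Pi_U(mu)I_\theta)$ holds simply because $\Pi_U$ is finite-dimensional, being admissible and finitely generated over a torus; the regularity of $u$ plays no role there. Likewise, nonvanishing of $\mathrm{tr}(e_{{^0\chi_r}}\Pi_U(u)I_\theta)$ already forces the projector to be nonzero, so you never need the $\theta$-invariance of ${^0\chi_r}$.

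The routes diverge at the last step. The paper uses that $(J_r,\rho_r)$ is a $G_r$-cover of $({^0T_r},{^0\chi_r})$, so $\Pi^{\rho_r}\cong\Pi_U^{{^0\chi_r}}$ by Roche's Remark 7.8. Hence $\Pi$ contains $\rho_r$, and it lies in the block by the defining property of a type.

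You instead take a character quotient $\xi$ of the $T_r$-direct summand $\Pi_U^{{^0\chi_r}}$. Frobenius reciprocity then embeds $\Pi$ into a principal series induced from an unramified twist of $\xi$, and you read off the supercuspidal support directly. One small point should be said explicitly: two characters of $T_r$ with the same restriction to ${^0T_r}$ differ by an unramified character, so the inertial class really is $\mathfrak{s}_r$.

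What each route buys: yours is more elementary and does not depend on the type construction of \Cref{types}. The paper's is a one-liner once that construction is available, and it relies on the same isomorphism $\Pi^{\rho_r}\cong\Pi_U^{{^0\chi_r}}$ that is needed again in \Cref{idempotent trace F_r}.

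Your opening step of semisimplifying $\Pi_U$ is unnecessary. In the twisted setting it would also require a filtration stable under $I_\theta$. Nothing later in your argument depends on it, so it is harmless.
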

    \begin{proof}
        The non-vanishing of the last formula of \Cref{characters of a function} shows that $\Pi_U^{^0\chi_r}\neq 0$. Recall that the type $(J_r,\rho_r)$ we construct is a $G_r$-cover of $({^0T_r},{^0\chi_r})$ (see \Cref{concrete construction of types}). Then by the isomorphism 
        $$\Pi^{\rho_r}\;\widetilde{\rightarrow} \;\;\Pi_U^{^0\chi_r},$$
        (see \cite[Remark 7.8]{roche1998types} for details) we know that $\Pi^{\rho_r} \neq 0$, which means that $\Pi\in \mathfrak{R}_{^0\chi_r}(G_r)$, by the definition of types.
    \end{proof}

\subsection{Computation of traces of elementary functions}\label{all computation}
    In this subsection, we write $W$ (resp. $W_r$) for the relative Weyl group associated to the maximal $F$-split torus $A$ (resp. $F_r$-split torus $A_r$) in $G$.

    For any irreducible $\theta$-stable representation $\Pi$ of $G_r$ with intertwiner $I_{\theta}$, we will compute the trace
    $$\langle \textnormal{trace}\; \Pi I_{\theta},\phi^{u,{^0\chi_r}}\rangle.$$
    By \Cref{vanishing trace}, we may assume $\Pi$ belongs to the subcategory $\mathfrak{R}_{^0\chi_r}(G_r)$. Suppose the supercuspidal support of $\Pi$ is $(T_r,\xi')_{G_r}$, where $\xi'$ is an extension of a $W_r$-conjugate of $^0\chi_r$.

    Let $\Xi$ denote the set of characters on $T_r$ which extend some $W_r$-conjugate of $^0\chi_r$. Thus $\Xi$ contains $W_r\xi'$. Let $\Xi({^0\chi_r})\subset \Xi$ consists of those whose restrictions to ${^0T_r}$ is exactly $^0\chi_r$. Let $\Xi^{\theta}$ (resp. $\Xi({^0\chi_r})^{\theta}$) denote the subset of $\theta$-fixed elements in $\Xi$ (resp. $\Xi({^0\chi_r})$). From now on, $\xi'$ will denote an arbitrary element of $\Xi$, not just the one associated to $\Pi$.

    Next we fix further notation related to $\Pi$. Let $\Pi_U$ denote the Jacquet module of $\Pi$ relative to $U_r$. Since $(\Pi^{\theta})_U=(\Pi_U)^{\theta}$, the intertwiner induces an intertwiner on the Jacquet module, for which we will still use $\theta$ to represent. Recall that $\Pi_U$ is a subquotient of 
    $$(i_{B_r}^{G_r}(\xi'))_U=\delta_{B_r}^{1/2}\bigoplus_{w\in W_r} \mathbb{C}_{^w\xi'},$$
    where $\mathbb{C}_{^w\xi'}$ is the $1$-dimensional representation of $T_r$ corresponding to the character $^w\xi'$. Thus there is a well-defined subset $\Xi(\Pi)\subset \Xi$ and positive multiplicities $a_{\xi',\Pi}=:a_{\xi'}$ for $\xi'\in \Xi(\Pi)$ such that, as $T_r$-representations,
    \begin{equation}\label{decomposition of jacquet module}
        \Pi_U=\delta_{B_r}^{1/2}\bigoplus_{\xi'\in \Xi (\Pi)} \mathbb{C}_{\xi'}^{a_{\xi'}}.
    \end{equation}
    Since $\Pi_U^{\theta}$ is $\theta$-stable, we can see that the set $\Xi(\Pi)$ is stabilized by $\theta$.

    Furthermore, we define $\Xi(\Pi)^{\theta}:=\Xi^{\theta}\cap \Xi(\Pi)$ and $\Xi(\Pi,{^0\chi_r})^{\theta}:=\Xi({^0\chi_r})^{\theta}\cap \Xi(\Pi)$. For $\xi'\in \Xi^{\theta}$, we define
    $$\textnormal{tr}(I_{\theta},\Pi,\xi'):=\langle \textnormal{trace}\; I_{\theta};\delta_{B_r}^{1/2}\mathbb{C}_{\xi'}^{a_{\xi'}}\rangle,$$
    the trace of the linear transformation $I_{\theta}$ on the subspace $\mathbb{C}_{\xi'}^{a_{\xi'}}$ (since $\xi'$ is $\theta$-fixed).

    Finally, for any $\xi'\in \Xi({^0\chi_r})$, we define $\eta'$ to be the unramified part of $\xi'$. That is, since we already fix a uniformizer $\varpi\in F_r$, we can write every element $t\in T_r$ uniquely as the form $t=t_0\cdot t'$, where $t_0\in {^0T_r}$ and $t'=\tau(\varpi)$ for a cocharacter $\tau\in X_*(A_r)$. And we define $\eta'(t)=\xi'(t)/{^0\chi_r}(t_0)$ for $t,t_0$ as above. 
    \begin{prop}\label{split characters}
        In the notation above, we have
        \begin{equation}
            \langle \textnormal{trace}\; \Pi I_{\theta},\phi^{u,{^0\chi_r}}\rangle= \delta_{B_r}^{-1/2}(u)\; \sum_{\xi'\in \Xi(\Pi,{^0\chi_r})^{\theta}}\; \eta'(u)\;\textnormal{tr}(I_{\theta},\Pi,\xi').
        \end{equation}
    \end{prop}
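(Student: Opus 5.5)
The plan is to start from the last line of \Cref{characters of a function}:
$$\langle \textnormal{trace}\;\Pi I_{\theta},\phi^{u,{^0\chi_r}}\rangle = q^{r\langle 2\rho,\nu\rangle}\int_{{^0T_r}}\Theta_{\Pi_U\theta}(mu)\,{^0\chi_r^{-1}}(m)\,dm,$$
and to evaluate the right-hand side directly from the decomposition \Cref{decomposition of jacquet module} of the Jacquet module. Since $q^{r\langle 2\rho,\nu\rangle}=\delta_{B_r}^{-1}(u)$, the normalizing power $\delta_{B_r}^{1/2}$ in $\Pi_U$ evaluated at $mu$ gives $\delta_{B_r}^{1/2}(u)$, because $\delta_{B_r}$ is trivial on ${^0T_r}$. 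Combining the two powers produces the prefactor $\delta_{B_r}^{-1/2}(u)$.

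The key step is to compute the twisted character $\Theta_{\Pi_U\theta}$ on $T_r$. The intertwiner $I_\theta$ on $\Pi_U$ satisfies $I_\theta\circ\Pi_U(t)=\Pi_U(\theta(t))\circ I_\theta$, so it permutes the generalized eigenspaces $\mathbb{C}_{\xi'}^{a_{\xi'}}$, sending the $\xi'$-piece to the $\theta(\xi')$-piece. Only the pieces with $\xi'=\theta(\xi')$ contribute to the trace of $\Pi_U(mu)\circ I_\theta$. On such a piece I expect $\Pi_U(mu)$ to act by $\delta_{B_r}^{1/2}(u)\xi'(mu)$ up to a unipotent part, since the multiplicity space may not be semisimple. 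I would handle the unipotent part by noting that $T_r$ is abelian and that the space is finite-dimensional: choose a $T_r$-stable flag compatible with $I_\theta$ (the generalized eigenspace is $I_\theta$-stable), and observe that the trace of the composite only sees the diagonal, where $T_r$ acts by the scalar $\xi'$. This gives
$$\Theta_{\Pi_U\theta}(mu)=\delta_{B_r}^{1/2}(u)\sum_{\xi'\in\Xi(\Pi)^\theta}\xi'(mu)\,\textnormal{tr}(I_\theta,\Pi,\xi'),$$
where $\textnormal{tr}(I_\theta,\Pi,\xi')$ absorbs the normalization as in its definition. One point to check is that identifying the character function with the trace of $\Pi_U(mu)I_\theta$ is legitimate at these elements. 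Since $mu$ is strongly $\theta$-regular by \Cref{lemma for well defined}, this holds, and it is implicit in the twisted Casselman theorem already invoked.

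Next I would integrate over ${^0T_r}$. Writing $\xi'(mu)=\xi'(m)\xi'(u)$, orthogonality of characters on the compact group ${^0T_r}$, with $\textnormal{vol}({^0T_r})=1$, gives $\int_{{^0T_r}}\xi'(m)\,{^0\chi_r^{-1}}(m)\,dm=1$ if $\xi'|_{{^0T_r}}={^0\chi_r}$ and $0$ otherwise. This cuts the sum down to $\Xi(\Pi,{^0\chi_r})^\theta$. For such $\xi'$, since $u=\nu(\varpi)$ has trivial ${^0T_r}$-component, the definition of the unramified part gives $\xi'(u)=\eta'(u)$. Assembling the prefactor yields the stated formula.

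I expect the main obstacle to be the careful justification of the twisted character computation on the Jacquet module. Three things need care: the possibly non-semisimple action of $T_r$, the compatibility of $I_\theta$ with the $\delta_{B_r}^{1/2}$-twisted decomposition, and making sure that $\theta$-fixedness of $\xi'$ is exactly the condition for a nonzero contribution. A subsidiary check is that the constant $q^{r\langle2\rho,\nu\rangle}$ coming from the Jacobian matches the normalization of $\delta_{B_r}$ over $F_r$, where the absolute value is taken in $F$.
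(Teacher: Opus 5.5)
Your proposal is correct and follows the paper's own argument. Both expand $\Theta_{\Pi_U\theta}(mu)$ through the decomposition of the Jacquet module, discard the non-$\theta$-fixed $\xi'$, extract $\delta_{B_r}^{1/2}(u)\xi'(m)\eta'(u)$, and use orthogonality on ${^0T_r}$ to cut the sum down to $\Xi(\Pi,{^0\chi_r})^{\theta}$. Your flag argument for a possibly non-semisimple $T_r$-action (filter each $\theta$-fixed generalized eigenspace by powers of the nilpotent operators $\xi'(t)^{-1}\Pi_U(t)-1$, which $I_\theta$ normalizes) is a genuine improvement, since the paper simply asserts that $\Pi_U(mu)$ acts by a scalar on each component.
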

    \begin{proof}
        We rewrite the integral in terms of \Cref{characters of a function}. Combining it with \Cref{decomposition of jacquet module}, we have the following form:
        \begin{align*}
            \langle \textnormal{trace}\; \Pi I_{\theta},\phi^{u,{^0\chi_r}}\rangle & =\delta_{B_r}^{-1}(u)\int_{{^0T_r}} \Theta_{\Pi_U\theta}(mu){^0\chi_r}^{-1}(m)\; dm \\ & =\delta_{B_r}^{-1}(u)\int_{{^0T_r}} {^0\chi_r}^{-1}(m)\langle \textnormal{trace}\;\Pi_U(mu)I_{\theta}\; ; \;\delta_{B_r}^{1/2}\bigoplus_{\xi'\in \Xi (\Pi)} \mathbb{C}_{\xi'}^{a_{\xi'}}\rangle\; dm
        \end{align*}
        The component $\delta_{B_r}^{1/2}\mathbb{C}_{\xi'}^{a_{\xi'}}$ can contribute to the trace only if $\xi'\in \Xi(\Pi)^{\theta}$, and in that case $\Pi_UI_{\theta}$ preserves that component. Therefore, we can write the integral in the following form:
        \begin{equation}\label{computing the traces}
            \delta_{B_r}^{-1}(u)\sum_{\xi'\in \Xi(\Pi)^{\theta}}\int_{{^0T_r}} {^0\chi_r}^{-1}(m)\langle \textnormal{trace}\;\Pi_U(mu)I_{\theta}\; ; \;\delta_{B_r}^{1/2} \mathbb{C}_{\xi'}^{a_{\xi'}}\rangle\; dm
        \end{equation}
        For each component $\delta_{B_r}^{1/2} \mathbb{C}_{\xi'}^{a_{\xi'}}$, $\Pi_U(mu)$ acts by the scalar
        $$\delta_{B_r}^{1/2}(mu)\;\xi'(mu)=\delta_{B_r}^{1/2}(u)\xi'(m)\eta'(u).$$
        Thus \Cref{computing the traces} can be written as
        $$\delta_{B_r}^{-1/2}(u)\sum_{\xi'\in \Xi(\Pi)^{\theta}}\; \eta'(u)\;\textnormal{tr}(I_{\theta},\Pi,\xi')\;\int_{{^0T_r}}\; {^0\chi_r}^{-1}(m)\xi'(m)\;dm$$
        And the integral on the right side in non vanishing (and equal to $1$) if and only if the restriction of $\xi'$ on ${^0T_r}$ is precisely $^0\chi_r$, which means that $\xi'\in \Xi(\Pi,{^0\chi_r})^{\theta}$. The proof is completed.
    \end{proof}

    We shall also need the following result. Recall the idempotent $e_{\rho_r}\in \mathcal{H}(G_r,\rho_r)$ is defined to have support $J$ and to take value $\rho_r(k)^{-1}$ at $k\in J_r$.

    \begin{lemma}\label{idempotent trace F_r}
        In the notation above, we have
        \begin{equation}\label{computing trace 2}
            \sum_{\xi'\in \Xi(\Pi,{^0\chi_r})^{\theta}}\;\textnormal{tr}(I_{\theta},\Pi,\xi')=\langle\textnormal{trace}\;\Pi I_{\theta},e_{\rho_r}\rangle.
        \end{equation}
    \end{lemma}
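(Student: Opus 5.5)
The right-hand side of \eqref{computing trace 2} is the trace of $I_{\theta}$ on the $\rho_r$-isotypic subspace $\Pi^{\rho_r}$. Indeed, with the Haar measure giving $J_r$ volume $1$, the operator $\Pi(e_{\rho_r})$ is the projection of $\Pi$ onto $\Pi^{\rho_r}$. Since $\rho_r=\rho\circ N_r$ on the torus part and is trivial on the root-group parts, $\rho_r$ is $\theta$-stable. Hence $I_{\theta}$ preserves $\Pi^{\rho_r}$, and
$$\langle\textnormal{trace}\;\Pi I_{\theta},e_{\rho_r}\rangle=\textnormal{tr}(I_{\theta}\,|\,\Pi^{\rho_r}).$$
This is exactly the identity already used in the proof of \Cref{orbital integral of e}.

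Next I would transport this trace to the Jacquet module. As recalled in the proof of \Cref{vanishing trace}, $(J_r,\rho_r)$ is a $G_r$-cover of $({^0T_r},{^0\chi_r})$. The natural projection $\Pi\to\Pi_U$ therefore restricts to an isomorphism $\Pi^{\rho_r}\;\widetilde{\rightarrow}\;\Pi_U^{^0\chi_r}$ \cite[Remark 7.8]{roche1998types}. I would check that this isomorphism is $\theta$-equivariant. The projection $\Pi\to\Pi_U$ commutes with $I_{\theta}$ because $U_r$ is $\theta$-stable and the intertwiner on $\Pi_U$ is by definition the one induced from $I_{\theta}$. The subspaces $\Pi^{\rho_r}$ and $\Pi_U^{^0\chi_r}$ are both $\theta$-stable because ${^0\chi_r}$ and $\rho_r$ are $\theta$-invariant. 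This gives
$$\textnormal{tr}(I_{\theta}\,|\,\Pi^{\rho_r})=\textnormal{tr}(I_{\theta}\,|\,\Pi_U^{^0\chi_r}).$$

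Finally I would compute the right side using \eqref{decomposition of jacquet module}. The modulus character $\delta_{B_r}^{1/2}$ is trivial on ${^0T_r}$, so the ${^0\chi_r}$-isotypic part of $\Pi_U$ is $\bigoplus_{\xi'\in\Xi(\Pi,{^0\chi_r})}\delta_{B_r}^{1/2}\mathbb{C}_{\xi'}^{a_{\xi'}}$. One point needs care here: \eqref{decomposition of jacquet module} is only a decomposition up to semisimplification. The right approach is to use the generalized eigenspace decomposition of $\Pi_U$ under $T_r$. Since $\Pi_U$ is finite-dimensional and $T_r$ is commutative, this is a genuine direct sum. Because $I_{\theta}\Pi_U(t)=\Pi_U(\theta(t))I_{\theta}$, the operator $I_{\theta}$ maps the generalized $\xi'$-eigenspace to the $\theta(\xi')^{\pm1}$-twisted one. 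Consequently only $\theta$-fixed $\xi'$ contribute to the trace, and the contribution of each such $\xi'$ is by definition $\textnormal{tr}(I_{\theta},\Pi,\xi')$. Summing over $\xi'\in\Xi(\Pi,{^0\chi_r})^{\theta}$ gives the left-hand side of \eqref{computing trace 2}.

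The main obstacle is the $\theta$-equivariance of the cover isomorphism in the second step, so I would write it out explicitly by following Roche's proof.
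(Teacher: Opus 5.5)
Your proposal is correct and follows the paper's own proof. Both identify the right-hand side with $\textnormal{tr}(I_{\theta}\,|\,\Pi^{\rho_r})$, transport this through the cover isomorphism $\Pi^{\rho_r}\;\widetilde{\rightarrow}\;\Pi_U^{{}^0\chi_r}$, and read off the $\theta$-fixed constituents from \eqref{decomposition of jacquet module}. Your extra care (generalized eigenspaces in place of the paper's semisimplified decomposition, and an explicit $\theta$-equivariance check) only spells out what the paper leaves implicit; the equivariance is in fact immediate, because the isomorphism is the restriction of the natural projection $\Pi\to\Pi_U$, which commutes with $I_{\theta}$.
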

    \begin{proof}
        The right hand side can be written as
        $$\int_{J_r}\langle\textnormal{trace}\; \Pi(g)I_{\theta},\;\Pi^{\rho_r}\rangle \;e_{\rho_r}(g)\; dg=\langle\textnormal{trace}\; I_{\theta},\Pi^{\rho_r}\rangle.$$
        By the isomorphism $\Pi^{\rho_r}\;\widetilde{\rightarrow}\;\Pi_U^{^0\chi_r}$ and \Cref{decomposition of jacquet module}, this can be written as
        $$\langle\textnormal{trace}\; I_{\theta},\delta_{B_r}^{1/2}\bigoplus_{\xi'\in \Xi(\Pi,{^0\chi_r})^{\theta}} \mathbb{C}_{\xi'}^{a_{\xi'}}\rangle,$$
        which is the left hand side of \Cref{computing trace 2}.
    \end{proof}
    \begin{cor}\label{idempotent trace}
        We have
        \begin{equation}
            \langle \textnormal{trace}\; \Pi I_{\theta},\phi^{u,{^0\chi_r}}\rangle|_{u=1}=\langle\textnormal{trace}\;\Pi I_{\theta},e_{\rho_r}\rangle.
        \end{equation}
        In particular, if $r=1$, we have 
        \begin{equation}
            \sum_{\xi\in \Xi(\pi,{^0\chi})}\;a_{\xi,\pi}=\langle \textnormal{trace} \; \pi,e_{\rho}\rangle.
        \end{equation}
    \end{cor}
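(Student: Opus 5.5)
The corollary should follow by specializing \Cref{split characters} at $u=1$ and comparing with \Cref{idempotent trace F_r}.

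First, observe what $u=1$ means. Then $\nu=0$, and the set $\mathscr{J}_1$ is the image of ${^0T_r}\backslash J_r\times{^0T_r}$ under $[k,m]\mapsto k^{-1}m\theta(k)$. The defining formula of $\phi^{u,{^0\chi_r}}$ still makes sense there. The trace formula \Cref{characters of a function} and \Cref{split characters} should be read as expressions in $u$. The computation in the proof of \Cref{split characters} only used two facts: the Jacobian factor $\delta_{B_r}^{-1}(u)$, and that $\Pi_U(mu)$ acts on $\delta_{B_r}^{1/2}\mathbb{C}_{\xi'}^{a_{\xi'}}$ by $\delta_{B_r}^{1/2}(u)\xi'(m)\eta'(u)$. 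Both specialize directly at $u=1$, giving $\delta_{B_r}^{\pm1/2}(1)=1$ and $\eta'(1)=1$.

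Plugging $u=1$ into the right-hand side of \Cref{split characters} therefore leaves
$$\sum_{\xi'\in\Xi(\Pi,{^0\chi_r})^{\theta}}\textnormal{tr}(I_{\theta},\Pi,\xi').$$
By \Cref{idempotent trace F_r} this equals $\langle\textnormal{trace}\,\Pi I_\theta,e_{\rho_r}\rangle$, which is the first claim. I would phrase it this way: the right side of \Cref{split characters} is a Laurent polynomial in the values $\eta'(u)$, and its evaluation at $u=1$ is what the notation $|_{u=1}$ denotes.

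For the second claim, take $r=1$, so $\theta=\mathrm{id}$ and $I_\theta=\mathrm{id}$. Then $\textnormal{tr}(I_\theta,\pi,\xi)=\langle\textnormal{trace}\,\mathrm{id};\delta_B^{1/2}\mathbb{C}_\xi^{a_\xi}\rangle=a_{\xi,\pi}$. Every $\xi$ is trivially $\theta$-fixed, so $\Xi(\pi,{^0\chi})^\theta=\Xi(\pi,{^0\chi})$. Substituting into the first identity gives the displayed formula.

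The only point needing care is the interpretation of "$|_{u=1}$". One must make sure the regularity of $\nu$ is not needed for the formal expression. Regularity was used only to identify $M_{u\theta}=T$ and hence the Jacobian. So I would treat $u=1$ purely as evaluating the resulting polynomial expression rather than as the trace of a genuine elementary function $\phi^{1,{^0\chi_r}}$.
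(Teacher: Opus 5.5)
Your proposal is correct and follows the paper's implicit argument: evaluate the right-hand side of \Cref{split characters} at $u=1$, where $\delta_{B_r}^{-1/2}(1)=\eta'(1)=1$, apply \Cref{idempotent trace F_r}, and for $r=1$ note that $\textnormal{tr}(I_{\theta},\pi,\xi)=a_{\xi,\pi}$. One side remark is slightly off: regularity of $\nu$ is not used only to get $M_{u\theta}=T$; it is also needed for the injectivity of $[k,mu]\mapsto k^{-1}mu\theta(k)$ and for the Casselman--Rogawski identity $\Theta_{\Pi\theta}(mu)=\Theta_{\Pi_U\theta}(mu)$, so the intermediate steps do not literally specialize at $u=1$; but since you, like the paper, read $|_{u=1}$ as formal evaluation of the final expression in $u$, the proof is unaffected.
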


\subsection{Labesse's elementary functions adapted to $\mathfrak{R}_{^0\chi_r,\lambda N}(G_r)$}

    In this subsection, we are going to construct Labesse's elementary functions adapted to the category $\mathfrak{R}_{^0\chi_r,\lambda N}(G_r)$.

    First, we should define the category $\mathfrak{R}_{^0\chi_r,\lambda N}(G_r)$. Let $\lambda:Z(F)\rightarrow \mathbb{C}^{\times}$ denote a character on the center $Z(F)$ with the property that $\lambda|_{^0Z(F)}={^0\chi^{-1}}|_{^0Z(F)}$, as in \Cref{derived 1}. We then define $\mathfrak{R}_{^0\chi_r,\lambda N}(G_r)$ to be the subcategory of $\mathfrak{R}_{^0\chi_r}(G_r)$ consisting of all representations $(\pi,V)\in \mathfrak{R}_{^0\chi_r}(G_r)$ such that $\pi(z)v=\lambda^{-1} N(z)v$ for any $z\in Z(F_r)$ and $v\in V$. Similarly, we could define the Hecke algebra $\mathcal{H}_{\lambda N}(G_r,\rho_r)$ to be the set consisting of functions $\phi\in C^{\infty}(G_r)$ satisfying the following conditions:
    \begin{itemize}
        \item[(1)] $\phi(j_1gj_2)=\rho_r^{-1}(j_1)\phi(g)\rho_r^{-1}(j_2)$ for any $j_1,j_2\in J_r;$
        \item[(2)] $\phi(zg)=\lambda N(z)\phi(g)$ for any $z\in Z(F_r);$
        \item[(3)] $\textnormal{supp}(\phi)$ is compact modulo $Z(F_r).$ 
    \end{itemize}
    It is easy to see that for any functions $\phi\in \mathcal{H}(G_r,\rho_r)$, we have $\phi_{\lambda N}\in \mathcal{H}_{\lambda N}(G_r,\rho_r)$.

    For any functions $\phi\in \mathcal{H}_{\lambda N}(G_r,\rho_r)$ and any representation $(\pi,V)\in \mathfrak{R}_{^0\chi_r,\lambda N}(G_r)$, we may define the action $\pi(\phi)$ on $V$ as follows:
    \begin{equation}\label{definition of action}
        \pi(\phi)v=\int_{G_r/Z(F_r)} \pi(g)\phi(g)v\;\mathrm{d}g.
    \end{equation}
    This is well defined and the integral is convergent. We have the following important lemma:
    \begin{lemma}\label{two actions are the same}
        Let $\phi\in \mathcal{H}(G_r,\rho_r)$. For any $(\pi,V)\in \mathfrak{R}_{^0\chi_r,\lambda N}(G_r)$, we have
        $$\pi(\phi)v=\pi(\phi_{\lambda N})v$$ for any $v\in V$.
    \end{lemma}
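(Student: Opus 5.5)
The plan is to compute $\pi(\phi)v$ directly from the definitions and fold the integral over $G_r$ into an integral over $G_r/Z(F_r)$ followed by an integral over $Z(F_r)$. In $\pi(\phi)v=\int_{G_r}\pi(g)\phi(g)v\,\mathrm{d}g$, choose Haar measures on $G_r$, $Z(F_r)$ and $G_r/Z(F_r)$ compatibly. The measure on $Z(F_r)$ should be the one used to define $\phi_{\lambda N}$, giving $^0Z(F_r)$ volume $1$. With these choices,
\begin{equation*}
\pi(\phi)v=\int_{G_r/Z(F_r)}\int_{Z(F_r)}\pi(gz)\phi(gz)v\,\mathrm{d}z\,\mathrm{d}\bar g .
\end{equation*}
The right-hand side is well defined because $\phi$ is compactly supported.

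Since $Z(F_r)$ is central, $\pi(gz)=\pi(g)\pi(z)$. The defining property of $\mathfrak{R}_{^0\chi_r,\lambda N}(G_r)$ gives $\pi(z)v=(\lambda N)^{-1}(z)v$. The inner integral therefore becomes
\begin{equation*}
\pi(g)\Big(\int_{Z(F_r)}\phi(gz)(\lambda N)^{-1}(z)\,\mathrm{d}z\Big)v=\pi(g)\phi_{\lambda N}(g)v .
\end{equation*}
Substituting this back into the outer integral gives $\int_{G_r/Z(F_r)}\pi(g)\phi_{\lambda N}(g)v\,\mathrm{d}\bar g$, which is exactly the definition \eqref{definition of action} of $\pi(\phi_{\lambda N})v$.

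Several points need checking. First, the integrand $g\mapsto \pi(g)\phi_{\lambda N}(g)v$ must be a function on $G_r/Z(F_r)$. This follows from $\phi_{\lambda N}(gz)=\lambda N(z)\phi_{\lambda N}(g)$ together with $\pi(gz)v=(\lambda N)^{-1}(z)\pi(g)v$, so the two scalars cancel. Second, the Fubini interchange is justified because all the integrals reduce to finite sums over compact open pieces: $\phi$ is locally constant and compactly supported, and $v$ is smooth, so it is fixed by a small compact open subgroup. Third, the vector-valued integrals should be interpreted as finite sums in the usual way for smooth representations.

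The main obstacle I expect is the bookkeeping of measure normalizations. The action \eqref{definition of action} on $G_r/Z(F_r)$ uses a quotient measure, and for the identity to hold with no constant, that quotient measure must be the one compatible with the measure on $G_r$ normalized by $J_r$ and the measure on $Z(F_r)$ normalized by $^0Z(F_r)$. I would state this normalization explicitly. The remaining steps are formal.
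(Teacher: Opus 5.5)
Your proposal is correct and is the same computation as the paper's proof: the paper starts from $\pi(\phi_{\lambda N})v$, unfolds $\phi_{\lambda N}$ as an integral over $Z(F_r)$, absorbs the twist using $\pi(z)v=(\lambda N)^{-1}(z)v$, and then refolds the iterated integral over $G_r/Z(F_r)$ and $Z(F_r)$ into an integral over $G_r$ to obtain $\pi(\phi)v$. Your argument is this chain read in reverse, and your checks on $Z(F_r)$-invariance of the integrand and on compatible measure normalizations are points the paper leaves implicit.
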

    \begin{proof}
        \begin{align*}
            \pi(\phi_{\lambda N})v & = \int_{G_r/Z(F_r)} \phi_{\lambda N}(g)\pi(g)v\; \mathrm{d}g \\
            & = \int_{G_r/Z(F_r)} \int_{Z(F_r)}\phi(gz)\lambda^{-1}(z)\pi(g)v\; \mathrm{d}z\mathrm{d}g \\
            & = \int_{G_r/Z(F_r)} \int_{Z(F_r)}\phi(gz)\pi(gz)v\; \mathrm{d}z\mathrm{d}g \\
            & = \int_{G_r} \phi(g)\pi(g)v \; \mathrm{d}g \\
            & = \pi(\phi)v.
        \end{align*}
    \end{proof}

    We have an instant corollary:
    \begin{cor}
        For any $\theta$-stable admissible representation $(\Pi,V)\in \mathfrak{R}_{^0\chi_r,\lambda N}(G_r)$ and any $\phi\in \mathcal{H}(G_r,\rho_r)$, we have
        \begin{equation}\label{trace invariant formula}
            \langle \textnormal{trace}\;\Pi I_{\theta},\phi\rangle=\langle \textnormal{trace}\;\Pi I_{\theta},\phi_{\lambda N}\rangle.
        \end{equation}
    \end{cor}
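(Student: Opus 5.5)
The corollary follows from \Cref{two actions are the same} by passing from operators to traces. Since $(\Pi,V)\in\mathfrak{R}_{^0\chi_r,\lambda N}(G_r)$ is admissible and $\phi\in\mathcal{H}(G_r,\rho_r)$ is bi-$(J_r,\rho_r)$-equivariant, the operator $\Pi(\phi)$ has image inside the finite-dimensional isotypic space $V^{\rho_r}$. The same holds for $\Pi(\phi_{\lambda N})$, which is defined by \eqref{definition of action}: $\phi_{\lambda N}$ is again $\rho_r^{-1}$-equivariant on both sides under $J_r$, because $Z(F_r)$ is central and the integral over $Z(F_r)$ commutes with translations by $J_r$. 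Hence both $\Pi(\phi)I_{\theta}$ and $\Pi(\phi_{\lambda N})I_{\theta}$ are finite-rank operators. Their traces are the quantities $\langle \textnormal{trace}\;\Pi I_{\theta},\phi\rangle$ and $\langle \textnormal{trace}\;\Pi I_{\theta},\phi_{\lambda N}\rangle$, where the second is taken with respect to the action \eqref{definition of action}.

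By \Cref{two actions are the same}, $\Pi(\phi)v=\Pi(\phi_{\lambda N})v$ for every $v\in V$, so $\Pi(\phi)=\Pi(\phi_{\lambda N})$ as operators on $V$. Composing on the right with the fixed intertwiner $I_{\theta}$ gives $\Pi(\phi)I_{\theta}=\Pi(\phi_{\lambda N})I_{\theta}$, and \eqref{trace invariant formula} follows by taking traces.

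The one point that needs care is normalization rather than anything substantive. The pairing on the right-hand side must be understood through the action \eqref{definition of action}, that is, integration over $G_r/Z(F_r)$ with the quotient of the fixed Haar measure on $G_r$ by the measure $dz$ on $Z(F_r)$ used to define $\phi_{\lambda N}$. These are exactly the measures used in the proof of \Cref{two actions are the same}, where the unfolding $\int_{G_r/Z(F_r)}\int_{Z(F_r)}=\int_{G_r}$ is applied. With these measures fixed, the proof is just the observation that equal operators have equal traces.
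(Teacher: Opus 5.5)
Your proposal is correct and is exactly the argument the paper intends: the paper states this as an instant corollary of \Cref{two actions are the same}, since the operator identity $\Pi(\phi)=\Pi(\phi_{\lambda N})$ on $V$ gives $\Pi(\phi)I_{\theta}=\Pi(\phi_{\lambda N})I_{\theta}$, and hence equal traces. Your additional remarks are details the paper leaves implicit and are appropriate: finite rank follows from the image lying in the admissible $\rho_r$-isotypic space, and the quotient measure on $G_r/Z(F_r)$ is the one used in the unfolding.
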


    Now we could define our Labesse's elementary functions adapted to $\mathfrak{R}_{^0\chi_r,\lambda N}(G_r)$ to be all functions of the form $\phi^{u,{^0\chi_r}}_{\lambda N}$ (and when $r=1$, $f^{t,{^0\chi}}_{\lambda}$). Define $\Xi_{\lambda N}\subset \Xi$ to be those whose restriction to the center $Z(F_r)$ is $\lambda^{-1} N$, and $\Xi_{\lambda N}({^0\chi_r})=\Xi({^0\chi_r})\cap \Xi_{\lambda N}$. Note that for any irreducible representation $\Pi\in \mathfrak{R}_{^0\chi_r,\lambda N}(G_r)$, its supercuspidal support must be of the form $(T_r,\xi')_{G_r}$ where $\xi'\in \Xi_{\lambda N}$. Therefore, $^w\xi'\in \Xi_{\lambda N}$ for any $w\in W_r$. Combine this with \Cref{decomposition of jacquet module}, we can see that $\Xi(\Pi)\subset \Xi_{\lambda N}$. Thus all discussions in \Cref{all computation} are valid if we substitute $\phi^{u,{^0\chi_r}}$ by $\phi^{u,{^0\chi_r}}_{\lambda N}$.
    \begin{lemma}
        The functions $\phi^{u,{^0\chi_r}}_{\lambda N}$ and $f_{\lambda}^{t,{^0\chi}}$ are associated.
    \end{lemma}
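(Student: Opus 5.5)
The plan is to combine \Cref{associated} with part (1) of \Cref{derived 1}. The direction we need there is the ``only if'': if $\phi$ and $f$ are associated, then $\phi_{\lambda N}$ and $f_\lambda$ are associated for every character $\lambda$ of $Z(F)$. The functions here are defined on $H$ with $Z$ central. In our setting $Z$ plays the role of the center, with $Z$ an induced torus after the reductions of \Cref{reduction steps}. Since $\phi^{u,{^0\chi_r}}$ and $f^{t,{^0\chi}}$ are associated by \Cref{associated}, this directly gives the claim. Because the proof of \Cref{derived 1} is only cited, I would also check the argument directly, so that the statement does not depend on the exact hypotheses there.

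The direct argument runs as follows. Fix a semisimple $\gamma\in G$ and unfold $\textnormal{SO}_\gamma(f_\lambda)$ as
$$\textnormal{SO}_\gamma(f_\lambda)=\int_{Z(F)}\lambda^{-1}(z)\,\textnormal{SO}_{\gamma z}(f)\,dz.$$
This uses Fubini and the fact that $Z(F)$ is central, so $Z(F)\subset G_\gamma^\circ$ and translation by $z$ preserves stable classes. Next, apply the matching identity for the pair $(\phi^{u},f^{t})$ at each $\gamma z$. This rewrites the integrand as a sum over stable $\theta$-classes $\delta$ with $\mathscr{N}\delta=\gamma z$. On the twisted side, the analogous unfolding gives
$$\textnormal{SO}_{\delta\theta}(\phi_{\lambda N})=\int_{Z(F_r)}\lambda^{-1}N(z')\,\textnormal{SO}_{\delta z'\theta}(\phi)\,dz'.$$
We then compare the two expressions using $\mathscr{N}(\delta z')=\mathscr{N}(\delta)N(z')$ and the exact sequence
$$1\to \ker N\to Z(F_r)\xrightarrow{N} Z(F),$$
whose kernel is $(1-\theta)Z(F_r)$ by Hilbert 90 for the induced torus $Z$. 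Translating $\delta$ by an element of $(1-\theta)Z(F_r)$ does not change its stable $\theta$-class, because $z\theta(z)^{-1}\delta$ is $\theta$-conjugate to $\delta$ via $z$. The measure normalizations (giving ${^0Z(F)}$ and ${^0Z(F_r)}$ volume $1$) then match, because the norm ${^0Z(F_r)}\to{^0Z(F)}$ is surjective.

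I expect the main obstacle to be the image of $N$ on $Z(F_r)$. The image is a finite-index subgroup of $Z(F)$, so a priori the $F$-side integral also picks up $z\notin N(Z(F_r))$. For such $z$ we need $\textnormal{SO}_{\gamma z}(f^{t,{^0\chi}})$ to vanish whenever $\gamma$ is a norm. To get this, I would use the explicit supports from \Cref{orbital integral value} and \Cref{conjugation lemma}. By those results, $\textnormal{SO}_{\gamma z}(f^t)\neq0$ forces $\gamma z$ to be stably conjugate to $m_0t$ with $m_0t$ a norm. By the first paragraph of the proof of \Cref{associated}, such elements are all norms, and then $z$ itself is a norm in $\mathbf{H}^0_{ab}$. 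This is the same mechanism as in \Cref{vanishing statement}.

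Finally, for $\gamma$ not a norm, both sides are zero. On the $F$-side this follows from the vanishing just described together with the support description. On the twisted side the sum over $\delta$ is empty.
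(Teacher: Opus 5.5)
Your first paragraph is exactly the paper's proof: the lemma is the ``only if'' direction of \Cref{derived 1}(1) applied to the associated pair from \Cref{associated}. Nothing more is needed, provided $Z$ is the group of \Cref{derived 1}, namely a product of copies of $R_{F'/F}\mathbb{G}_m$ with $F'\supseteq F_r$.

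Your supplementary direct check, however, does not hold up, and it shows that this hypothesis on $Z$ matters.

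\emph{What the hypothesis buys.} With $F'\supseteq F_r$, $Z(F_r)$ is an induced $\textnormal{Gal}(F_r/F)$-module, so $N\colon Z(F_r)\to Z(F)$ is surjective. Your substitute for this fails. Knowing that the image of $z$ in $\mathbf{H}^0_{ab}(F,G)$ is a norm does not give $z\in N(Z(F_r))$.

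\emph{A counterexample to your closing step.} Your final claim, that both sides vanish at non-norms, is false for a general induced center. Take $G=\textnormal{GL}_2$, $r=2$, $Z=\mathbb{G}_m$ the center, $t=\textnormal{diag}(\varpi^{2a},\varpi^{2b})$ with $a>b$, and $\gamma=\varpi^{-1}t$.
\begin{itemize}
\item $\gamma$ is not a norm. After $\theta$-conjugation, any $\delta$ with norm $\gamma$ satisfies $\delta\theta(\delta)=\gamma$. Hence $\delta$ commutes with $\gamma$ and lies in $T(F_2)$, which forces even valuations.
\item Yet $\textnormal{SO}_\gamma(f^{t,{^0\chi}}_\lambda)\neq 0$. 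In $\textnormal{SO}_\gamma(f^{t,{^0\chi}}_\lambda)=\int_{Z(F)}\lambda^{-1}(z)\,\textnormal{SO}_{\gamma z}(f^{t,{^0\chi}})\,dz$, exactly the $z\in\varpi\mathfrak{O}^\times$ contribute.
\item The condition $\lambda|_{{^0Z(F)}}={^0\chi^{-1}}|_{{^0Z(F)}}$ makes the integrand constant on that set, so the integral equals $\lambda(\varpi)^{-1}\neq 0$.
\end{itemize}
So identifying $Z$ with an arbitrary induced center is not harmless.

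\emph{The opposite problem in the correct setting.} In the setting of \Cref{derived 1} itself, $\ker N=(1-\theta)Z(F_r)\cong Z(F)\backslash Z(F_r)$ is non-compact for $r\geq 2$; for $Z=R_{F'/F}\mathbb{G}_m$ it is $(F'^{\times})^{r-1}$. Consequently:
\begin{itemize}
\item your unfolding over all of $Z(F_r)$ diverges;
\item so does the naive twisted orbital integral of $\phi_{\lambda N}$ over $G_{\delta\theta}\backslash G_r$;
\item your measure-matching remark silently assumes $\ker N\subset{^0Z(F_r)}$.
\end{itemize}
One has to integrate modulo $Z(F_r)$ and unfold over $Z(F_r)/\ker N\cong Z(F)$.

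Rely on \Cref{derived 1} with its stated hypotheses, as the paper does, rather than on the direct computation.
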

    \begin{proof}\label{associated 2}
        This is directly from \Cref{derived 1} and \Cref{associated}.
    \end{proof}

\section{Proof in the strongly regular elliptic case}\label{section 9}
\subsection{Local data adapted to $\mathfrak{R}_{^0\chi_r,\lambda N}(G_r)$}

    In this subsection, we assume $G$ is any unramified group over $F$ with center an induced torus and $G_{\textnormal{sc}}=G_{\textnormal{der}}$. Let $\textnormal{Irr}_{^0\chi,\lambda}(G)$ (resp. $\textnormal{Irr}_{^0\chi_r,\lambda N}^{\theta}(G_r)$) denote the set of irreducible (resp. irreducible $\theta$-stable) admissible representations in $\mathfrak{R}_{^0\chi,\lambda}(G)$ (resp. $\mathfrak{R}_{^0\chi_r,\lambda N}(G_r)$).

    Following \cite[\S11]{Weimin}, we define the local data adapted to $\mathfrak{R}_{^0\chi,\lambda}(G)$ to consist of the data (a), (b) and (c), subject to conditions (1) and (2) as follows:

    \begin{itemize}
    \item[(a)] An indexing set $\mathscr{I}$ (possibly infinite);

    \item[(b)] A collection of complex numbers $a_i(\pi)$ for $i\in \mathscr{I}$ and $\pi\in \textnormal{Irr}_{^0\chi,\lambda}(G))$;

    \item[(c)] A collection of complex numbers $b_i(\Pi)$ for $i\in \mathscr{I}$ and $\Pi\in \textnormal{Irr}_{^0\chi_r,\lambda N}^{\theta}(G_r).$
    \end{itemize}

    $\hspace{\fill}$

    \begin{itemize}
    \item[(1)] For $i$ fixed, the constants $a_i(\pi)$ and $b_i(\Pi)$ are zero for all but infinitely many $\pi$ and $\Pi$.

    \item[(2)] For $\phi\in \mathcal{H}_{\lambda N}(G_r,\rho_r)$ and $f\in \mathcal{H}_{\lambda}(G,\rho)$, the following two statements are equivalent:

            \begin{itemize}
            \item[(A)] For all $i$, we have $$\sum_{\pi\in \textnormal{Irr}_{^0\chi,\lambda}(G))}\; a_i(\pi)\; \langle \textnormal{trace}\;\pi,f\rangle=\sum_{\Pi\in \textnormal{Irr}_{^0\chi_r,\lambda N}^{\theta}(G_r)}\; b_i(\Pi)\; \langle \textnormal{trace}\;\Pi I_{\theta},\phi\rangle;$$

            \item[(B)] For all strongly regular elliptic semisimple norms $\gamma=\mathscr{N}(\delta)$, we have
            $$\textnormal{SO}_{\gamma}(f)=\textnormal{SO}_{\delta\theta}(\phi).$$
            \end{itemize}
    \end{itemize}
    Thus by using local data, we can the change problem from orbital integrals to computing traces.
    \begin{rmk}
        A word about the intertwiners $I_{\theta}$: these are local intertwiners coming from canonically defined intertwiners at the global (adelic) level. But the global-to-local process involves some choices and hence the $I_{\theta}$ here are not canonically defined. However, since $\Pi$ is irreducible, $I_{\theta}$ is defined up to a scalar, and such scalar can be absorbed into the coefficients $b(\Pi)$. Hence we are free to normalize the $I_{\theta}$ however we like.
    \end{rmk}

\subsection{End of proof}

    We follow all the assumptions in \Cref{reduction steps}. Fix $\phi\in \mathcal{Z}(G_r,\rho_r)$ and $f=b\phi\in \mathcal{Z}(G,\rho)$. We prove that for all characters $\lambda:Z(F)\rightarrow\mathbb{C}^{\times}$ satisfying $\lambda|_{^0Z(F)}={^0\chi^{-1}}|_{^0Z(F)}$, the functions $\phi_{\lambda N}$ and $f_{\lambda}$ are associated.

    By the discussion of previous sections, we may assume $\gamma$ is a strongly regular elliptic semisimple norm, say $\gamma=\mathscr{N}(\delta)$. To check the identity $\textnormal{SO}_{\delta\theta}(\phi_{\lambda N})=\textnormal{SO}_{\gamma}(f_{\lambda})$, we follow the method of Labesse: we use the associated elementary functions $\phi^{u,{^0\chi_r}}_{\lambda N}$ and $f^{t,{^0\chi}}_{\lambda}$ to give, as $u$ ranges, sufficiently many character identities to establish (2)(A) in the local data for the pair $\phi_{\lambda N},f_{\lambda}$.

    We need to show that for each $i$, the identity in (2)(A) above holds. Fixing $i$ and dropping it from our notation, we need to prove that
    \begin{equation}\label{final6}
        \sum_{\pi}\;a(\pi)\; \langle\textnormal{trace}\;\pi,f_{\lambda}\rangle=\sum_{\Pi}\;b(\Pi)\; \langle\textnormal{trace}\;\Pi I_{\theta},\phi_{\lambda N}\rangle.
    \end{equation}
    Since we already know the Labesse's elementary functions we constructed in the last section are associated, by ignoring the character $^0\chi$ (resp. $^0\chi_r$), we get the following equation
    \begin{equation}
        \sum_{\pi}\;a(\pi)\; \langle\textnormal{trace}\;\pi,f^t_{\lambda}\rangle=\sum_{\Pi}\;b(\Pi)\; \langle\textnormal{trace}\;\Pi I_{\theta},\phi^u_{\lambda N}\rangle.
    \end{equation}

    Let us write the left hand side using \Cref{split characters} in the case where $r=1$:
    \begin{equation}\label{final1}
        \sum_{\pi}\; a(\pi)\langle \textnormal{trace}\;\pi,f^t_{\lambda}\rangle=\delta_B^{-1/2}(t)\;\sum_{\xi_0\in \Xi_{\lambda}({^0\chi})/W_{^0\chi}}\;\sum_{\pi\in i_B^G(\xi_0)}\; a(\pi)\;\sum_{\xi\in\Xi(\pi,{^0\chi)}}\; \eta(t)\cdot a_{\xi,\pi}.
    \end{equation}
    Here $\xi_0$ ranges over any set of representatives for the $W_{^0\chi}$-orbits on the set $\Xi_{\lambda}({^0\chi})$ of characters on $T(F)$ extending $^0\chi$. Since $\eta(t)=\eta_r(u)$, \Cref{final1} is a linear combination of characters which are norms evaluated on $u=\nu(\varpi)$.

    For the right hand side, we can also write it as a linear combination of characters, and we also hope to separate the norm part and the non-norm part. We need the following lemma:
    \begin{lemma}[\cite{haines2012base}, Lemma 9.2.1]\label{final lemma}
        If $\xi_1',\xi_2'\in \Xi_{\lambda N}({^0\chi_r})^{\theta}$ are $W_{^0\chi}$-conjugate to each other, then one is a norm if and only if the other one is.
    \end{lemma}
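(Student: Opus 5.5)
We must prove: if $\xi_1',\xi_2'\in \Xi_{\lambda N}({^0\chi_r})^{\theta}$ with $\xi_2'={^w\xi_1'}$ for some $w\in W_{^0\chi}$, then $\xi_1'$ is a norm iff $\xi_2'$ is. Here "norm" means of the form $\xi\circ N_r$ for a character $\xi$ of $T(F)$.

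The plan is to show that the property of being a norm is transported by $W$-conjugation, using that the norm map commutes with the Weyl group action.

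First, I will recall why $N_r:T_r\to T$ is $W$-equivariant. The group $W=(W_r)^{\theta}$ consists of $\theta$-fixed elements, so we may choose representatives $n_w\in N_G(T)(F)$, $F$-rational and hence $\theta$-fixed. Then $\theta(n_w t n_w^{-1})=n_w\theta(t)n_w^{-1}$, which gives
\[
N_r({^{n_w}t})={^{n_w}N_r(t)}.
\]

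Second, the transport step. Suppose $\xi_1'=\xi_1\circ N_r$ with $\xi_1$ a character of $T(F)$. Then for $t\in T_r$,
\[
{^w\xi_1'}(t)=\xi_1'(n_w^{-1}tn_w)=\xi_1(N_r(n_w^{-1}tn_w))=\xi_1(n_w^{-1}N_r(t)n_w)=({^w\xi_1})(N_r t).
\]
So ${^w\xi_1'}={^w\xi_1}\circ N_r$ is a norm. By symmetry, applying the same argument with $w^{-1}$ gives the converse.

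Third, I will check that the constraints defining $\Xi_{\lambda N}({^0\chi_r})^\theta$ play no role beyond making the statement meaningful. The $\theta$-invariance and the central character $\lambda^{-1}N$ are preserved because $w$ is $\theta$-fixed and $W$ acts trivially on $Z$. The restriction ${^0\chi_r}$ is preserved because $w\in W_{^0\chi}$ fixes ${^0\chi}$, hence fixes ${^0\chi_r}={^0\chi}\circ N_r$ by the equivariance above. If the target notion of "norm" also requires $\xi$ to lie in $\Xi_\lambda({^0\chi})$, then ${^w\xi_1}$ does as well, since ${^w\xi_1}|_{^0T}={^w({^0\chi})}={^0\chi}$ and the central characters agree.

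The only real subtlety is the meaning of $W_{^0\chi}$ here. If it means a subgroup of the relative Weyl group $W$ over $F$, the argument above is complete. If instead the lemma intends conjugation by $W_{r,{^0\chi_r}}\subset W_r$, which need not be $\theta$-fixed, equivariance fails. In that case I would argue as in \cite[Lemma 9.2.1]{haines2012base}. Since both $\xi_1'$ and $\xi_2'={^w\xi_1'}$ are $\theta$-fixed, ${^{\theta(w)}\xi_1'}={^w\xi_1'}$. So $w^{-1}\theta(w)$ lies in the stabilizer of $\xi_1'$, and one replaces $w$ by a $\theta$-fixed element in the same coset using the fact that the relevant coset space has $\theta$-fixed points. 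This is a Lang-type argument in the finite group $W_r$, and I expect it to be the main obstacle. With a $\theta$-fixed representative in hand, the first case applies.
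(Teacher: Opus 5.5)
Your proof is correct and is essentially the paper's argument: the paper also picks a representative $n\in N_GT(F)$ of $w\in W_{^0\chi}$, uses $N(n^{-1}an)=n^{-1}N(a)n$, and concludes that ${^n\xi_2'}=({^n\chi})_r$ is a norm whenever $\xi_2'=\chi_r$ is. Your final paragraph is unnecessary, because the lemma and the paper's proof take $W_{^0\chi}$ inside the $F$-relative Weyl group $W$, where $F$-rational representatives exist; the Lang-type fallback would in any case need a cohomology vanishing that you have not justified.
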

    \begin{proof}
        If $\xi_1'$ and $\xi_2'$ are $W_{^0\chi}$ conjugate, we may assume $\xi_1'={^n\xi_2'}$, for $n\in N_GT(F)$ a representative element of an element $w\in W_{^0\chi}$. We only need to prove one direction. If $\xi_2'$ is a norm, we can assume $\xi_2'=\chi_r$ for a character $\chi\in \Xi_{\lambda}({^0\chi})$. Therefore, for any $a\in T_r$, we have
        $$\xi_1'(a)=\xi_2'(n^{-1}an)=\chi(N(n^{-1}an))=\chi(n^{-1}N(a)n)=(^n\chi)_r(a),$$
        which shows that $\xi_1'$ is also a norm.

    \end{proof}
    From the above lemma, we can rewrite the right hand side in the following way:
    \begin{align}\label{final2}
        \sum_{\Pi}\;b(\Pi)\; \langle\textnormal{trace}\;\Pi I_{\theta},\phi^u_{\lambda N}\rangle & = \notag \\
        \delta_{B_r}^{-1/2}(u) \; & \sum_{\xi_0'\in \Xi_{\lambda N}({^0\chi_r})^{\theta}/W_{^0\chi}}\;\sum_{\substack{\Pi \;\textnormal{s.t.} \\ W_{^0\chi}\xi_0'\cap\Xi(\Pi,{^0\chi_r})^{\theta}\neq \varnothing}}\; b(\Pi)\;\sum_{\substack{\xi'\in \\   W_{^0\chi}\xi_0'\cap\Xi(\Pi,{^0\chi_r})^{\theta}}}\; \eta'(u)\cdot \textnormal{tr}(I_{\theta},\Pi,\xi').
    \end{align}
    This is because, if $\Pi\in i_{B_r}^{G_r}(\xi')$, where $\xi'\in \Xi_{\lambda N}({^0\chi_r})$ is a non $\theta$-invariant character of $T_r$, then all characters in the set $W_{^0\chi}\xi'$ will not be $\theta$-invariant. Therefore, $W_{^0\chi}\xi'\cap\Xi(\Pi,{^0\chi_r})^{\theta}=\varnothing$.

    Let $(9.4)_n$ (resp. $(9.4)_{nn}$) denote the contribution to (9.4) coming from those $\xi_0'$ which are norms (resp. not norms). By \Cref{final lemma}, all (resp. none) of the terms $\eta'(u)$ appearing in $(9.4)_n$ resp. ($(9.4)_{nn}$) are (resp. are not) norms.

    Now we regard each side of the identity 
    \begin{equation}\label{final3}
        (9.3)=(9.4)_n+(9.4)_{nn}
    \end{equation}
    as a linear combination of characters on regular dominant $\nu\in X_*(A^r)$. The linear independence of characters asserts that any non-empty set of distinct characters on $X_*(A^r)$ is linearly independent over $\mathbb{C}$. And the proof shows that the set of their restrictions to the regular dominant cocharcters remains linearly independent. \Cref{final3} holds for all regular dominant $\nu$, thus by this linear independence statement, it must hold for $\nu=0$, that is, for $u=1$. In the self-evident notation, this means that we have two identities:
    \begin{align}
        (9.3)|_{t=1} & = (9.4)_n|_{u=1}; \\
        0 & = (9.4)_{nn}|_{u=1}.
    \end{align}
    In fact the linear independence gives something stronger: the contributions of each $\xi_0'\in \Xi_{\lambda N}({^0\chi_r})^{\theta}/W_{^0\chi}$, to the above equations satisfy corresponding identities. That is to say, the parts consisting of the set of $\eta$ coming from characters whose composition with the norm map have the same value on $T_r$ should be the same. Therefore, combining this with \Cref{idempotent trace}, we have the following lemma:
    \begin{lemma}
    For each $\xi_0'\in \Xi_{\lambda N}({^0\chi_r})^{\theta}/W_{^0\chi}$, we have
        \begin{align}\label{final4}
            \sum_{\substack{\xi_0\in \Xi_{\lambda}({^0\chi})/W_{^0\chi} \\ \textnormal{s.t.}\;\xi_{0_r}=\xi_0'}} \sum_{\pi\in i_B^G(\xi_0)}\; a(\pi)\langle\textnormal{trace}\;\pi,e_{\rho}\rangle & = \nonumber \\
            \sum_{\substack{\Pi \;\textnormal{s.t.} \\ W_{^0\chi}\xi_0'\cap\Xi(\Pi,{^0\chi_r})^{\theta}\neq \varnothing}}\; b(\Pi) & \;\sum_{\substack{\xi'\in \\   W_{^0\chi}\xi_0'\cap\Xi(\Pi,{^0\chi_r})^{\theta}}}\; \textnormal{tr}(I_{\theta},\Pi,\xi').
        \end{align}
    \end{lemma}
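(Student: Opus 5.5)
We derive the lemma from the identity \Cref{final3} by linear independence of characters on the lattice of regular dominant cocharacters, followed by specialization at $u=1$ using \Cref{idempotent trace}.

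First we regroup both sides. On the left, (9.3), we collect the $\xi_0\in\Xi_{\lambda}({^0\chi})/W_{^0\chi}$ according to the orbit $W_{^0\chi}\xi_{0_r}$ in $\Xi_{\lambda N}({^0\chi_r})^{\theta}/W_{^0\chi}$. Each $\xi\in\Xi(\pi,{^0\chi})$ contributes the monomial $\eta(t)=\eta(N\nu(\varpi))=\eta_r(u)$. Here $\eta_r=\eta\circ N$ is the unramified part of $\xi_r$, so it is a character $\nu\mapsto\eta_r(\nu(\varpi))$ of $X_*(A_r)$, or more precisely of its $\theta$-coinvariant image. On the right, (9.4) is already grouped by $\xi_0'$, and each $\xi'$ contributes the monomial $\eta'(u)$. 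After dividing out the common factor $\delta_{B_r}^{-1/2}(u)=\delta_B^{-1/2}(t)$, both sides become finite linear combinations of characters $\nu\mapsto\eta'(\nu(\varpi))$ with constant coefficients $a(\pi)a_{\xi,\pi}$ and $b(\Pi)\mathrm{tr}(I_\theta,\Pi,\xi')$. Finiteness comes from condition (1) of the local data together with the finiteness of $\Xi(\Pi)$.

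Next we apply linear independence. Characters of $X_*(A_r)$ restricted to the regular dominant cone are linearly independent. We therefore equate coefficients of each distinct unramified character $\eta'$, which is exactly the finer statement asserted before the lemma. Then we sum, over all $\eta'$ occurring, the unramified parts of the characters $\xi'\in W_{^0\chi}\xi_0'$ with $\xi'|_{^0T_r}={^0\chi_r}$. This gives an identity between the $\xi_0'$-contributions on both sides with every $\eta'$ replaced by $1$, i.e. at $u=1$. The map $\xi'\mapsto\eta'$ is injective on $\Xi({^0\chi_r})$, since the restriction to ${^0T_r}$ is fixed. Hence the characters $\eta'$ attached to different $W_{^0\chi}$-orbits are distinct, and grouping by orbit is legitimate.

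For the norm orbits, $\xi_0'=\xi_{0_r}$. We use \Cref{final lemma} to see that the left-hand terms land in exactly these orbits, since $\xi_r$ is $\theta$-fixed and restricts to ${^0\chi_r}$ whenever $\xi$ restricts to ${^0\chi}$. We then use \Cref{idempotent trace} with $r=1$ to rewrite $\sum_{\xi\in\Xi(\pi,{^0\chi})}a_{\xi,\pi}$ as $\langle\mathrm{trace}\,\pi,e_\rho\rangle$. For the non-norm orbits, the left side of the lemma is an empty sum and the identity becomes $0=(9.4)_{nn}$ at $u=1$, restricted to that orbit.

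The main obstacle is bookkeeping in the left-hand sum. A single $\pi$ may have $\xi\in\Xi(\pi,{^0\chi})$ lying in several $W_{^0\chi}$-orbits, and distinct $\xi$ may have the same norm $\xi_r$. We must check that, after summing over $\xi_0$ with $\xi_{0_r}=\xi_0'$ and over $\pi\in i_B^G(\xi_0)$, the total coefficient of each $\eta_r$ equals the left side of \Cref{final4}. I would handle this by noting that $\Xi(\pi,{^0\chi})\subset W\xi_0\cap\Xi({^0\chi})=W_{^0\chi}\xi_0$ for $\pi$ in the block of $\xi_0$. Thus $\langle\mathrm{trace}\,\pi,e_\rho\rangle$ splits entirely into the orbit of $\xi_0$, and the orbit-by-orbit equality follows.
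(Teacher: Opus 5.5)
Your proposal is correct and follows the paper's route. You regard both sides of $(9.3)=(9.4)_n+(9.4)_{nn}$ as linear combinations of characters of regular dominant $\nu$, use linear independence to isolate the contribution of each $W_{^0\chi}$-orbit $\xi_0'$ at $u=1$, and convert via \Cref{idempotent trace}. Your extra bookkeeping (injectivity of $\xi'\mapsto\eta'$ on $\Xi({^0\chi_r})$, and the inclusion $\Xi(\pi,{^0\chi})\subset W_{^0\chi}\xi_0$) makes explicit what the paper leaves implicit in its remark that linear independence gives ``something stronger''.
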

    Note that if $\xi_0'$ is not a norm, then the left hand side of \Cref{final4} is zero, and by the linear independence argument, the right hand side should also be zero.

    Next we fix any $\xi_0'\in \Xi_{\lambda N}({^0\chi_r})^{\theta}$, and sum over the $W_{^0\chi}$-orbit inside of the $W_{^0\chi_r}\xi_0'\cap \Xi_{\lambda N}({^0\chi_r})^{\theta}$. Note that by \Cref{idempotent trace F_r}, the right hand side will be simplified (all the not norm $W_{^0\chi}$-orbit will contribute nothing to \Cref{final4}). Therefore, we get 
    \begin{align}\label{final5}
        \sum_{\substack{\xi_0\in\Xi_{\lambda}(\chi)/W_{^0\chi}\\ \textnormal{s.t.}\; \xi_{0_r}\in W_{^0\chi_r}\xi_0'}}\; \sum_{\pi\in i_B^G(\xi_0)} & \; a(\pi)\langle \textnormal{trace}\; \pi, e_{\rho} \rangle \nonumber \\ & =\sum_{\Pi\in i_{B_r}^{G_r}(\xi_0')}\; b(\Pi)\langle\textnormal{trace}\; \Pi I_{\theta}, e_{\rho_r} \rangle. 
    \end{align}
    If the $W_{^0\chi_r}$-orbit of $\xi_0'$ contains no norm, then the left hand side and right hand side are both zero.

    By the definition of the base change homomorphism, we can see that $f$ acts on $i_B^G(\xi_0)^{\rho}$ by the same scalar by which $\phi$ acts on $i_{B_r}^{G_r}(\xi_0')^{\rho_r}$. By \Cref{two actions are the same}, we can see that the scalars are still the same for the actions $f_{\lambda}$ and $\phi_{\lambda N}$. Multiply this scalar on both sides of \Cref{final5}, we get a similar equation but with $e_{\rho_r}$ (resp. $e_{\rho}$) replaced by $\phi_{\lambda N}$ (resp. $f_{\lambda}$). Summing such equations over all $\xi_0'\in \Xi_{\lambda N}({^0\chi_r})/W_{^0\chi_r}$, we get the desired \Cref{final6}. This completes the proof.

    We have the following direct corollary:
    \begin{cor}
        Suppose $G$ satisfies LLC+ (see \cite[\S 5.2]{haines2014stable}). Let $(\tau,V)$ denote a finite dimensional algebraic representation on $^LG$ and $Z_V$ be the corresponding element in $\mathfrak{Z}^{\textnormal{st}}(G_r)$ defined in \Cref{geometric stable Bernstein center}. Then the two functions $Z_V*e_{\rho_r}$ and $b'(Z_V)*e_{\rho}$ are associated. 
    \end{cor}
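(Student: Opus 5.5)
The plan is to deduce the corollary directly from \Cref{main theorem} together with the compatibility lemma in \Cref{stable bernstein center}. Under LLC+, $Z_V\in\mathfrak{Z}^{\textnormal{st}}(G_r)$ is regarded, via \Cref{stable bernstein center and bernstein center}, as an element of the Bernstein center $\mathfrak{Z}(G_r)$. Convolving it with the idempotent $e_{\rho_r}$ gives an element of the center: by \Cref{property of Bernstein center}(a), the function $\phi:=Z_V*e_{\rho_r}$ lies in $\mathcal{Z}(G_r,\rho_r)$. Concretely, it acts on $i_{B_r}^{G_r}(\xi')^{\rho_r}$ by the scalar $Z_V(\xi')$, so $\beta^{-1}(\phi)$ is the restriction of the regular function $Z_V$ to $\mathfrak{X}_{\mathfrak{s}_r}$.

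Next, I will apply the lemma of \Cref{stable bernstein center} (based on the commutative diagram of \cite[Lemma 8.1.3]{kazhdan2012endoscopic}), which gives
$$b_r(Z_V*e_{\rho_{\mathfrak{s}_r}})=b_r'(Z_V)*e_{\rho_{\mathfrak{s}}}.$$
Thus $b'(Z_V)*e_\rho=b_r\phi$. To double-check this at the level of scalars: for $\xi=\chi\eta$ with $\eta$ unramified, $b_r\phi$ acts on $i_B^G(\xi)^\rho$ by $Z_V(\xi\circ N_r)$. Meanwhile $b'_r(Z_V)$ acts by $Z_V$ evaluated on the infinitesimal character $\lambda_{\xi}|_{W_{F_r}}$. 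These agree because $\lambda_{\xi\circ N_r}=\lambda_\xi|_{W_{F_r}}$ under local class field theory for tori, which is exactly the compatibility of the norm map with restriction.

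Finally, \Cref{main theorem} applied to $\phi\in\mathcal{Z}(G_r,\rho_r)$ shows that $\phi$ and $b_r\phi$ are associated, which is the claim.

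The only point needing care is the identification of $\beta^{-1}(Z_V*e_{\rho_r})$ with $Z_V|_{\mathfrak{X}_{\mathfrak{s}_r}}$. This relies on LLC+ being compatible with parabolic induction from $T$, so that the supercuspidal support $(T_r,\xi')$ maps to the infinitesimal character of $\xi'$. I expect this step to be the main (mild) obstacle, and I would cite \cite[\S5]{haines2014stable} for it. The rest is formal.
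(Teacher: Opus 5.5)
Your proposal is correct and follows the route the paper intends for this ``direct corollary'': $Z_V*e_{\rho_r}\in\mathcal{Z}(G_r,\rho_r)$ by \Cref{property of Bernstein center}(a), the compatibility lemma of \Cref{stable bernstein center} identifies $b'(Z_V)*e_{\rho}$ with $b_r(Z_V*e_{\rho_r})$, and \Cref{main theorem} then gives the association. Your scalar-level check and your remark about identifying $\beta^{-1}(Z_V*e_{\rho_r})$ with $Z_V$ restricted to $\mathfrak{X}_{\mathfrak{s}_r}$ only unpack that lemma (the Kazhdan--Varshavsky diagram) and the definition of $\mathfrak{Z}^{\textnormal{st}}\to\mathfrak{Z}$ via the local Langlands correspondence for $T$; no compatibility of LLC+ with parabolic induction is needed beyond what \Cref{stable bernstein center and bernstein center} already provides.
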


\printbibliography

@inproceedings{roche1998types,
  title={Types and Hecke algebras for principal series representations of split reductive $p$-adic groups},
  author={Roche, Alan},
  booktitle={Annales scientifiques de l’Ecole Normale Sup{\'e}rieure},
  volume={31},
  number={3},
  pages={361--413},
  year={1998},
  organization={Elsevier}
}

@article{adler2000intertwining,
  title={An intertwining result for $p$-adic groups},
  author={Adler, Jeffrey D and Roche, Alan},
  journal={Canadian Journal of Mathematics},
  volume={52},
  number={3},
  pages={449--467},
  year={2000},
  publisher={Cambridge University Press}
}

@article{kottwitz1982rational,
author = {Robert E. Kottwitz},
title = {{Rational conjugacy classes in reductive groups}},
volume = {49},
journal = {Duke Mathematical Journal},
number = {4},
publisher = {Duke University Press},
pages = {785 -- 806},
year = {1982},
doi = {10.1215/S0012-7094-82-04939-0},
URL = {https://doi.org/10.1215/S0012-7094-82-04939-0}
}

@article{haines2009base,
author = {Thomas J. Haines},
title = {{The base change fundamental lemma for central elements in parahoric Hecke algebras}},
volume = {149},
journal = {Duke Mathematical Journal},
number = {3},
publisher = {Duke University Press},
pages = {569 -- 643},
year = {2009},
doi = {10.1215/00127094-2009-045},
URL = {https://doi.org/10.1215/00127094-2009-045}
}

@inproceedings{haines2012base,
  title={Base change for Bernstein centers of depth zero principal series blocks},
  author={Haines, Thomas J},
  booktitle={Annales scientifiques de l'Ecole normale sup{\'e}rieure},
  volume={45},
  number={5},
  pages={681--718},
  year={2012}
}

@article{yu2001construction,
  title={Construction of tame supercuspidal representations},
  author={Yu, Jiu-Kang},
  journal={Journal of the American Mathematical Society},
  volume={14},
  number={3},
  pages={579--622},
  year={2001}
}

@article{kim2017construction,
  title={Construction of tame types},
  author={Kim, Ju-Lee and Yu, Jiu-Kang},
  journal={Representation Theory, Number Theory, and Invariant Theory: In Honor of Roger Howe on the Occasion of His 70th Birthday},
  pages={337--357},
  year={2017},
  publisher={Springer}
}

@article{fintzen2021tame,
  title={Tame tori in $p$-adic groups and good semisimple elements},
  author={Fintzen, Jessica},
  journal={International Mathematics Research Notices},
  volume={2021},
  number={19},
  pages={14882--14904},
  year={2021},
  publisher={Oxford University Press}
}

@article{fintzen2021types,
  title={Types for tame $p$-adic groups},
  author={Fintzen, Jessica},
  journal={Annals of Mathematics},
  volume={193},
  number={1},
  pages={303--346},
  year={2021},
  publisher={Department of Mathematics, Princeton University Princeton, New Jersey, USA}
}

@article{fintzen2022tame,
  title={Tame cuspidal representations in non-defining characteristics},
  author={Fintzen, Jessica},
  journal={Michigan Mathematical Journal},
  volume={72},
  pages={331--342},
  year={2022},
  publisher={University of Michigan, Department of Mathematics}
}

@misc{fintzensupercuspidal,
      title={Supercuspidal representations: construction, classification, and characters}, 
      author={Jessica Fintzen},
      year={2025},
      eprint={2510.12883},
      archivePrefix={arXiv},
      primaryClass={math.RT},
      url={https://arxiv.org/abs/2510.12883}, 
}

@article{haines2014stable,
  title={The stable Bernstein center and test functions for Shimura varieties},
  author={Haines, Thomas J},
  journal={Automorphic forms and Galois representations},
  volume={2},
  pages={118--186},
  year={2014},
  publisher={Cambridge University Press}
}

@article{clozel1990fundamental,
author = {Laurent Clozel},
title = {{The fundamental lemma for stable base change}},
volume = {61},
journal = {Duke Mathematical Journal},
number = {1},
publisher = {Duke University Press},
pages = {255 -- 302},
year = {1990},
doi = {10.1215/S0012-7094-90-06112-5},
URL = {https://doi.org/10.1215/S0012-7094-90-06112-5}
}

@article{rogawski1988trace,
  title={Trace Paley-Wiener theorem in the twisted case},
  author={Rogawski, JD},
  journal={Transactions of the American Mathematical Society},
  volume={309},
  number={1},
  pages={215--229},
  year={1988}
}

@article{casselman1977characters,
  title={Characters and Jacquet modules},
  author={Casselman, William},
  journal={Mathematische Annalen},
  volume={230},
  pages={101--105},
  year={1977},
  publisher={Springer}
}

@inproceedings{borel1979automorphic,
  title={Automorphic $L$-functions},
  author={Borel, Armand},
  booktitle={Proc. Symp. Pure Math},
  volume={33},
  number={2},
  pages={27--61},
  year={1979}
}

@book{laumon1996cohomology,
  title={Cohomology of Drinfeld modular varieties, part 1, geometry, counting of points and local harmonic analysis},
  author={Laumon, G{\'e}rard},
  year={1996},
  publisher={Cambridge University Press}
}

@article{van1972computation,
  title={Computation of certain induced characters of $p$-adic groups},
  author={Van Dijk, G},
  journal={Mathematische Annalen},
  volume={199},
  number={4},
  pages={229--240},
  year={1972},
  publisher={Springer}
}

@article{haineshecke,
  title={On Hecke algebra isomorphisms and types for depth zero principal series},
  author={Haines, Thomas J},
  journal={available at www. math. umd. edu/tjh}
}

@article{kottwitz1986base,
  title={Base change for unit elements of Hecke algebras},
  author={Kottwitz, Robert E},
  journal={Compositio Mathematica},
  volume={60},
  number={2},
  pages={237--250},
  year={1986}
}

@article{labesse1990fonctions,
author = {J.-P. Labesse},
title = {{Fonctions élémentaires et lemme fondamental pour le changement de base stable}},
volume = {61},
journal = {Duke Mathematical Journal},
number = {2},
publisher = {Duke University Press},
pages = {519 -- 530},
year = {1990},
doi = {10.1215/S0012-7094-90-06120-4},
URL = {https://doi.org/10.1215/S0012-7094-90-06120-4}
}

@article{scholze2013local,
  title={The Langlands-Kottwitz approach for some simple Shimura varieties},
  author={Scholze, Peter},
  journal={Inventiones mathematicae},
  volume={192},
  number={3},
  pages={627--661},
  year={2013},
  publisher={Springer}
}

@article{kottwitz2000distributions,
  title={The distributions in the invariant trace formula are supported on characters},
  author={Kottwitz, Robert E and Rogawski, Jonathan D},
  journal={Canadian Journal of Mathematics},
  volume={52},
  number={4},
  pages={804--814},
  year={2000},
  publisher={Cambridge University Press}
}

@article{kazhdan2012endoscopic,
author = {David Kazhdan and Yakov Varshavsky},
title = {{On endoscopic transfer of Deligne–Lusztig functions}},
volume = {161},
journal = {Duke Mathematical Journal},
number = {4},
publisher = {Duke University Press},
pages = {675 -- 732},
year = {2012},
doi = {10.1215/00127094-1548371},
URL = {https://doi.org/10.1215/00127094-1548371}
}

@book{labesse1999cohomologie,
  title={Cohomologie, stabilisation et changement de base},
  author={Labesse, Jean-Pierre and Breen, Lawrence and Clozel, Laurent},
  volume={257},
  year={1999},
  publisher={Soci{\'e}t{\'e} math{\'e}matique de France}
}

@inproceedings{zelevinsky1977induced,
  title={Induced representations of reductive p-adic groups},
  author={J. N. Bernstein and A. V. Zelevinsky},
  booktitle={Annales Scient. {\'E}c. Norm. Sup.},
  volume={10},
  pages={441--472},
  year={1977}
}

@article{kottwitz1992lambda,
  title={On the $\lambda$-adic representations associated to some simple Shimura varieties},
  author={Kottwitz, Robert E},
  journal={Inventiones mathematicae},
  volume={108},
  number={1},
  pages={653--665},
  year={1992},
  publisher={Springer}
}

@article{Weimin,
    author = {Weimin Jiang},
    title = {Base change fundamental lemma for central elements in depth-zero Hecke algebras over local function fields},
    journal = {in preparation}
}


\end{document}